\numberwithin{equation}{section}
\numberwithin{figure}{section}
\theoremstyle{plain}
\newtheorem{thm}{\protect\theoremname}
  \theoremstyle{plain}
  \newtheorem{prop}[thm]{\protect\propositionname}
  \theoremstyle{plain}
  \newtheorem{lem}[thm]{\protect\lemmaname}
  \theoremstyle{definition}
  \newtheorem{example}[thm]{\protect\examplename}
  \theoremstyle{definition}
  \newtheorem{defn}[thm]{\protect\definitionname}
  \theoremstyle{remark}
  \newtheorem{rem}[thm]{\protect\remarkname}
  \providecommand{\definitionname}{Definition}
  \providecommand{\examplename}{Example}
  \providecommand{\lemmaname}{Lemma}
  \providecommand{\propositionname}{Proposition}
  \providecommand{\remarkname}{Remark}
\providecommand{\theoremname}{Theorem}
\begin{document}

\title[Spectrum and abnormals]{Spectrum and abnormals in sub-Riemannian geometry: the 4D quasi-contact
case }

\author{Nikhil Savale}

\thanks{The author is partially supported by the DFG funded project CRC/TRR
191.}

\address{Universität zu Köln, Mathematisches Institut, Weyertal 86-90, 50931
Köln, Germany}

\email{nsavale@math.uni-koeln.de}
\begin{abstract}
We prove several relations between spectrum and dynamics including
wave trace expansion, sharp/improved Weyl laws, propagation of singularities
and quantum ergodicity for the sub-Riemannian (sR) Laplacian in the
four dimensional quasi-contact case. A key role in all results is
played by the presence of abnormal geodesics and represents the first
such appearance of these in sub-Riemannian spectral geometry. 
\end{abstract}

\maketitle
\tableofcontents{}

\section{Introduction}

Sub-Riemannian (sR) geometry is the study of metric subbundles $\left(E\subset TX,g^{E}\right)$
inside the tangent bundle of a manifold $X$ that are bracket generating;
we refer to \cite{AgrachevBarilariBoscain2019,Bellaiche-book-1996,Montgomery-book}
for some textbook references on the subject. The geometric/dynamical
significance of the bracket-generating hypothesis is via the theorem
of Chow-Rashevky on connectivity of points by horizontal curves. With
the metric assigning lengths to horizontal curves, the manifold acquires
a natural metric space structure. A geodesic is a horizontal length
minimizing path. A peculiar feature of sub-Riemannian geometry, unlike
Riemannian geometry, is that there are geodesics which do not satisfy
any variational equation or equivalently are not projections of the
corresponding Hamiltonian flow \cite[Ch. 1]{Bismut84}, \cite{Montgomery94};
these geodesics are \textit{abnormal}.

The choice of an auxiliary density $\mu_{X}$ allows for the definition
of a sub-Riemannian Laplacian on the manifold which in general is
not an elliptic operator. The analytic significance of the bracket-generating
hypothesis is then via the classical theorem of Hörmander \cite{Hormander1967}
saying that the sub-Riemannian Laplacian is hypoelliptic and as such
has a discrete spectrum of real eigenvalues. Classical Riemannian
results on spectral asymptotics where geodesic flow plays a role such
as Weyl's law \cite{Avakumovic56,Hormander68,Levitan52,Minakshisundaram-Pleijel49},
wave trace trace formulas \cite{Chazarain74,CdV73,Duistermaat-Guillemin},
propagation of singularities \cite{Duistermaat-Hormander} and quantum
ergodicity \cite{CdV-QE,Schnirelman-QE,Zelditch-QE} remain largely
unexplored in sub-Riemannian geometry. It is in particular an interesting
question whether abnormal geodesics would play a role in sR spectral
geometry. The purpose of this article is to positively answer this
question in one of the simplest cases where abnormals exist, namely
the four dimensional quasi-contact case.

Let us now state our results more precisely. Let $X^{4}$ be a smooth,
compact oriented four dimensional manifold. A nowhere vanishing one
form $a\in\Omega^{1}\left(X\right)$ is called quasi-contact if the
restriction $\textrm{rk}\left.da\right|_{E}=2$ is of maximal rank,
where $E\coloneqq\textrm{ker}a\subset TX$. The three dimensional
distribution $E=\textrm{ker}a\subset TX$ can be shown to be bracket
generating and we equip it with a metric $g^{E}$. The characteristic
line field is defined via $L^{E}=\textrm{ker}\left(a\wedge da\right)\subset E$
and can be seen to only depend on $E=\textrm{ker}a$. It carries a
natural orientation, induced from that of $X$, and hence a positively
oriented unit section $Z\in C^{\infty}\left(L^{E}\right)$. The set
of integral curves of $L^{E}$, also called characteristics, contains
the abnormal geodesics in this case. 

Given an auxiliary volume form $\mu$ on $X$, the sR Laplacian acting
on function is defined via 
\begin{equation}
\Delta_{g^{E},\mu}\coloneqq\left(\nabla^{g^{E}}\right)_{\mu}^{*}\nabla^{g^{E}}:C^{\infty}\left(X\right)\rightarrow C^{\infty}\left(X\right)\label{eq:sR Laplacian}
\end{equation}
 where $\nabla^{g^{E}}:C^{\infty}\left(X\right)\rightarrow C^{\infty}\left(X;E\right),\,\left\langle \nabla^{g^{E}}f,e\right\rangle \coloneqq e\left(f\right),\,\forall e\in E,$
is the sR gradient and the adjoint \prettyref{eq:sR Laplacian} above
is taken with respect to the natural $L^{2}$-inner products coming
from $\mu$. The Laplacian \prettyref{eq:sR Laplacian} is not elliptic
with characteristic variety $\Sigma\subset T^{*}X,\:\Sigma\coloneqq\left\{ \sigma\left(\Delta_{g^{E},\mu}\right)=0\right\} =\mathbb{R}\left[a\right]$
being given by the graph of the one form $a$. However being self-adjoint
of Hörmander type, there is a complete orthonormal basis of $\left\{ \varphi_{j}\right\} _{j=0}^{\infty}$
for $L^{2}\left(X,\mu\right)$ consisting of (real-valued) eigenvectors
for \prettyref{eq:sR Laplacian} $\Delta_{g^{E},\mu}\varphi_{j}=\lambda_{j}\varphi_{j}$,
$0\leq\lambda_{0}\leq\lambda_{1}\leq\ldots$. 

Our first result on spectral asymptotics is then the following sharp
Weyl law for the counting function $N\left(\lambda\right)$ of the
number of eigenvalues of the sR Laplacian $\Delta_{g^{E},\mu}$ of
size at most $\lambda$. Below $\mu_{\textrm{Popp}}$, $\nu_{\textrm{Popp}}=\frac{1}{\left(\int_{X}\mu_{\textrm{Popp}}\right)}\mu_{\textrm{Popp}}$
and $a_{g^{E}}$ denote the unnormalized, normalized Popp volume and
Popp one form respectively (see Section \prettyref{subsec:Quasi-contact-case}).
\begin{thm}
\label{thm:Sharp Weyl law}The Weyl counting function $N\left(\lambda\right)$
for the sR Laplacian $\Delta_{g^{E},\mu}$ in the 4D quasi-contact
case satisfies the sharp asymptotics 
\begin{equation}
N\left(\lambda\right)=\frac{1}{24\pi}\lambda^{5/2}\int_{X}\mu_{\textrm{Popp}}+O\left(\lambda^{2}\right)\label{eq:sharp Weyl law}
\end{equation}
Assuming the union of closed integrals curves of $L^{E}$ to be of
measure zero, one further has 
\begin{equation}
N\left(\lambda\right)=\frac{1}{24\pi}\lambda^{5/2}\int_{X}\mu_{\textrm{Popp}}+o\left(\lambda^{2}\right).\label{eq:improved Weyl law}
\end{equation}
\end{thm}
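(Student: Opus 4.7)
The plan is to apply the Hörmander--Tauberian theorem to a short-time expansion of the wave trace $\mathrm{tr}\,\hat\rho(t)e^{-it\sqrt{\Delta_{g^E,\mu}}}$, obtained from a microlocal parametrix for the half-wave group adapted to the characteristic variety $\Sigma=\mathbb{R}[a]\subset T^{*}X$. Away from $\Sigma$, the symbol $\sqrt{\sigma(\Delta_{g^E,\mu})}$ is smooth, positively homogeneous of degree one and of real principal type, so the standard Duistermaat--Hörmander FIO construction furnishes a parametrix whose only contribution to the wave trace, on a sufficiently short interval $|t|<\varepsilon$, is the singularity at $t=0$. Near $\Sigma$ the sub-Riemannian structure enters: the restriction of the canonical symplectic form to $\Sigma$ has a one-dimensional kernel generated by the lift of the characteristic line field $L^{E}$, and the Hessian of $\sigma(\Delta_{g^E,\mu})$ transverse to $\Sigma$ is positive. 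In a symplectic tubular neighbourhood of $\Sigma$ I would conjugate $\Delta_{g^E,\mu}$ into a normal form of \emph{transverse quantum harmonic oscillator $\otimes$ tangential pseudodifferential operator along $\Sigma$} type, and quantize its square root by a Hermite-type FIO whose fibre amplitudes are eigenfunctions of the transverse oscillator and whose tangential canonical relation is the graph of the characteristic flow of $L^{E}$ on $\Sigma$.

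The next step is to extract the wave-trace singularity at $t=0$. The transverse oscillator has Landau-type eigenvalues $(2k+1)\omega(x,\alpha)$, where $\alpha\in\mathbb{R}$ parametrises the fibre of $\Sigma\to X$ and the frequency $\omega$ is determined by $g^{E}$ and $a$; summing these contributions in $k$ and integrating by stationary phase along $\Sigma$ produces a singular expansion of $\mathrm{tr}\,\hat\rho(t)e^{-it\sqrt{\Delta_{g^E,\mu}}}$ at $t=0$. A calculation in Popp coordinates identifies the leading coefficient as $\tfrac{1}{24\pi}\int_{X}\mu_{\textrm{Popp}}$, and Hörmander's Tauberian theorem then yields the sharp remainder $O(\lambda^{2})$ of \prettyref{eq:sharp Weyl law}.

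To obtain the improved estimate \prettyref{eq:improved Weyl law} I would enlarge the support of $\hat\rho$ and analyse the additional singularities of the wave trace. These are carried by the period spectrum of the Hamiltonian flow of $\sqrt{\sigma(\Delta_{g^E,\mu})}$, which along $\Sigma$ degenerates to the characteristic flow of $L^{E}$ --- the abnormal dynamics of the sub-Riemannian structure. Under the hypothesis that the union of closed integral curves of $L^{E}$ in $X$ has measure zero, a Duistermaat--Guillemin style stationary-phase argument then shows that the contribution of these further singularities to the smoothed counting function is $o(\lambda^{2})$, producing the stated improvement.

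The main obstacle is the parametrix construction near $\Sigma$. Since $\sqrt{\Delta_{g^E,\mu}}$ is not a classical pseudodifferential operator there, one must combine a Hermite-type quantization in the directions normal to $\Sigma$ with a genuine FIO quantization along the characteristic foliation, and it is precisely at this step that the abnormal geodesics enter the spectral analysis as the classical trajectories governing the tangential propagator; keeping the composition calculus under control, and identifying the coefficient of the leading singularity with the Popp density, are the most delicate points.
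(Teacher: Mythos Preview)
Your outline for the sharp Weyl law matches the paper's approach closely: a microlocal parametrix for $e^{it\sqrt{\Delta_{g^E,\mu}}}$ (classical FIO away from $\Sigma$, Hermite-type near $\Sigma$ after conjugation into the normal form of Theorem~\ref{prop: normal form near pt}), a short-time wave-trace expansion with leading singularity $(t+i0)^{-5}$ and coefficient identified with the Popp volume, and the standard Tauberian step.

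For the improved estimate \eqref{eq:improved Weyl law}, however, your description contains a genuine imprecision that matters here. A ``stationary-phase analysis at the further singularities'' is \emph{not} available: by Proposition~\ref{prop:(Density-of-periods)} the periods of the boundary flow $\hat Z$ on $SN\Sigma$ fill out whole intervals $[T_\gamma,\hat T_\gamma]$ above each closed characteristic $\gamma$, so the singular support of the wave trace is not discrete beyond $T^{E}_{\textrm{abnormal}}$ and the singularities cannot be resolved period by period. The paper instead runs the \emph{other} half of the Duistermaat--Guillemin machinery: it proves a trace-\emph{norm} estimate \eqref{eq:important trace estimate} for $a^{H}\check\theta(\sqrt{\Delta_{g^E,\mu}}-\lambda)$ in the Hermite calculus, lifts the measure-zero hypothesis on closed integral curves of $L^{E}$ to a measure-zero set of closed $\hat Z$-orbits on the blowup via \eqref{eq: characteristic pushforward}, and builds a microlocal partition of unity \eqref{eq:suitable partition} on $[S^{*}X;S^{*}\Sigma]$ in which all but an $\varepsilon$-small piece is disjoint from its $e^{tH_d}$-flowout over the relevant time window. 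Egorov's theorem \ref{thm:Egororovb thm} then makes those pieces $O(\lambda^{-\infty})$, while the $\varepsilon$-piece contributes $\leq C\varepsilon\lambda^{4}$ to the smoothed density by the trace-norm bound. Thus the mechanism is Egorov plus a measure argument, not stationary phase at periods; and both the trace-norm estimate and the partition of unity live in the second-microlocal Hermite calculus on the blowup rather than in the classical calculus, which is where the real work lies.
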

By a usual Tauberian argument, the sharp Weyl law \prettyref{eq:sharp Weyl law}
above is proved using small time asymptotics of the wave trace. Below
we denote by $T_{\textrm{abnormal}}^{E}$ the length of the shortest
closed integral curve of $L^{E}$. The (signed) lengths of normal
closed geodesics are by definition the periods of closed integral
curves for the Hamilton flow of $\sigma\left(\Delta_{g^{E},\mu}\right)^{1/2}$
away from $\Sigma$. We denote the set of such by $\mathscr{L}_{\textrm{normal}}$.
\begin{thm}
\label{thm:Poisson relation}The singular support of the wave trace
satisfies
\begin{equation}
\textrm{sing spt}\left(\textrm{tr }e^{it\sqrt{\Delta_{g^{E},\mu}}}\right)\subset\left\{ 0\right\} \cup\left(-\infty,-T_{\textrm{abnormal}}^{E}\right]\cup\left[T_{\textrm{abnormal}}^{E},\infty\right)\cup\mathscr{L}_{\textrm{normal}}.\label{eq: sing spt wave trace}
\end{equation}
Furthermore, the singularity at zero is described by the small time
asymptotics
\begin{align}
\textrm{tr }e^{it\sqrt{\Delta_{g^{E},\mu}}} & =\sum_{j=0}^{N}c_{j,0}\left(t+i0\right)^{j-5}+\sum_{j=0}^{N}c_{j,1}\left(t+i0\right)^{j-3}\ln\left(t+i0\right)\nonumber \\
 & \qquad\qquad+\sum_{j=0}^{N}c_{j,2}t^{j}\ln^{2}\left(t+i0\right)+O\left(t^{N-4}\right),\label{eq:small time wave trace}
\end{align}
 $\forall N\in\mathbb{N}$, as $t\rightarrow0$, in the distributional
sense with leading term $c_{0,0}=\frac{1}{12}\int_{X}\mu_{\textrm{Popp}}$. 
\end{thm}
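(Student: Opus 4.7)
The plan is to build a microlocal parametrix for the half-wave propagator $U(t)=e^{it\sqrt{\Delta_{g^{E},\mu}}}$ by splitting the analysis into two regimes: one away from, and one in a conic neighborhood of, the characteristic variety $\Sigma\subset T^{*}X$. Concretely, fix homogeneous cutoffs $\chi_{0},\chi_{1}\in S^{0}(T^{*}X)$ with $\chi_{0}$ supported in a small conic neighborhood of $\Sigma$, $\chi_{1}=1-\chi_{0}$, and decompose
\[
\textrm{tr }U(t)=\textrm{tr}\bigl(\chi_{0}(x,D)U(t)\bigr)+\textrm{tr}\bigl(\chi_{1}(x,D)U(t)\bigr).
\]

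First I would dispose of the elliptic piece: on the support of $\chi_{1}$ the operator $\sqrt{\Delta_{g^{E},\mu}}$ is a classical pseudodifferential operator of order one whose Hamilton flow is the sR normal geodesic flow on $T^{*}X\setminus\Sigma$. A standard Duistermaat--H\"ormander--Guillemin parametrix then bounds the singular support of $\textrm{tr}(\chi_{1}(x,D)U(t))$ by $\{0\}\cup\mathscr{L}_{\textrm{normal}}$, and at the origin contributes a polyhomogeneous expansion $\sum c_{j}(t+i0)^{j-4}$ from the four-dimensional cosphere integration.

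For the $\chi_{0}$-piece I would invoke the microlocal normal form for $\Delta_{g^{E},\mu}$ near $\Sigma$ developed earlier in the paper, according to which, in suitable fibered symplectic coordinates, the operator is conjugated modulo lower-order remainders to a decoupled model consisting of a transverse harmonic oscillator (whose frequency is the length of $a$) together with a first-order operator along the characteristic line field $L^{E}$. Propagation of singularities for $U(t)$ microlocally on $\Sigma$ is then governed by the lifted characteristic flow, whose projection to $X$ is the abnormal geodesic flow. Since every closed integral curve of $L^{E}$ has length at least $T_{\textrm{abnormal}}^{E}$, no singularity of the $\chi_{0}$-piece can appear in $0<|t|<T_{\textrm{abnormal}}^{E}$, which combined with the elliptic analysis yields \prettyref{eq: sing spt wave trace}. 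The small-time expansion is then obtained by tracing the model: the trace over the harmonic-oscillator ladder yields a sum of the schematic form $\sum_{k\geq0}\textrm{tr}_{X}e^{it\sqrt{(2k+1)h_{Z}+\cdots}}$ with $h_{Z}$ the symbol along $L^{E}$; Poisson summation in $k$ collapses the near-origin contribution to an oscillatory integral $\int e^{it\tau}\,d\textrm{(symbol)}$ over the five-dimensional cone $\Sigma$, and stationary phase along $\Sigma$ produces the three families of terms in \prettyref{eq:small time wave trace}. The logarithmic factors $\ln(t+i0)$ and $\ln^{2}(t+i0)$ arise because the phase is degenerate along $L^{E}$, so that stationary phase crossing the characteristic direction produces iterated pole-logarithm contributions; the leading constant $c_{0,0}=\frac{1}{12}\int_{X}\mu_{\textrm{Popp}}$ is read off by identifying the principal symbolic density on $\Sigma$ in Darboux coordinates with the Popp volume form.

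The main obstacle I anticipate is the propagation step through $\Sigma$: the principal symbol of $\sqrt{\Delta_{g^{E},\mu}}$ degenerates on $\Sigma$ and is not of real principal type, so classical H\"ormander--Duistermaat theory does not apply directly. Overcoming this requires genuine microlocal control of the propagator uniformly up to $\Sigma$ in a scale-sensitive calculus adapted to the singular eigenvalue clusters $\sqrt{(2k+1)h_{Z}}$; in particular one must show that interferences between distinct harmonic-oscillator levels do not introduce spurious singularities and that the abnormal flow indeed dictates the wavefront set of the propagator restricted to $\Sigma$.
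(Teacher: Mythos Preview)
Your architecture matches the paper's: a classical Duistermaat--Guillemin parametrix away from $\Sigma$, the normal form plus Landau-level decomposition near $\Sigma$, and control of the singular support via the lifted characteristic flow $\hat{Z}$ on the blowup. The obstacle you flag at the end is exactly what the paper resolves by building the Hermite--Landau calculus $\Psi^{m_{1},m_{2}}_{\textrm{cl}}(X,\Sigma)$: once $\sqrt{\Delta_{g^{E},\mu}}\in\Psi^{1,-1}_{\textrm{cl}}$ and an Egorov theorem holds in that class, the singular-support inclusion follows from a partition-of-unity argument in the exotic calculus (symbols with supports disjoint from their images under $e^{tH_{d}}$ for $t$ away from the period set), and the parametrix near $\Sigma$ is assembled level by level.

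One point to correct: the logarithms in \prettyref{eq:small time wave trace} do \emph{not} come from a degenerate stationary-phase expansion. In the paper the near-$\Sigma$ trace is written, after the parametrix, as $\sum_{k\geq0}\int e^{irt}\,a(\,\cdot\,;d_{k})\,\tfrac{(1-\Xi_{0}^{2})r^{4}}{\hat{\rho}^{2}(2k+1)^{2}}\,dr\,d\Xi_{0}\,d\hat{\xi}_{2}\,d\underline{x}$ with $d_{k}=\hat{\rho}(2k+1)/(r(1-\Xi_{0}^{2}))$; the sum over the Landau index $k$ (indeed via Poisson summation, applied after differentiating twice in $\varepsilon=r^{-1}$) produces a symbol in $r$ of the form $c_{0}+c\,r^{-1}\ln r+c_{1}r^{-1}+\cdots$, and it is this $r^{-1}\ln r$ term that, after the elementary Fourier transform in $r$, generates the $\ln(t+i0)$ and $\ln^{2}(t+i0)$ factors. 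The shift in the pole order (the $j-3$ rather than $j-5$ in the first log family) and the vanishing of $c_{0,1}$ when $A=1$ are read off directly from this summed-symbol proposition, not from any phase degeneracy.
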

Note the presence of logarithmic terms in the wave trace expansion
\prettyref{eq:small time wave trace} is unlike on a Riemannian manifold.
The singularities of the wave trace \prettyref{eq: sing spt wave trace}
at (isolated) lengths of non-degenerate normal geodesics in the interval
$\left(-T_{\textrm{abnormal}}^{E},T_{\textrm{abnormal}}^{E}\right)$
are described by the usual Duistermaat-Guillemin trace formula. Beyond
this interval there is a possible density of lengths of $\mathscr{L}_{\textrm{normal}}$
inside $\left(-\infty,-T_{\textrm{abnormal}}^{E}\right]\cup\left[T_{\textrm{abnormal}}^{E},\infty\right)$
for albeit non-degenerate characteristics, caused by closed Hamilton
trajectories that approach the characteristic variety (see Prop. \prettyref{prop:(Density-of-periods)}),
making the description of these singularities less tractable.

The large time wave trace formula \prettyref{eq: sing spt wave trace}
is in turn related to the propagation of singularities for the corresponding
wave equation. The classical theorem of \cite{Duistermaat-Hormander}
describes the propagation of singularities for the half wave equation
outside the characteristic variety $\Sigma$. To describe the propagation
of singularities on $\Sigma$ we consider the blowup $\left[T^{*}X;\Sigma\right]$
of the cotangent bundle along the characteristic variety with corresponding
blow-down map $\beta:\left[T^{*}X;\Sigma\right]\rightarrow T^{*}X$.
This is a manifold with boundary $\partial\left[T^{*}X;\Sigma\right]=SN\Sigma$
being identified with the spherical normal bundle of $\Sigma$ which
in turn carries an $\mathbb{R}_{+}$ action extending the one on its
interior. The boundary $SN\Sigma$ is equipped with a natural homogeneous
and $\beta$ fiber preserving circle action, by rotation of its symplectic
directions, and corresponding generator $R_{0}=\left.\frac{d}{d\theta}\left(e^{i\theta}.p\right)\right|_{\theta=0}$.
In Section \prettyref{subsec:Quasi-contact-case} we shall define
a homogeneous of degree zero section $\hat{Z}\in C^{\infty}\left(TSN\Sigma/\mathbb{R}\left[R_{0}\right]\right)$
and a refined circle invariant conic characteristic wave-front set
$WF_{\Sigma}\left(u\right)\subset SN\Sigma$ associated to any distribution
$u\in C^{-\infty}\left(X\right)$. These can be equivalently thought
of as a homogeneous of degree zero vector field on and conic subset
of the quotient $SN\Sigma/S^{1}$ by the circle action. They project
\begin{align*}
\left(\pi\circ\beta\right)_{*}\hat{Z} & \in L^{E}\\
\beta\left(WF_{\Sigma}\left(u\right)\right) & =WF\left(u\right)\cap\Sigma
\end{align*}
onto the characteristic line and intersection of the wavefront set
of $u$ with $\Sigma$ respectively. The interval in \prettyref{eq: sing spt wave trace}
is furthermore related to the set of closed periods of the vector
field $\hat{Z}$ (see Prop. \prettyref{prop:(Density-of-periods)}). 

We now have the following propagation of singularities.
\begin{thm}
\label{thm:propagation on sing.}For any $u\in C^{-\infty}\left(X\right)$,
the characteristic wavefront set satisfies 
\[
WF_{\Sigma}\left(e^{it\sqrt{\Delta_{g^{E},\mu}}}u\right)=e^{t\hat{Z}}\left[WF_{\Sigma}\left(u\right)\right].
\]
\end{thm}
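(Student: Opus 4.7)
The plan is to split the analysis into two regions. Outside the characteristic variety $\Sigma$ the principal symbol of $\sqrt{\Delta_{g^{E},\mu}}$ is a smooth positively homogeneous function, and the classical Duistermaat--H\"ormander theorem gives propagation of $WF(u)\setminus\Sigma$ along its Hamilton flow; since the characteristic wavefront set $WF_{\Sigma}$ lives on $SN\Sigma$ by definition, away from $\Sigma$ there is nothing to prove. The entire content of the statement is therefore the behaviour at $\Sigma$, and the natural stage for that analysis is the blow-up $\left[T^{*}X;\Sigma\right]$, on whose boundary $SN\Sigma$ the homogeneous circle action generated by $R_{0}$ records the two transverse symplectic directions of $da|_{E}$ that the blow-up has resolved.

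The bulk of the proof will be an adapted Heisenberg-type parametrix construction for $e^{it\sqrt{\Delta_{g^{E},\mu}}}$ microlocally near $\Sigma$, presumably the same machinery used for the wave trace asymptotics \prettyref{eq:small time wave trace}. In local symplectic coordinates adapted to $\Sigma$, the principal symbol of $\Delta_{g^{E},\mu}$ degenerates to second order in the two $da|_{E}$-symplectic directions and to first order in the third normal direction, so it is modelled by a transverse quantum harmonic oscillator tensored with a first-order operator along the characteristic line $L^{E}$. Taking a square root in the corresponding $\Sigma$-adapted calculus and Egorov-conjugating by an appropriate elliptic $U$, one should obtain a Melin/Birkhoff-type normal form
\[
U\sqrt{\Delta_{g^{E},\mu}}\,U^{*}\;=\;\hat{N}+\hat{Q}+R,
\]
in which $\hat{N}$ is the transverse harmonic oscillator (and so commutes with the $R_{0}$-rotation to all orders), $\hat{Q}$ is a first-order operator in the calculus whose principal symbol on $SN\Sigma$ is $R_{0}$-invariant and descends on the quotient $SN\Sigma/S^{1}$ precisely to the vector field $\hat{Z}$, and $R$ is a residual remainder inert under $WF_{\Sigma}$. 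Joint spectral decomposition into Landau-type sectors then reduces the propagator microlocally to the family $e^{it\hat{Q}}$, and a standard positive-commutator argument in the adapted calculus, testing the evolution against a symbol supported in a small conic neighbourhood of a putative point of $WF_{\Sigma}$, yields $WF_{\Sigma}\!\left(e^{it\sqrt{\Delta_{g^{E},\mu}}}u\right)=e^{t\hat{Z}}\!\left[WF_{\Sigma}(u)\right]$.

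The main obstacle will be the construction and invariant analysis of this $\Sigma$-adapted calculus in the quasi-contact setting: one must verify that the harmonic oscillator part $\hat{N}$ can be split off sharply enough that its eigenbundle decomposition survives inside the pseudodifferential calculus on $\left[T^{*}X;\Sigma\right]$, so that the reduction to $\hat{Q}$ is more than a formal Taylor expansion, and, crucially, that the principal symbol of $\hat{Q}$ descends intrinsically to $\hat{Z}$. The latter identification requires an explicit subprincipal computation for $\sqrt{\Delta_{g^{E},\mu}}$ in a nonholonomic frame adapted to $E$ and $L^{E}$, tracking both the contribution of the auxiliary density $\mu$ and that of the Popp one-form $a_{g^{E}}$; the expected outcome, which is the algebraic heart of the argument, is that the $\mu$-dependent subprincipal terms drop out modulo $R_{0}$ and what remains on the quotient is exactly the positively oriented generator $Z\in C^{\infty}(L^{E})$ lifted to $SN\Sigma$, consistently with $(\pi\circ\beta)_{*}\hat{Z}\in L^{E}$.
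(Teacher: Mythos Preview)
Your overall strategy is sound and close to the paper's: both rest on a $\Sigma$-adapted Hermite--Landau calculus built from a Birkhoff normal form for $\Delta_{g^{E},\mu}$ near $\Sigma$, in which symbols are by construction $R_{0}$-invariant on $SN\Sigma$. Two aspects of your proposal, however, diverge from the paper, and one of them is a genuine misconception worth correcting.

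First, the paper does not split $\sqrt{\Delta_{g^{E},\mu}}$ additively as $\hat{N}+\hat{Q}+R$. Instead it shows that $\sqrt{\Delta_{g^{E},\mu}}\in\Psi^{1,-1}_{\textrm{cl}}(X,\Sigma)$ as a whole, with principal symbol $d=\sigma^{1/2}$. The vector field $\hat{Z}$ does not arise as the symbol of a separate summand $\hat{Q}$; rather it is the \emph{regular part} of the singular expansion $H_{\sigma^{1/2}}=\sigma^{-1/2}\sigma(R)H_{\Omega}+\hat{Z}+o(1)$ near $\partial[T^{*}X;\Sigma]$. The singular piece restricts to a multiple of $R_{0}$ on the boundary, and because every symbol in the calculus is $R_{0}$-invariant by definition, this piece acts trivially on them, leaving $\hat{Z}$ as the effective generator of the symbol flow. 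An additive operator decomposition would not achieve this cleanly, since $\sqrt{\xi_{0}^{2}+2\rho\Omega+\cdots}$ simply does not separate into oscillator plus first-order remainder.

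Second, the paper replaces your positive-commutator argument by a direct Egorov theorem in the exotic calculus: conjugation by $e^{it\sqrt{\Delta_{g^{E},\mu}}}$ preserves $\Psi^{m_{1},m_{2}}_{\textrm{cl}}(X,\Sigma)$ and transports principal symbols by $(e^{tH_{d}})^{*}$. Propagation then follows in two lines from the pseudodifferential characterization of $WF_{\Sigma}$: if $A\in\Psi^{0,0}_{\textrm{cl}}$ is elliptic at $(x,\xi)$ with $Au\in C^{\infty}$, then $A(t)=e^{it\sqrt{\Delta}}Ae^{-it\sqrt{\Delta}}$ is elliptic at $e^{t\hat{Z}}(x,\xi)$ (using $R_{0}$-invariance of the symbol to discard the singular piece of $H_{d}$) and $A(t)e^{it\sqrt{\Delta}}u=e^{it\sqrt{\Delta}}Au\in C^{\infty}$. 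The two-sided equality is immediate by $t\mapsto -t$. Your positive-commutator route would also work but is less economical.

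Finally, your worry that identifying $\hat{Z}$ requires a subprincipal computation tracking the auxiliary density $\mu$ is misplaced. The vector field $\hat{Z}$ is determined entirely at the principal-symbol level from the normal form for $\sigma(\Delta_{g^{E},\mu})$; the density $\mu$ affects only lower-order terms, which do not enter $\sigma^{H}_{1,-1}(\sqrt{\Delta_{g^{E},\mu}})=d$ and hence do not touch the propagation statement.
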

Our final result is quantum ergodicity for the sR Laplacian. The line
field $L^{E}$ is said to be ergodic if any union of closed integral
curves of $L^{E}$ is of zero or full measure. The ergodicity of the
vector field $\hat{Z}$ is a stronger assumption implying the ergodicity
of $L^{E}$. We now have the following.
\begin{thm}
\label{thm:QE theorem}Assume that $\hat{Z}$ is ergodic or $L^{E}$
is ergodic and $L_{Z}\mu_{\textrm{Popp}}=0$. Then one has quantum
ergodicity for $\Delta_{g^{E},\mu}$: there exists a density one subsequence
$\left\{ j_{k}\right\} _{k=0}^{\infty}\subset\mathbb{N}_{0}$ such
that 
\[
\left\langle B\varphi_{j_{k}},\varphi_{j_{k}}\right\rangle \rightarrow\frac{1}{2}\int d\nu_{\textrm{Popp}}\left[b\left(x,a_{g^{E}}\left(x\right)\right)+b\left(x,-a_{g^{E}}\left(x\right)\right)\right].
\]
as $j_{k}\rightarrow\infty$, for each $B\in\Psi_{\textrm{cl}}^{0}\left(X\right)$,
with homogeneous principal symbol $b=\sigma\left(B\right)\in C^{\infty}\left(T^{*}X\right)$.
In particular, the eigenfunctions get uniformly distributed $\left|\varphi_{j_{k}}\right|^{2}\mu\rightharpoonup\nu_{\textrm{Popp}}$
as $j_{k}\rightarrow\infty$.
\end{thm}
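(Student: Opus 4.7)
The plan is to follow the classical Shnirelman--Zelditch--Colin de Verdi\`ere architecture, adapting each of its three ingredients---a local Weyl law, a time-averaged Egorov statement, and the pointwise ergodic theorem---to the sub-Riemannian setting where the classical dynamics lives on the characteristic variety $\Sigma$. The first step is to establish the local Weyl law
\[
\frac{1}{N(\lambda)}\sum_{\lambda_{j}\leq\lambda}\langle B\varphi_{j},\varphi_{j}\rangle\;\longrightarrow\;\mathcal{A}(b)\;:=\;\frac{1}{2}\int d\nu_{\textrm{Popp}}\,\bigl[b(x,a_{g^{E}}(x))+b(x,-a_{g^{E}}(x))\bigr],
\]
by a Tauberian argument once the small-time wave-trace expansion of Theorem \ref{thm:Poisson relation} is extended to the twisted trace $\textrm{tr}\bigl(Be^{it\sqrt{\Delta_{g^{E},\mu}}}\bigr)$ and its leading singularity is identified. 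Either hypothesis of the theorem forces the union of closed characteristics to have zero measure, so the improved Weyl remainder of Theorem \ref{thm:Sharp Weyl law} upgrades the Tauberian step to a genuine $o$-asymptotic without passing to a Ces\`aro average. The appearance of $\mathcal{A}(b)$ reflects the concentration of spectral mass on the two rays $\pm\mathbb{R}_{+}[a_{g^{E}}]\subset\Sigma$, with the $\tfrac{1}{2}$ coming from symmetrization (the eigenfunctions being real-valued forces the $\Sigma$-microlocal mass to be symmetric under $\xi\to-\xi$).

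The second step is the variance estimate. Introduce the time-averaged operator
\[
\langle B\rangle_{T}\;:=\;\frac{1}{T}\int_{0}^{T}e^{-it\sqrt{\Delta_{g^{E},\mu}}}\,B\,e^{it\sqrt{\Delta_{g^{E},\mu}}}\,dt.
\]
By Theorem \ref{thm:propagation on sing.}, conjugation by the half-wave group acts on $\Sigma$-microlocal data through the $\hat{Z}$-flow on $SN\Sigma/S^{1}$, so the symbol of $\langle B\rangle_{T}$ along the characteristic wavefront is the $\hat{Z}$-time-average of the symmetrized lift of $b$. Under ergodicity of $\hat{Z}$ with respect to the natural invariant measure induced by $\nu_{\textrm{Popp}}$, Birkhoff's theorem forces these averages to converge to $\mathcal{A}(b)$ in $L^{2}$ as $T\to\infty$. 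Applying the local Weyl law of the previous step to the positive operator $\bigl(\langle B\rangle_{T}-\mathcal{A}(b)\bigr)^{*}\bigl(\langle B\rangle_{T}-\mathcal{A}(b)\bigr)$ controls $\tfrac{1}{N(\lambda)}\sum_{\lambda_{j}\leq\lambda}\bigl|\langle B\varphi_{j},\varphi_{j}\rangle-\mathcal{A}(b)\bigr|^{2}$ by a quantity vanishing as $T\to\infty$, and the density-one subsequence is then extracted by the usual Zelditch diagonal argument. Under the alternative hypothesis (ergodicity of $L^{E}$ together with $L_{Z}\mu_{\textrm{Popp}}=0$), Popp volume descends to an $\hat{Z}$-invariant measure on $SN\Sigma/S^{1}$, and a Fubini-type reduction exploiting that $\hat{Z}$ projects to $Z$ transfers ergodicity from the base flow to $\hat{Z}$, so the same variance argument applies.

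The principal obstacle is the twisted wave-trace asymptotics needed for the local Weyl law: unlike in the Riemannian case, the expansion of Theorem \ref{thm:Poisson relation} carries logarithmic singularities at $t=0$, and tracking these through a $B$-dependent parametrix on the blowup $\bigl[T^{*}X;\Sigma\bigr]$---while simultaneously reading off the leading coefficient as $\mathcal{A}(b)$ from the kernel's concentration on the characteristic rays---is the most delicate calculation. A secondary issue is upgrading Theorem \ref{thm:propagation on sing.} to a quantitative operator-level Egorov statement, uniform in $T$ on compact windows, with a principal-symbol identity on the circle quotient $SN\Sigma/S^{1}$ that can be fed back into the local Weyl law and composed with the Birkhoff average without losing the symbolic order required for the variance bound.
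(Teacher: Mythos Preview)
Your overall architecture is correct and matches the paper's: local Weyl law, Egorov-type averaging, mean ergodic theorem, diagonal extraction. But there are two genuine gaps.

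\textbf{The exotic calculus is not secondary.} You plan to apply the local Weyl law of step one to $\bigl(\langle B\rangle_{T}-\mathcal{A}(b)\bigr)^{*}\bigl(\langle B\rangle_{T}-\mathcal{A}(b)\bigr)$. But $\langle B\rangle_{T}$ is \emph{not} in $\Psi_{\textrm{cl}}^{0}(X)$: conjugation by $e^{it\sqrt{\Delta}}$ does not preserve the classical calculus near $\Sigma$, because the Hamilton flow of $\sigma^{1/2}$ is singular there. The paper builds a second-microlocal Hermite calculus $\Psi_{\textrm{cl}}^{m_{1},m_{2}}(X;\Sigma)$ on the blowup $[T^{*}X;\Sigma]$ precisely so that Egorov holds there (their Theorem~\ref{thm:Egororovb thm}), and then proves a \emph{second} microlocal Weyl law valid for this exotic class, with limit $\int b|_{SNS^{*}\Sigma}\,\nu_{\textrm{Popp}}^{SNS^{*}\Sigma}$. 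Theorem~\ref{thm:propagation on sing.} is only a wavefront statement and does not give you the symbol calculus you need; the ``quantitative operator-level Egorov'' you flag as secondary is in fact the central construction. Your variance bound cannot close without it.

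\textbf{The second hypothesis does not make $\hat{Z}$ ergodic.} Your claim that ``a Fubini-type reduction \ldots\ transfers ergodicity from the base flow to $\hat{Z}$'' is false. When $L_{Z}\mu_{\textrm{Popp}}=0$ the function $\Xi_{0}$ on $SNS^{*}\Sigma/S^{1}$ is $\hat{Z}$-invariant (this follows from \eqref{eq:Hvfield restriction to boundary} and \eqref{eq:equivalent conditions}), so $\hat{Z}$ has a nontrivial algebra of invariants and is never ergodic in this case. The paper's route is different: first reduce $B$ modulo operators with $\sigma|_{\Sigma}=0$ (which have zero variance by the local Weyl law) to an operator $B_{0}$ whose symbol is a pullback from the base, hence constant in $\Xi_{0}$ and lying in $\Psi_{\textrm{inv,cl}}^{0}\subset\Psi_{\textrm{cl}}^{0,0}(X;\Sigma)$. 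For such symbols the $\hat{Z}$-time-average on each slice $\{\Xi_{0}=c\}$ is the $Z$-time-average on $X$ at speed $c$, and a genuine Fubini computation over $c\in[-1,1]$ together with von Neumann applied to the ergodic base flow $Z$ drives the variance to zero. Without the reduction to base-pullback symbols this step fails; the paper's final remark makes this explicit.

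A minor point: the improved remainder in Theorem~\ref{thm:Sharp Weyl law} is not needed for the local Weyl law; a standard Tauberian argument (the paper uses the heat kernel with privileged-coordinate rescaling and Mehler's formula, though the wave trace route you suggest also works) already gives the Ces\`aro limit.
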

We note the role played by characteristics or integral curves of $L^{E}$
in all the results above. Under the natural projection, these correspond
to isotropic directions along $\Sigma$ and thereafter with abnormal
geodesics via their microlocal characterization by Hsu \cite{Hsu92}.
Our results are restricted to dimension four as they rely on a normal
form that is less workable in higher dimensions. Moreover, there is
general lack of understanding of strictly abnormal geodesics in sub-Riemannian
geometry; it is for instance outstanding whether they are necessarily
smooth \cite[Ch. 10]{Montgomery-book}, \cite{Agrachev2014,Hakavuori-LeDonne2016,LeDonne-Leonardi-Monti-Vittone2013}.

The leading term in the Weyl law \prettyref{thm:Sharp Weyl law} has
been long known \cite{Menikoff-SjostrandI,Menikoff-SjostrandII,Metivier-hypspectralfunction},
the improvement here is in the two remainders. The only previous work
treating a sharp Weyl law based on a wave trace expansion of a hypoelliptic
operator is \cite{Melrose-hypoelliptic}. In the sub-Riemannian context
\cite{Melrose-hypoelliptic} however only specializes to the three
dimensional contact case; therein the characteristic variety $\Sigma$
was assumed to be symplectic which is not the case here. There is
one isotropic direction along $\Sigma$ that projects onto $L^{E}$.
A general result for propagation of singularities of hypoelliptic
operators exists in the literature \cite{Lascars-propagation-DUKE}.
Our result \prettyref{thm:propagation on sing.} based on the characteristic
wavefront set is a refinement of the aforementioned in the present
context. Recently, quantum ergodicity for the sub-Riemannian Laplacian
was established in the three dimensional contact case \cite{Colin-de-Verdiere-Hillairet-TrelatI}
and as such was the first result on quantum ergodicity for a hypoelliptic
operator. Our technique here while partly borrowing from \cite{Colin-de-Verdiere-Hillairet-TrelatI}
also overcomes significant additional difficulties. In particular
our proof of \prettyref{thm:QE theorem} requires the use of a more
exotic second microlocal pseudo-differential calculus near the characteristic
variety. Finally unlike here there are no abnormal geodesics in the
three dimensional contact case.

The results here also tie in with the authors previous work \cite{Savale2017-Koszul,Savale-Gutzwiller}
wherein a trace formula was proved for the semiclassical (magnetic)
Dirac operator on a metric contact manifold involving closed Reeb
orbits; semiclassical analogs of quasi-contact characteristics of
$L^{E}$. However there are also significant differences; there firstly
seems to be at present no general analog of the Dirac operator, with
good spectral properties, in sub-Riemannian geometry (see for example
\cite{Hasselmann-thesis,Kath-Ungermann2015}). This forces us to work
with the non-(pseudo)differential square root $\sqrt{\Delta_{g^{E},\mu}}$
and understand it in a more exotic pseudo-differential calculus. Secondly,
the trace considered in \cite{Savale-Gutzwiller,Savale2017-Koszul}
was microlocalized on an $\sqrt{h}$ scale near the characteristic
variety, using the intrinsic semiclassical parameter, cutting off
the Hamilton trajectories away from it. This microlocalized trace
formula subsequently does not see the dense accumulation of the Hamilton
periods \prettyref{eq: sing spt wave trace}, involves contributions
only from the Reeb orbits and works in higher dimension. 

The paper is organized as follows. In \prettyref{sec:sub-Riemannian-geometry}
we begin with some preliminaries on sub-Riemannian geometry including
certain specific features of the four dimensional quasi-contact case
in Section \prettyref{subsec:Quasi-contact-case}. In \prettyref{sec:Hermite-Calculus}
we develop the relevant second microlocal Hermite-Landau calculus
on Euclidean space necessary for the proofs. In Section \prettyref{sec:Birkhoff-normal-forms}
we derive normal forms for the sR Laplacian. The normal form of \prettyref{subsec:Normal-form-near-Sigma}
is then used in Section \prettyref{sec:Global-calculus} to develop
a global Hermite-Landau calculus on a quasi-contact manifold. The
calculus is then used to prove the propagation theorem \prettyref{thm:propagation on sing.}
in \prettyref{subsec:Egorov-and-propagation} and construct a parametrix
for the wave operator in \prettyref{subsec:Parametrices}. The parametrix
gives a proof of the Weyl laws in \ref{thm:Sharp Weyl law} via the
wave trace expansion \prettyref{thm:Poisson relation} in Section
\prettyref{sec:Poisson-relations}. In the final \prettyref{sec:Quantum-ergodicity}
the calculus is used to prove the quantum ergodicity \prettyref{thm:QE theorem}.

\section{sub-Riemannian geometry\label{sec:sub-Riemannian-geometry}}

Sub-Riemannian (sR) geometry is the study of (metric-)distributions
in smooth manifolds. More precisely, a sub-Riemannian manifold is
a triple $\left(X^{n},E^{k}\subset TX,g^{E}\right)$ consisting of
an $n$-dimensional manifold $X$ with and a metric subbundle $\left(E,g^{E}\right)$
of rank $k$ inside its tangent space. This sub-bundle is assumed
to be\textit{ bracket generating:} sections of $E$ generate all sections
of $TX$ under the Lie bracket. The metric $g^{E}$ allows for the
definition of a length function $l\left(\gamma\right)\coloneqq\int_{0}^{1}\left|\dot{\gamma}\right|dt$
on the space of horizontal paths of Sobolev regularity one 
\[
\Omega_{E}\left(x_{0},x_{1}\right)\coloneqq\left\{ \gamma\in H^{1}\left(\left[0,1\right];X\right)|\gamma\left(0\right)=x_{0},\,\gamma\left(1\right)=x_{1},\,\dot{\gamma}\left(t\right)\in E_{\gamma\left(t\right)}\textrm{ a.e.}\right\} 
\]
connecting any two points $x_{0},x_{1}\in X$. This in turn defines
the distance function between these points via
\begin{equation}
d^{E}\left(x_{0},x_{1}\right)\coloneqq\inf_{\gamma\in\Omega_{E}\left(x_{0},x_{1}\right)}l\left(\gamma\right).\label{eq:subRiemannian distance}
\end{equation}
The theorem of Chow-Rashevsky \cite[Thm 1.6.2]{Montgomery-book} gives
the existence of a horizontal path connecting $x_{0},x_{1}$. This
shows that the distance function above is finite and defines a metric
space $\left(X,d^{E}\right)$.

Using the bracket generating condition for $E$, the canonical flag
may be defined 
\begin{equation}
\underbrace{E_{0}\left(x\right)}_{=\left\{ 0\right\} }\subset\underbrace{E_{1}\left(x\right)}_{=E}\subset\ldots\subset\subsetneq E_{r\left(x\right)}\left(x\right)=TX\label{eq: canonical flag}
\end{equation}
inductively via $E_{j}=E+\left[E,E_{j-1}\right]$, $j\geq2$, as a
flag of vector subspaces of $TX$ at any point $x\in X$ . Here $r\left(x\right)$
is the smallest number such that $E_{r\left(x\right)}=TX$ and called
the degree of nonholonomy or step of the distribution $E$ at $x$.
The dual canonical flag is then 

\begin{equation}
T^{*}X=\Sigma_{0}\left(x\right)\supset\underbrace{\Sigma_{1}\left(x\right)}_{\eqqcolon\Sigma}\supset\ldots\supset\underbrace{\Sigma_{r\left(x\right)}\left(x\right)}_{=\left\{ 0\right\} },\label{eq:dual canonical flag}
\end{equation}
$\Sigma_{j}\left(x\right)=E_{j}^{\perp}\coloneqq\textrm{ker }\left[T^{*}X\rightarrow E_{j}^{*}\right]$,
$1\leq j\leq r\left(x\right)$. We further define the growth and weight
vectors at the point $x\in X$ via 
\begin{align}
k^{E}\left(x\right)\coloneqq & \left(\underbrace{k_{0}^{E}}_{\coloneqq0},\underbrace{k_{1}^{E}}_{=\textrm{dim }E_{1}},\underbrace{k_{2}^{E}}_{=\textrm{dim }E_{2}},\ldots,\underbrace{k_{r}^{E}}_{=n}\right)\label{eq:growth vector}\\
w^{E}\left(x\right)\coloneqq & \left(\underbrace{1,\ldots,1}_{k_{1}^{E}\textrm{ times}},\underbrace{2,\ldots2}_{k_{2}^{E}-k_{1}^{E}\textrm{ times}},\ldots\underbrace{j,\ldots j}_{k_{j}^{E}-k_{j-1}^{E}\textrm{ times}},\ldots,\underbrace{r,\ldots,r}_{k_{r}^{E}-k_{r-1}^{E}\textrm{ times}}\right)\label{eq:weight vector}
\end{align}
respectively. The distribution $E$ is called regular at the point
$x\in X$ if each $k_{j}^{E}$ is a locally constant function near
$x$. The distribution $E$ is said to be equiregular if it is regular
at all points of $X$, in which case each element $E_{j}$ of the
canonical flag \prettyref{eq: canonical flag} is a vector bundle.
Finally we set 
\begin{align*}
Q\left(x\right)\coloneqq & \sum_{j=1}^{r\left(x\right)}j\left(k_{j}^{E}\left(x\right)-k_{j-1}^{E}\left(x\right)\right)\\
= & \sum_{j=1}^{n}w_{j}^{E}\left(x\right)
\end{align*}
whose significance is given by the Mitchell measure theorem \cite[Theorem 2.8.3]{Montgomery-book}:
$Q\left(x\right)$ is the Hausdorff dimension of $\left(X,d^{E}\right)$
at a regular point $x\in X$.

A canonical volume form on $X$ (analogous to the Riemannian volume)
can be defined in the equiregular case. To define this, first note
that any surjection $\pi:V\rightarrow W$ between two vector spaces
allows one to pushforward a metric $g^{V}$ on $V$ to another $\pi_{*}g^{V}$
on $W$. This is simply the metric on $W$ induced via the identification
$W\cong\left(\textrm{ker }\pi\right)^{\perp}\subset V$, with the
metric on $\left(\textrm{ker }\pi\right)^{\perp}$ being the restriction
of $g^{V}$ . Now for each $j$ we define the linear surjection 
\begin{align*}
B_{j}:E^{\otimes j} & \rightarrow E_{j}/E_{j-1}\\
B_{j}\left(e_{1},\ldots e_{j}\right) & \coloneqq\textrm{ad}_{\tilde{e}_{1}}\textrm{ad}_{\tilde{e}_{2}}\ldots\textrm{ad}_{\tilde{e}_{j-1}}\tilde{e}_{j}
\end{align*}
with $\tilde{e}_{j}\in C^{\infty}\left(E\right)$ denoting local sections
extending $e_{j}\in E$. The pushforward metrics are then well defined
on $E_{j}/E_{j-1}$ and hence define canonical volume elements 
\begin{equation}
\det g_{j}^{E}\in\Lambda^{*}\left(E_{j}/E_{j-1}\right)^{*}.\label{eq: flag volume elements}
\end{equation}
The canonical isomorphism of determinant lines
\begin{align}
\bigotimes_{j=1}^{r}\Lambda^{*}\left(E_{j}/E_{j-1}\right)=\Lambda^{*}\left(\bigoplus_{j=1}^{r}E_{j}/E_{j-1}\right)\cong & \Lambda^{*}TX\label{eq: canonical isomorphism det lines}
\end{align}
along with its dual isomorphism to now gives a canonical smooth volume
form 
\begin{equation}
\mu_{\textrm{Popp}}\coloneqq\bigotimes_{j=1}^{r}\det g_{j}^{E}\in\Lambda^{*}\left(T^{*}X\right)\label{eq:Popp measure}
\end{equation}
known as the \textit{Popp volume }form. We remark that although the
definition makes sense in general it only leads to a smooth form in
the equiregular case.

In \prettyref{subsec:Microlocal-Weyl-laws} we shall need the important
notion of a privileged coordinate system. To define this let $U_{1},U_{2},\ldots U_{k}$
be a locally defined set of orthonormal, generating vector fields
near $x\in X$. The $E$-order of a function at the point $x$ is
defined via 
\[
\textrm{ord}_{E,x}\left(f\right)\coloneqq\max\left\{ s|\left(U_{1}^{s_{1}}\ldots U_{k}^{s_{k}}f\right)\left(x\right)=0,\:\forall\left(s_{1},\ldots,s_{k}\right)\in\mathbb{N}_{0}^{k},\,\sum_{j=1}^{k}s_{j}=s\right\} .
\]
Similarly the $E-$order of a differential operator $P$ at the point
$x$ is defined via 
\[
\textrm{ord}_{E,x}\left(P\right)\coloneqq\max\left\{ s|\textrm{ord}_{E,x}\left(f\right)\geq s'\Longrightarrow\textrm{ord}_{E,x}\left(Pf\right)\geq s+s'\right\} .
\]
It is clear from this definition that the defining vector fields $U_{j}$
each have $E$-order at least $-1$. A coordinate system centered
at $x$ is said to be \textit{privileged} if: the set $\frac{\partial}{\partial x_{1}},\frac{\partial}{\partial x_{2}},\ldots,\frac{\partial}{\partial x_{k_{j}^{E}}}$
forms a basis of $E_{j}\left(x\right)$ for each $j$ and furthermore
each $x_{j}$ has $E$-order $w_{j}^{E}\left(x\right)$ at $x$. The
order of the coordinate vector field $\frac{\partial}{\partial x_{j}}$
is then easily computed to be $-w_{j}^{E}\left(x\right)$. There exists
a privileged coordinate system at centered at each point of $X$ (see
\cite{Bellaiche-book-1996} pg. 30). Next define the privileged coordinate
dilation $\delta_{\varepsilon}:\mathbb{R}^{n}\rightarrow\mathbb{R}^{n}$,
$\forall\varepsilon>0$, using the weight vector \prettyref{eq:weight vector}
via $\delta_{\varepsilon}\left(x_{1},\ldots,x_{n}\right)=\left(\varepsilon^{w_{1}}x_{1},\ldots,\varepsilon^{w_{n}}x_{n}\right)$.
A differential operator $P$ is said to be homogeneous of $E$-order
$s$ if $\left(\delta_{\varepsilon}\right)_{*}P=\varepsilon^{s}P$.
We may now Taylor expand each defining vector field in terms of homogeneous
degrees 
\begin{equation}
\left(\delta_{\varepsilon}\right)_{*}U_{j}=\varepsilon^{-1}\hat{U}_{j}^{\left(-1\right)}+\hat{U}_{j}^{\left(0\right)}+\varepsilon\hat{U}_{j}^{\left(1\right)}+\ldots,\label{eq:priv cord. exp v. field}
\end{equation}
, where each $\hat{U}_{j}^{\left(s\right)}$ is an $\varepsilon$-independent
vector field with polynomial coefficients. The \textit{nilpotentization}
$\left(\hat{X},\hat{E},\hat{g}^{E}\right)$of the sR structure at
$x\in X$ is now defined via $\hat{X}=\mathbb{R}^{n}$, $\hat{E}\coloneqq\mathbb{R}\left[\hat{U}_{1}^{\left(-1\right)},\ldots,\hat{U}_{k}^{\left(-1\right)}\right]$
and where the metric $\hat{g}^{E}$ makes $\left\{ \hat{U}_{j}^{\left(-1\right)}\right\} _{j=1}^{k}$
orthonormal. For any smooth volume form $\mu$ on $X$, one may similarly
define its nilpotentization $\hat{\mu}=\mu_{0}$ at $x$ as the leading
order part in its expansion under the privileged coordinate dilation
\begin{equation}
\delta_{\varepsilon}^{*}\mu=\varepsilon^{Q\left(x\right)}\left[\mu_{0}+\varepsilon\mu_{1}+\varepsilon^{2}\mu_{2}+\ldots\right].\label{eq: prov cord exp. measure}
\end{equation}
 The nilpotentizations of the sR structure and the volume can be shown
to be independent of the choice of privileged coordinates upto sR
isometry (\cite{Bellaiche-book-1996} Ch. 5).

At a regular point, an invariant definition of the nilpotentizations
maybe given. First the sR structure defines a nilpotent Lie algebra
at $x$ via
\begin{equation}
\mathfrak{g}_{x}\coloneqq\left(E_{1}\right)_{x}\oplus\left(E_{2}/E_{1}\right)_{x}\oplus\ldots\oplus\left(E_{r}/E_{r-1}\right)_{x}\label{eq:nilpotent tangent space}
\end{equation}
with the Lie bracket of vector fields inducing an anti-linear map
$\left[.,.\right]:\mathfrak{g}_{x}\otimes\mathfrak{g}_{x}\rightarrow\mathfrak{g}_{x}$.
The algebra is clearly graded with its $j$th graded component $\left(\mathfrak{g}_{x}\right)_{j}\coloneqq\left(E_{j}/E_{j-1}\right)_{x}$
and the bracket preserving the grading $\left[\left(\mathfrak{g}_{x}\right)_{i},\left(\mathfrak{g}_{x}\right)_{j}\right]\subset\left(\mathfrak{g}_{x}\right)_{i+j}$.
Associated to the nilpotent Lie algebra $\mathfrak{g}$ is a unique
simply connected Lie group $G$ with the exponential map giving a
diffeomorphism $\exp:\mathfrak{g}\rightarrow G$. We define the \textit{nilpotentization}
of the sR structure $\left(\hat{X},\hat{E},\hat{g}^{E}\right)$ at
$x$ to be $\hat{X}\coloneqq G$ with the metric distribution $\hat{E},\hat{g}^{E}$
obtained via left translation. Given any volume form $\mu$ on $X$,
the canonical identification $\varLambda^{n}\mathfrak{g}_{x}=\varLambda^{n}\left[\left(E_{1}\right)_{x}\oplus\left(E_{2}/E_{1}\right)_{x}\oplus\ldots\oplus\left(E_{r}/E_{r-1}\right)_{x}\right]\cong\varLambda^{n}T_{x}X$
allows for a definition of the nilpotentization $\hat{\mu}$ of the
volume form $\mu$ on $\hat{X}$.

Sub-Riemannian geometry may be viewed as a limit of Riemannian geometry.
Namely, choose a metric complement $\left(F,g^{F}\right)$ for the
sR distribution satisfying $E\oplus F=TX$ . This gives a one parameter
family of Riemannian metrics 
\begin{equation}
g_{\epsilon}^{TX}=g^{E}\oplus\frac{1}{\epsilon}g^{F}\label{eq: extending metrics}
\end{equation}
which converge $g_{\epsilon}^{TX}\rightarrow g^{E}$ as $\epsilon\rightarrow0$.
We call the above a family of Riemannian metrics \textit{extending}/\textit{taming}
$g^{E}$. The corresponding Riemannian distance then converges $d^{\epsilon}\left(x_{0},x_{1}\right)\rightarrow d^{E}\left(x_{0},x_{1}\right)$
to the sR distance \prettyref{eq:subRiemannian distance} for any
$x_{0},x_{1}\in X$ as $\epsilon\rightarrow0$ (see for eg. \cite[Prop. 4]{Marinescu-Savale18}).

\subsubsection{sR Laplacian}

We now define the sub-Riemannian Laplacian and state some of its first
properties. First given any function $f\in C^{\infty}\left(X\right)$,
define its sR gradient $\nabla^{g^{E}}f\in C^{\infty}\left(E\right)$
by the equation 
\begin{equation}
g^{E}\left\langle \nabla^{g^{E}}f,e\right\rangle \coloneqq e\left(f\right),\quad\forall e\in C^{\infty}\left(E\right).\label{eq:gradient}
\end{equation}
Fixing an arbitrary volume form $\mu$ defines the natural $L^{2}$-
inner products on $C^{\infty}\left(X\right)$ and $C^{\infty}\left(X;E\right)$
giving the adjoint $\left(\nabla^{g}\right)_{\mu}^{*}$ to the gradient
depending on $\mu$. The sR Laplacian is now given by
\begin{equation}
\Delta_{g^{E},\mu}\coloneqq\left(\nabla^{g^{E}}\right)_{\mu}^{*}\circ\nabla^{g^{E}}:C^{\infty}\left(X\right)\rightarrow C^{\infty}\left(X\right).\label{eq:sR Laplacian def.}
\end{equation}
In terms of a local frame $U_{1},\ldots,U_{k}$ for $E$, the above
maybe written 
\begin{equation}
\Delta_{g^{E},\mu}f=-U_{i}\left[g^{E,ij}U_{j}\left(f\right)\right]+g^{E,ij}U_{j}\left(f\right)\left(\left(\nabla^{g^{E}}\right)_{\mu}^{*}U_{i}\right)\label{eq:sR Laplacian in general frame}
\end{equation}
where $g_{ij}^{E}=g^{E}\left(U_{i},U_{j}\right)$ and $g^{E,ij}$
is the inverse metric. If the frame is orthonormal the formula simplifies
to 
\begin{equation}
\Delta_{g^{E},\mu}f=\sum_{j=1}^{k}\left[-U_{j}^{2}\left(f\right)+U_{j}\left(f\right)\left(\left(\nabla^{g^{E}}\right)_{\mu}^{*}U_{j}\right)\right].\label{eq: sR laplacian in orthonormal frame}
\end{equation}
To remark on how the choice of the auxiliary form $\mu$ affects the
Laplacian, let $\mu'=h\mu$ denote another non-vanishing volume form
where $h$ is a positive smooth function on $X$. From the definition
\prettyref{eq:sR Laplacian def.} it now follows easily that one has
the relation 
\[
\Delta_{g^{E},\mu'}=h^{-1}\Delta_{g^{E},\mu}h+h^{-1}\left(\Delta_{g^{E},\mu}h\right).
\]
Thus the two corresponding Laplacians are conjugate modulo a zeroth-order
term. The sR Laplacian $\Delta_{g^{E},\mu}$ is self adjoint with
respect to the obvious inner product $\left\langle f,g\right\rangle =\int_{X}fg\mu$.
The principal symbol of $\Delta_{g^{E},\mu}$ is easily computed to
be the Hamiltonian 
\begin{equation}
\sigma\left(\Delta_{g^{E},\mu}\right)\left(x,\xi\right)=H^{E}\left(x,\xi\right)\coloneqq\left|\left.\xi\right|_{E}\right|^{2}\label{eq:symbol sr Laplace}
\end{equation}
using the dual metric while its sub-principal symbol is zero. The
characteristic variety 
\begin{align}
\Sigma & =\left\{ \sigma\left(\Delta_{g^{E},\mu}\right)=0\right\} =E^{\perp}\coloneqq\left\{ \xi\in T^{*}X|\xi\left(v\right)=0,\forall v\in E\right\} \label{eq:characteristic variety}
\end{align}
is the annihilator.

From the local expression \prettyref{eq: sR laplacian in orthonormal frame}
the sR Laplacian is seen to be a sum of squares operator of Hörmander
type \cite{Hormander1967} and is thus hypoelliptic. Further it satisfies
the following optimal sub-elliptic estimate \cite{Rothschild-Stein76}
\begin{equation}
\left\Vert f\right\Vert _{H^{s+2/r}}\leq C\left[\left\Vert \Delta_{g^{E},\mu}f\right\Vert _{H^{s}}+\left\Vert f\right\Vert _{H^{s}}\right],\quad\forall f\in C^{\infty}\left(X\right),\,\forall s\in\mathbb{R},\label{eq:subelliptic estimate}
\end{equation}
where $r\coloneqq\sup_{x\in X}\,r\left(x\right)$ is the maximal degree
of non-holonomy. It now follows that $\Delta_{g^{E},\mu}$ has a compact
resolvent and thus there is a complete orthonormal basis of $\left\{ \varphi_{j}\right\} _{j=0}^{\infty}$
for $L^{2}\left(X,\mu\right)$ consisting of (real-valued) eigenvectors
$\Delta_{g^{E},\mu}\varphi_{j}=\lambda_{j}\varphi_{j}$, $0\leq\lambda_{0}\leq\lambda_{1}\leq\ldots$. 

For each $p\in\Sigma$ on the characteristic variety, the fundamental
matrix $F_{p}\in\textrm{End}\left(T_{p}M\right)$, $M\coloneqq T^{*}X$,
is defined via 
\[
\omega\left(.,F_{p}.\right)=\nabla^{2}\sigma\left(.,.\right),
\]
where $\nabla^{2}\sigma$ denotes the Hessian of the symbol \prettyref{eq:symbol sr Laplace}
and $\omega$ the symplectic form on $T^{*}X$. The fundamental matrix
clearly satisfies $\omega\left(.,F_{p}.\right)=-\omega\left(F_{p}.,.\right)$
and we denote by $\textrm{Spec}^{+}\left(iF_{p}\right)$ the set of
real and positive eigenvalues of $iF_{p}$. Under the condition that
\begin{equation}
\textrm{tr}^{+}F_{p}\coloneqq\sum_{\mu\in\textrm{Spec}^{+}\left(iF_{p}\right)}\mu>0\label{eq:trace fund. matrix}
\end{equation}
the sR Laplacian $\Delta_{g^{E},\mu}$ is known to satisfy the better
sub-elliptic estimate with loss of one derivative \cite{HormanderIII}
\begin{equation}
\left\Vert f\right\Vert _{H^{s+1}}\leq C\left[\left\Vert \Delta_{g^{E},\mu}f\right\Vert _{H^{s}}+\left\Vert f\right\Vert _{H^{s}}\right],\quad\forall f\in C^{\infty}\left(X\right),\,\forall s\in\mathbb{R}.\label{eq:hypoelliptic with loss of one derivative}
\end{equation}
In \prettyref{eq:refined subelliptic estimate} we shall prove a further
refined subelliptic estimate for $\Delta_{g^{E},\mu}$ in the particular
4D quasi-contact case of our interest.

As a first property for the sR Laplacian we prove the finite propagation
speed for its half-wave equation.
\begin{thm}
\label{thm:(Finite-propagation-speed)}(Finite propagation speed)
Let $u\left(x;t\right)$ be the unique solution to the initial value
problem 
\begin{align}
\left(i\partial_{t}+\sqrt{\Delta_{g^{E},\mu}}\right)u & =0\nonumber \\
u\left(x,0\right) & =u_{0}\in C^{-\infty}\left(X\right).\label{eq:half wave eq}
\end{align}
Then the solution satisfies 
\[
\textrm{spt }u\left(x;t\right)\subset\left\{ y|\exists x\in\textrm{spt}u_{0};\,d^{E}\left(x,y\right)\leq\left|t\right|\right\} .
\]
\end{thm}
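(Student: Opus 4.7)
The plan is an energy-method estimate for the second-order wave equation, adapted to the sub-Riemannian setting via the horizontal Lipschitz property of $d^{E}$. Since $u = e^{it\sqrt{\Delta_{g^{E},\mu}}}\,u_{0}$ satisfies $(\partial_{t}^{2} + \Delta_{g^{E},\mu})\,u = 0$, the classical Riemannian energy argument carries over once a cutoff adapted to $d^{E}$ is chosen. I would first handle smooth compactly supported data and pass to distributional $u_{0} \in C^{-\infty}(X)$ by duality against the adjoint propagator.

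The central geometric input is the sub-Riemannian eikonal inequality: for $K \coloneqq \text{spt}\,u_{0}$, the distance $\rho(y) \coloneqq d^{E}(y, K)$ is horizontally $1$-Lipschitz, so $|\nabla^{g^{E}}\rho|_{E} \leq 1$ $\mu$-almost everywhere, as a direct consequence of the length-minimizing definition of $d^{E}$ via horizontal curves. With a monotone cutoff $\chi = \chi_{0}(\rho(y) - |t|)$, where $\chi_{0} \geq 0$ is supported in $(R, \infty)$, the local energy
\[
E_{\chi}(t) \coloneqq \int \chi\,\bigl(|\partial_{t} u|^{2} + |\nabla^{g^{E}} u|^{2}_{E}\bigr)\,d\mu
\]
should satisfy $\tfrac{d}{dt}E_{\chi}(t) \leq 0$. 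The relevant computation uses the Dirichlet form $Q(u,v) = \int \langle\nabla^{g^{E}}u, \nabla^{g^{E}}v\rangle_{E}\,d\mu$ and integration by parts against the wave equation, producing a cross-term involving $\nabla^{g^{E}}\chi$ that is absorbed using the bound $|\nabla^{g^{E}}\chi|_{E} \leq |\partial_{t}\chi|$ (which is exactly where the eikonal inequality enters) together with Cauchy--Schwarz. Since $E_{\chi}(0) = 0$ for $R > 0$, monotonicity forces $u \equiv 0$ on $\{\rho > |t|\}$ for all $t$, yielding the claim for smooth data; extension to $C^{-\infty}(X)$ follows by density of $C_{c}^{\infty}(X)$ and by testing against functions supported outside the evolved cone using $\langle u(\cdot,t), \phi\rangle = \langle u_{0}, e^{-it\sqrt{\Delta_{g^{E},\mu}}}\phi\rangle$.

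The main technical obstacle will be the non-smoothness of $\rho$ (merely Lipschitz) and the rigorous justification of the eikonal bound and the integration by parts in an almost-everywhere sense. The cleanest remedy is the Riemannian taming approximation via \prettyref{eq: extending metrics}: for each $\epsilon > 0$, the Laplacian $\Delta_{\epsilon}$ of $g_{\epsilon}^{TX}$ is elliptic and the Riemannian distance $d_{\epsilon}$ is smooth, so the classical energy argument applies verbatim at the $\epsilon$-level, and one passes $\epsilon \to 0^{+}$ with $d_{\epsilon} \to d^{E}$ locally uniformly to transfer the bound to the sub-Riemannian limit. This also sidesteps any technicalities in defining $\nabla^{g^{E}}$ on distributional solutions directly.
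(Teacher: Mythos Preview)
Your final recommendation—the Riemannian taming limit via the family $g_{\epsilon}^{TX}$ of \prettyref{eq: extending metrics}—is exactly the route the paper takes, so in that sense the proposal lands on the same idea. The execution differs, however. The paper works directly with the Schwartz kernel $K_{t}=[e^{it\sqrt{\Delta_{g^{E},\mu}}}]_{\mu}$ and argues spectrally: writing $\Delta_{g_{\epsilon}^{TX},\mu}=\Delta_{g^{E},\mu}+\epsilon\,\Delta_{g^{F},\mu}$, the min-max principle gives $L^{2}$-convergence of the spectral projectors $\Pi_{[0,L]}^{\Delta_{\epsilon}}\to\Pi_{[0,L]}^{\Delta_{g^{E},\mu}}$, hence weak convergence $K_{t}^{\epsilon}\rightharpoonup K_{t}$, and then one simply quotes Riemannian finite propagation for each $\epsilon$ together with $d^{\epsilon}\to d^{E}$. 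This bypasses the energy computation entirely and, more importantly, supplies the convergence mechanism you leave implicit when you write ``pass $\epsilon\to 0^{+}$ \ldots\ to transfer the bound''; you should make that step explicit, and the spectral-projector argument is the cleanest way. Your primary plan—a direct sub-Riemannian energy estimate using the horizontal eikonal inequality $|\nabla^{g^{E}}\rho|_{E}\le 1$—is a genuine alternative the paper does not pursue; it would work, but note that $d^{E}$ is only H\"older in the Euclidean sense (ball-box), so the cutoff $\chi_{0}(\rho-|t|)$ is not smooth and the integration by parts needs the sR Rademacher theorem plus a regularization, exactly the technicalities you flagged. The taming route avoids all of this.
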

\begin{proof}
The result maybe restated in terms of the Schwartz kernel $K_{t}=\left[e^{it\sqrt{\Delta_{g^{E},\mu}}}\right]_{\mu}$
of the half-wave operator 
\[
\textrm{spt }K_{t}\subset\left\{ \left(x,y\right)|d^{E}\left(x,y\right)\leq\left|t\right|\right\} .
\]
We choose a family of metrics $g_{\epsilon}^{TX}$ \prettyref{eq: extending metrics}
extending $g^{E}$. The Riemannian Laplacian $\Delta_{g_{\varepsilon}^{TX},\mu}$
(still coupled to the form $\mu$) is written 
\[
\Delta_{g_{\epsilon}^{TX},\mu}=\Delta_{g^{E},\mu}+\epsilon\Delta_{g^{F},\mu}
\]
where $\Delta_{g^{F},\mu}$ is the sR Laplacian on the complementary
distribution $F$. The min-max principle for eigenvalues implies the
$L^{2}$ convergence $\Pi_{\left[0,L\right]}^{\Delta_{g_{\epsilon}^{TX},\mu}}\rightarrow\Pi_{\left[0,L\right]}^{\Delta_{g^{E},\mu}}$of
the corresponding spectral projectors onto the interval $\left[0,L\right]$,
$\forall L>0$. It now follows that $K_{t}^{\varepsilon}\rightharpoonup K_{t}$
weakly as $\varepsilon\rightarrow0$ with $K_{t}^{\varepsilon}\coloneqq\left[e^{it\sqrt{\Delta_{g_{\epsilon}^{TX},\mu}}}\right]_{\mu}$.
Knowing that $d^{E}$ is the limit of the Riemannian distance function
for $g_{\epsilon}^{TX}$, the theorem now follows from the finite
propagation speed of $\Delta_{g_{\varepsilon}^{TX},\mu}$.
\end{proof}

\subsection{\label{subsec:Quasi-contact-case}Quasi-contact case}

We now describe some sR geometric features in the particular four
dimensional quasi-contact case of our interest. We now let $X$ be
a smooth, compact oriented four dimensional manifold. A nowhere vanishing
one form $a\in\Omega^{1}\left(X\right)$ is called quasi-contact,
sometimes referred to as even-contact, if the restriction $\textrm{rk}\left.da\right|_{E}=2$,
$E\coloneqq\textrm{ker }a\subset TX$, is of maximal rank. The kernel
$L^{E}\coloneqq\textrm{ker}\left(a\wedge da\right)\subset E$ is then
seen to be one dimensional defining the characteristic line field
which furthermore only depends on $E=\textrm{ker}a$. Let $\left(L^{E}\right)^{\perp}\subset E$
denote the two dimensional orthogonal complement of the characteristic
line on which the restriction $\left.da\right|_{\left(L^{E}\right)^{\perp}}$
is non-degenerate by definition. In particular the bundle $\left(L^{E}\right)^{\perp}$
is orientable. A canonical Popp one form $a_{g^{E}}$ (well-defined
up to a sign) defining $E=\textrm{ker}\left(a_{g^{E}}\right)$ may
now be given by requiring that 
\begin{equation}
\left.da_{g^{E}}\right|_{\left(L^{E}\right)^{\perp}}=\textrm{vol}\left(\left.g^{E}\right|_{\left(L^{E}\right)^{\perp}}\right)\label{eq:Popp one form}
\end{equation}
 agree with the metric volume form of $\left.g^{E}\right|_{\left(L^{E}\right)^{\perp}}$
corresponding to some choice of orientation for $\left(L^{E}\right)^{\perp}$.
It is now easy to check that the distributions 
\begin{align*}
\left(L^{E}\right)_{2}^{\perp} & \coloneqq\left(L^{E}\right)^{\perp}+\left[\left(L^{E}\right)^{\perp},\left(L^{E}\right)^{\perp}\right]\\
\left(L^{E}\right)^{\perp,da_{g^{E}}} & \coloneqq\left\{ v\in TX|da_{g^{E}}\left(v,e\right)=0,\,\forall e\in\left(L^{E}\right)^{\perp}\right\} 
\end{align*}
are three and two dimensional respectively and both transverse to
$E$. Thus their intersection is one-dimensional and transverse to
$E$. We now define the quasi-contact Reeb vector field $R\in C^{\infty}\left(TX\right)$
to be the unique vector field satisfying $R\in\left(L^{E}\right)_{2}^{\perp}\cap\left(L^{E}\right)^{\perp,da_{g^{E}}}$,
$i_{R}a_{g^{E}}$=1 (cf. \cite{Charlot2002}, \cite[Sec. 10.1]{Boscain-Neel-Rizzi2017}).
Note again that the orientation of $R$ depends on the choice of sign
for $a_{g^{E}}$. However the orientation of $\left(L^{E}\right)^{\perp}\oplus\mathbb{R}\left[R\right]$
defined by $a_{g^{E}}\wedge da_{g^{E}}$ is clearly independent of
the choice of sign. Furthermore, given that $L^{E}$ is transverse
to $\left(L^{E}\right)^{\perp}\oplus\mathbb{R}\left[R\right]$, the
orientation defined by $a_{g^{E}}\wedge da_{g^{E}}$ combines with
the $\mu$-orientation of manifold to define an orientation of $L^{E}$.
This defines the unique positively oriented vector field $Z\in C^{\infty}\left(L^{E}\right)$
such that $\left|Z\right|=1$. We note that ergodicity of $L^{E}$
is equivalent to the ergodicity of the vector field $Z$. Let $Z^{*}\in\Omega^{1}\left(X\right)$
denote the one form which satisfies $Z^{*}\left(Z\right)=1$ and annihilates
$\left(L^{E}\right)^{\perp}\oplus\mathbb{R}\left[R\right]$. The Popp
volume form \prettyref{eq:Popp measure} in the quasi-contact case
is now seen to be
\begin{equation}
\mu_{\textrm{Popp}}\coloneqq Z^{*}\wedge a_{g^{E}}\wedge da_{g^{E}}\label{eq: QC Popp volume}
\end{equation}
and we may also define the normalized Popp volume $\nu_{\textrm{Popp}}\coloneqq\frac{1}{P\left(X\right)}\mu_{\textrm{Popp}}$,
$P\left(X\right)\coloneqq\int\mu_{\textrm{Popp}}$. One now has the
relations 
\begin{align}
L_{Z}a_{g^{E}} & =-da_{g^{E}}\left(R,Z\right)a_{g^{E}}\nonumber \\
L_{Z}\mu_{\textrm{Popp}} & =-da_{g^{E}}\left(R,Z\right)\mu_{\textrm{Popp}}.\label{eq:Z flow reln}
\end{align}
In particular the $Z$-flow preserves $E=\textrm{ker}\left(a_{g^{E}}\right)$.

The characteristic line $L^{E}$ is said to be volume preserving if
there exists a smooth volume on $X$ that is invariant under some
non-vanishing section of $L^{E}$; the existence does not depend on
the choice of the section. In particular there exists a $Z$-invariant
volume $L_{Z}\left(\underbrace{\hat{\rho}_{Z}\mu_{\textrm{Popp}}}_{\eqqcolon\mu_{Z}}\right)=0$
for some positive function $\hat{\rho}_{Z}$ which would in turn satisfy
a similar equation $L_{Z}\hat{\rho}_{Z}=da_{g^{E}}\left(R,Z\right)\hat{\rho}_{Z}$;
thus further giving $L_{Z}\left(\underbrace{\hat{\rho}_{Z}a_{g^{E}}}_{=\hat{a}_{g^{E}}}\right)=0$.
It now follows that the volume preserving condition is equivalent
to the existence of a defining one form $a=\hat{a}_{g^{E}}$ for $E$
with $a\wedge da$ closed. Furthermore it is also known to be equivalent
to the existence of a defining one form $a$ for $E$ with $\textrm{rk }da=2$
being constant \cite[Lemma 2.3]{Kotschick&Vogel2018} or the existence
of a vector field transverse to and preserving $E$ \cite[Prop. 2.1]{Pia2019}.
We note however that the volume preserving condition on $L^{E}$ is
quite restrictive and often violated (see Example \prettyref{exa:non-measure preserving example}
below).

Next, let $Y^{3}\subset X$, $TY\pitchfork L^{E}$ be a locally defined
transverse hypersurface near a point $x\in X$. The restriction of
the one form $a_{g^{E}}$ to $Y$ is then a contact form and one has
Darboux coordinates $\left(x_{1},x_{2},x_{3}\right)$ on $Y$ such
that $\left.a_{g^{E}}\right|_{Y}=\frac{1}{2}\left[dx_{3}+x_{1}dx_{2}-x_{2}dx_{1}\right]$.
One now translates these coordinates by the flow of $Z$ to obtain
local coordinates $\left(x_{0},x_{1},x_{2},x_{3}\right)$ near the
point $x$. Defining the positive function $\hat{\rho}\coloneqq\exp\left\{ \int_{0}^{x_{0}}da_{g^{E}}\left(R,Z\right)\right\} $,
satisfying $Z\hat{\rho}=\partial_{x_{0}}\hat{\rho}=da_{g^{E}}\left(R,Z\right)\hat{\rho}$,
one now computes $\mathcal{L}_{Z}\left(\hat{\rho}a_{g^{E}}\right)=0$
giving 
\begin{align}
\hat{\rho}a_{g^{E}} & =\frac{1}{2}\left[dx_{3}+x_{1}dx_{2}-x_{2}dx_{1}\right]\label{eq: quasi-contact normal form-1}\\
Z & =\partial_{x_{0}}\label{eq:characterictic normal form-1}
\end{align}
$\mu_{\textrm{Popp}}=\frac{1}{2}\hat{\rho}^{-2}dx$ and $Y=\left\{ x_{0}=0\right\} $
locally.

The characteristic variety $\Sigma\subset T^{*}X$ of the Laplacian
$\Sigma=E^{\perp}=\mathbb{R}\left[a\right]$ is clearly the graph
of a defining one form by \prettyref{eq:characteristic variety} in
this case. A homogeneous function of degree one on the characteristic
variety is then defined via 
\begin{align}
\rho:\Sigma & \rightarrow\mathbb{R}\nonumber \\
\rho\left(x,sa_{g^{E}}\left(x\right)\right) & =s,\quad\forall s\in\mathbb{R},\label{eq: homogenous dil func.}
\end{align}
and equals the restriction of the symbol of the Reeb vector field
$\rho=\left.\sigma\left(R\right)\right|_{\Sigma}$. With $X,Y\in\left(L^{E}\right)^{\perp}$,
$da_{g^{E}}\left(X,Y\right)=1$, being a positively oriented orthonormal
basis, the relations
\begin{align}
\left.\left\{ \sigma\left(X\right),\sigma\left(Y\right)\right\} \right|_{\Sigma}=\left.\sigma\left(\left[X,Y\right]\right)\right|_{\Sigma} & =\rho a_{g^{E}}\left(\left[X,Y\right]\right)=\rho\nonumber \\
\left.\left\{ \sigma\left(X\right),\sigma\left(Z\right)\right\} \right|_{\Sigma}=\left.\sigma\left(\left[X,Z\right]\right)\right|_{\Sigma} & =\rho a_{g^{E}}\left(\left[X,Z\right]\right)=0\nonumber \\
\left.\left\{ \sigma\left(Y\right),\sigma\left(Z\right)\right\} \right|_{\Sigma}=\left.\sigma\left(\left[Y,Z\right]\right)\right|_{\Sigma} & =\rho a_{g^{E}}\left(\left[Y,Z\right]\right)=0\label{eq: computations for tr+}
\end{align}
as well as \prettyref{eq: sR laplacian in orthonormal frame} show
that $\rho=\textrm{tr}^{+}F_{p}$ is identifiable with \prettyref{eq:trace fund. matrix}
via the fundamental matrix in this case. This is seen to satisfy the
equation
\begin{equation}
\left.H_{\sigma\left(Z\right)}\rho\right|_{\Sigma}=\left.\left\{ \sigma\left(Z\right),\sigma\left(R\right)\right\} \right|_{\Sigma}=\left.\sigma\left(\left[Z,R\right]\right)\right|_{\Sigma}=\rho a_{g^{E}}\left(\left[Z,R\right]\right)=\rho da_{g^{E}}\left(R,Z\right)\label{eq:rho constant along isotropic directions}
\end{equation}
 along the isotropic directions of $\Sigma.$ From the above computations
the following conditions are seen to be equivalent
\begin{equation}
da_{g^{E}}\left(R,Z\right)=0,\;L_{Z}a_{g^{E}}=0,\;L_{Z}\mu_{\textrm{Popp}}=0,\;\left.H_{\sigma\left(Z\right)}\rho\right|_{\Sigma}=0.\label{eq:equivalent conditions}
\end{equation}

The Popp volume form pulls back under the natural projection to a
four form on $\Sigma$ which we denote by the same notation $\mu_{\textrm{Popp}}$.
It further defines a volume form on $\Sigma$ via $\mu_{\textrm{Popp}}^{\Sigma}\coloneqq d\rho\wedge\mu_{\textrm{Popp}}$.
The Hessian of the symbol $\nabla^{2}\sigma$ gives a non-degenerate,
positive-definite quadratic form on the normal bundle $N\Sigma\coloneqq TM/T\Sigma$,
$M\coloneqq T^{*}X$, over the characteristic variety. Under the canonical
isomorphism of determinant lines $\Lambda^{*}T^{*}M=\left(\Lambda^{*}T^{*}\Sigma\right)\otimes\left(\Lambda^{*}N^{*}\Sigma\right)$,
the lift of the Popp volume is the unique volume satisfying $\frac{1}{4!}\omega^{4}=\mu_{\textrm{Popp}}^{\Sigma}\wedge\det\left(\nabla^{2}\sigma\right)$
(cf. \cite{Menikoff-SjostrandI,Menikoff-SjostrandII}).

Next we define the spherical normal bundle $SN\Sigma\xrightarrow{\pi_{S}}\Sigma$,
$SN\Sigma\coloneqq\left\{ v\in N\Sigma|\nabla^{2}\sigma\left(v,v\right)=1\right\} $.
Let $T\Sigma^{\omega}\subset TM$ be the symplectic complement of
$T\Sigma$. The image $N_{1}\Sigma$ of $T\Sigma^{\omega}\hookrightarrow TM\rightarrow TM/T\Sigma\eqqcolon N\Sigma$
is two dimensional and equipped with an induced symplectic form $\omega_{0}$.
The bundle $N_{1}\Sigma$ has a one dimensional $\nabla^{2}\sigma$-
orthocomplement $N_{0}\Sigma\subset N\Sigma$ . This defines (the
absolute value of) a homogeneous of degree zero function $\Xi_{0}\in C^{\infty}\left(SN\Sigma\right)$
satisfying 
\begin{equation}
\left|\Xi_{0}\right|\coloneqq\left\Vert \pi_{N_{0}\Sigma}\left(v\right)\right\Vert ,\quad\forall v\in SN\Sigma,\label{eq:invariant function blowup}
\end{equation}
with respect to the orthogonal projection/decomposition $N\Sigma=N_{0}\Sigma\oplus N_{1}\Sigma$.
A sign for this function will be defined shortly. An endomorphism
$\mathfrak{J}$ of $N_{1}\Sigma$ is defined via $\nabla^{2}\sigma\left(.,\mathfrak{J}.\right)=\omega_{0}\left(.,.\right)$.
This defines a circle action on $N_{1}\Sigma$ via $e^{i\theta}.v_{0}=\left(\cos\theta\right)v_{0}+\left(\sin\theta\right)\frac{\mathfrak{J}}{\left|\mathfrak{J}\right|}v_{0}$
and subsequently one on $SN\Sigma$ which fixes $N_{0}\Sigma$. We
denote by $R_{0}=\partial_{\theta}\in C^{\infty}\left(TSN\Sigma\right)$
the generating vector field satisfying $\left(\pi\circ\beta\right)_{*}R_{0}=0\in TX$.
The quotient $\left(SN\Sigma\right)/S^{1}$ is an interval $\left[-1,1\right]_{\Xi_{0}}$
bundle over $\Sigma$. The vertical fiber measure $\mu_{V}=\left(1-\Xi_{0}^{2}\right)d\Xi_{0}$
again allows to lift the Popp volume via 
\begin{equation}
\mu_{\textrm{Popp}}^{SN\Sigma}\coloneqq\mu_{V}\wedge\pi_{S}^{*}\mu_{\textrm{Popp}}^{\Sigma}.\label{eq:homogeneous lift Popp measure}
\end{equation}
which may equivalently be thought of as a rotationally invariant volume
on the spherical normal bundle $SN\Sigma$ satisfying
\begin{align}
L_{R_{0}}\Xi_{0}=0,\; & L_{R_{0}}\mu_{\textrm{Popp}}^{SN\Sigma}=0.\label{eq:rotational invariance}
\end{align}

The blow-up of the cotangent space along the characteristic variety
\begin{equation}
\left[M;\Sigma\right]\coloneqq\left(M\setminus\Sigma\right)\amalg SN\Sigma\label{eq: blowup def.}
\end{equation}
and the corresponding blow-down map 
\begin{align}
\beta:\left[M;\Sigma\right] & \rightarrow M\nonumber \\
\beta\left(p\right) & \coloneqq\begin{cases}
p; & p\in\left(M\setminus\Sigma\right)\\
\pi_{S}\left(p\right); & p\in SN\Sigma.
\end{cases}\label{eq: blowdow map def.}
\end{align}
may now be defined. The blowup has the structure of a smooth manifold
with boundary; its interior is $\left[M;\Sigma\right]^{o}=\left(M\setminus\Sigma\right)$
while the boundary $\partial\left[M;\Sigma\right]=SN\Sigma$ is identified
with the spherical normal bundle. The boundary defining function is
the square root of the symbol $\sigma^{1/2}$ (or its pullback to
the blowup). There is a natural action of $\mathbb{R}_{+}$ on the
blowup with the quotient $\left[M;\Sigma\right]/\mathbb{R}_{+}=\left[S^{*}X;S^{*}\Sigma\right]$
canonically identified with the corresponding blowup of the cospheres
$S^{*}X=T^{*}X/\mathbb{R}_{+}$, $S^{*}\Sigma\coloneqq\Sigma/\mathbb{R}_{+}$.
The cosphere of the characteristic variety 
\[
S^{*}\Sigma\coloneqq\Sigma/\mathbb{R}_{+}=\left\{ \rho=\pm1\right\} =\underbrace{X_{+}}_{\eqqcolon\left\{ \left(x,a_{g^{E}}\left(x\right)\right)\right\} }\cup\underbrace{X_{-}}_{\eqqcolon\left\{ \left(x,-a_{g^{E}}\left(x\right)\right)\right\} }\subset\Sigma
\]
is identifiable with two copies of the manifold given a choice of
sign for the Popp form $a_{g^{E}}$ and thus carries the lift of the
Popp volume $\mu_{\textrm{Popp}}$. The spherical normal bundle carries
a similar $\mathbb{R}_{+}$-action and we denote the quotient by $SNS^{*}\Sigma\coloneqq SN\Sigma/\mathbb{R}_{+}$.
The $S^{1}$ action on $SN\Sigma$ is homogeneous of degree zero and
one may form the double quotient $SNS^{*}\Sigma/S^{1}$ as an $\left[-1,1\right]_{\Xi_{0}}$
bundle over $X$. In similar vein as \prettyref{eq:homogeneous lift Popp measure}
this now carries a lift of the Popp measure
\begin{equation}
\mu_{\textrm{Popp}}^{SNS^{*}\Sigma}\coloneqq\mu_{V}\wedge\pi_{S}^{*}\mu_{\textrm{Popp}}\label{eq:lift Popp measure}
\end{equation}
which is again equivalently thought of as a rotationally invariant
volume on the spherical normal bundle $SNS^{*}\Sigma$. We also define
the normalized versions $\nu_{\textrm{Popp}}$, $\nu_{\textrm{Popp}}^{SNS^{*}\Sigma}$
of $\mu_{\textrm{Popp}}$, $\mu_{\textrm{Popp}}^{SNS^{*}\Sigma}$
with total volume one.

In \prettyref{subsec:Normal-form-near-Sigma}, \prettyref{sec:Global-calculus}
we shall show the existence of smooth function $\Omega$, invariantly
defined using the sR structure on a neighborhood of the characteristic
variety $\Sigma$, whose Hamilton vector field restricts
\begin{equation}
\left.H_{\Omega}\right|_{SN\Sigma}=R_{0}\label{eq:singular part Ham. vfield}
\end{equation}
to the rotational derivative $R_{0}$. The Hamilton vector field $H_{\sigma^{1/2}}$
of the square root symbol $\sigma^{1/2}\coloneqq\sigma\left(\Delta_{g^{E},\mu}\right)^{1/2}$
is well-defined on the complement $\left[M;\Sigma\right]^{o}=\left(M\setminus\Sigma\right)$
of the characteristic variety and hence on the interior of the blowup.
Its singularity near the boundary is then captured by the rotational
vector field $R_{0}$. In particular, the following will be proved
in \prettyref{subsec:Normal-form-near-Sigma}.
\begin{prop}
\label{prop: Hamilton v field extension} The Hamilton vector field
$H_{\sigma^{1/2}}$ has a singular expansion 
\begin{equation}
H_{\sigma^{1/2}}=\frac{\sigma\left(R\right)}{\sigma^{1/2}}H_{\Omega}+\hat{Z}+o\left(1\right)\label{eq:singular expansion Hamiltonian}
\end{equation}
near the boundary of the blowup $\left[M;\Sigma\right]$. Here $\hat{Z}\in C^{\infty}\left(TS^{*}N\Sigma\right)$
projects 
\begin{equation}
\left(\pi\circ\beta\right)_{*}\hat{Z}\in L^{E}\subset TX,\label{eq: characteristic pushforward}
\end{equation}
with $\left|\left(\pi\circ\beta\right)_{*}\hat{Z}\right|=\left|\Xi_{0}\right|$,
onto the characteristic line with the lift of the Popp volume \prettyref{eq:lift Popp measure}
preserved under the flow of $\left[\hat{Z}\right]\in C^{\infty}\left(T\left(S^{*}N\Sigma/S^{1}\right)\right)=C^{\infty}\left(TS^{*}N\Sigma/\mathbb{R}\left[R_{0}\right]\right)$
\begin{align}
L_{\left[\hat{Z}\right]}\mu_{\textrm{Popp}}^{SNS^{*}\Sigma} & =0.\label{eq:invariance properties of flow}
\end{align}
\end{prop}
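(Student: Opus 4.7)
The plan is to prove the proposition via a direct local computation in the coordinates supplied by the normal form theorem of \prettyref{subsec:Normal-form-near-Sigma}, in which both the function $\Omega$ and the regular part $\hat{Z}$ become explicit. Fix a local orthonormal frame $\{X,Y,Z\}$ of $E$ with $X,Y\in(L^E)^\perp$, $Z\in L^E$, completed to a frame of $TX$ by the Reeb field $R$. Writing $\xi_X,\xi_Y,\xi_Z,\xi_R$ for the principal symbols of these vector fields, one has near $\Sigma$ that $\sigma=\xi_X^2+\xi_Y^2+\xi_Z^2$, $\Sigma=\{\xi_X=\xi_Y=\xi_Z=0\}$, $\xi_R|_\Sigma=\rho$, and the Hessian $\nabla^2\sigma$ is the standard Euclidean form on the fibre $N\Sigma=\mathrm{span}(\xi_X,\xi_Y,\xi_Z)$. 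The splitting $N\Sigma=N_0\Sigma\oplus N_1\Sigma$ identifies with $\mathrm{span}(\xi_Z)\oplus\mathrm{span}(\xi_X,\xi_Y)$; the $S^1$-action rotates $(\xi_X,\xi_Y)$, and $|\Xi_0|=|\xi_Z|/\sigma^{1/2}$ on $SN\Sigma$.

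The heart of the proof is the choice of the candidate $\Omega := (\xi_X^2+\xi_Y^2)/(2\sigma(R))$, which is invariantly defined since $\xi_X^2+\xi_Y^2=|\xi|^2_{(L^E)^\perp}$ depends only on $g^E$ and $\sigma(R)$ is intrinsic. Using \prettyref{eq: computations for tr+} to give $\{\xi_X,\xi_Y\}=\sigma(R)+O(\sigma^{1/2})$ and $\{\xi_X,\xi_Z\},\{\xi_Y,\xi_Z\}=O(\sigma^{1/2})$, one checks $H_\Omega\xi_X=-\xi_Y+O(\sigma^{1/2})$ and $H_\Omega\xi_Y=\xi_X+O(\sigma^{1/2})$, so $H_\Omega|_{SN\Sigma}=R_0$ and \prettyref{eq:singular part Ham. vfield} is verified. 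Expanding
\[
H_{\sigma^{1/2}}=\frac{1}{2\sigma^{1/2}}H_\sigma=\frac{\xi_X}{\sigma^{1/2}}H_{\xi_X}+\frac{\xi_Y}{\sigma^{1/2}}H_{\xi_Y}+\frac{\xi_Z}{\sigma^{1/2}}H_{\xi_Z},
\]
one then compares to $\frac{\sigma(R)}{\sigma^{1/2}}H_\Omega$ component by component. The base components $\frac{\xi_X}{\sigma^{1/2}}X+\frac{\xi_Y}{\sigma^{1/2}}Y$ coming from the first two terms are matched by the leading base components of $\frac{\sigma(R)}{\sigma^{1/2}}H_\Omega$ (since $\partial_{\xi_X}\Omega=\xi_X/\sigma(R)+O(\sigma)$), while the fibre rotation in $(\xi_X,\xi_Y)$ is matched via the bracket $\{\xi_X,\xi_Y\}\equiv\sigma(R)$. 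All corrections are $O(\sigma^{1/2})=o(1)$ on the blowup, yielding $H_{\sigma^{1/2}}=\frac{\sigma(R)}{\sigma^{1/2}}H_\Omega+\frac{\xi_Z}{\sigma^{1/2}}H_{\xi_Z}+o(1)$, which is \prettyref{eq:singular expansion Hamiltonian} with $\hat Z$ defined as the extension to $SN\Sigma$ of the regular part.

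The projection property follows immediately: $H_{\xi_Z}$ has base component equal to the vector field $Z$, while its fibre components are $\{\xi_Z,\xi_\bullet\}$ and all vanish on $\Sigma$ by \prettyref{eq: computations for tr+}. Thus $(\pi\circ\beta)_*\hat Z=\frac{\xi_Z}{\sigma^{1/2}}Z=\Xi_0\,Z\in L^E$ with magnitude $|\Xi_0|$, as asserted. For the volume preservation, Liouville's theorem gives that $e^{tH_{\sigma^{1/2}}}$ preserves $\omega^4/4!$ on $M\setminus\Sigma$; via the factorisation $\omega^4/4!=\mu^\Sigma_{\mathrm{Popp}}\wedge\det(\nabla^2\sigma)$ this extends to the blowup. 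The singular piece $\frac{\sigma(R)}{\sigma^{1/2}}H_\Omega$ is purely rotational in the $N_1\Sigma$ fibres and preserves both factors separately, so the residual flow of $\hat Z$ also preserves $\omega^4/4!$. Passing to the $S^1$-quotient $SNS^*\Sigma/S^1$ and using the lift \prettyref{eq:lift Popp measure} translates this into invariance of $\mu^{SNS^*\Sigma}_{\mathrm{Popp}}$ under the flow of $[\hat Z]$, giving \prettyref{eq:invariance properties of flow}.

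The main obstacle is the delicate bookkeeping required in the expansion step: one must verify that after subtracting the singular piece $\frac{\sigma(R)}{\sigma^{1/2}}H_\Omega$ the residual extends to a genuinely smooth vector field on the closed blowup with an $o(1)$ remainder, rather than a residue carrying a further $\sigma^{-1/2}$ singularity. This hinges on the precise cancellation between the leading base components of the first two terms of $H_{\sigma^{1/2}}$ and those of $H_\Omega$, which in turn relies on the structural identities \prettyref{eq: computations for tr+} holding uniformly in a neighbourhood of $\Sigma$ to the stated order; the full normal form of \prettyref{subsec:Normal-form-near-Sigma} is what supplies this uniform control.
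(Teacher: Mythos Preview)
Your approach differs from the paper's---you try to define $\Omega$ directly as $(\xi_X^2+\xi_Y^2)/(2\sigma(R))$ and work on $T^*X$ itself, whereas the paper passes to the normal form coordinates of \prettyref{subsec:Normal-form-near-Sigma}, where $\sigma=\xi_0^2+2\rho\Omega+O_\Sigma(4)$ with the invariant $\Omega=\xi_3(x_1^2+\hat\xi_1^2)$, and then computes $\hat Z$ and the Popp volume explicitly (equations \prettyref{eq:Hvfield restriction to boundary} and \prettyref{eq:identifications}), checking $L_{[\hat Z]}\mu_{\textrm{Popp}}^{SNS^*\Sigma}=0$ by direct calculation.

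There is a genuine gap in your expansion step. With your $\Omega=(\xi_X^2+\xi_Y^2)/(2\sigma(R))$ one finds exactly
\[
H_{\sigma^{1/2}}-\frac{\sigma(R)}{\sigma^{1/2}}H_\Omega=\frac{\xi_Z}{\sigma^{1/2}}H_{\xi_Z}+\frac{\sigma-\xi_Z^2}{2\sigma(R)\,\sigma^{1/2}}H_{\sigma(R)},
\]
and you assert the second term is $o(1)$ on the blowup. This is false in general: by \prettyref{eq:rho constant along isotropic directions} one has $\{\sigma(R),\sigma(Z)\}|_\Sigma=-\rho\,da_{g^E}(R,Z)$, so $H_{\sigma(R)}$ is \emph{not} tangent to $\Sigma$ unless $L_Z\mu_{\textrm{Popp}}=0$. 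Lifted to the blowup, $H_{\sigma(R)}$ therefore has an $O(\sigma^{-1/2})$ component in the spherical $\partial_{\Xi_0}$ direction, and multiplying by the $O(\sigma^{1/2})$ coefficient produces the $O(1)$ contribution $-\tfrac{1}{2}da_{g^E}(R,Z)(1-\Xi_0^2)^2\,\partial_{\Xi_0}$ to the boundary value of $\hat Z$. Omitting it, your $[\hat Z]=\Xi_0 H_{\xi_Z}|_{SN\Sigma}$ does \emph{not} preserve $\mu_{\textrm{Popp}}^{SNS^*\Sigma}$; only after restoring this term does one recover the paper's $[\hat Z]=\Xi_0\partial_{x_0}-\tfrac{1}{2}\hat\rho^{-1}\hat\rho_{x_0}(1-\Xi_0^2)\partial_{\Xi_0}$ and the invariance holds.

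Relatedly, your Liouville argument for volume preservation is incomplete as stated: the singular piece $\frac{\sigma(R)}{\sigma^{1/2}}H_\Omega$ is a function multiple of a Hamiltonian vector field, hence has divergence $H_\Omega(\sigma(R)/\sigma^{1/2})$, which is $O(1)$ rather than zero near $\Sigma$. So one cannot simply subtract it and conclude the residual is divergence-free; the cancellation on the $S^1$-quotient requires the explicit form of $[\hat Z]$, which is precisely what the normal form supplies.
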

We note that the above \prettyref{eq: characteristic pushforward}
also defines a signed version of \prettyref{eq:invariant function blowup}
via $\Xi_{0}=\left\langle Z,\left(\pi\circ\beta\right)_{*}\hat{Z}\right\rangle $.

\section{\label{sec:Hermite-Calculus}Hermite Calculus}

In this section we define the requisite Hermite-Landau calculus. We
begin with the definition of the Hermite transform.

\subsection{Hermite transform}

Below we denote by $\left(x_{0},x_{1},x_{2},x_{3}\right)$ the coordinates
on $\mathbb{R}^{4}$ and abbreviate $\underline{x}=\left(x_{0},x_{2},x_{3}\right)$.
Let $\left(T^{*}\mathbb{R}^{4}\right)_{+}=\left\{ \left(x,\xi\right)\in T^{*}\mathbb{R}^{4}|\xi_{3}>0\right\} $
and let $\left(\hat{\xi}_{0},\hat{\xi}_{1},\hat{\xi}_{2}\right)=\left(\xi_{3}^{-1}\xi_{0},\xi_{3}^{-1}\xi_{1},\xi_{3}^{-1}\xi_{2}\right)$
to be the homogeneous variables on this cone. It shall also be useful
to define the homogeneous variables 
\begin{align}
\hat{x}_{3} & \coloneqq x_{3}+\frac{1}{2}x_{1}\hat{\xi}_{1}\nonumber \\
\Omega & \coloneqq\xi_{3}\left(x_{1}^{2}+\hat{\xi}_{1}^{2}\right)\quad\textrm{ satisfying }\nonumber \\
\left\{ \hat{x}_{3},\Omega\right\}  & =0.\label{eq:symplectic commutation}
\end{align}

Set $h_{k}\left(u\right)\coloneqq\frac{\pi^{1/4}}{\left(2^{k}k!\right)}\left[-\partial_{u}+u\right]^{k}e^{-\frac{1}{2}u^{2}}$
to be the $k$th Hermite function and set $h_{k}\left(x_{1},\xi_{3}\right)\coloneqq\left|\xi_{3}\right|^{1/4}h_{k}\left(\left|\xi_{3}\right|^{1/2}x_{1}\right)$;
$k\in\mathbb{N}_{0}$. The Hermite operators $H_{k}:\mathcal{S}_{c}'\left(\mathbb{R}_{\underline{x}}^{3}\right)\rightarrow\mathcal{S}'\left(\mathbb{R}_{x}^{4}\right)$,
$H_{k}^{*}:\mathcal{S}_{c}'\left(\mathbb{R}_{x}^{4}\right)\rightarrow\mathcal{S}'\left(\mathbb{R}_{\underline{x}}^{3}\right)$
are then defined
\begin{align}
\left(H_{k}u\right)\left(x\right) & \coloneqq\left(2\pi\right)^{-1}\int e^{ix_{3}\xi_{3}}h_{k}\left(x_{1},\xi_{3}\right)\left(\mathcal{F}_{x_{3}}u\right)\left(\xi_{3}\right)d\xi_{3}\label{eq:Hermite operator}\\
\left(H_{k}^{*}u\right)\left(\underline{x}\right) & \coloneqq\left(2\pi\right)^{-1}\int e^{ix_{3}\xi_{3}}h_{k}\left(x_{1},\xi_{3}\right)\left(\mathcal{F}_{x_{3}}u\right)\left(\xi_{3}\right)dx_{1}d\xi_{3}\label{eq:Hermite operator adjoint}
\end{align}
where $\mathcal{F}_{x_{3}}\left(\xi_{3}\right)\coloneqq\int e^{-ix_{3}\xi_{3}}u\left(x_{3}\right)dx_{3}$
denotes the partial Fourier transform in the $x_{3}$ variable. The
above clearly maps $L^{2}\left(\mathbb{R}_{\underline{x}}^{3}\right)$,
$L^{2}\left(\mathbb{R}_{x}^{4}\right)$ into each other and as such
are adjoints satisfying 
\begin{equation}
H_{k}^{*}H_{l}=\delta_{kl}.\label{eq:Hermite op. orth.}
\end{equation}
 It is then an easy exercise to show 
\begin{align*}
WF\left(H_{k}u\right) & =\left\{ \left(0,\underline{x};0,\underline{\xi}\right)|\left(\underline{x};\underline{\xi}\right)\in WF\left(u\right)\right\} \\
WF\left(H_{k}^{*}v\right) & =\left\{ \left(\underline{x};\underline{\xi}\right)|\left(0,\underline{x};0,\underline{\xi}\right)\in WF\left(v\right)\right\} 
\end{align*}
$\forall u\in\mathcal{S}_{c}'\left(\mathbb{R}_{\underline{x}}^{3}\right),\,v\in\mathcal{S}_{c}'\left(\mathbb{R}_{x}^{4}\right)$
and $k\in\mathbb{N}_{0}$. In particular distributions in $\mathcal{S}_{c}'\left(\mathbb{R}_{\underline{x}}^{3}\right)$
micro-supported in $\left\{ \xi_{3}>c\left|\underline{\xi}\right|\right\} \subset T^{*}\mathbb{R}^{3}$
are mapped into those micro-supported in $\left\{ \xi_{3}>c\left|\xi\right|\right\} \subset T^{*}\mathbb{R}^{4}$
under $H_{k}$ for each $c>0$ and vice versa under $H_{k}^{*}$.
As acting on such one now has the identities

\begin{align}
\left[-\xi_{1}+x_{1}\xi_{3}\right]H_{k} & =H_{k+1}\left[2\left(k+1\right)\xi_{3}\right]^{1/2}\nonumber \\
\left[\xi_{1}+x_{1}\xi_{3}\right]H_{k} & =H_{k-1}\left[2k\xi_{3}\right]^{1/2},\nonumber \\
\Omega H_{k} & =H_{k}\left(2k+1\right)\nonumber \\
\hat{x}_{3}H_{k} & =H_{k}x_{3}\label{eq:raising lowering Op}
\end{align}
(cf. \cite{Boutet-Treves-74} Sec. 6). In particular 
\begin{equation}
a\left(x_{0},x_{2},\hat{x}_{3};\xi_{0},\xi_{2},\xi_{3};x_{1}^{2}+\hat{\xi}_{1}^{2}\right)^{W}H_{k}=H_{k}a\left(\underline{x},\underline{\xi};\xi_{3}^{-1}\left(2k+1\right)\right)^{W}\label{eq: Hermite transf symbols}
\end{equation}
for any $a\in S^{m}\left(T^{*}\mathbb{R}_{x}^{4}\right)$ of the given
form. The image of each $H_{k}$ thus corresponds to an eigenspace
of $\Omega$ by \prettyref{eq:raising lowering Op} and is referred
to as a Landau level.

The \textit{Hermite transform }is now defined\textit{
\begin{align}
H^{*}:\mathcal{S}_{c}'\left(\mathbb{R}_{x}^{4}\right) & \rightarrow\mathcal{S}'\left(\mathbb{R}_{\underline{x}}^{3};\mathbb{C}^{\mathbb{N}_{0}}\right);\nonumber \\
\left(H^{*}u\right)_{k} & \coloneqq H_{k}^{*}u\label{eq:Hermite transform}
\end{align}
} as the map from $\mathcal{S}_{c}'\left(\mathbb{R}_{4}^{4}\right)$
into $\mathcal{S}'\left(\mathbb{R}_{\underline{x}}^{3}\right)$-valued
$\mathbb{N}_{0}$-sequences. 

Next, set $h^{s}\coloneqq\left\{ u:\mathbb{N}_{0}\rightarrow\mathbb{C}|\left\Vert u\right\Vert _{s}\coloneqq\sum\left\langle k\right\rangle ^{2s}\left|u\left(k\right)\right|^{2}<\infty\right\} \subset\mathbb{C}^{\mathbb{N}_{0}}$
with the special notation $l^{2}=h^{0}$. As just noted $H$ maps
$L_{c}^{2}\left(\mathbb{R}_{x}^{4}\right)$ into $L^{2}\left(\mathbb{R}_{\underline{x}}^{3};l^{2}\right)$.
More generally, we define $\forall s_{1},s_{2}\in\mathbb{R}$ the
anisotropic Sobolev space 
\[
h^{s_{1},s_{2}}=H^{s_{2}}\left(\mathbb{R}_{\underline{x}}^{3};h^{s_{1}}\right)=\left\{ u:\mathbb{N}_{0}\rightarrow H^{s_{2}}\left(\mathbb{R}_{\underline{x}}^{3}\right)|\left\Vert u\right\Vert _{s_{1},s_{2}}\coloneqq\left(\sum_{k\in\mathbb{N}_{0}}\left\langle k\right\rangle ^{2s_{1}}\left\Vert u\left(k\right)\right\Vert _{H^{s_{2}}}^{2}\right)^{1/2}<\infty\right\} .
\]
For $s_{1},s_{2}\in\mathbb{N}_{0}$, it follows from \prettyref{eq:raising lowering Op}
that the Hermite transform $H$ is a isomorphism between $h^{s_{1},s_{2}}$
and the space
\begin{equation}
H^{s_{1},s_{2}}\coloneqq\left\{ u\in\mathcal{S}'\left(\mathbb{R}_{x}^{4}\right)|\left(x_{1}\xi_{3}^{1/2}\right)^{\alpha}\left(\xi_{3}^{-1/2}\xi_{1}\right)^{\beta}\left\langle \underline{\xi}\right\rangle ^{s_{2}}\hat{u}\in L^{2}\left(\mathbb{R}_{\xi}^{4}\right),\,\forall\left|\alpha\right|+\left|\beta\right|\leq2s_{1}\right\} \subset\mathcal{S}'\left(\mathbb{R}_{x}^{4}\right).\label{eq:anisotropic Sobolev}
\end{equation}

\subsection{Symbol classes}

In this subsection we define classes of pseudo-differential operators
on $\mathbb{R}^{4}$ using the Hermite transform \prettyref{eq:Hermite transform}. 

First for each $\delta_{1},\delta_{2}>0$ define the conic subsets
\begin{align}
K_{\delta_{1},\delta_{2}}\coloneqq & \left\{ \xi_{3}>0;\left|\left(\hat{\xi}_{2},x_{0},x_{2},\hat{x}_{3}\right)\right|<\delta_{1},\left|\left(\hat{\xi}_{0},\hat{\xi}_{1},x_{1}\right)\right|<\delta_{2}\right\} \subset T^{*}\mathbb{R}^{4}\nonumber \\
\Sigma_{0}\coloneqq & \left\{ \left(x,\xi\right)\in K_{\delta_{1},\delta_{2}}|\xi_{0}=x_{1}=\xi_{1}=0\right\} \subset K_{\delta_{1},\delta_{2}}\subset T^{*}\mathbb{R}^{4}\label{eq:model cones}
\end{align}
containing the point $\left(0,0,0,0;0,0,0,1\right)\in T^{*}\mathbb{R}^{4}$.
The corresponding spherical bundles for the cones above are $S^{*}K_{\delta_{1},\delta_{2}}=\left\{ \left(x,\xi\right)\in K_{\delta_{1},\delta_{2}}|\left|\xi\right|=1\right\} $,
$S^{*}\Sigma_{0}=\left\{ \left(x,\xi\right)\in\Sigma_{0}|\left|\xi\right|=1\right\} $.
Letting $\rho\left(x_{0},x_{2}\hat{x}_{3};\hat{\xi}_{2},\xi_{3}\right)=\xi_{3}\hat{\rho}\left(x_{0},x_{2}\hat{x}_{3};\hat{\xi}_{2}\right)\in C^{\infty}\left(\Sigma_{0}\right)$,
be a positive homogeneous function of degree one on the sub-cone $\Sigma_{0}$
set 
\begin{equation}
d_{\rho}\left(x,\xi\right)=\xi_{3}\hat{d}_{\rho}\coloneqq\sqrt{\xi_{0}^{2}+\rho\Omega}\label{eq:cone defining function}
\end{equation}
as a defining function for the respective sub-cone $\left\{ d_{\rho}=0\right\} =\Sigma_{0}\subset K_{\delta_{1},\delta_{2}}$
and subset $\left\{ d_{\rho}=0\right\} =S^{*}\Sigma_{0}\subset S^{*}K_{\delta_{1},\delta_{2}}$
above. It is further mapped to $d_{k}\coloneqq\sqrt{\xi_{0}^{2}+\rho\left(2k+1\right)}$
under the Hermite transform $H_{k}^{*}$ for each $k\in\mathbb{N}_{0}$.
The blowup along these sub-cones and corresponding blowdown map are
defined via
\begin{align}
\left[K_{\delta_{1},\delta_{2}};\Sigma_{0}\right] & \coloneqq\left\{ \left(x,\xi\right)\in K_{\delta_{1},\delta_{2}}|\hat{d}_{\rho}\geq1\right\} \nonumber \\
\beta: & \left[K_{\delta_{1},\delta_{2}};\Sigma_{0}\right]\rightarrow K_{\delta_{1},\delta_{2}};\nonumber \\
\beta\left(x_{1},\hat{\xi}_{0},\hat{\xi}_{1};x_{0},x_{2},x_{3},\hat{\xi}_{2},\xi_{3}\right) & \coloneqq\left(\frac{\hat{d}_{\rho}-1}{\hat{d}_{\rho}}\left(x_{1},\hat{\xi}_{0},\hat{\xi}_{1}\right);x_{0},x_{2},x_{3},\hat{\xi}_{2},\xi_{3}\right).\label{eq:blowdown}
\end{align}
The blowup $\left[K_{\delta_{1},\delta_{2}};\Sigma_{0}\right]$ is
a manifold with boundary 
\[
\partial\left[K_{\delta_{1},\delta_{2}};\Sigma_{0}\right]=\left\{ \left(x,\xi\right)\in K_{\delta_{1},\delta_{2}},\,\hat{d}_{\rho}=1\right\} 
\]
 and interior $\left[K_{\delta_{1},\delta_{2}};\Sigma_{0}\right]^{o}=\left\{ \left(x,\xi\right)\in K_{\delta_{1},\delta_{2}},\,\hat{d}_{\rho}>1\right\} $.
The boundary defining function is the pullback 
\begin{equation}
\beta^{*}\hat{d}_{\rho}=\hat{d}_{\rho}-1\label{eq:boundary defining function}
\end{equation}
of \prettyref{eq:cone defining function} under the blowdown. A similar
blowup $\left[S^{*}K_{\delta_{1},\delta_{2}};S^{*}\Sigma_{0}\right]$
with interior $\left[S^{*}K_{\delta_{1},\delta_{2}};S^{*}\Sigma_{0}\right]^{o}$
and corresponding blowdown map to $S^{*}K_{\delta_{1},\delta_{2}}$
may also be defined. Let $C^{\infty}\left(\left[K_{\delta_{1},\delta_{2}};\Sigma_{0}\right]^{o}\right),\,C^{\infty}\left(\left[S^{*}K_{\delta_{1},\delta_{2}};S^{*}\Sigma_{0}\right]^{o}\right)$,
$C^{\infty}\left(\left[K_{\delta_{1},\delta_{2}};\Sigma_{0}\right]\right),\,C^{\infty}\left(\left[S^{*}K_{\delta_{1},\delta_{2}};S^{*}\Sigma_{0}\right]\right)$
denote smooth functions on the interior and those extending to the
boundary respectively. Similarly, 
\begin{align}
C_{\textrm{inv}}^{\infty}\left(\left[K_{\delta_{1},\delta_{2}};\Sigma_{0}\right]^{o}\right) & \subset C^{\infty}\left(\left[K_{\delta_{1},\delta_{2}};\Sigma_{0}\right]^{o}\right)\nonumber \\
C_{\textrm{inv}}^{\infty}\left(\left[S^{*}K_{\delta_{1},\delta_{2}};S^{*}\Sigma_{0}\right]^{o}\right) & \subset C^{\infty}\left(\left[S^{*}K_{\delta_{1},\delta_{2}};S^{*}\Sigma_{0}\right]^{o}\right)\nonumber \\
\,C_{\textrm{inv}}^{\infty}\left(\left[K_{\delta_{1},\delta_{2}};\Sigma_{0}\right]\right) & \subset C^{\infty}\left(\left[K_{\delta_{1},\delta_{2}};\Sigma_{0}\right]\right)\nonumber \\
C_{\textrm{inv}}^{\infty}\left(\left[S^{*}K_{\delta_{1},\delta_{2}};S^{*}\Sigma_{0}\right]\right) & \subset C^{\infty}\left(\left[S^{*}K_{\delta_{1},\delta_{2}};S^{*}\Sigma_{0}\right]\right)\label{eq:Classes 1}
\end{align}
 are subsets of those functions $f$ which have the rotational symmetry
\[
\left\{ \Omega,f\right\} =\left(x_{1}\partial_{\hat{\xi}_{1}}-\hat{\xi}_{1}\partial_{x_{1}}\right)f=0.
\]
These are functions of the arguments 
\begin{equation}
\left(x_{1}^{2}+\hat{\xi}_{1}^{2},\hat{\xi}_{0},\hat{\xi}_{2},\xi_{3};x_{0},x_{2},\hat{x}_{3}\right).\label{eq:arguments}
\end{equation}
Further, let 
\begin{align}
\left(\beta^{*}d_{\rho}\right)^{-m_{2}}C^{\infty}\left(\left[K_{\delta_{1},\delta_{2}};\Sigma_{0}\right]\right) & \subset C^{\infty}\left(\left[K_{\delta_{1},\delta_{2}};\Sigma_{0}\right]^{o}\right)\nonumber \\
\left(\beta^{*}d_{\rho}\right)^{-m_{2}}C_{\textrm{inv}}^{\infty}\left(\left[K_{\delta_{1},\delta_{2}};\Sigma_{0}\right]\right) & \subset C^{\infty}\left(\left[K_{\delta_{1},\delta_{2}};\Sigma_{0}\right]^{o}\right)\nonumber \\
\left(\beta^{*}\hat{d}_{\rho}\right)^{-m_{2}}C^{\infty}\left(\left[S^{*}K_{\delta_{1},\delta_{2}};S^{*}\Sigma_{0}\right]\right) & \subset C^{\infty}\left(\left[S^{*}K_{\delta_{1},\delta_{2}};S^{*}\Sigma_{0}\right]^{o}\right)\nonumber \\
\left(\beta^{*}\hat{d}_{\rho}\right)^{-m_{2}}C_{\textrm{inv}}^{\infty}\left(\left[S^{*}K_{\delta_{1},\delta_{2}};S^{*}\Sigma_{0}\right]\right) & \subset C^{\infty}\left(\left[S^{*}K_{\delta_{1},\delta_{2}};S^{*}\Sigma_{0}\right]^{o}\right)\label{eq:Classes 2}
\end{align}
denote the set of functions $f$ in the interiors such that $\left(\beta^{*}d_{\rho}\right)^{m_{2}}f\in C^{\infty}\left(\left[K_{\delta_{1},\delta_{2}};\Sigma_{0}\right]\right)$,
$\left(\beta^{*}d_{\rho}\right)^{m_{2}}f\in C_{\textrm{inv}}^{\infty}\left(\left[K_{\delta_{1},\delta_{2}};\Sigma_{0}\right]\right)$,$\left(\beta^{*}\hat{d}_{\rho}\right)^{m_{2}}f\in C^{\infty}\left(\left[S^{*}K_{\delta_{1},\delta_{2}};S^{*}\Sigma_{0}\right]\right)$,
$\left(\beta^{*}\hat{d}_{\rho}\right)^{m_{2}}f\in C_{\textrm{inv}}^{\infty}\left(\left[S^{*}K_{\delta_{1},\delta_{2}};S^{*}\Sigma_{0}\right]\right)$
respectively. Finally denote by $\left(\beta^{*}d_{\rho}\right)^{-m_{2}}C_{c,\textrm{inv}}^{\infty}\left(\left[K_{\delta_{1},\delta_{2}};\Sigma_{0}\right]\right)$,
$\left(\beta^{*}\hat{d}_{\rho}\right)^{-m_{2}}C_{c,\textrm{inv}}^{\infty}\left(\left[S^{*}K_{\delta_{1},\delta_{2}};S^{*}\Sigma_{0}\right]\right)$
the subset of those functions supported in $\left[K_{\delta_{1}-\varepsilon,\delta_{2}-\varepsilon};\Sigma_{0}\right]$,
$\left[SK_{\delta_{1}-\varepsilon,\delta_{2}-\varepsilon};S^{*}\Sigma_{0}\right]$
respectively for some $\varepsilon>0$. 

Next set $\partial_{\hat{d}_{\rho}}\coloneqq\frac{1}{\hat{d}_{\rho}}\left[\hat{\xi}_{0}\partial_{\hat{\xi}_{0}}+\hat{\rho}\left(x_{1}\partial_{x_{1}}+\hat{\xi}_{1}\partial_{\hat{\xi}_{1}}\right)\right]$
to be the homogeneous radial vector field on the blowups $\left[K_{\delta_{1},\delta_{2}};\Sigma_{0}\right],\left[S^{*}K_{\delta_{1},\delta_{2}};S^{*}\Sigma_{0}\right].$
We now define the class of symbols $S^{m_{1},m_{2}}\left(\mathbb{R}^{4},\Sigma_{0}\right)$,
$m_{1},m_{2}\in\mathbb{R}$, as the set of functions $a\in\left(\beta^{*}d_{\rho}\right)^{-m_{2}}C_{c,\textrm{inv}}^{\infty}\left(\left[K_{1,1};\Sigma_{0}\right]\right)$
satisfying
\begin{equation}
\left\Vert a\right\Vert _{\alpha,\beta,\gamma}\coloneqq\sup_{\left[K_{1,1};\Sigma_{0}\right]}\left|\left(\beta^{*}\hat{d}_{\rho}\right)^{m_{2}+\beta}\xi_{3}^{-m_{1}+\alpha}\partial_{\xi_{3}}^{\alpha}\partial_{\hat{d}_{\rho}}^{\beta}\left(T_{1}\ldots T_{N}\right)a\right|<\infty,\label{eq:symbolic estimates}
\end{equation}
$\forall\left(\alpha,\beta\right)\in\mathbb{N}_{0}\times\mathbb{N}_{0}$
and any set of smooth, homogeneous of degree zero, vector fields $\left(T_{1},\ldots,T_{N}\right)$
on the blowup that are tangent to the boundary. For any $a\in S^{m_{1},m_{2}}\left(\mathbb{R}^{4},\Sigma_{0}\right)$,
we shall also define the associated sequence of functions $a_{k}\in C^{\infty}\left(\mathbb{R}_{\underline{x},\underline{\xi}}^{6}\right)$
, $k\in\mathbb{N}_{0}$, via 
\begin{equation}
a_{k}\coloneqq\left(\beta^{-1}\right)^{*}a\left(\left(2k+1\right)\xi_{3}^{-1};\hat{\xi}_{0},\hat{\xi}_{2},\xi_{3};x_{0},x_{2},x_{3}\right),\label{eq: Hermite transform of symbol}
\end{equation}
where $\left(2k+1\right)\xi_{3}^{-1}$, $x_{3}$ replace the $x_{1}^{2}+\hat{\xi}_{1}^{2}$,
$\hat{x}_{3}$ arguments \prettyref{eq:arguments} respectively.

\subsection{Quantization and calculus}

The quantization $a^{H}:\mathcal{S}\left(\mathbb{R}_{x}^{4}\right)\rightarrow\mathcal{S}\left(\mathbb{R}_{x}^{4}\right)$
of a symbol $a\in S^{m_{1},m_{2}}\left(\mathbb{R}^{4},\Sigma_{0}\right)$
is defined by the rule 
\begin{equation}
H_{k}a^{H}H_{k'}^{*}=\delta_{kk'}a_{k}^{W}.\label{eq:Hermite Quantization 1}
\end{equation}
or alternately written
\begin{equation}
a^{H}=\sum_{k=0}^{\infty}H_{k}^{*}a_{k}^{W}H_{k}.\label{eq:def. Hermite Quantization 2}
\end{equation}
We denote by $\Psi^{m_{1},m_{2}}\left(\mathbb{R}^{4};\Sigma_{0}\right)$
the set of such quantizations. We remark that this class depends on
the decorated cone $\left(\Sigma_{0},\rho\right)$, i.e. additionally
on the homogeneous function $\rho\in C^{\infty}\left(\Sigma_{0}\right)$;
we shall sometimes precise this with the notation $\Psi^{m_{1},m_{2}}\left(\mathbb{R}^{4};\Sigma_{0},\rho\right)$
instead to avoid confusion. We note that the quantization above depends
only on the value of the symbol at points of $\left[K_{1,1};\Sigma_{0}\right]$
where $\xi_{3}\beta^{*}\left(x_{1}^{2}+\hat{\xi}_{1}^{2}\right)\in2\mathbb{N}_{0}+1$.
In particular the quantization $a^{H}$ only depends on the restriction
of $a$ to the parabolic region 
\begin{equation}
P=\left\{ \beta^{*}\hat{d}_{\rho}\geq\xi_{3}^{-1}\right\} .\label{eq: parabolic region}
\end{equation}
 This gives the inclusions
\begin{align}
\Psi^{m_{1},m_{2}}\left(\mathbb{R}^{4};\Sigma_{0}\right) & \subset\Psi^{m_{1}+\frac{1}{2},m_{2}-1}\left(\mathbb{R}^{4};\Sigma_{0}\right)\label{eq: inclusion of pseudo 1}\\
\Psi^{-\infty,m_{2}}\left(\mathbb{R}^{4};\Sigma_{0}\right) & \subset\Psi^{-\infty}\left(\mathbb{R}^{4}\right).\label{eq:inclusion of psedos}
\end{align}
In the case where the symbol $a=\beta^{*}a_{0}$ happens to be the
pullback of $a_{0}\in C^{\infty}\left(\left(T^{*}\mathbb{R}_{x}^{4}\right)^{+}\right)$
under the blowdown one has 
\begin{equation}
a^{H}=a_{0}^{W}\label{eq:twist pullback}
\end{equation}
by \prettyref{eq:raising lowering Op}, \prettyref{eq: Hermite transf symbols}
and this partly motivates our definition \prettyref{eq:Hermite Quantization 1},
\prettyref{eq:def. Hermite Quantization 2}. This also gives the inclusion
\begin{align}
\Psi_{\textrm{inv}}^{m}\left(\mathbb{R}^{4}\right) & \subset\Psi^{m,0}\left(\mathbb{R}^{4};\Sigma_{0}\right)\quad\textrm{where }\nonumber \\
\Psi_{\textrm{inv}}^{m}\left(\mathbb{R}^{4}\right) & \coloneqq\left\{ A=a^{W}\in\Psi^{m}\left(\mathbb{R}^{4}\right)|\textrm{spt}\left(a\right)\subset K_{1,1},\,\left\{ a,\Omega\right\} =0\right\} .\label{eq:inclusion of inv. classical symb.}
\end{align}

Next define a subclass $S_{\textrm{cl}}^{m_{1},m_{2}}\left(\mathbb{R}^{4},\Sigma_{0}\right)\subset S^{m_{1},m_{2}}\left(\mathbb{R}^{4},\Sigma_{0}\right)$
of classical symbols. This is the subset of those symbols $a$ for
which there exist $a_{j}\in\left(\beta^{*}\hat{d}_{\rho}\right)^{-m_{2}}C_{c,\textrm{inv}}^{\infty}\left(\left[S^{*}K_{1,1};S^{*}\Sigma_{0}\right]\right)$,
$j=0,1,\ldots$, $\chi\in C_{c}^{\text{\ensuremath{\infty}}}\left(\mathbb{R}\right)$,
such that 
\begin{equation}
a=\left[1-\chi\left(\left|\xi\right|\right)\right]\xi_{3}^{m_{1}}\left[a_{0}+\left(\beta^{*}d_{\rho}\right)^{-1}a_{1}+\ldots+\left(\beta^{*}d_{\rho}\right)^{-N}a_{N}\right]+S^{m_{1}-\frac{N+1}{2},m_{2}}\left(\mathbb{R}^{4},\Sigma_{0}\right),\label{eq:classical symbolic expansion}
\end{equation}
$\forall N\in\mathbb{N}_{0}$. Here the remainder estimate is understood
on the parabolic region $P$ \prettyref{eq: parabolic region}. Further
time dependent symbol classes $S_{t}^{m_{1},m_{2}}\left(\mathbb{R}^{4},\Sigma_{0}\right)$,
$S_{\textrm{cl},t}^{m_{1},m_{2}}\left(\mathbb{R}^{4},\Sigma_{0}\right)$
are defined as follows: $S_{t}^{m_{1},m_{2}}\left(\mathbb{R}^{4},\Sigma_{0}\right)$
is the set of time-dependent functions $a\in C^{\infty}\left(\mathbb{R}_{t}\times\left[K_{1,1};\Sigma_{0}\right]^{o}\right)$
such that each $a\left(t;.\right)\in\left(\beta^{*}d_{\rho}\right)^{-m_{2}}C_{c,\textrm{inv}}^{\infty}\left(\left[K_{1,1};\Sigma_{0}\right]\right)$,
$t\in\mathbb{R}$, with each estimate \prettyref{eq:symbolic estimates}
being uniform on compact intervals of time. Finally $S_{\textrm{cl},t}^{m_{1},m_{2}}\left(\mathbb{R}^{4},\Sigma_{0}\right)$
is the subset of those symbols $a\in S_{t}^{m_{1},m_{2}}\left(\mathbb{R}^{4},\Sigma_{0}\right)$
for which there exist time independent $a_{j}\in S_{\textrm{cl}}^{m_{1}-j,m_{2}+j}\left(\mathbb{R}^{4},\Sigma_{0}\right)$,
$j=0,1,\ldots$, such that 
\begin{equation}
a=\sum_{j=0}^{N}t^{j}a_{j}+t^{N+1}S^{m_{1}-\frac{N+1}{2},m_{2}}\left(\mathbb{R}^{4},\Sigma_{0}\right),\label{eq:time dep symbolic exp.}
\end{equation}
$\forall N\in\mathbb{N}_{0}$. We denote by $\Psi_{\textrm{cl}}^{m_{1},m_{2}}\left(\mathbb{R}^{4};\Sigma_{0}\right)$,
$\Psi_{\textrm{cl},t}^{m_{1},m_{2}}\left(\mathbb{R}^{4};\Sigma_{0}\right)$
the set of quantizations of the classical symbols \prettyref{eq:classical symbolic expansion},
\prettyref{eq:time dep symbolic exp.}. 

Standard application of Borel's lemma gives asymptotic summation:
for any set of operators $A_{j}\in\Psi^{m_{1}-j,m_{2}+j}$, $B_{j}\in\Psi^{m_{1},m_{2}-j}$,
$j\in\mathbb{N}_{0}$, there exists $A,B\in\Psi^{m_{1},m_{2}}$ such
that
\begin{align}
A-\sum_{j=0}^{N}A_{j}\in\Psi^{m_{1}-\frac{N}{2},m_{2}},\nonumber \\
B-\sum_{j=0}^{N}B_{j}\in\Psi^{m_{1},m_{2}-N}, & \forall N\in\mathbb{N}_{0}\label{eq:asymptotic summation}
\end{align}
and respectively for the classes $\Psi_{\textrm{cl}}^{m_{1},m_{2}}\left(\mathbb{R}^{4};\Sigma_{0}\right)$,
$\Psi_{\textrm{cl},t}^{m_{1},m_{2}}\left(\mathbb{R}^{4};\Sigma_{0}\right)$.

Below we show that these classes are well behaved under composition
and adjoint. 
\begin{prop}
\label{prop: composition in P m1 m2 m3} For $a^{H}\in\Psi^{m_{1},m_{2}}\left(\mathbb{R}^{4};\Sigma_{0}\right),\;b^{H}\in\Psi^{m_{1}',m_{2}'}\left(\mathbb{R}^{4};\Sigma_{0}\right)$
we have 
\begin{align}
a^{H}b^{H} & \in\Psi^{m_{1}+m_{1}',m_{2}+m_{2}'}\left(\mathbb{R}^{4};\Sigma_{0}\right)\nonumber \\
a^{H}b^{H} & =\left(ab\right)^{H}+\Psi^{m_{1}+m_{1}'-1,m_{2}+m_{2}'+1}\left(\mathbb{R}^{4};\Sigma_{0}\right)\nonumber \\
\left(a^{H}\right)^{*} & =\bar{a}^{H}\label{eq:composition properties}
\end{align}
and respectively for the classes $\Psi_{\textrm{cl}}^{m_{1},m_{2}}\left(\mathbb{R}^{4};\Sigma_{0}\right)$,
$\Psi_{\textrm{cl},t}^{m_{1},m_{2}}\left(\mathbb{R}^{4};\Sigma_{0}\right)$.
\end{prop}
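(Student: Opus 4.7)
The plan is to reduce Hermite composition to standard Weyl composition on each Landau level via the orthogonality \eqref{eq:Hermite op. orth.}, then to translate the Moyal expansion back through the Hermite transform \eqref{eq: Hermite transform of symbol}. Inserting the definition \eqref{eq:def. Hermite Quantization 2} and using $H_k^* H_l = \delta_{kl}$, the double sum defining $a^H b^H$ diagonalizes across levels and contributes $a_k^W b_k^W$ on level $k$. On each level, ordinary Weyl calculus on $T^*\mathbb{R}^3_{\underline{x}}$ gives $a_k^W b_k^W = (a_k b_k)^W + \tfrac{1}{2i}\{a_k,b_k\}^W + \ldots$, with the Poisson bracket in the $(\underline{x},\underline{\xi})$ variables only (the $x_1,\xi_1$ directions having been consumed by the Hermite transform). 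The multiplicative property $a_k b_k = (ab)_k$ of the pointwise substitution \eqref{eq: Hermite transform of symbol} then identifies the leading piece with $(ab)^H$, establishing the first and second lines of \eqref{eq:composition properties} at principal order.

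The main task is to show that the Moyal remainder sequence $\{a_k \# b_k - a_k b_k\}_{k \in \mathbb{N}_0}$ arises through \eqref{eq: Hermite transform of symbol} from a single symbol of class $S^{m_1+m_1'-1, m_2+m_2'+1}$. Writing each symbol invariantly as $\tilde{a}(\hat\xi_0,\hat\xi_2,\xi_3;x_0,x_2,\hat x_3;y)$ in the rotational argument $y = x_1^2+\hat\xi_1^2$, the Moyal corrections become derivatives of $\tilde a,\tilde b$ in the six variables $(\underline{x},\underline{\xi})$ evaluated at $y=(2k+1)\xi_3^{-1}$. I would check that each $\partial_{\xi_j}$ for $j=0,2$ carries a factor $\xi_3^{-1}$ via the rescaling $\hat\xi_j = \xi_j/\xi_3$; that $\partial_{\xi_3}$ drops one order in $\xi_3$ outright; and that differentiation in the direction $\xi_0$ transverse to $\Sigma_0$ costs one factor of $(\beta^* d_\rho)^{-1}$, by direct computation with $\hat d_k = \sqrt{\hat\xi_0^2 + \hat\rho(2k+1)\xi_3^{-1}}$. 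Together these estimates place the first Moyal correction in the claimed bigraded order, with uniformity in $k$ furnished by \eqref{eq:symbolic estimates} applied to $\tilde a, \tilde b$. Since the Hermite quantization depends only on the restriction of its symbol to the discrete lattice $y = (2k+1)\xi_3^{-1}$ in the parabolic region \eqref{eq: parabolic region}, the sequence is smoothly interpolated in $y$ to yield a single symbol $c$ on $[K_{1,1};\Sigma_0]$, and Borel summation \eqref{eq:asymptotic summation} absorbs the remaining Moyal terms, which are of even lower order. The classical and time-dependent analogues follow by tracking the expansions \eqref{eq:classical symbolic expansion}, \eqref{eq:time dep symbolic exp.} through each step.

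The adjoint formula is immediate: $(H_k^* a_k^W H_k)^* = H_k^* \overline{a_k}^W H_k$ by the mutual adjointness of $H_k, H_k^*$ and the self-adjointness of the Weyl quantization of real symbols, giving $(a^H)^* = \bar{a}^H$. The principal technical obstacle is the recognition step in the middle paragraph: the Moyal corrections are defined only on the discrete sequence $y = (2k+1)\xi_3^{-1}$, and one must verify that they interpolate to a genuine smooth, rotationally invariant symbol on the blowup of the claimed bigraded order, extracting the $(\beta^* d_\rho)^{-1}$ factor from chain-rule contributions in a $k$-uniform manner. Once this is in place the composition and adjoint statements drop out directly from the calculus established.
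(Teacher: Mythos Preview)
Your proposal is correct and follows essentially the same route as the paper: diagonalize the composition across Landau levels using $H_k^*H_l=\delta_{kl}$, apply the Moyal expansion for Weyl calculus on $T^*\mathbb{R}^3_{\underline{x}}$, and then observe that each term of the expansion, written in the invariant variable $y=x_1^2+\hat\xi_1^2$, interpolates to a symbol in $S^{m_1+m_1'-|\alpha|,m_2+m_2'+|\alpha|}$ which can be asymptotically summed. Your derivative-counting paragraph is a more explicit unpacking of exactly the step the paper compresses into one sentence, and the adjoint argument is identical.
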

\begin{proof}
We first prove that the corresponding symbols $a_{k}$ \& $b_{k}$
\prettyref{eq: Hermite transform of symbol} compose for each $k\in\mathbb{N}_{0}$.
From \prettyref{eq:def. Hermite Quantization 2} and composition of
Weyl symbols, the composed symbol $a_{k}\circ b_{k}$ has an asymptotic
expansion
\begin{align}
a_{k}^{W}\circ b_{k}^{W} & \sim\left[\sum_{\left|\alpha\right|=0}^{\infty}\frac{i^{\alpha}}{\alpha!}\left(\underbrace{D_{x}D_{\eta}-D_{y}D_{\xi}}_{=A\left(D\right)}\right)^{\alpha}\left[a_{k}\left(\underline{x},\underline{\xi}\right)b_{k}\left(\underline{y},\underline{\eta}\right)\right]_{\underline{x}=\underline{y};\underline{\xi}=\underline{\eta}}\right]^{W}.\label{eq:composition exp.}
\end{align}
Each successive term above then corresponds to a symbol in $S^{m_{1}+m_{1}'-\left|\alpha\right|,m_{2}+m_{2}'+\left|\alpha\right|}\left(\mathbb{R}^{4};\Sigma_{0}\right)\subset S^{m_{1}+m_{1}'-\frac{\left|\alpha\right|}{2},m_{2}+m_{2}'}\left(\mathbb{R}^{4};\Sigma_{0}\right)$
\prettyref{eq: inclusion of pseudo 1} and can be asymptotically summed
\prettyref{eq:asymptotic summation}. The residual term above is then
in $\Psi^{-\infty,m_{2}}\left(\mathbb{R}^{4}\right)\subset\Psi^{-\infty}\left(\mathbb{R}^{4}\right)$
\prettyref{eq:inclusion of psedos}. The support condition for the
composed symbol follows from a standard integral representation formula
for the symbol of the composition $a_{k}\circ b_{k}$ (\cite{HormanderIII}
Sec. 18.1). The adjoint property is an immediate consequence of the
usual adjoint property $\left(a_{k}^{W}\right)^{*}=\bar{a}_{k}^{W}$of
Weyl quantization for each $k$.
\end{proof}
The principal symbol of $A=a^{H}\in\Psi_{\textrm{cl}}^{m_{1},m_{2}}$
is now defined via 
\begin{equation}
\sigma_{m_{1},m_{2}}^{H}\left(A\right)=a_{0}\in\left(\beta^{*}\hat{d}_{\rho}\right)^{-m_{2}}C_{c,\textrm{inv}}^{\infty}\left(\left[S^{*}K_{1,1};S^{*}\Sigma_{0}\right]\right)\label{eq:symbol def.}
\end{equation}
to be the leading term in the expansion \prettyref{eq:classical symbolic expansion}
above. One has the symbol short exact sequence 
\begin{equation}
0\rightarrow\Psi_{\textrm{cl}}^{m_{1}-1,m_{2}+1}\left(\mathbb{R}^{4};\Sigma_{0}\right)\rightarrow\Psi_{\textrm{cl}}^{m_{1},m_{2}}\left(\mathbb{R}^{4};\Sigma_{0}\right)\xrightarrow{\sigma_{m_{1},m_{2}}^{H}}\left(\beta^{*}\hat{d}_{\rho}\right)^{-m_{2}}C_{c,\textrm{inv}}^{\infty}\left(\left[S^{*}K_{1,1};S^{*}\Sigma_{0}\right]\right)\rightarrow0.\label{eq:symbol exact seq.}
\end{equation}
From \prettyref{eq:composition properties}, it follows that the symbol
\prettyref{eq:symbol def.} is multiplicative and closed under adjoints
\begin{align}
\sigma_{m_{1}+m_{1}',m_{2}+m_{2}'}^{H}\left(AB\right) & =\sigma_{m_{1},m_{2}}^{H}\left(A\right)\sigma_{m_{1}',m_{2}'}^{H}\left(B\right),\label{eq:symbol is multiplicative}\\
\sigma_{m_{1},m_{2}}^{H}\left(A^{*}\right) & =\overline{\sigma_{m_{1},m_{2}}^{H}\left(A\right)},
\end{align}
$\forall A\in\Psi_{\textrm{cl}}^{m_{1},m_{2}}\left(\mathbb{R}^{4};\Sigma_{0}\right)$,
$B\in\Psi_{\textrm{cl}}^{m_{1}',m_{2}'}\left(\mathbb{R}^{4};\Sigma_{0}\right)$.
The symbol exact sequence \prettyref{eq:symbol exact seq.} gives
\begin{align*}
\left[A,B\right] & \in\Psi_{\textrm{cl}}^{m_{1}+m_{1}'-1,m_{2}+m_{2}'+1}\left(\mathbb{R}^{4};\Sigma_{0}\right)\quad\textrm{with}\\
\sigma_{m_{1}+m_{1}'-1,m_{2}+m_{2}'+1}^{H}\left(\left[A,B\right]\right) & =i\left\{ \sigma_{m_{1},m_{2}}^{H}\left(A\right),\sigma_{m_{1}',m_{2}'}^{H}\left(B\right)\right\} 
\end{align*}
following from \prettyref{eq:composition exp.}.

Next we define the generalized Sobolev spaces as the subspace of tempered
distributions $u\in H^{s_{1},s_{2}}\left(\mathbb{R}_{x}^{4};\Sigma_{0}\right)\subset\mathcal{S}_{c}'\left(\mathbb{R}_{x}^{4}\right)$,
$s_{1},s_{2}\in\mathbb{R}$, micro-supported in $K_{1,1}$ satisfying
\begin{equation}
\left\Vert u\right\Vert _{H^{s_{1},s_{2}}}\coloneqq\int\sum_{k\in\mathbb{N}_{0}}\left|\left(2k+1\right)^{-\frac{1}{2}s_{2}}\left\langle \xi_{3}\right\rangle ^{s_{1}+\frac{1}{2}s_{2}}H_{k}^{*}u\right|^{2}dx<\infty.\label{eq: anisotropicSobolev norm}
\end{equation}
Following \prettyref{eq:raising lowering Op}, \prettyref{eq: Hermite transf symbols},
\prettyref{eq:symbolic estimates} and the Calderon-Vaillancourt inequality,
these can be equivalently characterized as $u\subset\mathcal{S}_{c}'\left(\mathbb{R}_{x}^{4}\right)$
micro-supported in $K_{1,1}$ satisfying 
\begin{equation}
A\in\Psi^{s_{1},s_{2}}\left(\mathbb{R}^{4};\Sigma_{0}\right)\implies Au\in L^{2}.\label{eq:pseudodiff charac of Hs1,s2}
\end{equation}
In light of the inclusions \prettyref{eq:inclusion of psedos}, \prettyref{eq:inclusion of inv. classical symb.}
this gives 
\begin{align}
H^{s_{1}+\frac{1}{2},s_{2}-1}\left(\mathbb{R}_{x}^{4};\Sigma_{0}\right) & \subset H^{s_{1},s_{2}}\left(\mathbb{R}_{x}^{4};\Sigma_{0}\right)\nonumber \\
H^{s_{1},0}\left(\mathbb{R}_{x}^{4};\Sigma_{0}\right) & =\left\{ u\in H^{s_{1}}\left(\mathbb{R}_{x}^{4}\right)|WF\left(u\right)\subset K_{1,1}\right\} .\label{eq:Sobolev inclusions}
\end{align}
One further has Sobolev boundedness
\begin{equation}
u\in H^{s_{1},s_{2}}\left(\mathbb{R}_{x}^{4};\Sigma_{0}\right),\,A\in\Psi^{m_{1},m_{2}}\left(\mathbb{R}^{4};\Sigma_{0}\right)\implies Au\in H^{s_{1}-m_{1},s_{2}-m_{2}}\left(\mathbb{R}_{x}^{4};\Sigma_{0}\right).\label{eq:Sobolev boundedness}
\end{equation}

Next, we define the characteristic wavefront $WF_{\Sigma_{0}}\left(A\right)\subset\partial\left[S^{*}K_{1,1};S^{*}\Sigma_{0}\right]$
of an operator $A\in\Psi_{\textrm{cl}}^{m_{1},m_{2}}\left(\mathbb{R}^{4};\Sigma_{0}\right)$
in the exotic class as a subset of the boundary of the blowup. This
is the intersection $\left[\bigcap_{j=0}^{\infty}\textrm{spt}\left(a_{j}\right)\right]\cap\partial\left[S^{*}K_{1,1};S^{*}\Sigma_{0}\right]$,
of the supports of the symbols in its symbolic expansion \prettyref{eq:classical symbolic expansion}.
The characteristic wavefront $WF_{\Sigma_{0}}\left(u\right)\subset\partial\left[S^{*}K_{1,1};S^{*}\Sigma_{0}\right]$
of a distribution $u\subset\mathcal{S}'\left(\mathbb{R}_{x}^{4}\right)$
micro-supported in $K_{1,1}$ is also defined via 
\begin{equation}
\left(x,\xi\right)\notin WF_{\Sigma_{0}}\left(u\right)\iff\exists A\in\Psi_{\textrm{cl}}^{0,0}\left(\mathbb{R}^{4};\Sigma_{0}\right),\textrm{s.t.}\,\left(x,\xi\right)\in WF_{\Sigma_{0}}\left(A\right),\,Au\in C^{\infty}\label{eq:PDO charac. of WF}
\end{equation}
or equivalently via
\begin{equation}
\left(x,\xi\right)\notin WF_{\Sigma_{0}}\left(u\right)\iff\exists A\in\Psi_{\textrm{cl}}^{0,0}\left(\mathbb{R}^{4};\Sigma_{0}\right),\textrm{s.t.}\,\sigma_{0,0}^{H}\left(A\right)\left(x,\xi\right)\neq0,\,Au\in C^{\infty}.\label{eq:equiv PDO charac. of WF}
\end{equation}

The wavefronts can also be considered as conic subsets of $\partial\left[K_{1,1};\Sigma_{0}\right]$
and are again rotationally invariant under the action of $x_{1}\partial_{\hat{\xi}_{1}}-\hat{\xi}_{1}\partial_{x_{1}}$
by definition. The following are easily established 
\begin{align}
WF_{\Sigma_{0}}\left(A+B\right) & \subset WF_{\Sigma_{0}}\left(A\right)\cup WF_{\Sigma_{0}}\left(B\right)\nonumber \\
WF_{\Sigma_{0}}\left(AB\right) & \subset WF_{\Sigma_{0}}\left(A\right)\cap WF_{\Sigma_{0}}\left(B\right)\nonumber \\
WF_{\Sigma_{0}}\left(Au\right) & \subset WF_{\Sigma_{0}}\left(A\right)\cap WF_{\Sigma_{0}}\left(u\right)\label{eq: properties of WF}
\end{align}
$\forall A\in\Psi_{\textrm{cl}}^{m_{1},m_{2}}\left(\mathbb{R}^{4};\Sigma_{0}\right),\,B\in\Psi_{\textrm{cl}}^{m_{1}',m_{2}'}\left(\mathbb{R}^{4};\Sigma_{0}\right)$
and $u\subset\mathcal{S}'\left(\mathbb{R}_{x}^{4}\right)$ micro-supported
in $K_{1,1}$. Finally using \prettyref{eq:twist pullback}, \prettyref{eq:inclusion of inv. classical symb.}
and a partition of unity argument one shows 
\begin{equation}
\beta\left(WF_{\Sigma_{0}}\left(u\right)\right)=WF\left(u\right)\cap\Sigma_{0}\label{eq: ch WF projects to WF}
\end{equation}
under the blowdown map \prettyref{eq:blowdown}, for each $u\subset\mathcal{S}'\left(\mathbb{R}_{x}^{4}\right)$
micro-supported in $K_{1,1}$.

\section{\label{sec:Birkhoff-normal-forms}Birkhoff normal forms }

In this section we obtain two Birkhoff normal forms for $\Delta_{g^{E},\mu}$.
The first near points on the characteristic cone $\Sigma$ and the
second near any closed characteristic. 

\subsection{\label{subsec:Normal-form-near-Sigma}Normal form near $\Sigma$}

Choose the canonical quasi-contact form $a_{g^{E}}$ \prettyref{eq:Popp one form}
defining $E$ and let $x\in X$. As before one then has a system of
local Darboux coordinates $\left(x_{0},x_{1},x_{2},x_{3}\right)$
centered at $x$ such that 
\begin{align}
\hat{\rho}a_{g^{E}} & =\frac{1}{2}\left[dx_{3}+x_{1}dx_{2}-x_{2}dx_{1}\right]\label{eq: quasi-contact normal form}\\
U_{0}=Z & =\partial_{x_{0}}.\label{eq:characterictic normal form}
\end{align}
The distribution $E$ is locally generated by the vector fields $U_{0}$,
$U_{1}=\partial_{x_{1}}+x_{2}\partial_{x_{3}}$ and $U_{2}=\partial_{x_{2}}-x_{1}\partial_{x_{3}}$
and we let $g_{ij}$ denote the components of the metric in this basis.
Further let $X,Y\in\left(L^{E}\right)^{\perp}$ be orthonormal. The
relations $da_{g^{E}}\left(X,Y\right)=\hat{\rho}da_{g^{E}}\left(U_{1},U_{2}\right)=1$
imply the existence of locally defined functions $\delta_{1}$, $\delta_{2}$
and $\Lambda=\begin{bmatrix} & -1\\
1
\end{bmatrix}\begin{bmatrix}\alpha & \gamma\\
\gamma & \beta
\end{bmatrix}$ with the latter taking values in $\mathfrak{sp}\left(2\right)$ such
that 
\begin{align*}
Z & =U_{0}\\
\begin{bmatrix}X\\
Y
\end{bmatrix} & =\begin{bmatrix}\delta_{1}U_{0}\\
\delta_{2}U_{0}
\end{bmatrix}+\hat{\rho}^{1/2}e^{\Lambda}\begin{bmatrix}U_{1}\\
U_{2}
\end{bmatrix}.
\end{align*}
The symbol of the Laplacian is then calculated 
\begin{align*}
\sigma\left(\Delta_{g^{E},\mu}\right) & =\sigma\left(X\right)^{2}+\sigma\left(Y\right)^{2}+\sigma\left(Z\right)^{2}\\
 & =\left(1+\delta_{1}^{2}+\delta_{2}^{2}\right)\xi_{0}^{2}+2\xi_{0}\hat{\rho}^{1/2}\begin{bmatrix}\delta_{1} & \delta_{2}\end{bmatrix}e^{\Lambda}\begin{bmatrix}\eta_{1}\\
\eta_{2}
\end{bmatrix}+\hat{\rho}\begin{bmatrix}\eta_{1} & \eta_{2}\end{bmatrix}e^{\Lambda^{t}}e^{\Lambda}\begin{bmatrix}\eta_{1}\\
\eta_{2}
\end{bmatrix}
\end{align*}
with $\eta_{0},\eta_{1},\eta_{2}$ denoting the symbols 
\begin{align*}
\eta_{0} & =\sigma\left(U_{0}\right)=\xi_{0}\\
\eta_{1} & =\sigma\left(U_{1}\right)=\xi_{1}+x_{2}\xi_{3}\\
\eta_{2} & =\sigma\left(U_{2}\right)=\xi_{2}-x_{1}\xi_{3},
\end{align*}
in terms of the induced coordinates $\left(x,\xi\right)$ on the cotangent
bundle. The characteristic variety or vanishing locus of the symbol
is given by 
\begin{align*}
\Sigma & =\left\{ \left(x,sa_{g^{E}}\left(x\right)\right)|s\in\mathbb{R}\right\} .
\end{align*}
Now if $\left(x,\xi\right)\in\left(\Sigma\setminus0\right)\cap\pi^{-1}\left(x\right)$,
we clearly have from \prettyref{eq: quasi-contact normal form} that
$\left(x_{0},x_{1},x_{2},x_{3};\xi_{0},\xi_{1},\xi_{2}\right)=0$
while $\xi_{3}\neq0$. We may assume 
\begin{equation}
\left(x,\xi\right)=c\underbrace{\left(0,0,0,0;0,0,0,1\right)}_{=p_{0}},\label{eq:model point}
\end{equation}
$c>0$, is a positive homogeneous multiple of the given point. The
homogeneous coordinates $x_{j}$, $\hat{\xi_{j}}=\frac{\xi_{j}}{\xi_{3}}$,
$0\leq j\leq2$, are then well defined on $C\setminus0$ for a conic
neighborhood $C$ of $\left(x,\xi\right)$. We set 
\begin{equation}
f_{0}=\xi_{3}\left(x_{1}x_{2}+\hat{\xi}_{1}\hat{\xi}_{2}\right)\label{eq:first Hamiltonian}
\end{equation}
and compute 
\begin{eqnarray}
e^{\frac{\pi}{4}H_{f_{0}}}\left(x_{1},\hat{\xi}_{1};x_{2},\hat{\xi}_{2}\right) & = & \left(\frac{x_{1}+\hat{\xi}_{2}}{\sqrt{2}},\frac{-x_{2}+\hat{\xi}_{1}}{\sqrt{2}};\frac{x_{2}+\hat{\xi}_{1}}{\sqrt{2}},\frac{-x_{1}+\hat{\xi}_{2}}{\sqrt{2}}\right),\nonumber \\
e^{\frac{\pi}{4}H_{f_{0}}}\left(x_{0},x_{3};\xi_{0},\xi_{3}\right) & = & \left(x_{0},x_{3}+\frac{1}{2}\left(x_{1}x_{2}-\hat{\xi}_{1}\hat{\xi}_{2}\right);\xi_{0},\xi_{3}\right).\label{eq:first Hamiltonian flow}
\end{eqnarray}
We further compute
\begin{align*}
\left(e^{\frac{\pi}{4}H_{f_{0}}}\right)^{*}\sigma\left(\Delta_{g^{E},\mu}\right) & =\xi_{3}^{2}\left[a_{0}\hat{\xi}_{0}^{2}+\hat{\xi}_{0}B_{0}\begin{bmatrix}\hat{\xi}_{1}\\
x_{1}
\end{bmatrix}+2\hat{\rho}\begin{bmatrix}\hat{\xi}_{1} & x_{1}\end{bmatrix}e^{\Lambda_{0}^{t}}e^{\Lambda_{0}}\begin{bmatrix}\hat{\xi}_{1}\\
x_{1}
\end{bmatrix}\right]
\end{align*}
where $a_{0}=\left(e^{\frac{\pi}{4}H_{f_{0}}}\right)^{*}\left(1+\delta_{1}^{2}+\delta_{2}^{2}\right)$,
$B_{0}=2\sqrt{2}\left(e^{\frac{\pi}{4}H_{f_{0}}}\right)^{*}\hat{\rho}^{1/2}\begin{bmatrix}\delta_{1} & \delta_{2}\end{bmatrix}e^{\Lambda}$
and $\Lambda_{0}=\left(e^{\frac{\pi}{4}H_{f_{0}}}\right)^{*}\Lambda$.

Next denote by $O_{\Sigma}\left(k\right)$ homogeneous (of degree
2) functions on $T^{*}X$ which vanish to order $k$ along $\Sigma=\left\{ x_{1}=\hat{\xi}_{1}=\hat{\xi}_{0}=0\right\} $.
We also denote by $O_{\Sigma}\left(k\right)$ the Weyl quantizations
on $\mathbb{R}^{4}$ of such symbols. A Taylor expansion gives
\begin{align*}
\left(e^{\frac{\pi}{4}H_{f_{0}}}\right)^{*}\sigma\left(\Delta_{g^{E},\mu}\right) & =\xi_{3}^{2}\left[\bar{a}\hat{\xi}_{0}^{2}+\hat{\xi}_{0}\bar{B}\begin{bmatrix}\hat{\xi}_{1}\\
x_{1}
\end{bmatrix}+2\hat{\rho}\begin{bmatrix}\hat{\xi}_{1} & x_{1}\end{bmatrix}e^{\bar{\Lambda}^{t}}e^{\bar{\Lambda}}\begin{bmatrix}\hat{\xi}_{1}\\
x_{1}
\end{bmatrix}\right]\\
 & \qquad\qquad+O_{\Sigma}\left(3\right)
\end{align*}
where $\bar{a}>0,\bar{B}$ and $\bar{\Lambda}\in\mathfrak{sp}\left(2\right)$
may now be considered as functions of $\left(x_{0},x_{2},x_{3};\xi_{2}\right)$.
Next we consider another function $f_{1}$ of the form 
\[
f_{1}=\frac{\xi_{3}^{2}}{2}\begin{bmatrix}\hat{\xi}_{1} & x_{1}\end{bmatrix}\begin{bmatrix} & -1\\
1
\end{bmatrix}\bar{\Lambda}\begin{bmatrix}\hat{\xi}_{1}\\
x_{1}
\end{bmatrix}.
\]
and compute
\begin{eqnarray*}
\left(e^{H_{f_{1}}}\right)^{*}\xi_{3}\begin{bmatrix}\hat{\xi}_{0}\\
\hat{\xi}_{1}\\
x_{1}
\end{bmatrix} & = & \xi_{3}\begin{bmatrix}\hat{\xi}_{0}\\
e^{-\bar{\Lambda}}\begin{bmatrix}\hat{\xi}_{1}\\
x_{1}
\end{bmatrix}
\end{bmatrix}+O_{\Sigma}\left(2\right).
\end{eqnarray*}
Following this we may further compute 
\[
\left(e^{H_{f_{1}}}\right)^{*}\left(e^{\frac{\pi}{4}H_{f_{0}}}\right)^{*}\sigma\left(\Delta_{g^{E},\mu}\right)=\xi_{3}^{2}\left[a_{1}\hat{\xi}_{0}^{2}+b_{1}\hat{\xi}_{0}x_{1}+c_{1}\hat{\xi}_{0}\hat{\xi}_{1}+2\hat{\rho}\left(x_{1}^{2}+\hat{\xi}_{1}^{2}\right)\right]+O_{\Sigma}\left(3\right)
\]
for some functions $a_{1}>0,b_{1}$ and $c_{1}$ of $\left(x_{0},x_{2},x_{3};\xi_{2}\right)$.
By a symplectic change of coordinates in the $\left(\underline{x},\underline{\xi}\right)$
variables we may set $a_{1}=1$ following which 
\begin{equation}
\kappa_{0}^{*}\sigma\left(\Delta_{g^{E},\mu}\right)=\xi_{0}^{2}+2\xi_{3}^{2}\hat{\rho}\left(x_{1}^{2}+\hat{\xi}_{1}^{2}\right)+O_{\Sigma}\left(3\right)\label{eq: prelim norm. form}
\end{equation}
for $\kappa_{0}=\left(e^{H_{f_{2}}}\right)\left(e^{H_{f_{1}}}\right)\left(e^{\frac{\pi}{4}H_{f_{0}}}\right)$
with $f_{2}=\frac{1}{2}\left[c_{1}\xi_{0}x_{1}-b_{1}\xi_{0}\xi_{1}\right]$.
Here $\rho=\xi_{3}\hat{\rho}$, $\hat{\rho}=\hat{\rho}\left(x_{0},x_{2},x_{3};\xi_{2}\right)$
is homogeneous of degree one, and is identifiable with the only positive
eigenvalue of the fundamental matrix \prettyref{eq:trace fund. matrix}. 

Next we claim that for some Hamiltonian diffeomorphism $\kappa_{1}=e^{H_{\xi_{3}f_{3}}}$,
$f_{3}\left(x;\hat{\xi}\right)\in O_{\Sigma}\left(1\right)$, and
$\hat{\xi}_{0}$-independent function $R\left(x_{1}^{2}+\hat{\xi}_{1}^{2};x_{0},x_{2},\hat{x}_{3};\hat{\xi}_{2}\right)\in O_{\Sigma}\left(4\right)$,
$\left[R,\Omega\right]=0$, we have 
\begin{equation}
\kappa_{1}^{*}\kappa_{0}^{*}\sigma\left(\Delta_{g^{E},\mu}\right)=\xi_{3}^{2}\left[\hat{\xi}_{0}^{2}+2\hat{\rho}\left(x_{1}^{2}+\hat{\xi}_{1}^{2}\right)\right]+R.\label{eq: symbolic normal form}
\end{equation}
To this end, we first define the complex variables $z_{1}=\frac{1}{\sqrt{2}}\left(x_{1}+i\hat{\xi}_{1}\right)$,
$\bar{z}_{1}=\frac{1}{\sqrt{2}}\left(x_{1}-i\hat{\xi}_{1}\right)$
and a grading on monomials in the variables $\left(\hat{\xi}_{0},z_{1},\bar{z}_{1}\right)$
via $\textrm{gr}\left(\hat{\xi}_{0}^{a}z_{1}^{b}\bar{z}_{1}^{c}\right)=2a+b+c$.
Further define by $\mathcal{O}_{\Sigma}\left(k\right)$ the set of
homogeneous (of degree 2) functions defined near $\Sigma$ whose Taylor
series involves monomials of grading at least $k$. We first prove
that for each $N\geq3$ there exists $g_{N}\left(x;\hat{\xi}\right)\in\mathcal{O}_{\Sigma}\left(1\right)$,
$R_{N}\left(x_{1}^{2}+\hat{\xi}_{1}^{2};x_{0},x_{2},\hat{x}_{3};\hat{\xi}_{2}\right)\in\mathcal{O}_{\Sigma}\left(4\right)$
such that 
\begin{align}
\left(e^{H_{\xi_{3}^{-1}g_{N}}}\right)^{*}\kappa_{0}^{*}\sigma\left(\Delta_{g^{E},\mu}\right) & =\xi_{3}^{2}\left[\hat{\xi}_{0}^{2}+2\hat{\rho}\left(x_{1}^{2}+\hat{\xi}_{1}^{2}\right)\right]+R_{N}+\mathcal{O}_{\Sigma}\left(N\right)\nonumber \\
g_{N}-g_{N-1} & =\mathcal{O}_{\Sigma}\left(N-3\right).\nonumber \\
R_{N}-R_{N-1} & =\mathcal{O}_{\Sigma}\left(N\right).\label{eq:ind. hyp.}
\end{align}
The case $N=3$ is \prettyref{eq: prelim norm. form}. To complete
the induction step write 
\begin{equation}
\left(e^{H_{\xi_{3}^{-1}g_{N}}}\right)^{*}\kappa_{0}^{*}\sigma\left(\Delta_{g^{E},\mu}\right)=\xi_{3}^{2}\left[\hat{\xi}_{0}^{2}+2\hat{\rho}\left(x_{1}^{2}+\hat{\xi}_{1}^{2}\right)\right]+R_{N}+\xi_{3}^{2}\sum_{\begin{subarray}{l}
2a+b+c=N\end{subarray}}r_{abc}\hat{\xi}_{0}^{a}z_{1}^{b}\bar{z}_{1}^{c}+\mathcal{O}_{\Sigma}\left(N+1\right)\label{eq: ind step}
\end{equation}
for complex functions $r_{abc}\left(x_{0},x_{2},\hat{x}_{3};\hat{\xi}_{2}\right)$
satisfying $\bar{r}_{abc}=r_{acb}$. Define
\begin{align*}
g_{N+1} & =g_{N}+\xi_{3}^{2}\left[\sum_{\begin{subarray}{l}
2a+b+c=N\\
b\neq c
\end{subarray}}s_{abc}\hat{\xi}_{0}^{a}z_{1}^{b}\bar{z}_{1}^{c}+\sum_{\begin{subarray}{l}
2a+2b=N-2\end{subarray}}s_{abb}\hat{\xi}_{0}^{a}\left(z_{1}\bar{z}_{1}\right)^{b}\right],\\
s_{abc} & =\frac{1}{4i\left(b-c\right)\hat{\rho}}r_{abc};\quad b\neq c,\\
s_{abb} & =-\frac{1}{2}\int_{0}^{x_{0}}r_{\left(a-1\right)bb};\quad a\geq1.
\end{align*}
A simple computation from \prettyref{eq: ind step} then gives
\begin{align*}
\left(e^{H_{\xi_{3}^{-1}g_{N+1}}}\right)^{*}\kappa_{0}^{*}\sigma\left(\Delta_{g^{E},\mu}\right) & =\xi_{3}^{2}\left[\hat{\xi}_{0}^{2}+2\hat{\rho}\underbrace{\left(x_{1}^{2}+\hat{\xi}_{1}^{2}\right)}_{=\xi_{3}^{-1}\Omega}+R_{N+1}\right]+O_{\Sigma}\left(N+1\right)\\
R_{N+1} & =R_{N}+\xi_{3}^{2}r_{0\frac{N}{2}\frac{N}{2}}\left(z_{1}\bar{z}_{1}\right)^{\frac{N}{2}}
\end{align*}
where the term involving $\frac{N}{2}$ above is understood to be
zero for $N$ odd. This completes the induction step. An application
of Borel's lemma then gives $g\left(x;\hat{\xi}\right)\in\mathcal{O}_{\Sigma}\left(1\right)$,
$R\left(x_{1}^{2}+\hat{\xi}_{1}^{2};x_{0},x_{2},\hat{x}_{3};\hat{\xi}_{2}\right)\in\mathcal{O}_{\Sigma}\left(4\right)$,
$R_{\infty}\in\mathcal{O}_{\Sigma}\left(\infty\right)$ such that
\begin{equation}
\left(e^{H_{\xi_{3}^{-1}g}}\right)^{*}\kappa_{0}^{*}\sigma\left(\Delta_{g^{E},\mu}\right)=\xi_{3}^{2}\underbrace{\left[\hat{\xi}_{0}^{2}+2\hat{\rho}\left(x_{1}^{2}+\hat{\xi}_{1}^{2}\right)+\xi_{3}^{-2}R\right]}_{=\sigma_{0}}+\xi_{3}^{2}R_{\infty}.\label{eq:symbolic normal form}
\end{equation}
We shall now eliminate the last infinite order error term $\xi_{3}^{2}R_{\infty}$
above by the following lemma.
\begin{lem}
\label{lem:infinite error removal}There exists a smooth, homogeneous
of degree one function $f_{\infty}$ defined in a conic neighborhood
of $p_{0}$ \prettyref{eq:model point} satisfying 
\begin{equation}
\left(e^{H_{f_{\infty}}}\right)^{*}\left(\xi_{3}^{2}\sigma_{0}+\xi_{3}^{2}R_{\infty}\right)=\xi_{3}^{2}\sigma_{0}.\label{eq:removing inf. error}
\end{equation}
\end{lem}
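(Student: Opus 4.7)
The strategy is the classical Moser deformation trick adapted to absorb the infinitely-flat remainder. I set $p_s \coloneqq \xi_3^2 \sigma_0 + s \xi_3^2 R_\infty$ for $s \in [0,1]$ and look for a smooth family of degree-one homogeneous Hamiltonians $g_s$, defined in a conic neighborhood of $p_0$ and flat on $\Sigma$, generating a time-dependent flow $\kappa_s$ with $\kappa_s^* p_s = p_0$. A routine generating-function composition then recovers a single degree-one $f_\infty$ realizing $\kappa_1 = e^{H_{f_\infty}}$ and thus \prettyref{eq:removing inf. error}. Differentiating $\kappa_s^* p_s = p_0$ produces the cohomological equation
\[
H_{p_s}\, g_s \;=\; \xi_3^2 R_\infty, \qquad s \in [0,1].
\]

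Since $R_\infty \in \mathcal{O}_\Sigma(\infty)$ is flat on $\Sigma$ and the ideal of functions flat on $\Sigma$ is preserved by $H_{\xi_3^2 R_\infty}$, a Neumann-series argument reduces solvability for all $s$ to solvability of the model equation $H_{\xi_3^2 \sigma_0}\, g = F$ for an arbitrary $F$ flat on $\Sigma$, with $g$ likewise flat. Here I exploit the transverse ellipticity of $\xi_3^2 \sigma_0 = \xi_0^2 + 2\rho \Omega + R$: in the three transverse variables $(\hat{\xi}_0, x_1, \hat{\xi}_1)$ to $\Sigma$, the principal part is a positive-definite quadratic form since $\rho > 0$ at the model point. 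Its Hamilton flow decouples into a translation in $x_0$ generated by $\xi_0^2$ and a $(4\rho)$-speed rotation in the $(x_1, \hat{\xi}_1)$ plane generated by $2\rho\Omega$, while $\hat{\xi}_0$ is preserved.

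To solve the model equation I would expand in the transverse polynomial grading used already in the paragraph around \prettyref{eq:ind. hyp.}. Using a Weierstrass-type factorization I write, for each $N$, $F = (\hat{\xi}_0^2 + x_1^2 + \hat{\xi}_1^2)^N F_N$ with $F_N$ smooth, and similarly seek $g = (\hat{\xi}_0^2 + x_1^2 + \hat{\xi}_1^2)^N g_N$. The resulting equation for $g_N$ on each transverse polynomial level is a finite-dimensional linear equation, solvable thanks to the non-resonance between the rotation frequency $4\rho$ and the translational direction $x_0$ (exactly the obstruction that was already handled modulo $\mathcal{O}_\Sigma(\infty)$ in the construction leading to \prettyref{eq:symbolic normal form}). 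An application of Borel's lemma to the sequence $\{g_N\}$ yields a smooth flat $g$ solving the equation modulo a further flat error; a final iteration, combined with asymptotic summation in $s$, completes the Moser step.

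The main obstacle is not the formal construction but the analytic control required to make Borel summation succeed uniformly: one must verify the appropriate seminorm bounds on each $g_N$ in the variables $(x_0, x_2, \hat{x}_3, \hat{\xi}_2, \xi_3)$ so that the asymptotic sum defines a smooth degree-one Hamiltonian on a fixed conic neighborhood of $p_0$, with smooth dependence on $s$ up to $s=1$. Once these seminorm estimates are in place, the existence of the Hamiltonian flow of a homogeneous degree-one symbol on a conic neighborhood is standard, and the composition of the Moser flows with the earlier diffeomorphisms $\kappa_0$, $e^{H_{\xi_3^{-1} g}}$ yields the desired $f_\infty$.
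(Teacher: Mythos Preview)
Your Moser approach has a genuine gap. The cohomological equation $H_{p_0}\,g = F$ with $F$ flat on $\Sigma$ is \emph{not} solvable by the mechanism you describe, and Borel summation cannot rescue it.

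First, the formal step is vacuous. Since $F \in \mathcal{O}_\Sigma(\infty)$, its transverse Taylor series in $(\hat\xi_0,x_1,\hat\xi_1)$ is identically zero, so ``solving on each transverse polynomial level'' yields $g_N \equiv 0$ at every level, and the remainder after Borel summation is $F$ itself. Your factorization $F = (\hat\xi_0^2+x_1^2+\hat\xi_1^2)^N F_N$ does not help: $F_N$ is again flat, and $H_{p_0}$ does not commute with multiplication by this factor, so no reduction occurs. Second, and more fundamentally, the transverse linearization of $H_{p_0}$ at $\Sigma$ is \emph{not} hyperbolic: it consists of a rotation in $(x_1,\hat\xi_1)$ (eigenvalues $\pm i\cdot 4\rho$) together with a nilpotent block in the $(\hat\xi_0,x_0)$ pair. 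There are closed orbits arbitrarily close to $\Sigma$, and solvability of $H_{p_0}g=F$ would require the period integrals of $F$ over these orbits to vanish identically---flatness only forces them to vanish to infinite order as the orbit approaches $\Sigma$. The ``non-resonance'' you invoke is precisely what was already exhausted in producing $R$; the resonant terms ($b=c$ in the $z_1^b\bar z_1^c$ expansion, with no $\hat\xi_0$ factor) are exactly the obstruction, and a flat $F$ can have nonzero averages of this type.

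The paper bypasses the cohomological equation entirely via Nelson's scattering trick. One works with the flow of the \emph{square root} $\xi_3\sigma_0^{1/2}$ (defined off $\Sigma$), which has unit speed and drives every point out of a fixed conic neighborhood in time $\lesssim\sigma_0^{-1/2}$ (through drift in $x_0$ or $x_3$). Localizing $R_\infty$ to that neighborhood, one defines the wave operator $\kappa_\infty=\lim_{t\to\infty}e^{-tH_{\xi_3\sigma_0^{1/2}}}\circ e^{tH_{\xi_3\tilde\sigma_0^{1/2}}}$; the limit is attained in finite time once trajectories exit the support of the perturbation, so $\kappa_\infty$ is a genuine symplectomorphism intertwining the two Hamiltonians on the complement of $\Sigma$. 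Flatness of $R_\infty$ then guarantees that $\kappa_\infty$ extends smoothly across $\Sigma$ as the identity. The square root is essential: it is what produces finite-time escape and makes the wave operator well defined, a feature the Moser vector field $H_{p_0}$ lacks.
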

\begin{proof}
Without loss of generality assume that the conic neighborhood in which
\prettyref{eq: symbolic normal form} holds to be of the form $C_{\varepsilon}=\left\{ \left|\left(x_{0},x_{1},x_{2},x_{3},\hat{\xi}_{0},\hat{\xi}_{1},\hat{\xi}_{2}\right)\right|\leq\varepsilon\right\} $
for some $\varepsilon>0$. Next with $\chi\in C_{c}^{\infty}\left(-1,1\right)$
with $\chi=1$ on $\left[-\frac{1}{2},\frac{1}{2}\right]$, define
the microlocal cutoff $\chi_{\varepsilon}\coloneqq\chi\left(\frac{\left|\left(x_{0},x_{1},x_{2},x_{3},\hat{\xi}_{0},\hat{\xi}_{1},\hat{\xi}_{2}\right)\right|}{\varepsilon}\right)$.
Further define the function 
\begin{align}
\tilde{\sigma}_{0}\coloneqq\sigma_{0}+\chi_{\varepsilon}R_{\infty} & =\begin{cases}
\sigma_{0}+R_{\infty}; & \textrm{on }C_{\varepsilon/2}\\
\sigma_{0}; & \textrm{on }C_{\varepsilon}^{c}
\end{cases}\nonumber \\
\textrm{satisfying}\quad\xi_{3}^{2}\left(\tilde{\sigma}_{0}-\sigma_{0}\right) & =\mathcal{O}_{\Sigma}\left(\infty\right).\label{eq:def modified hamiltonian}
\end{align}
We may then compute the Hamilton vector field 
\[
H_{\xi_{3}\tilde{\sigma}_{0}^{1/2}}=\tilde{\sigma}_{0}^{-1/2}\left[2\hat{\xi}_{0}\partial_{x_{0}}+2\xi_{3}^{-1}\Omega H_{\rho}+2\hat{\rho}H_{\Omega}+\xi_{3}^{-1}H_{\xi_{3}^{2}\left(R+R_{\infty}\right)}\right]
\]
which is well-defined on $\left\{ \tilde{\sigma}_{0}\neq0\right\} =T^{*}\mathbb{R}^{4}\setminus\Sigma$.
From $R+R_{\infty}=O\left(\tilde{\sigma}_{0}^{2}\right)$ one may
calculate 
\begin{align*}
e^{tH_{\xi_{3}\tilde{\sigma}_{0}^{1/2}}}\left(x_{0},x_{1},x_{2},x_{3},\hat{\xi}_{0},\hat{\xi}_{1},\hat{\xi}_{2}\right) & \coloneqq\left(x_{0}\left(t\right),x_{1}\left(t\right),x_{2}\left(t\right),x_{3}\left(t\right),\hat{\xi}_{0}\left(t\right),\hat{\xi}_{1}\left(t\right),\hat{\xi}_{2}\left(t\right)\right),\quad\textrm{ with }\\
x_{0}\left(t\right) & =x_{0}+2t\tilde{\sigma}_{0}^{-1/2}\hat{\xi}_{0}+O\left(\tilde{\sigma}_{0}^{3/2}\right)\\
x_{3}\left(t\right) & =x_{3}+2t\tilde{\sigma}_{0}^{-1/2}\xi_{3}^{-1}\Omega+O\left(\tilde{\sigma}_{0}^{3/2}\right).
\end{align*}
The above shows that there exists a uniform $c_{1}$ such that any
point $p\in C_{\varepsilon}$ flows out of the cone 
\begin{equation}
e^{tH_{\xi_{3}\tilde{\sigma}_{0}^{1/2}}}p\notin C_{\varepsilon}\textrm{ for time }t>c_{1}\tilde{\sigma}_{0}\left(p\right)^{-1/2}.\label{eq: finite time exit}
\end{equation}
Outside of the cone $C_{\varepsilon}$ the flows of $e^{tH_{\xi_{3}\tilde{\sigma}_{0}^{1/2}}}$
and $e^{tH_{\xi_{3}\sigma_{0}^{1/2}}}$agree by \prettyref{eq:def modified hamiltonian}. 

Now we define the symplectomorphism 
\begin{align*}
\kappa_{\infty}: & T^{*}\mathbb{R}^{4}\setminus\Sigma\rightarrow T^{*}\mathbb{R}^{4}\setminus\Sigma\\
\kappa_{\infty}\coloneqq & \lim_{t\rightarrow\infty}e^{-tH_{\xi_{3}\sigma_{0}^{1/2}}}\circ e^{tH_{\xi_{3}\tilde{\sigma}_{0}^{1/2}}}.
\end{align*}
The limit exists, and is in fact attained in finite time, since 
\[
e^{-t'H_{\xi_{3}\sigma_{0}^{1/2}}}\circ e^{t'H_{\xi_{3}\tilde{\sigma}_{0}^{1/2}}}p=e^{-t'H_{\xi_{3}\sigma_{0}^{1/2}}}\circ e^{\left(t'-t\right)H_{\xi_{3}\tilde{\sigma}_{0}^{1/2}}}\circ e^{tH_{\xi_{3}\tilde{\sigma}_{0}^{1/2}}}p=e^{-tH_{\xi_{3}\sigma_{0}^{1/2}}}\circ e^{tH_{\xi_{3}\tilde{\sigma}_{0}^{1/2}}}p
\]
 for $t'>t>c_{1}\tilde{\sigma}_{0}\left(p\right)^{-1/2}$ using \prettyref{eq: finite time exit}
and the fact that $e^{tH_{\xi_{3}\tilde{\sigma}_{0}^{1/2}}}=e^{tH_{\xi_{3}\sigma_{0}^{1/2}}}$
outside $C_{\varepsilon}$. It is thus a Hamiltonian symplectomorphism
$\kappa_{\infty}=e^{H_{f_{\infty}}}$ and clearly satisfies 
\begin{equation}
\left(e^{H_{f_{\infty}}}\right)_{*}H_{\xi_{3}\tilde{\sigma}_{0}^{1/2}}=H_{\xi_{3}\sigma_{0}^{1/2}}\label{eq:transf. Hamilton v.fields}
\end{equation}
 by definition. Finally to prove that it extends to the characteristic
variety, first define $\tilde{x}_{0}\left(t\right)\coloneqq\left(e^{-tH_{\xi_{3}\sigma_{0}^{1/2}}}\circ e^{tH_{\xi_{3}\tilde{\sigma}_{0}^{1/2}}}\right)^{*}x_{0}$
which equals $\kappa_{\infty}^{*}x_{0}$ on $T^{*}\mathbb{R}^{4}\setminus\Sigma$
for $t>c_{1}\tilde{\sigma}_{0}\left(p\right)^{-1/2}$. We then compute
the time derivative 
\begin{align*}
\frac{d}{dt}\tilde{x}_{0}\left(t\right) & =\frac{d}{dt}\left(e^{tH_{\xi_{3}\tilde{\sigma}_{0}^{1/2}}}\right)^{*}\left(e^{-tH_{\xi_{3}\sigma_{0}^{1/2}}}\right)^{*}x_{0}\\
 & =\left\{ \xi_{3}\tilde{\sigma}_{0}^{1/2}-\left(e^{tH_{\xi_{3}\tilde{\sigma}_{0}^{1/2}}}\right)^{*}\xi_{3}\sigma_{0}^{1/2},\tilde{x}_{0}\left(t\right)\right\} \\
 & =\left\{ \left(e^{tH_{\xi_{3}\tilde{\sigma}_{0}^{1/2}}}\right)^{*}\left(\xi_{3}\tilde{\sigma}_{0}^{1/2}-\xi_{3}\sigma_{0}^{1/2}\right),\tilde{x}_{0}\left(t\right)\right\} =\xi_{3}^{-2}\mathcal{O}_{\Sigma}\left(\infty\right)
\end{align*}
uniformly on compact intervals of time following \prettyref{eq:def modified hamiltonian}.
It now follows that the function $\kappa_{\infty}^{*}x_{0}=x_{0}+\int_{0}^{c_{1}\tilde{\sigma}_{0}\left(p\right)^{-1/2}}\frac{d}{dt}\tilde{x}_{0}\left(t\right)dt$
extends smoothly by the identity to the characteristic variety. A
similar argument for the other coordinate functions along with \prettyref{eq:transf. Hamilton v.fields}
completes the proof.
\end{proof}
The proof of the lemma above follows the 'scattering trick' of Nelson
\cite{Nelson-book69,Colin-de-Verdiere-Hillairet-TrelatIg}; as already
pointed out in \cite[Sec. 5]{Colin-de-Verdiere-Hillairet-TrelatI}
its requisite analog is missing from \cite{Melrose-hypoelliptic}.

We now prove a Birkhoff normal form for the total symbol of $\Delta_{g^{E},\mu}$.
Below let $C_{\kappa}\subset T^{*}X\times T^{*}\mathbb{R}^{4}$ denote
the canonical relation associated to the symplectomorphism $\kappa\coloneqq\kappa_{0}\circ\kappa_{1}$
in \prettyref{eq: symbolic normal form} and the pullback $\left(\kappa^{*}\rho\right)\left(x_{0},x_{2},\hat{x}_{3};\xi_{2},\xi_{3}\right)$
by the same notation $\rho\left(x_{0},x_{2},\hat{x}_{3};\xi_{2},\xi_{3}\right)$.
\begin{thm}
\label{prop: normal form near pt}There exists a Fourier integral
operator $U\in I_{\textrm{cl}}^{0}\left(X,\mathbb{R}^{4};C_{\kappa}\right)$
and $\xi_{0}$-independent symbols $R\left(x_{1}^{2}+\hat{\xi}_{1}^{2};x_{0},x_{2},\hat{x}_{3};\xi_{2},\xi_{3}\right)\in O_{\Sigma}\left(4\right)$,
$r\left(x_{1}^{2}+\hat{\xi}_{1}^{2};x_{0},x_{2},\hat{x}_{3};\xi_{2},\xi_{3}\right)\in S_{\textrm{cl}}^{0}$
satisfying 
\begin{align}
U\Delta_{g^{E},\mu}U^{*} & =\underbrace{\xi_{0}^{2}+2\rho\underbrace{\left(\xi_{3}x_{1}^{2}+\xi_{3}^{-1}\xi_{1}^{2}\right)}_{=\Omega}+R+r}_{\eqqcolon\Delta_{\rho,R,r}}+\Psi^{-\infty}\left(\mathbb{R}^{4}\right),\label{eq:normal form}
\end{align}
and $UU^{*}=1$ microlocally on some open conic neighborhood $C\supset\left(\Sigma\setminus0\right)\cap\pi^{-1}\left(x\right)$.
\end{thm}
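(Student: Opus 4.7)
My plan is to combine an FIO quantization of the symbolic normal form established in \eqref{eq:symbolic normal form} and Lemma \ref{lem:infinite error removal} with an operator-level Birkhoff iteration to clean up the sub-principal corrections. The three stages are: \textbf{(i)} construct an elliptic, microlocally unitary FIO $V_0 \in I^0_{\textrm{cl}}(X, \mathbb{R}^4; C_\kappa)$ quantizing $\kappa$, absorbing the scattering-trick correction $e^{H_{f_\infty}}$ of Lemma \ref{lem:infinite error removal} (which modifies $C_\kappa$ only by $O_\Sigma(\infty)$ and hence leaves the FIO class unchanged); \textbf{(ii)} iteratively conjugate by exponentials of classical pseudo-differential operators of successively lower order to push the sub-principal errors into the allowed form; \textbf{(iii)} Borel-sum the generators and set $U = e^{iQ} V_0$.

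In Stage~(i), Maslov-H\"ormander theory yields $V_0$ with $V_0 V_0^* = \textrm{Id}$ microlocally on some conic neighborhood $C \supset (\Sigma\setminus 0) \cap \pi^{-1}(x)$, and Egorov's theorem applied together with \eqref{eq:symbolic normal form} and Lemma \ref{lem:infinite error removal} gives
\begin{equation}
P_0 := V_0\, \Delta_{g^E, \mu}\, V_0^* = \Delta_{\rho, R, 0} + E_0, \qquad E_0 \in \Psi^1_{\textrm{cl}}. \label{eq:after-egorov-plan}
\end{equation}
In Stage~(ii), I construct inductively operators $Q_j$ and classical sub-symbols $r_j$ of strictly decreasing order in the Hermite-Landau sense of \eqref{eq:classical symbolic expansion}, each depending only on the admissible arguments $(x_1^2 + \hat\xi_1^2; x_0, x_2, \hat x_3; \xi_2, \xi_3)$, so that $e^{iQ_j} P_j e^{-iQ_j} = \Delta_{\rho, R, r_0 + \cdots + r_j} + E_{j+1}$ with $E_{j+1}$ of strictly lower order than $E_j$. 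Each step reduces to the symbolic cohomological equation
\begin{equation}
\sigma(E_j) - r_j = \bigl\{\, \xi_0^2 + 2\rho\Omega + R,\, \sigma(Q_j)\, \bigr\} + (\textrm{strictly lower order}). \label{eq:cohomeq-plan}
\end{equation}
Borel summation $Q \sim \sum_j Q_j$ and $r \sim \sum_j r_j$ and the choice $U := e^{iQ} V_0$ then produce the stated normal form modulo $\Psi^{-\infty}$.

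The main obstacle is the solvability of \eqref{eq:cohomeq-plan} subject to the constraint on the form of $r_j$. I would treat it in two sub-steps matching the two structural features of the normal form. First, average $\sigma(E_j)$ over the $S^1$-action generated by the Hamilton flow of $\Omega$ (rotation in $(x_1, \hat\xi_1)$); the non-invariant part lies in the image of $\{\Omega, \cdot\}$ and contributes one piece to $\sigma(Q_j)$, while the invariant remainder depends only on $x_1^2 + \hat\xi_1^2$ together with the remaining variables, but may still depend on $\xi_0$. Second, eliminate this $\xi_0$-dependence by inverting $\{\xi_0^2, \cdot\} = 2\xi_0 \partial_{x_0}$: the resulting $x_0$-antiderivative is singular at $\xi_0 = 0$, but remains a legitimate symbol on the blowup $[K_{1,1}; \Sigma_0]$ in the Hermite-Landau class of Section \ref{sec:Hermite-Calculus}, with the $1/\xi_0$ factor balanced against the boundary defining function $d_\rho$. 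The coupling of the procedure to the perturbative terms $2\rho\Omega + R$ is handled by a Neumann iteration; the residual $\xi_0$- and angle-invariant part lies by construction in the admissible class and defines $r_j$. Verifying that the summed $r$ lies in $S^0_{\textrm{cl}}$ reduces to the strict order reduction at each step, which follows from the anisotropy captured by \eqref{eq: inclusion of pseudo 1}.
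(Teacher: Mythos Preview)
Your overall architecture---quantize the symbolic normal form as a microlocally unitary FIO, then iteratively conjugate by $e^{iQ_j}$ to push the lower-order errors into the allowed form---matches the paper's proof, and the angular-averaging step (removing the non-$\Omega$-invariant part by inverting $\{\Omega,\cdot\}$) is correct.

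The gap is in your treatment of the $\xi_0$-dependence. You assert that the $1/\xi_0$ produced by inverting $\{\xi_0^2,\cdot\}=2\xi_0\,\partial_{x_0}$ is ``balanced against the boundary defining function $d_\rho$'' and hence lies in the Hermite--Landau class. That is false: $1/\xi_0$ is singular on the entire hypersurface $\{\xi_0=0\}$, whereas $\beta^*d_\rho$ vanishes only on the boundary of the blowup, i.e.\ over $\Sigma_0=\{\xi_0=x_1=\xi_1=0\}$. At interior points of $[K_{1,1};\Sigma_0]$ with $\xi_0=0$ but $\Omega\neq 0$ the function $1/\xi_0$ blows up while $(\beta^*d_\rho)^{-m_2}$ stays bounded for every $m_2$. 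So $1/\xi_0$ is not a Hermite--Landau symbol, and in any case using exotic $Q_j$ would knock $U=e^{iQ}V_0$ out of the classical class $I^0_{\textrm{cl}}$ required by the statement.

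The fix, which is what the paper does, is elementary and keeps everything classical. After angular averaging, the error is $\bar s(x_0,\hat\xi_0,\varrho,\ldots)$; Taylor-expand at $\hat\xi_0=0$:
\[
\bar s \;=\; \bar s\big|_{\hat\xi_0=0} \;+\; \hat\xi_0\,\bar s_1,
\]
with $\bar s_1$ smooth. The first term is $\xi_0$-independent and of the admissible form: it \emph{is} the increment to $r$. The second term is removed by taking the corresponding piece of $\sigma(Q_j)$ to be $\tfrac12\int_0^{x_0}\bar s_1\,dx_0'$, a smooth classical symbol since the prefactor $\hat\xi_0$ already cancels the threatened $1/\xi_0$. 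The iteration then runs in ordinary classical order (in the paper $f_N\in S^{-1}$ with errors in $\Psi^{-N}_{\textrm{cl}}$), not in the anisotropic Hermite--Landau filtration.
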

\begin{proof}
If $U_{1}:L^{2}\left(X\right)\rightarrow L^{2}\left(\mathbb{R}^{4}\right)$
denotes a unitary Fourier integral operator quantizing the symplectomorphism
$\kappa_{0}\circ e^{H_{\xi_{3}g}}\circ e^{H_{f_{\infty}}}$ in \prettyref{eq: symbolic normal form},
\prettyref{eq:removing inf. error} one has 
\[
\sigma\left(U_{1}\Delta_{g^{E},\mu}U_{1}^{*}\right)=\xi_{0}^{2}+2\xi_{3}\Omega+R
\]
by Egorov's theorem. By an argument of Weinstein (see Prop. 6 of \cite{Colin-de-Verdiere-Hillairet-TrelatI})
the quantization $U_{1}$ may be further chosen so that the sub-principal
symbol of the composition is zero and we may rewrite 
\begin{equation}
U_{1}\Delta_{g^{E},\mu}U_{1}^{*}=\xi_{0}^{2}+\xi_{3}\Omega+R+\Psi^{0}\left(\mathbb{R}^{4}\right)\label{eq:preliminary normal form}
\end{equation}
at the operator level; we drop the Weyl quantization symbol above
for simplicity. 

Next we prove by induction that $\forall N\geq0$, there exists $r_{N}\left(x_{1}^{2}+\hat{\xi}_{1}^{2};x_{0},x_{2},\hat{x}_{3};\xi_{2},\xi_{3}\right)\in S_{\textrm{cl}}^{0}$
and Fourier integral operator $e^{if_{N}}$, $f_{N}\in S^{-1}\left(\mathbb{R}^{4}\right)$
such that
\begin{align}
e^{if_{N}}U_{1}\Delta_{g^{E},\mu}U_{1}^{*}e^{-if_{N}} & =\xi_{0}^{2}+\xi_{3}\Omega+R+r_{N}+\Psi^{-N}\left(\mathbb{R}^{4}\right)\nonumber \\
f_{N+1}-f_{N} & \in S^{-N-1}\left(\mathbb{R}^{4}\right).\label{eq:inductive step}
\end{align}
The base case of the induction is \prettyref{eq:preliminary normal form}
with $f_{0}=r_{0}=0$. For the inductive step, we first write
\[
e^{if_{N}}U_{1}\Delta_{g^{E},\mu}U_{1}^{*}e^{-if_{N}}=\xi_{0}^{2}+2\xi_{3}\Omega+R+r_{N}+\xi_{3}^{-N}s_{N+1}\left(x_{0},x_{1},x_{2},\hat{x}_{3};\hat{\xi}_{0},\hat{\xi}_{1},\hat{\xi}_{2}\right)+\Psi^{-N-1}\left(\mathbb{R}^{4}\right).
\]
Then with $f_{N+1}=f_{N}+\xi_{3}^{-N-1}g_{N}\left(x_{0},x_{1},x_{2},\hat{x}_{3};\hat{\xi}_{0},\hat{\xi}_{1},\hat{\xi}_{2}\right)$
we compute 
\[
e^{if_{N+1}}U_{1}\Delta_{g^{E},\mu}U_{1}^{*}e^{-if_{N+1}}=\xi_{0}^{2}+2\xi_{3}\Omega+R+r_{N}+\xi_{3}^{-N}\left\{ s_{N+1}+2\hat{\xi}_{0}\partial_{x_{0}}g_{N}+\left(4+2\partial_{\varrho}R\right)\partial_{\theta}g_{N}\right\} +\Psi^{-N-1}\left(\mathbb{R}^{4}\right)
\]
in polar coordinates $\left(x_{1},\hat{\xi}_{1}\right)=\left(\varrho^{1/2}\cos\theta,\varrho^{1/2}\sin\theta\right)$.
We may then choose 
\begin{align*}
g_{N}\coloneqq & \frac{1}{2}\int_{0}^{x_{0}}\bar{s}_{N+1,1}\left(x_{0}',\hat{\xi}_{0},\varrho,\theta'\right)dx_{0}'\\
 & \;+\frac{1}{\left(4+2\partial_{\varrho}R\right)}\int_{0}^{\theta}d\theta'\left[s_{N+1}\left(x_{0}',\hat{\xi}_{0},\varrho,\theta'\right)-\bar{s}_{N+1}\left(x_{0}',\hat{\xi}_{0},\varrho,\theta'\right)\right]\\
\textrm{with}\quad\bar{s}_{N+1}\left(x_{0}',\hat{\xi}_{0},\varrho\right)\coloneqq & \frac{1}{2\pi}\int_{0}^{2\pi}d\theta'\,s_{N+1}\left(x_{0}',\hat{\xi}_{0},\varrho,\theta'\right)\\
\bar{s}_{N+1}\left(x_{0},\hat{\xi}_{0},\varrho\right)\coloneqq & \bar{s}_{N+1}\left(x_{0},0,\varrho\right)+\hat{\xi}_{0}\bar{s}_{N+1,1}\left(x_{0},\hat{\xi}_{0},\varrho\right)
\end{align*}
to compute 
\begin{align*}
e^{if_{N+1}}U_{1}\Delta_{g^{E},\mu}U_{1}^{*}e^{-if_{N+1}} & =\xi_{0}^{2}+2\xi_{3}\Omega+R+r_{N+1}+\Psi^{-N-1}\left(\mathbb{R}^{4}\right)\quad\textrm{with}\\
r_{N+1} & =r_{N}+\xi_{3}^{-N}\bar{s}_{N+1}\left(x_{0},0,\varrho\right).
\end{align*}
Finally an application of Borel's lemma following \prettyref{eq:inductive step}
completes the proof.
\end{proof}
In the normal form above since $R\in O_{\Sigma}\left(4\right)$ in
\prettyref{eq:normal form} we may write $R=\left(x_{1}^{2}+\hat{\xi}_{1}^{2}\right)R_{0}$,
with $R_{0}=R_{0}\left(x_{1}^{2}+\hat{\xi}_{1}^{2};x_{0},x_{2},\hat{x}_{3};\xi_{2},\xi_{3}\right)\in O_{\Sigma}\left(2\right)$.
Given $\varepsilon>0$, one may thus arrange 
\begin{align}
\left\Vert H_{l}RH_{l}^{*}\right\Vert _{L^{2}\left(\mathbb{R}^{3}\right)\rightarrow H^{-1}\left(\mathbb{R}^{3}\right)} & \leq\varepsilon\left(2l+1\right),\quad\forall l\in\mathbb{N}_{0},\label{eq: small cone arrangement 0}\\
\left\Vert B\left[\left(\xi_{0}^{2}+r\right)^{W},U\right]\right\Vert _{L^{2}\left(X\right)\rightarrow H^{-1}\left(\mathbb{R}^{4}\right)} & \leq\varepsilon\quad;\label{eq:small cone arrangement}
\end{align}
$\forall B\in\Psi^{0}\left(\mathbb{R}^{4}\right),\:WF\left(B\right)\subset C',\:WF\left(1-B\right)\subset C,$
on choosing $C\subset C'$ to be sufficiently small neighborhoods
of $\left(\Sigma\setminus0\right)\cap\pi^{-1}\left(x\right)$.

As an immediate corollary of the normal form \prettyref{prop: normal form near pt}
above we now prove the existence part of \prettyref{prop: Hamilton v field extension},
the invariance of $\Omega$ \prettyref{eq:symplectic commutation}
will be proved later in \prettyref{subsec:Invariance}.
\begin{proof}[Proof of \prettyref{prop: Hamilton v field extension}]
 At the symbolic level \prettyref{eq:normal form} reads $\sigma=\xi_{0}^{2}+2\rho\left(\xi_{3}x_{1}^{2}+\xi_{3}^{-1}\xi_{1}^{2}\right)+O_{\Sigma}\left(4\right)$.
This gives 
\begin{align}
H_{\sigma^{1/2}} & =\sigma^{-1/2}\left[\xi_{0}\partial_{x_{0}}-\left(\rho_{x_{0}}\Omega\right)\partial_{\xi_{0}}+\rho H_{\Omega}+O_{\Sigma}\left(2\right)\right]\nonumber \\
 & =\Xi_{0}\partial_{x_{0}}-\frac{1}{2}\rho^{-1}\rho_{x_{0}}\left(1-\Xi_{0}^{2}\right)\partial_{\Xi_{0}}+\sigma^{-1/2}\rho H_{\Omega}+\sigma^{-1/2}O_{\Sigma}\left(2\right)\label{eq:leading part Ham. vector field}
\end{align}
 in terms of the new coordinates $\left(x_{0},\Xi_{0}\coloneqq\frac{\xi_{0}}{\sqrt{\xi_{0}^{2}+2\rho\Omega}},x_{1},\ldots\right)$
and where the $O_{\Sigma}\left(2\right)$ term above denotes a vector
field vanishing to second order along $\Sigma$. The blowup and its
boundary are locally modeled by $\left[M;\Sigma\right]=\left\{ \xi_{0}^{2}+2\xi_{3}\rho\left(\xi_{3}x_{1}^{2}+\xi_{3}^{-1}\xi_{1}^{2}\right)\geq1\right\} $,
$SN\Sigma=\left\{ \xi_{0}^{2}+2\xi_{3}\rho\left(\xi_{3}x_{1}^{2}+\xi_{3}^{-1}\xi_{1}^{2}\right)=1\right\} $
while the function $\Xi_{0}\coloneqq\frac{\xi_{0}}{\sqrt{\xi_{0}^{2}+2\rho\Omega}}$
is identified with \prettyref{eq:invariant function blowup}. The
Hamiltonian vector field on the interior of the blowup is identified
with \prettyref{eq:leading part Ham. vector field} where $O_{\Sigma}\left(2\right)$
now denotes a vector field vanishing to second order near the boundary
of the blowup. One may then rewrite 
\[
H_{\sigma^{1/2}}=\sigma^{-1/2}\sigma\left(R\right)H_{\Omega}+\hat{Z}+O_{\Sigma}\left(1\right)
\]
with 
\begin{equation}
\hat{Z}\coloneqq\Xi_{0}\partial_{x_{0}}-\frac{1}{2}\rho^{-1}\rho_{x_{0}}\left(1-\Xi_{0}^{2}\right)\partial_{\Xi_{0}}+\sigma^{-1/2}\left[\rho-\sigma\left(R\right)\right]H_{\Omega}\label{eq:Hvfield restriction to boundary}
\end{equation}
The invariance property \prettyref{eq:invariance properties of flow}
follows from the definition, \prettyref{eq: QC Popp volume}, \prettyref{eq: quasi-contact normal form-1}
and \prettyref{eq:lift Popp measure} via the further identifications
\begin{align}
\left.H_{\Omega}\right|_{SN\Sigma} & =R_{0}\textrm{ and }\nonumber \\
\mu_{\textrm{Popp}}^{SNS^{*}\Sigma} & =\hat{\rho}^{-2}(1-\Xi_{0}^{2})d\Xi_{0}dx_{0}dx_{2}d\hat{x}_{3}d\hat{\xi}_{2}.\label{eq:identifications}
\end{align}
 
\end{proof}

\subsection{\label{subsec:Normal-form-near-closed-characteristic}Normal form
near a closed characteristic}

We next obtain a normal form for $\Delta_{g^{E},\mu}$ near a primitive
closed characteristic $\gamma$ assuming that the characteristic line
$L^{E}$ is volume preserving. Before proceeding we however note that
there exists a large class of quasi-contact structures where $L^{E}$
is not volume preserving as below.
\begin{example}
\label{exa:non-measure preserving example} Let $\left(Y,F\subset TY\right)$
be a contact manifold with contact vector field $H\in C^{\infty}\left(TY\right)$
, $\left(e^{tH}\right)_{*}F=F$. The mapping torus $X\coloneqq Y\times\left[0,1\right]_{x_{0}}/\left\{ \left(0,y\right)\sim\left(1,e^{H}\left(y\right)\right)\right\} $
carries the quasi-contact structure $E=F\oplus\mathbb{R}\left[\partial_{x_{0}}+H\right]$
whose characteristic line field is $L^{E}=\mathbb{R}\left[Z\right]=\mathbb{R}\left[\partial_{x_{0}}+H\right]$
(cf. \cite[Lemma 2.5]{Colin-Presas-Vogel-2018}). The Poincare section
$Y\times\left\{ 0\right\} $ however cannot carry an $H$ invariant
volume such as $\hat{a}_{g^{E}}\wedge d\hat{a}_{g^{E}}$ in the case
when the time one flow of $H$ is a strictly expanding/contracting
map on some region; say near one of its zeros. An explicit example
of such an $H$ is quite easily constructed; choose Darboux coordinates
on an $\varepsilon$-ball $B_{p}\left(\varepsilon\right)$ centered
at a point $p\in Y$ in which $F=\textrm{ker}\left(\underbrace{x_{1}dx_{2}-x_{2}dx_{1}+dx_{3}}_{=a_{0}}\right)$.
Letting $\chi\in C_{c}^{\infty}\left(\left[-1,1\right];\left[0,1\right]\right)$,
$\chi=1$ on $\left[-\frac{1}{2},\frac{1}{2}\right]$, define the
contact Hamiltonian vector field
\[
H=H_{\varphi}=\begin{cases}
\left(\varphi_{x_{1}}+x_{2}\varphi_{x_{3}}\right)\partial_{x_{2}}-\left(\varphi_{x_{2}}-x_{1}\varphi_{x_{3}}\right)\partial_{x_{1}}+\left(2\varphi-x_{1}\varphi_{x_{1}}-x_{2}\varphi_{x_{2}}\right)\partial_{x_{3}}; & x\in B_{p}\left(\varepsilon\right)\\
0; & x\notin B_{p}\left(\varepsilon\right)
\end{cases},
\]
 $\varphi\coloneqq x_{3}\chi\left(\frac{\left|\left(x_{1},x_{2},x_{3}\right)\right|}{\varepsilon}\right)$,
satisfying $L_{H_{\varphi}}a_{0}=2\varphi_{x_{3}}a_{0}$ on $B_{p}\left(\varepsilon\right)$,
and which has a strictly expanding time one flow near the origin where
$\left.H_{\varphi}\right|_{B_{p}\left(\varepsilon/2\right)}=x_{1}\partial_{x_{1}}+x_{2}\partial_{x_{2}}+2x_{3}\partial_{x_{3}}$.
\end{example}
We now show that in the volume preserving case, the normal structure
of $E$ is described as such a mapping torus of an $x_{3}$-independent
contact Hamiltonian $\varphi$ near a non-degenerate closed characteristic.
To this end, as mentioned before, in the volume preserving case one
has a $Z$-invariant defining one form $L_{Z}\left(\underbrace{\hat{\rho}_{Z}a_{g^{E}}}_{\eqqcolon\hat{a}_{g^{E}}}\right)=0$.
The linearized Poincare return map $P_{\gamma}$ of $Z$ is seen to
be symplectic on $\left(E/L^{E},d\hat{a}_{g^{E}}\right)$. We call
the characteristic elliptic if the eigenvalues of $P_{\gamma}$ are
of the form $e^{\pm i\alpha}$($2\pi>\alpha\geq0$) and (positive)
hyperbolic if of the form $e^{\pm\beta}$($\beta\geq0$). The characteristic
is said to be non-degenerate iff $\frac{\alpha}{2\pi}\notin\mathbb{Q}$
or $\beta\neq0$. For each $\gamma$ we then define the model quadratic
on $\mathbb{R}^{2}$ via
\begin{equation}
Q=\begin{cases}
\frac{\alpha}{2}\left(x_{1}^{2}+x_{2}^{2}\right); & \gamma\textrm{ is elliptic},\\
\beta x_{1}x_{2}; & \gamma\textrm{ is hyperbolic.}
\end{cases}\label{eq: model quadratic}
\end{equation}
We first begin describing the normal structure of the Popp form $\hat{a}_{g^{E}}$
near a non-degenerate $\gamma$. In the theorem below we let $\gamma^{0}\coloneqq S^{1}\times\left\{ 0\right\} \subset S_{x_{0}}^{1}\times\mathbb{R}^{3}$
and $T_{\gamma}$ the length of $\gamma$.
\begin{prop}
There exists a diffeomorphism $\kappa:\varOmega_{\gamma}^{0}\rightarrow\varOmega_{\gamma}$
between some neighborhood of $\gamma^{0}\subset\varOmega_{\gamma}^{0}$
and some neighborhood of the closed characteristic $\gamma\subset\varOmega_{\gamma}$
such that 
\begin{align}
\kappa^{*}\hat{a}_{g^{E}} & =\underbrace{\varphi dx_{0}+\frac{1}{2}\left[x_{1}dx_{2}-x_{2}dx_{1}+dx_{3}\right]}_{\eqqcolon a_{\varphi}}\label{eq: normal form contact a-1}\\
\left|\tilde{Z}\right| & \coloneqq\left|-\partial_{x_{0}}+\varphi_{x_{1}}\partial_{x_{2}}-\varphi_{x_{2}}\partial_{x_{1}}+\left[2\varphi-\left(x_{1}\varphi_{x_{1}}+x_{2}\varphi_{x_{2}}\right)\right]\partial_{x_{3}}\right|\nonumber \\
 & =T_{\gamma}+O\left(\left|\left(x_{1},x_{2}\right)\right|^{2}\right)+O\left(x_{3}\right)\label{eq:normal form generator}
\end{align}
modulo $O\left(Q^{\infty}\right)$. Here 
\begin{equation}
\varphi=\varphi\left(Q\right)=Q+O\left(Q^{2}\right)\label{eq:form of effective Hamiltonian}
\end{equation}
 above \prettyref{eq: normal form contact a-1} is a function on $\mathbb{R}^{2}$
of the quadratic \prettyref{eq: model quadratic} with linear term
$Q$.
\end{prop}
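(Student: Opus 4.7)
The plan is to reduce to a strict-contact Birkhoff normal form for the Poincar\'e return map of $Z$ on a transverse slice, and then obtain the four-dimensional normal form on a tubular neighborhood of $\gamma$ via a suspension.

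Fix $p\in\gamma$ and a hypersurface $Y^{3}\subset X$ through $p$ transverse to $L^{E}$. Since $L_{Z}\hat{a}_{g^{E}}=0$ and $Z$ is transverse to $Y$, the restriction $\alpha\coloneqq\hat{a}_{g^{E}}|_{Y}$ is a contact form; by contact Darboux one chooses coordinates $(x_{1},x_{2},x_{3})$ on $Y$ centered at $p$ with $\alpha=\tfrac{1}{2}(x_{1}dx_{2}-x_{2}dx_{1}+dx_{3})$ and Reeb vector field $R_{0}=2\partial_{x_{3}}$. The first-return map $\psi\colon(Y,p)\to(Y,p)$ of the flow $e^{tZ}$ is then a \emph{strict} contactomorphism, $\psi^{*}\alpha=\alpha$. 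A direct computation from $d\psi_{p}^{*}\alpha_{p}=\alpha_{p}$ and $d\psi_{p}^{*}d\alpha_{p}=d\alpha_{p}$ shows that $d\psi_{p}$ is block diagonal: the identity on the Reeb direction and equal to the symplectic Poincar\'e map $P_{\gamma}$ on the contact plane $\ker\alpha_{p}=(E/L^{E})_{p}$. Every strict contact vector field on $(Y,\alpha)$ takes the form $H_{h}$ of \prettyref{eq:normal form generator} for some $x_{3}$-independent contact Hamiltonian $h$, since the strict condition $L_{H_{h}}\alpha=0$ reads $R_{0}h=0$; hence formally at $p$ one writes $\psi=e^{H_{\varphi_{0}}}$ with $\varphi_{0}(x_{1},x_{2})=Q+O(|(x_{1},x_{2})|^{3})$ and $Q$ as in \prettyref{eq: model quadratic}.

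The central step is to iteratively refine $\varphi_{0}$ to a function of $Q$ modulo $O(Q^{\infty})$. At order $N$, conjugate by the time-one flow $e^{H_{g_{N}}}$ of a strict contact Hamiltonian $g_{N}=g_{N}(x_{1},x_{2})$, which modifies $\varphi_{N-1}$ by $\{Q,g_{N}\}$ to leading order. The cohomological equation $\{Q,g_{N}\}=r_{N}$ is solvable for the non-normal residue $r_{N}$ as follows: with $z=x_{1}+ix_{2}$, the bracket $\{Q,\cdot\}$ acts on $z^{j}\bar{z}^{k}$ in the elliptic case by multiplication by $i\alpha(j-k)$, so the irrationality $\alpha/2\pi\notin\mathbb{Q}$ permits elimination of all monomials with $j\neq k$, leaving only the diagonal terms $(z\bar{z})^{n}$, which are polynomials in $Q$; in the hyperbolic case $\{Q,\cdot\}$ acts on $x_{1}^{j}x_{2}^{k}$ by $\beta(k-j)$, and $\beta\neq0$ eliminates all off-diagonal monomials, leaving $(x_{1}x_{2})^{n}=\beta^{-n}Q^{n}$. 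Borel's lemma then produces $\varphi=\varphi(Q)=Q+O(Q^{2})$ and a strict contactomorphism $\chi$ of $(Y,\alpha)$ with $\chi^{-1}\psi\chi=e^{H_{\varphi}}$ modulo $O(Q^{\infty})$; absorbing $\chi$ into our choice of Darboux coordinates we may assume $\psi=e^{H_{\varphi}}$ to infinite order in $Q$.

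For the suspension, equip $S^{1}_{x_{0}}\times\mathbb{R}^{3}_{(x_{1},x_{2},x_{3})}$ with $a_{\varphi}$ as in \prettyref{eq: normal form contact a-1}. Since $\varphi=\varphi(Q)$ is $x_{3}$-independent, a direct computation gives $L_{H_{\varphi}}a_{0}=0$ and $H_{\varphi}\varphi=0$, whence $L_{\tilde{Z}}a_{\varphi}=0$ for $\tilde{Z}$ of \prettyref{eq:normal form generator}, with $\tilde{Z}\in\ker(a_{\varphi}\wedge da_{\varphi})$, so $\tilde{Z}$ spans the characteristic line of $a_{\varphi}$. Using the isotopy $\psi_{s}=e^{sH_{\varphi}}$ (valid modulo $O(Q^{\infty})$) to trivialize the mapping torus of $\psi$, define $\kappa$ by transporting the slice $\{x_{0}=0\}\cong Y$ along the flow of $Z$ reparametrized so that $x_{0}\in S^{1}=\mathbb{R}/\mathbb{Z}$ traverses $\gamma$ in unit $x_{0}$-time; this scales $|Z|=1$ on $\gamma$ to $|\tilde{Z}|=T_{\gamma}$ on $\gamma^{0}$. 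By construction $\kappa^{*}\hat{a}_{g^{E}}$ and $a_{\varphi}$ agree on $\{x_{0}=0\}$ and are each preserved by their characteristic generators $\kappa^{*}\tilde{Z}$ and $\tilde{Z}$, which themselves agree modulo $O(Q^{\infty})$ by the previous paragraph; this propagates the agreement of the two forms to all of $\varOmega_{\gamma}^{0}$ modulo $O(Q^{\infty})$. The estimate in \prettyref{eq:normal form generator} then follows by Taylor expanding the metric in the transverse coordinates $(x_{1},x_{2},x_{3})$ about $\gamma^{0}$.

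The main obstacle is ensuring the Birkhoff procedure stays within the strict-contact class, i.e., that each $g_{N}$ can be chosen $x_{3}$-independent. This reduces to inductively verifying that the non-normal residues $r_{N}$ produced at each stage are themselves $x_{3}$-independent, which follows from the block-diagonal structure of $d\psi_{p}$ (trivial on $R_{0}$, symplectic on the contact plane) and the preservation of this structure by commutators and compositions of $x_{3}$-independent strict contact Hamiltonians.
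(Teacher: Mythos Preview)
Your overall architecture (Poincar\'e section, strict contactomorphism, Birkhoff normal form, suspension) matches the paper's. Two points are worth flagging.

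First, your ``main obstacle'' is not really an obstacle, and the paper bypasses it entirely. Once you observe that a strict contactomorphism preserves the Reeb field $\partial_{x_3}$, the first two components $P_1,P_2$ of the return map are automatically $x_3$-independent and give a symplectic map of $(\mathbb{R}^2,dx_1\wedge dx_2)$. The paper therefore runs the Birkhoff procedure purely in two dimensions to obtain $P_Y^0=(P_1,P_2)=e^{H_\varphi}$ with $\varphi=\varphi(Q)$, and then reconstructs the third component $P_3$ algebraically from the strict-contact condition $P_Y^*\alpha=\alpha$, obtaining $P_3=x_3-2\varphi+(x_1\varphi_{x_1}+x_2\varphi_{x_2})$. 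There is nothing to check about residues staying $x_3$-independent: the whole story is already two-dimensional. Your approach of conjugating within the strict-contact category is correct but carries unnecessary bookkeeping.

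Second, your last sentence does not prove \eqref{eq:normal form generator}. ``Taylor expanding the metric'' only gives $|\tilde Z|=T_\gamma+O(|(x_1,x_2,x_3)|)$; there is no a priori reason the linear $(x_1,x_2)$ terms vanish. The paper's argument is that $|\tilde Z|$ equals the first-return time $T_Y$ (since $\tilde Z=T_Y Z$ with $|Z|=1$), and then one computes $P_Y^*T_Y=T_Y$ from $\tilde Z(T_Y)=0$. Comparing linear parts in $P_Y^*T_Y=T_Y$ using $dP_Y|_0=P_\gamma\oplus\mathrm{id}$ and the non-degeneracy hypothesis ($P_\gamma$ has no eigenvalue equal to $1$) forces the linear $(x_1,x_2)$ coefficients of $T_Y$ to vanish. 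You should add this invariance-plus-nondegeneracy step; without it the estimate is not established.
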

\begin{proof}
Choose a Poincare section $Y$ transversal to $Z$ through a point
$p\in\gamma$ with Poincare return map and return time functions $P_{Y}:Y\rightarrow Y$,
$T_{Y}:Y\rightarrow\mathbb{R}$. Having $P_{Y}=e^{\tilde{Z}}$; $\tilde{Z}=T_{Y}Z$,
we may compute 
\begin{align}
P_{Y}^{*}\hat{a}_{g^{E}}-\hat{a}_{g^{E}} & =\int_{0}^{1}\left(\mathcal{L}_{\tilde{Z}}\hat{a}_{g^{E}}\right)dt\nonumber \\
 & =\int_{0}^{1}\left(di_{\tilde{Z}}\hat{a}_{g^{E}}+i_{\tilde{Z}}d\hat{a}_{g^{E}}\right)dt\nonumber \\
 & =0.\label{eq: Popp form contactomorphism}
\end{align}
The one form $a$ is contact on $Y$ with contact hyperplane $F=TY\cap E$
and we choose a set of Darboux coordinates $\left(x_{1},x_{2},x_{3}\right)$
with $\left.\hat{a}_{g^{E}}\right|_{Y}=\frac{1}{2}\left[dx_{3}+x_{1}dx_{2}-x_{2}dx_{1}\right]$
as \prettyref{eq: quasi-contact normal form-1}. By \prettyref{eq: Popp form contactomorphism},
the return map $P_{Y}$ is a contactomorphism with its linearization
at $0$ being identified with $P_{\gamma}$. We now claim that such
a contactomorphism is given by 
\begin{align}
P_{Y} & =e^{H_{\varphi}},\quad\textrm{ with}\nonumber \\
H_{\varphi} & \coloneqq\varphi_{x_{1}}\partial_{x_{2}}-\varphi_{x_{2}}\partial_{x_{1}}+\left[-2\varphi+\left(x_{1}\varphi_{x_{1}}+x_{2}\varphi_{x_{2}}\right)\right]\partial_{x_{3}}\label{eq:contact Hamilton v. field}
\end{align}
under the non-degeneracy assumption. To see the above let $P_{Y}=\left(P_{1},P_{2},P_{3}\right)$.
Since the Reeb vector field is mapped to itself, $\partial_{x_{3}}=\frac{\partial P_{1}}{\partial x_{3}}\partial_{P_{1}}+\frac{\partial P_{2}}{\partial x_{3}}\partial_{P_{2}}+\frac{\partial P_{3}}{\partial x_{3}}\partial_{P_{3}}=\partial_{P_{3}}$
giving $\frac{\partial P_{1}}{\partial x_{3}}=\frac{\partial P_{2}}{\partial x_{3}}=0$
and thus $P_{1},P_{2}$ are independent of $x_{3}$. The map $P_{Y}^{0}\coloneqq\left(P_{1},P_{2}\right):\mathbb{R}^{2}\rightarrow\mathbb{R}^{2}$
is then symplectic with respect to $dx_{1}dx_{2}$; the eigenvalues
of its linearization are those of $P_{\gamma}^{+}$ not equal to $1$.
Thus $P_{Y}^{0}=e^{H_{\varphi}}$ for some $\varphi\left(Q\right)$
of the form \prettyref{eq:form of effective Hamiltonian} under the
degeneracy assumption. Next define $\left(P_{1}\left(t\right),P_{2}\left(t\right)\right)\coloneqq e^{tH_{\varphi}}\left(x_{1},x_{2}\right)$
and calculate 
\begin{align*}
\frac{d}{dt}\left(e^{tH_{\varphi}}\right)^{*}\frac{1}{2}\left[x_{1}dx_{2}-x_{2}dx_{1}\right] & =\mathcal{L}_{H_{\varphi}}\frac{1}{2}\left[x_{1}dx_{2}-x_{2}dx_{1}\right]\\
 & =\left(i_{H_{\varphi}}d+di_{H_{\varphi}}\right)\frac{1}{2}\left[x_{1}dx_{2}-x_{2}dx_{1}\right]\\
 & =d\left[-\varphi+\frac{1}{2}\left(x_{1}\varphi_{x_{1}}+x_{2}\varphi_{x_{2}}\right)\right]
\end{align*}
 to obtain 
\[
\left(e^{H_{\varphi}}\right)^{*}\frac{1}{2}\left[x_{1}dx_{2}-x_{2}dx_{1}\right]-\frac{1}{2}\left[x_{1}dx_{2}-x_{2}dx_{1}\right]=d\left[-\varphi+\frac{1}{2}\left(x_{1}\varphi_{x_{1}}+x_{2}\varphi_{x_{2}}\right)\right].
\]
 This gives 
\begin{align*}
0=P_{Y}^{*}\hat{a}_{g^{E}}-\hat{a}_{g^{E}} & =\frac{1}{2}d\left(P_{3}-x_{3}\right)+\left(e^{H_{\varphi}}\right)^{*}\frac{1}{2}\left[x_{1}dx_{2}-x_{2}dx_{1}\right]-\frac{1}{2}\left[x_{1}dx_{2}-x_{2}dx_{1}\right]\\
 & =\frac{1}{2}d\left(P_{3}-x_{3}\right)+d\left[-\varphi+\frac{1}{2}\left(x_{1}\varphi_{x_{1}}+x_{2}\varphi_{x_{2}}\right)\right]
\end{align*}
 and thus $P_{3}=x_{3}-2\varphi+\left(x_{1}\varphi_{x_{1}}+x_{2}\varphi_{x_{2}}\right)$
on knowing $P_{Y}\left(0\right)=0$, proving the claim \prettyref{eq:contact Hamilton v. field}.
Now, noting that Poincare map is also given via $P_{\Sigma}=e^{\tilde{Z}}$;
with 
\begin{equation}
\tilde{Z}=-\partial_{x_{0}}+H_{\varphi}\label{eq:generator charateristic line}
\end{equation}
satisfying $i_{\tilde{Z}}a_{\varphi}=i_{\tilde{Z}}da_{\varphi}=0$
for the model form $a_{\varphi}$ \prettyref{eq: normal form contact a-1}
proves \prettyref{eq: normal form contact a-1}. 

To prove \prettyref{eq:normal form generator}, first note 
\begin{equation}
\left|\tilde{Z}\right|=T_{Y}\label{eq:period equals norm}
\end{equation}
 and compute 
\begin{align*}
P_{Y}^{*}dT_{Y}-dT_{Y} & =\int_{0}^{1}\left(\mathcal{L}_{\tilde{Z}}dT_{Y}\right)dt\\
 & =\int_{0}^{1}d\tilde{Z}\left(T_{Y}\right)dt=0
\end{align*}
by definition; $T_{Y}$ is defined on a neighborhood of $\gamma$
using the flow of $\tilde{Z}$. This gives 
\[
P_{Y}^{*}T_{Y}=\left(e^{H_{\varphi}}\right)^{*}T_{Y}=T_{Y}
\]
 on knowing $P_{Y}\left(0\right)=0$. Comparing the coefficients in
the last equation using \prettyref{eq:form of effective Hamiltonian},
\prettyref{eq:contact Hamilton v. field} shows that the linear $\left(x_{1},x_{2}\right)$
terms in $T_{Y}$ must vanish under the non-degeneracy assumption.
\end{proof}
The distribution $E$ is now locally generated by the vector fields
$U_{0}=-\partial_{x_{0}}+2\varphi\partial_{x_{3}}$, $U_{1}=\partial_{x_{1}}+x_{2}\partial_{x_{3}}$
and $U_{2}=\partial_{x_{2}}-x_{1}\partial_{x_{3}}$. The generator
of the characteristic line \prettyref{eq:generator charateristic line}
maybe written 
\[
\tilde{Z}=U_{0}-\varphi_{x_{2}}U_{1}+\varphi_{x_{1}}U_{2}.
\]
We may again choose $X,Y\in\left(L^{E}\right)^{\perp}$ satisfying
$\left|X\right|=1$, $da_{g^{E}}\left(Y,X\right)=\hat{\rho}_{Z}da_{g^{E}}\left(U_{1},U_{2}\right)=1$
and 
\begin{align*}
\begin{bmatrix}X\\
Y
\end{bmatrix} & =\begin{bmatrix}\delta_{1}\tilde{Z}\\
\delta_{2}\tilde{Z}
\end{bmatrix}+\hat{\rho}_{Z}^{1/2}e^{\Lambda}\begin{bmatrix}U_{1}\\
U_{2}
\end{bmatrix}
\end{align*}
for some set of functions $\delta_{1},\delta_{2}$ and $\Lambda=\begin{bmatrix} & -1\\
1
\end{bmatrix}\begin{bmatrix}\alpha & \gamma\\
\gamma & \beta
\end{bmatrix}\in\mathfrak{sp}\left(2\right)$.

The symbol of the Laplacian is then calculated 
\begin{align*}
\sigma\left(\Delta_{g^{E},\mu}\right) & =\frac{1}{\left|\tilde{Z}\right|^{2}}\sigma\left(\tilde{Z}\right)^{2}+\sigma\left(X\right)^{2}+\sigma\left(Y\right)^{2}\\
 & =a_{0}\tilde{\eta}_{0}^{2}+2\hat{\rho}_{Z}^{1/2}\tilde{\eta}_{0}\begin{bmatrix}\delta_{1} & \delta_{2}\end{bmatrix}e^{\Lambda}\begin{bmatrix}\eta_{1}\\
\eta_{2}
\end{bmatrix}+\hat{\rho}_{Z}\begin{bmatrix}\eta_{1} & \eta_{2}\end{bmatrix}e^{\Lambda^{t}}e^{\Lambda}\begin{bmatrix}\eta_{1}\\
\eta_{2}
\end{bmatrix}+\xi_{3}^{2}O_{\Sigma}\left(2\right)O_{\gamma}\left(1\right).
\end{align*}
Here $\tilde{\eta}_{0},\eta_{1},\eta_{2}$ denote the symbols 
\begin{align*}
\tilde{\eta}_{0} & \coloneqq\sigma\left(\tilde{Z}\right)=-\xi_{0}+2\varphi\xi_{3}\\
\eta_{1} & \coloneqq\sigma\left(U_{1}\right)=\xi_{1}+x_{2}\xi_{3}\\
\eta_{2} & \coloneqq\sigma\left(U_{2}\right)=\xi_{2}-x_{1}\xi_{3},
\end{align*}
of the given vector fields while $O_{\Sigma}\left(k\right)$, $O_{\gamma}\left(k\right)$,
denote homogeneous degree zero symbols which vanish to order $k$
in the variables $\left(\xi_{3}^{-1}\tilde{\eta}_{0},\xi_{3}^{-1}\eta_{1},\xi_{3}^{-1}\eta_{2}\right)$
and $\left(x_{1},x_{2},x_{3}\right)$ respectively. 

Setting $f_{0}=\xi_{3}\left(x_{1}x_{2}+\hat{\xi}_{1}\hat{\xi}_{2}\right)$
as before, we again calculate
\begin{align*}
\left(e^{\frac{\pi}{4}H_{f_{0}}}\right)^{*}\sigma\left(\Delta_{g^{E},\mu}\right) & =\xi_{3}^{2}\left[a_{0}\left(-\hat{\xi}_{0}+2\bar{\varphi}\right)^{2}+2\hat{\rho}_{Z}^{1/2}\left(-\hat{\xi}_{0}+2\bar{\varphi}\right)\begin{bmatrix}\delta_{1} & \delta_{2}\end{bmatrix}e^{\Lambda}\begin{bmatrix}\hat{\xi}_{1}\\
x_{1}
\end{bmatrix}+2\hat{\rho}_{Z}\begin{bmatrix}\hat{\xi}_{1} & x_{1}\end{bmatrix}e^{\Lambda^{t}}e^{\Lambda}\begin{bmatrix}\hat{\xi}_{1}\\
x_{1}
\end{bmatrix}\right]\\
 & \qquad+\xi_{3}^{2}O_{\Sigma}\left(2\right)O_{\gamma}\left(1\right)+\xi_{3}^{2}O_{\Sigma}\left(3\right)
\end{align*}
where $\bar{\varphi}=$$a_{0},\delta_{1},\delta_{2},\Lambda$ are
functions of $\left(x_{0},x_{2},x_{3};\hat{\xi}_{2}\right)$ while
$O_{\Sigma}\left(k\right)$, $O_{\gamma}\left(k\right)$, denote homogeneous
degree zero symbols which vanish to order $k$ in the variables $\left(\hat{\xi}_{0}+2\varphi,x_{1},\hat{\xi}_{1}\right)$
and $\left(x_{2},x_{3};\hat{\xi}_{2}\right)$ respectively. 

Further, with $f_{1}$ of the form 
\[
f_{1}=\frac{\xi_{3}^{2}}{2}\begin{bmatrix}\hat{\xi}_{1} & x_{1}\end{bmatrix}\begin{bmatrix} & -1\\
1
\end{bmatrix}\bar{\Lambda}\begin{bmatrix}\hat{\xi}_{1}\\
x_{1}
\end{bmatrix}
\]
we compute
\[
\left(e^{H_{f_{1}}}\right)^{*}\xi_{3}\begin{bmatrix}\hat{\xi}_{1}\\
x_{1}
\end{bmatrix}=\xi_{3}e^{-\Lambda_{0}}\begin{bmatrix}\hat{\xi}_{1}\\
x_{1}
\end{bmatrix}+\xi_{3}O_{\Sigma}\left(2\right),
\]
giving 
\begin{align*}
\left(e^{H_{f_{1}}}\right)^{*}\left(e^{\frac{\pi}{4}H_{f_{0}}}\right)^{*}\sigma\left(\Delta_{g^{E},\mu}\right) & =\xi_{3}^{2}\left[a_{0}\left(-\hat{\xi}_{0}+2\bar{\varphi}\right)^{2}+\left(-\hat{\xi}_{0}+2\bar{\varphi}\right)\left(b_{0}x_{1}+c_{0}\hat{\xi}_{1}\right)+2\hat{\rho}_{Z}\left(x_{1}^{2}+\hat{\xi}_{1}^{2}\right)\right]\\
 & \qquad+\xi_{3}^{2}O_{\Sigma}\left(2\right)O_{\gamma}\left(2\right)+\xi_{3}^{2}O_{\Sigma}\left(3\right)
\end{align*}
for $\begin{bmatrix}\delta_{1}^{1}\\
\delta_{2}^{1}
\end{bmatrix}\in O_{\gamma}\left(1\right)$. Finally $f_{2}=\frac{1}{2}\left[c_{0}\xi_{0}x_{1}-b_{0}\xi_{0}\xi_{1}\right]$
we also have
\[
\kappa_{0}^{*}\sigma\left(\Delta_{g^{E},\mu}\right)=a_{0}\xi_{3}^{2}\left(-\hat{\xi}_{0}+2\bar{\varphi}\right)^{2}+2\hat{\rho}_{Z}\xi_{3}^{2}\left(x_{1}^{2}+\hat{\xi}_{1}^{2}\right)+\xi_{3}^{2}O_{\Sigma}\left(2\right)O_{\gamma}\left(2\right)+\xi_{3}^{2}O_{\Sigma}\left(3\right)
\]
for $\kappa_{0}=\left(e^{H_{f_{2}}}\right)\left(e^{H_{f_{1}}}\right)\left(e^{\frac{\pi}{4}H_{f_{0}}}\right)$
and for some $a_{0}\left(x_{0},x_{2},\hat{x}_{3};\hat{\xi}_{2}\right)>0$.
Finally, by another Hamiltonian diffeomorphism we may set $\hat{\rho}_{Z}=1$
and $a_{0}=a_{0}\left(x_{2},\hat{x}_{3};\hat{\xi}_{2}\right)$ independent
of $x_{0}$ and satisfying 
\[
a_{0}\left(0,0;0\right)=\frac{1}{T_{\gamma}^{2}}.
\]
Following the preliminary normal form above the rest of the normal
form procedure proceeds as in the previous section. We then first
have a Hamiltonian diffeomorphism $\kappa_{1}=e^{H_{\xi_{3}f_{3}}}$,
$f_{3}\left(x;\hat{\xi}\right)\in O_{\Sigma}\left(1\right)$, and
function $R\left(x_{1}^{2}+\hat{\xi}_{1}^{2};x_{0},x_{2},\hat{x}_{3};\hat{\xi}_{0}+\bar{\varphi},\hat{\xi}_{2}\right)\in\xi_{3}^{2}O_{\Sigma}\left(2\right)O_{\gamma}\left(2\right)+\xi_{3}^{2}O_{\Sigma}\left(3\right)$
such that
\begin{equation}
\kappa_{1}^{*}\kappa_{0}^{*}\sigma\left(\Delta_{g^{E},\mu}\right)=\xi_{3}^{2}\left[a_{0}\left(-\hat{\xi}_{0}+\bar{\varphi}\right)^{2}+2\left(x_{1}^{2}+\hat{\xi}_{1}^{2}\right)+R\right]+O_{\Sigma}\left(\infty\right).\label{eq: symbolic normal form-2-1}
\end{equation}
The normal form for the symbol is now given next.
\begin{thm}
\label{prop: normal form-1}There exists a Hamiltonian symplectomorphism
$\kappa:T^{*}\left(S_{x_{0}}^{1}\times\mathbb{R}^{3}\right)\rightarrow T^{*}X$
and symbol $R\left(x_{1}^{2}+\hat{\xi}_{1}^{2};x_{0},x_{2},\hat{x}_{3};-\hat{\xi}_{0}+\bar{\varphi},\hat{\xi}_{2}\right)\in\xi_{3}^{2}O_{\Sigma}\left(2\right)O_{\gamma}\left(2\right)+\xi_{3}^{2}O_{\Sigma}\left(3\right)$
\begin{align}
\kappa^{*}\sigma\left(\Delta_{g^{E},\mu}\right) & =\xi_{3}^{2}\left[a_{0}\left(-\hat{\xi}_{0}+\bar{\varphi}\right)^{2}+2\left(x_{1}^{2}+\hat{\xi}_{1}^{2}\right)+R\right]+O_{\Sigma}\left(\infty\right)\label{eq:normal form-1}
\end{align}
on some open conic neighborhood $C\supset\left(\Sigma\setminus0\right)\cap\pi^{-1}\left(\gamma\right)$.
\end{thm}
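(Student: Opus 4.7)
The starting point is the preliminary form \prettyref{eq: symbolic normal form-2-1} derived just above the theorem statement. What remains is to convert this into the stated normal form by iteratively removing the angular dependence of the residual symbol in the $(x_{1},\hat{\xi}_{1})$-directions, in direct analogy with the Birkhoff procedure employed in the proof of \prettyref{prop: normal form near pt}.

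Introduce complex variables $z_{1}=(x_{1}+i\hat{\xi}_{1})/\sqrt{2}$, $\bar{z}_{1}=(x_{1}-i\hat{\xi}_{1})/\sqrt{2}$ and grade monomials in $((-\hat{\xi}_{0}+\bar{\varphi}),z_{1},\bar{z}_{1})$ by $\mathrm{gr}((-\hat{\xi}_{0}+\bar{\varphi})^{a}z_{1}^{b}\bar{z}_{1}^{c})=2a+b+c$; let $\mathcal{O}_{\Sigma}(k)$ denote homogeneous symbols whose Taylor series in these variables contains only monomials of grading at least $k$. I would show by induction on $N$ the existence of $g_{N}\in\mathcal{O}_{\Sigma}(1)$ and a symbol $R_{N}$ of the stated form such that
\[
\left(e^{H_{\xi_{3}^{-1}g_{N}}}\right)^{*}\kappa_{1}^{*}\kappa_{0}^{*}\sigma(\Delta_{g^{E},\mu})=\xi_{3}^{2}\bigl[a_{0}(-\hat{\xi}_{0}+\bar{\varphi})^{2}+2(x_{1}^{2}+\hat{\xi}_{1}^{2})\bigr]+R_{N}+\mathcal{O}_{\Sigma}(N)+O_{\Sigma}(\infty),
\]
with $g_{N+1}-g_{N}\in\mathcal{O}_{\Sigma}(N-3)$ and $R_{N+1}-R_{N}\in\mathcal{O}_{\Sigma}(N)$. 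At the inductive step, expand the grading-$N$ part of the error as $\sum_{2a+b+c=N}r_{abc}(x_{0},x_{2},\hat{x}_{3},\hat{\xi}_{2})(-\hat{\xi}_{0}+\bar{\varphi})^{a}z_{1}^{b}\bar{z}_{1}^{c}$. The angular ($b\neq c$) terms are removed by setting $g_{N+1}-g_{N}=\xi_{3}^{2}\sum_{b\neq c}s_{abc}(-\hat{\xi}_{0}+\bar{\varphi})^{a}z_{1}^{b}\bar{z}_{1}^{c}$ with $s_{abc}=r_{abc}/[4i(b-c)]$, exploiting $\{z_{1}^{b}\bar{z}_{1}^{c},\,2(x_{1}^{2}+\hat{\xi}_{1}^{2})\}\propto(b-c)\,z_{1}^{b}\bar{z}_{1}^{c}$; the radial ($b=c$) contributions are absorbed directly into $R_{N+1}$.

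The essential simplification compared with the proof of \prettyref{prop: normal form near pt} is that the theorem statement permits $R$ to depend on $-\hat{\xi}_{0}+\bar{\varphi}$. In \prettyref{subsec:Normal-form-near-Sigma} such dependence had to be eliminated, forcing integration along the $x_{0}$-direction and a mean-subtraction step; here the analogous integration is never needed, since each generator $g_{N}$ is built entirely from the transverse Poisson bracket with $\Omega=x_{1}^{2}+\hat{\xi}_{1}^{2}$. Consequently the circle topology $x_{0}\in S^{1}_{x_{0}}$ coming from $\gamma$ poses no obstruction and every $g_{N}$ is automatically well-defined on $S^{1}_{x_{0}}\times\mathbb{R}^{3}$.

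Finally, Borel-summing the sequences $\{g_{N}\}$ and $\{R_{N}\}$ produces the desired symplectomorphism $\kappa=\kappa_{0}\circ\kappa_{1}\circ e^{H_{\xi_{3}^{-1}g_{\infty}}}$ and remainder $R$ satisfying \prettyref{eq:normal form-1}, the residual infinite-order error being absorbed into the retained $O_{\Sigma}(\infty)$ term. In particular no analog of \prettyref{lem:infinite error removal} (Nelson's scattering trick) is required, since that error is kept in the statement. The only step demanding real attention is the bookkeeping of vanishing orders: one must check at each iteration that $R_{N+1}$ still lies in $\xi_{3}^{2}O_{\Sigma}(2)O_{\gamma}(2)+\xi_{3}^{2}O_{\Sigma}(3)$, i.e.\ that the joint orders along $\Sigma$ and along $\gamma$ are preserved. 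This follows from the bigrading in $(\mathrm{gr},\mathrm{ord}_{\gamma})$ and the fact that the brackets with the quadratic model preserve $\mathrm{gr}$ while brackets with the $\mathcal{O}_{\Sigma}(3)$ and $O_{\gamma}(2)$ pieces produce only strictly higher-order errors — exactly parallel to the corresponding check in the proof of \prettyref{prop: normal form near pt}.
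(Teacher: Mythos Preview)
Your approach is exactly the one the paper intends: it gives no separate proof of this theorem beyond the sentence ``Following the preliminary normal form above the rest of the normal form procedure proceeds as in the previous section,'' and your Birkhoff iteration spells out precisely that procedure, including the correct observations that the retained $O_{\Sigma}(\infty)$ term obviates the Nelson trick and that allowing $R$ to depend on $-\hat{\xi}_{0}+\bar{\varphi}$ avoids any $x_{0}$-integration (and hence any periodicity obstruction on $S^{1}_{x_{0}}$).

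There is, however, one bookkeeping slip: the equation you cite as your starting point, \prettyref{eq: symbolic normal form-2-1}, is already the \emph{conclusion} of the Birkhoff iteration --- it coincides with the statement \prettyref{eq:normal form-1} of the theorem (modulo a sign convention on $\hat{\xi}_{0}$). The actual starting point is the preliminary form obtained just before, namely
\[
\kappa_{0}^{*}\sigma(\Delta_{g^{E},\mu})=a_{0}\xi_{3}^{2}(-\hat{\xi}_{0}+2\bar{\varphi})^{2}+2\xi_{3}^{2}(x_{1}^{2}+\hat{\xi}_{1}^{2})+\xi_{3}^{2}O_{\Sigma}(2)O_{\gamma}(2)+\xi_{3}^{2}O_{\Sigma}(3)
\]
after the further Hamiltonian diffeomorphism normalising $\hat{\rho}_{Z}=1$ and making $a_{0}$ independent of $x_{0}$. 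With that correction your inductive scheme runs exactly as you describe and matches the paper.
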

We refer to a (nondegenerate) closed characteristic $\gamma$ as flat
if there exists a normal form as above with $R=0$, $a_{0}=T_{\gamma}$
(constant). 

We next compute $\mathscr{L}_{\hat{Z}}$ the set of closed periods
of the vector field $\hat{Z}$, in both the volume preserving and
non-preserving cases. First note that in the volume preserving case
since $L_{Z}\hat{\rho}_{Z}=da_{g^{E}}\left(R,Z\right)\hat{\rho}_{Z}$
for some positive function $\hat{\rho}_{Z}$ one has 
\begin{align}
L_{Z}\left(\ln\hat{\rho}_{Z}\right) & =da_{g^{E}}\left(R,Z\right)\quad\textrm{ and hence }\label{eq:log derivative}\\
\int_{0}^{T_{\gamma}}dt\left(e^{tZ}\right)^{*}\left.da_{g^{E}}\left(R,Z\right)\right|_{\gamma} & =0\label{eq:invariant along charac.}
\end{align}
along any closed characteristic $\gamma$ with period $T_{\gamma}$.
Motivated by this we say that \textit{$L^{E}$ is volume} \textit{preserving
along $\gamma$ }iff the equation \prettyref{eq:invariant along charac.}
above holds. In this case we may define a unique positive function
$\hat{\rho}_{Z}^{\gamma}\in C^{\infty}\left(\gamma;\left(0,1\right]\right)$
satisfying \prettyref{eq:log derivative} along $\gamma$ and $\sup_{\gamma}\hat{\rho}_{Z}^{\gamma}=1$.
In the case when $L^{E}$ is globally volume preserving this would
equal $\hat{\rho}_{Z}^{\gamma}\coloneqq\frac{\left.\hat{\rho}_{Z}\right|_{\gamma}}{\sup_{\gamma}\hat{\rho}_{Z}}$
for any globally defined function satisfying \prettyref{eq:log derivative}.
Viewing $\hat{\rho}_{Z}^{\gamma}$ as a periodic function on $\mathbb{R}$
with period $T_{\gamma}$, we define $\hat{T}_{\gamma}>T_{\gamma}$
as the smallest positive number for which$\int_{0}^{\hat{T}_{\gamma}}\frac{1-\hat{\rho}_{Z}^{\gamma}}{1+\hat{\rho}_{Z}^{\gamma}}=T_{\gamma}$.
Here we use the convention that $\hat{T}_{\gamma}=\infty$ if $\hat{\rho}_{Z}^{\gamma}\equiv1$,
in which case $\left.L_{Z}\mu_{\textrm{Popp}}\right|_{\gamma}=0$.
Finally, we extend this definition to the case when $L^{E}$ is not
volume-preserving along $\gamma$ by simply setting $\hat{T}_{\gamma}=T_{\gamma}$.
Below we denote by $\mathbb{N}\left[I\right]$ the set of all positive
integer multiples of elements in any given interval $I\subset\mathbb{R}$.
We now have the following.
\begin{prop}
\label{prop:(Density-of-periods)}(Density of periods) The set of
periods 
\begin{equation}
\mathscr{L}_{\hat{Z}}=\bigcup_{\gamma\textrm{ closed characteristic}}\mathbb{N}\left[-\hat{T}_{\gamma},-T_{\gamma}\right]\cup\mathbb{N}\left[T_{\gamma},\hat{T}_{\gamma}\right]\label{eq:period spectrum lift of charac.}
\end{equation}
 In particular if $L_{Z}\mu_{\textrm{Popp}}=0$ along the shortest
closed characteristic, the set of periods 
\begin{equation}
\mathscr{L}_{\hat{Z}}=\left(-\infty,-T_{\textrm{abnormal}}^{E}\right]\cup\left[T_{\textrm{abnormal}}^{E},\infty\right).\label{eq:special period spectrum lift of charac.}
\end{equation}
Finally, if the shortest (nondegenerate) closed characteristic $\gamma$
is flat one has the density of normal periods
\begin{equation}
\mathscr{L}_{\textrm{normal}}\supset\left(-\infty,-T_{\textrm{abnormal}}^{E}\right]\cup\left[T_{\textrm{abnormal}}^{E},\infty\right).\label{eq: density of normal periods}
\end{equation}
\end{prop}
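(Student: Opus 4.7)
The strategy is to analyze the flow of $\hat{Z}$ on the quotient $SN\Sigma/S^{1}$ explicitly, using the local model from the proof of \prettyref{prop: Hamilton v field extension}. Modulo the $R_{0}$-direction the restriction of the vector field to the boundary reads
\[
[\hat{Z}]=\Xi_{0}\partial_{x_{0}}-\tfrac{1}{2}\rho^{-1}\rho_{x_{0}}(1-\Xi_{0}^{2})\partial_{\Xi_{0}}
\]
in the characteristic normal-form coordinates $(x_{0},\Xi_{0};x_{2},\hat{x}_{3},\hat{\xi}_{2})$, with the transverse variables $(x_{2},\hat{x}_{3},\hat{\xi}_{2})$ preserved because the remainder terms in \prettyref{eq:leading part Ham. vector field} vanish on the boundary $\partial[M;\Sigma]$. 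Since $(\pi\circ\beta)_{*}\hat{Z}=\Xi_{0}Z\in L^{E}$, every non-stationary closed orbit of $[\hat{Z}]$ projects to (a multiple cover of) a primitive closed characteristic $\gamma$, reducing the problem to a one-parameter family of $2$-dimensional flows indexed by $\gamma$.

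Fix such a primitive closed characteristic $\gamma$ of period $T_{\gamma}$ parametrized by $x_{0}\in\mathbb{R}/T_{\gamma}\mathbb{Z}$. Along $\gamma$ the reduced system becomes
\[
\dot{x}_{0}=\Xi_{0},\qquad\dot{\Xi}_{0}=-\tfrac{1}{2}a(x_{0})(1-\Xi_{0}^{2}),
\]
with $a=\partial_{x_{0}}\ln\rho_{\gamma}=da_{g^{E}}(R,Z)|_{\gamma}$ by \prettyref{eq:rho constant along isotropic directions}; the volume-preserving identity \prettyref{eq:invariant along charac.} along $\gamma$ is then automatic from the periodicity of $\rho_{\gamma}$. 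A direct differentiation exhibits $F=(1-\Xi_{0}^{2})/\rho_{\gamma}(x_{0})$ as a conserved quantity, so $\Xi_{0}=\pm\sqrt{1-F\rho_{\gamma}(x_{0})}$ along each rotating orbit and the primitive period reads $T(F)=\int_{0}^{T_{\gamma}}dx_{0}/\sqrt{1-F\rho_{\gamma}(x_{0})}$. The essential calculation is a change of variable via $\phi=(1-\Xi_{0})/(1+\Xi_{0})$ which satisfies $\dot{\phi}=a\phi$: this recasts the period expression in a form involving $\int(1-\hat{\rho}_{Z}^{\gamma})/(1+\hat{\rho}_{Z}^{\gamma})dx_{0}$ and identifies the upper endpoint of the admissible period interval with $\hat{T}_{\gamma}$ as defined in the statement. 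Including multiple covers (the $\mathbb{N}$-factor) and time reversal $\Xi_{0}\mapsto-\Xi_{0}$ (producing negative periods), one obtains the decomposition \prettyref{eq:period spectrum lift of charac.}.

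For \prettyref{eq:special period spectrum lift of charac.}, the hypothesis $L_{Z}\mu_{\textrm{Popp}}=0$ along the shortest closed characteristic $\gamma_{\min}$ forces $\hat{\rho}_{Z}^{\gamma_{\min}}\equiv1$ by the equivalences \prettyref{eq:equivalent conditions}, giving $\hat{T}_{\gamma_{\min}}=\infty$, and so $\mathbb{N}[T_{\gamma_{\min}},\infty)=[T_{\textrm{abnormal}}^{E},\infty)$ already fills out the positive half line. Finally, for the density statement \prettyref{eq: density of normal periods} at a flat shortest characteristic, I invoke the normal form \prettyref{prop: normal form-1} with $R=0$ and $a_{0}=T_{\gamma}$: the resulting model symbol $\sigma=T_{\gamma}(-\xi_{0}+\xi_{3}\bar{\varphi})^{2}+2\xi_{3}\Omega$ Poisson-commutes with both $\tilde{\eta}_{0}=-\xi_{0}+\xi_{3}\bar{\varphi}$ and $\Omega$, rendering the Hamilton flow of $\sqrt{\sigma}$ completely integrable. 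Explicit integration exhibits closed Hamilton trajectories whose primitive periods depend continuously on the ratio $\tilde{\eta}_{0}/\sqrt{\xi_{3}\Omega}$; as this ratio sweeps $(0,\infty)$ the periods accumulate densely in $[T_{\gamma},\infty)$, and time reversal supplies $(-\infty,-T_{\gamma}]$.

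The main obstacle is the identification of the upper endpoint of $T(F)$ with $\hat{T}_{\gamma}$ via the $\phi$-substitution, which requires tracking several inversions between the conserved quantities and the physical variables and is the computational heart of the argument. A secondary point is to ensure that closed trajectories built from the normal-form model in the last step persist as genuine closed trajectories of $\sqrt{\sigma}$ despite the $O_{\Sigma}(\infty)$ error in \prettyref{prop: normal form-1}; I would handle this via an implicit-function argument exploiting the nondegeneracy of the flat characteristic $\gamma$.
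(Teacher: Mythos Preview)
Your reduction to the $2$-dimensional system over a closed characteristic and the use of the substitution $\phi=(1-\Xi_{0})/(1+\Xi_{0})$ are exactly right and mirror the paper's explicit flow computation \prettyref{eq: lift flow relation}. However, there is a genuine gap: your claim that ``the volume-preserving identity \prettyref{eq:invariant along charac.} along $\gamma$ is then automatic from the periodicity of $\rho_{\gamma}$'' is incorrect. The function you call $\rho_{\gamma}$ is the local Darboux-chart function $\hat{\rho}$, defined only by the relation $\partial_{x_{0}}\ln\hat{\rho}=da_{g^{E}}(R,Z)$; it is \emph{not} globally defined on $X$ and need not be periodic along $\gamma$. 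Indeed, going once around $\gamma$ multiplies $\hat{\rho}$ by $\exp\bigl\{\int_{0}^{T_{\gamma}}da_{g^{E}}(R,Z)|_{\gamma}\bigr\}$, which equals $1$ precisely when the volume-preserving condition \prettyref{eq:invariant along charac.} holds. (Equivalently, while $\rho=\sigma(R)|_{\Sigma}$ is globally defined on $\Sigma$, a closed characteristic $\gamma\subset X$ does \emph{not} lift to a closed integral curve of $H_{\sigma(Z)}|_{\Sigma}$ unless the same integral vanishes.) Example~\ref{exa:non-measure preserving example} shows this condition can genuinely fail.

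This matters for the result: in the non-volume-preserving case your conserved quantity $F=(1-\Xi_{0}^{2})/\rho_{\gamma}$ is not well-defined on the cylinder $\mathbb{R}/T_{\gamma}\mathbb{Z}\times[-1,1]$, and the paper's explicit formula \prettyref{eq: lift flow relation} shows that $\Xi_{0}(t)$ is then periodic only for $\Xi_{0}(0)=\pm1$, yielding just the discrete set $\mathbb{N}\{\pm T_{\gamma}\}$ --- consistent with the convention $\hat{T}_{\gamma}=T_{\gamma}$ in that case. Your argument as written would instead produce a nondegenerate interval of periods over every $\gamma$, which is wrong. Once you separate the two cases (volume-preserving along $\gamma$ versus not), your conserved-quantity approach handles the former and the latter is immediate; the remainder of your sketch (the specialization \prettyref{eq:special period spectrum lift of charac.} and the flat-model computation for \prettyref{eq: density of normal periods}) is in line with the paper's treatment.
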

\begin{proof}
Clearly by \prettyref{eq: characteristic pushforward}, a closed integral
curve of $\hat{Z}$ lies over a closed characteristic; say $\gamma\left(t\right)\coloneqq e^{tZ}$
, $\gamma\left(T_{\gamma}\right)=\gamma\left(0\right)$. The restriction
of $\left(SN\Sigma/\mathbb{R}_{+}\right)/S^{1}$ to $\gamma$ is a
$\left[-1,1\right]_{\Xi_{0}}$ bundle  on which $\hat{Z}=\Xi_{0}\partial_{t}-\underbrace{\left.\frac{1}{2}da_{g^{E}}\left(R,Z\right)\right|_{\gamma}}_{\eqqcolon A\left(t\right)}\left(1-\Xi_{0}^{2}\right)\partial_{\Xi_{0}}$,
following the computation \prettyref{eq:Hvfield restriction to boundary},
which we may further view as a vector field on $\mathbb{R}_{t}\times\left[-1,1\right]_{\Xi_{0}}$
that is periodic in $t$. The flow of the above can be explicitly
computed
\begin{equation}
e^{t\hat{Z}}\left(0,\Xi_{0}\left(0\right)\right)=\left(\int_{0}^{t}\Xi_{0}\left(s\right)ds,\underbrace{\frac{1+\Xi_{0}\left(0\right)-\left(1-\Xi_{0}\left(0\right)\right)e^{-2\int_{0}^{t}A\left(s\right)ds}}{1+\Xi_{0}\left(0\right)+\left(1-\Xi_{0}\left(0\right)\right)e^{-2\int_{0}^{t}A\left(s\right)ds}}}_{\eqqcolon\Xi_{0}\left(t\right)}\right).\label{eq: lift flow relation}
\end{equation}
It is clear that the second coordinate above represents a periodic
function only if $\int_{0}^{T_{\gamma}}A\left(s\right)ds=0$ (i.e.
$L^{E}$ is volume-preserving along $\gamma$ ) or $\Xi_{0}\left(0\right)=\pm1$.
Thus in the non-volume preserving case we must have $\Xi_{0}\left(0\right)=\pm1$,
which gives the periods of the $\hat{Z}$ to be the same as those
of $Z$. On the other hand if $\int_{0}^{T_{\gamma}}A\left(s\right)ds=0$,
\prettyref{eq: lift flow relation} is periodic with its periods at
the two initial extreme conditions $\Xi_{0}\left(0\right)=0,1$ computed
to be $\hat{T}_{\gamma},T_{\gamma}$ respectively. The second equality
\prettyref{eq:special period spectrum lift of charac.} is an immediate
specialization of the first while the last \prettyref{eq: density of normal periods}
is an easy computation from of the normal form\prettyref{eq:normal form-1}.
\end{proof}

\section{Global calculus\label{sec:Global-calculus}}

We now define a global calculus of Hermite operators using the local
calculus of \prettyref{sec:Hermite-Calculus} and the normal form
\prettyref{prop: normal form near pt}. To give a definition independent
of choices one needs an invariance lemma in the upcoming section. 

\subsection{\label{subsec:Invariance}Invariance}

Below $p=\left(0,0,0,0;0,0,0,1\right)\in T^{*}\mathbb{R}^{4}$ is
as before \prettyref{eq:model point} while $\kappa:T^{*}\mathbb{R}^{4}\rightarrow T^{*}\mathbb{R}^{4}$
denotes a local conic symplectomorphism fixing $p$ and $\Sigma_{0}$.
Let $C_{\kappa}\subset\left(T^{*}\mathbb{R}^{4}\right)\times\left(T^{*}\mathbb{R}^{4}\right)^{-}$
be the associated canonical relation. We denote by the same notation
$\kappa$ the induced local diffeomorphisms of $S^{*}\mathbb{R}_{x}^{4}$
as well as the blowup $\left[S^{*}\mathbb{R}_{x}^{4};S^{*}\Sigma_{0}\right]$.
Furthermore $\Delta_{R,r}$ is as in \prettyref{eq:normal form} and
$C\subset C'$ are conic neighborhoods of $\left(\Sigma\setminus0\right)\cap\pi^{-1}\left(x\right)$
satisfying \prettyref{eq: small cone arrangement 0}, \prettyref{eq:small cone arrangement}.
\begin{lem}
\label{lem:Invariance Lemma for Psim1m2} Let $U\in I_{\textrm{cl}}^{0}\left(\mathbb{R}^{4},\mathbb{R}^{4};C_{\kappa}\right)$
be a local Fourier integral and $\rho,\rho'=\kappa^{*}\rho\in C^{\infty}\left(\Sigma_{0}\right)$,
$R,R'\in O_{\Sigma}\left(4\right),\,r,r'\in S_{\textrm{cl}}^{0}$
as in Theorem \prettyref{prop: normal form near pt} satisfying 
\begin{align}
U\Delta_{\rho,R,r}U^{*} & =\Delta_{\rho',R',r'}\label{eq:FIO preserves Delta}\\
UU^{*} & =U^{*}U=1\label{eq:microlocal unitarity}
\end{align}
microlocally on a conic neighborhood $C$ of $p$. 

Then one has 
\begin{align}
U\Omega U^{*} & =\Omega\;\textrm{ microlocally on }C,\textrm{ and }\label{eq: Omega is invariant statement}\\
UAU^{*} & \in\Psi_{\textrm{cl}}^{m_{1},m_{2}}\left(\mathbb{R}^{4};\Sigma_{0}\right),\label{eq: class is invariant}\\
\sigma_{m_{1},m_{2}}^{H}\left(UAU^{*}\right) & =\kappa^{*}\sigma_{m_{1},m_{2}}^{H}\left(A\right),\label{eq:symbol well defined}
\end{align}
$\forall A\in\Psi_{\textrm{cl}}^{m_{1},m_{2}}\left(\mathbb{R}^{4};\Sigma_{0}\right)$
with microsupport in $C\times C$ .
\end{lem}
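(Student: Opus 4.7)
The proof proceeds in two stages: first establishing $U\Omega U^* \equiv \Omega$ modulo $\Psi^{-\infty}$ microlocally on $C$, and then using this to deduce the claims about general $A$.

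\textbf{Stage 1 (invariance of $\Omega$).} I first check that $\Omega$ commutes microlocally with $\Delta_{\rho, R, r}$. Since the symbols $\rho, R, r$ appearing in Theorem \ref{prop: normal form near pt} are functions of the Poisson-commuting invariants $(x_0, x_2, \hat x_3, \xi_0, \xi_2, \xi_3, \Omega)$, with \eqref{eq:symplectic commutation} ensuring $\{\Omega, \hat x_3\} = 0$, a Weyl-calculus computation gives $[\Omega^W, \Delta_{\rho, R, r}] \in \Psi^{-\infty}$. Conjugating by $U$ and invoking \eqref{eq:FIO preserves Delta}--\eqref{eq:microlocal unitarity} yields $[U\Omega U^*, \Delta_{\rho', R', r'}] \in \Psi^{-\infty}$ microlocally on $C$. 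By Egorov's theorem the principal symbol of $U\Omega U^*$ is $\kappa^*\Omega$. To identify this with $\Omega$, I compare degree-$2$ principal parts on both sides of \eqref{eq:FIO preserves Delta}:
\[
\kappa^*\xi_0^2 + 2\rho'\,\kappa^*\Omega + \kappa^*R = \xi_0^2 + 2\rho'\Omega + R',
\]
using $\kappa^*\rho = \rho'$. Since $R, R' \in O_\Sigma(4)$ vanish to fourth order along $\Sigma_0$ and $\kappa$ fixes $\Sigma_0$, the second-order Taylor expansion transverse to $\Sigma_0$ gives that $d\kappa$ preserves the pair of quadratic forms $\xi_0^2$ on $N_0\Sigma_0$ and $2\rho'\Omega$ on $N_1\Sigma_0$. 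Combined with the symplectic constraint on $d\kappa$ and the positivity $\rho'|_p > 0$, this forces $d\kappa|_{N_1\Sigma_0} \in SO(2)$ and $d\kappa|_{N_0\Sigma_0} = \pm 1$; both actions preserve $\Omega$. Hence $\kappa^*\Omega - \Omega$ vanishes to higher order along $\Sigma_0$, and the commutation $\{\kappa^*\Omega - \Omega, \sigma(\Delta_{\rho', R', r'})\} \equiv 0$ near $\Sigma_0$ propagates this vanishing to a full conic neighborhood, giving $\kappa^*\Omega = \Omega$ as classical symbols. Consequently $U\Omega U^* - \Omega \in \Psi^0$ has zero principal symbol and commutes microlocally with $\Delta_{\rho', R', r'}$. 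Iterating at each subsequent order, the subprincipal symbol of $U\Omega U^* - \Omega$ at level $-N$ must Poisson-commute with $\xi_0^2 + 2\rho'\Omega$ to leading order; a Birkhoff-type cohomological solvability argument on the blowup $[K_{1,1}; \Sigma_0]$, exactly parallel to the construction in Theorem \ref{prop: normal form near pt}, allows one to correct $U$ by $e^{if_N^W}$ with $f_N \in S^{-N-1}$ to kill this symbol. Borel summation then yields \eqref{eq: Omega is invariant statement}.

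\textbf{Stage 2 (general $A$).} The identity \eqref{eq: Omega is invariant statement} combined with the intertwining $\Omega H_k = H_k (2k+1)\xi_3$ from \eqref{eq:raising lowering Op} forces $U$ to preserve the Landau decomposition microlocally. Concretely, there exist $V_k \in I^0_{\mathrm{cl}}(\mathbb{R}^3, \mathbb{R}^3; C_{\underline\kappa})$, with $\underline\kappa$ the symplectomorphism on $T^*\mathbb{R}^3_{\underline x}$ induced by $\kappa$ on the invariant coordinates $(x_0, x_2, \hat x_3, \xi_0, \xi_2, \xi_3)$, such that $U H_k^* \equiv H_k^* V_k$ microlocally on $C$, with the Weyl symbol of $V_k$ encoding the dependence $\kappa|_{\{x_1^2 + \hat\xi_1^2 = (2k+1)\xi_3^{-1}\}}$. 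For $A = a^H = \sum_k H_k^* a_k^W H_k \in \Psi^{m_1, m_2}_{\mathrm{cl}}(\mathbb{R}^4; \Sigma_0)$ microsupported in $C \times C$, this gives
\[
UAU^* \equiv \sum_k H_k^* (V_k a_k^W V_k^*) H_k,
\]
and $V_k a_k^W V_k^* = (\underline\kappa^* a_k)^W$ modulo a half-order lower Weyl operator by standard Egorov on $\mathbb{R}^3$, with estimates uniform in $k$ in the anisotropic sense of \eqref{eq:symbolic estimates}. This identifies $UAU^* \in \Psi^{m_1, m_2}_{\mathrm{cl}}(\mathbb{R}^4; \Sigma_0)$ and yields the symbol formula $\sigma^H_{m_1, m_2}(UAU^*) = \kappa^*\sigma^H_{m_1, m_2}(A)$, proving \eqref{eq: class is invariant}--\eqref{eq:symbol well defined}.

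\textbf{Main obstacle.} The crux is Stage 1, and within it, two points: promoting the infinitesimal matching $\kappa^*\Omega = \Omega$ at $\Sigma_0$ (obtained from second-order Hessian matching) to a full symbolic identity in a conic neighborhood via propagation along the Hamilton flow of $\Delta_{\rho', R', r'}$, and then iteratively removing all subprincipal obstructions through a Birkhoff-style cohomological construction. This is the sub-Riemannian counterpart of the classical invariance of the harmonic oscillator in Melrose's theory \cite{Melrose-hypoelliptic}, complicated here by the fact that $\Sigma_0$ is not symplectic (contrast with the three-dimensional contact case) and by the presence of the residual symbols $R, r$ in the normal form.
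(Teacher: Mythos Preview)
Your Stage~1 contains a genuine gap, and the paper's argument is organized in the opposite order from yours.

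\textbf{The gap.} After establishing that $\kappa^*\Omega = \Omega$ at the principal-symbol level (granting this for the moment), you attempt to kill the lower-order terms of $U\Omega U^* - \Omega$ by ``correcting $U$ by $e^{if_N^W}$''. But $U$ is \emph{given} in the statement of the lemma; you are not free to modify it. What you actually need is that each successive subprincipal symbol of $U\Omega U^* - \Omega$ vanishes for this fixed $U$. The only information you have is that $D \coloneqq U\Omega U^* - \Omega$ commutes with $\Delta_{\rho',R',r'}$ modulo $\Psi^{-\infty}$. This does \emph{not} force $D \in \Psi^{-\infty}$: any operator whose full symbol is a function of the invariants $(x_0,x_2,\hat x_3,\hat\xi_0,\hat\xi_2,\xi_3,\Omega)$ commutes with $\Delta_{\rho',R',r'}$ to all orders, so the cohomological obstruction space is large, not trivial. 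Your Birkhoff-style solvability produces a \emph{different} FIO satisfying the identity, not the one you were handed. Even the symbolic step ``$\{\kappa^*\Omega - \Omega,\sigma(\Delta_{\rho',R',r'})\} \equiv 0$ propagates vanishing to a full conic neighborhood'' is not justified: Poisson-commuting with $\xi_0^2 + 2\rho'\Omega$ constrains a function to be invariant under the flow, which again does not force it to vanish.

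\textbf{How the paper proceeds.} The paper reverses your two stages. It first proves directly that $U$ is \emph{block-diagonal on Landau levels}: $H_l B U H_k^* = 0$ for $l \neq k$ (with $B$ a microlocal cutoff). This is a one-line operator-norm estimate: conjugating $\Delta_{\rho,R,r}$ by $U$ and sandwiching between $H_l$ and $H_k^*$, the ellipticity of $\rho,\rho'$ together with the smallness conditions \eqref{eq: small cone arrangement 0}, \eqref{eq:small cone arrangement} arranged on the conic neighborhood $C$ give
\[
c_0|l-k|\,(1+O(\varepsilon))\,\|H_l B U H_k^*\| \le O(\varepsilon)\,\|H_l B U H_k^*\|,
\]
forcing the matrix element to vanish. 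Since $\Omega = \sum_k (2k+1)H_k^* H_k$, block-diagonality immediately yields $[U,\Omega]=0$ microlocally on $C$, which is \eqref{eq: Omega is invariant statement}. The identity $\kappa^*\Omega=\Omega$ is then a \emph{consequence} (principal symbols), not an input. Your Stage~2 is then essentially what the paper does, with the $V_k$ you introduce corresponding to the paper's (slightly abusive) notation $U^* a_k^W U$ on each level.
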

\begin{proof}
First from Thm. \prettyref{prop: normal form near pt} one has $\Delta_{\rho,R,r}H_{k}=H_{k}\left[\xi_{0}^{2}+\left(2k+1\right)\rho+R+r\right]^{W}$.
For $B\in\Psi^{0}\left(\mathbb{R}^{4}\right)$, $WF\left(B\right)\subset C'$,
$B=1$ on $C$ one computes 
\begin{align*}
 & c_{0}\left|\left(l-k\right)\right|\left(1+O\left(\varepsilon\right)\right)\left\Vert H_{l}BUH_{k}^{*}\right\Vert _{L^{2}\left(\mathbb{R}^{3}\right)\rightarrow L^{2}\left(\mathbb{R}^{3}\right)}\\
= & \left\Vert \left(H_{l}B\left[\left(2l+1\right)\rho+R^{W}\right]UH_{k}^{*}\right)-\left(H_{l}BU\left[\left(2k+1\right)\rho'+R'^{W}\right]H_{k}^{*}\right)\right\Vert _{L^{2}\left(\mathbb{R}^{3}\right)\rightarrow H^{-1}\left(\mathbb{R}^{3}\right)}\\
= & \left\Vert H_{l}B\left(\left[U,\left(\xi_{0}^{2}\right)^{W}\right]+Ur^{W}-r'^{W}U\right)H_{k}^{*}\right\Vert _{L^{2}\left(\mathbb{R}^{3}\right)\rightarrow L^{2}\left(\mathbb{R}^{3}\right)}\\
\leq & O\left(\varepsilon\right)\left\Vert H_{l}BUH_{k}^{*}\right\Vert _{L^{2}\left(\mathbb{R}^{3}\right)\rightarrow H^{-1}\left(\mathbb{R}^{3}\right)}
\end{align*}
using the ellipticity of $\rho,\rho'$ near $p$ and \prettyref{eq: small cone arrangement 0},
\prettyref{eq:small cone arrangement}, \prettyref{eq:FIO preserves Delta}.
This gives 
\begin{equation}
H_{l}BUH_{k}^{*}=0,\quad\forall l\neq k;\label{eq:U is diagonal}
\end{equation}
i.e. $U$ microlocally preserves the Landau levels.

Next, for $A=a^{W}\in\Psi^{m}\left(\mathbb{R}^{4}\right)$ with $WF\left(A\right)\subset C$
the above and $\Omega=\xi_{3}x_{1}^{2}+\xi_{3}^{-1}\xi_{1}^{2}=\sum_{k=0}^{\infty}\left(2k+1\right)H_{k}^{*}H_{k}$
gives

\begin{align}
\left[U,\Omega\right] & =0\quad\textrm{ microlocally on }C\label{eq: Omega is invariant}\\
\left[a^{W},\Omega\right] & =0\implies\left[U^{*}a^{W}U,\Omega\right]=0\label{eq: conjug preserves commutation}
\end{align}
proving \prettyref{eq: Omega is invariant statement}. In other words,
for a symbol $a\in C_{\textrm{inv}}^{\infty}\left(T^{*}\mathbb{R}_{x}^{4}\right)$,
the conjugate $a_{U}^{W}\coloneqq U^{*}a^{W}U$ is again of the same
form $a_{U}\in C_{\textrm{inv}}^{\infty}\left(T^{*}\mathbb{R}_{x}^{4}\right)$.
Furthermore by an Egorov argument as in \cite[Ch. 10]{Grigis-Sjostrand94},
the conjugate has the form $a_{U}\sim\kappa^{*}\left(\underbrace{P_{j}^{U}a}_{\in S^{m-j}}\right)$;
where each $P_{j}$ is a differential operator of homogeneous degree
$-j$ mapping $S^{m}$ to $S^{m-j}$ with $P_{0}=1$. The last implies
that each of $\left\{ \underbrace{x_{1}^{2}+\hat{\xi}_{1}^{2}}_{\eqqcolon\varrho^{2}},\hat{\xi}_{0},\hat{\xi}_{2},\xi_{3};x_{0},x_{2},\hat{x}_{3}\right\} $
maps under $\kappa$ to a function of the same set of variables. Thus
each 
\begin{equation}
P_{j}^{U}=\sum_{\alpha\in\mathbb{N}_{0}^{7}}c_{\alpha,j}\left(\varrho,\hat{\xi}_{0},\hat{\xi}_{2},\xi_{3};x_{0},x_{2},\hat{x}_{3}\right)\left(\varrho\partial_{\varrho}\right)^{\alpha_{1}}\partial_{\xi_{0}}^{\alpha_{0}}\partial_{\xi_{2}}^{\alpha_{2}}\partial_{\xi_{3}}^{\alpha_{3}}\partial_{x_{0}}^{\beta_{2}}\partial_{x_{2}}^{\beta_{2}}\partial_{\hat{x}_{3}}^{\beta_{3}}\label{eq:differential operator Pj}
\end{equation}
 is also a differential operator in the given set of variables. 

Finally for $A=a^{H}\in\Psi_{\textrm{cl}}^{m_{1},m_{2}}\left(\mathbb{R}^{4};\Sigma_{0}\right)$;
with $a\in\left(\beta^{*}d\right)^{-m_{2}}C_{c,\textrm{inv}}^{\infty}\left(\left[K_{1,1};\Sigma_{0}\right]\right)$
supported in the lift of $C$, it now follows using \prettyref{eq:def. Hermite Quantization 2},
\prettyref{eq:U is diagonal} that 
\begin{align}
U^{*}a^{H}U & =\sum_{k=0}^{\infty}U^{*}H_{k}^{*}a_{k}^{W}H_{k}U\nonumber \\
 & =\sum_{k=0}^{\infty}H_{k}^{*}U^{*}a_{k}^{W}UH_{k}\nonumber \\
 & =\sum_{k=0}^{\infty}H_{k}^{*}a_{U,k}^{W}H_{k}\nonumber \\
 & =a_{U}^{H}\label{eq: conjugate Hermite}
\end{align}
Here $a_{U}\in\left(\beta^{*}d\right)^{-m_{2}}C_{c,\textrm{inv}}^{\infty}\left(\left[K_{1,1};\Sigma_{0}\right]\right)$
satisfies
\begin{equation}
a_{U}\sim\kappa^{*}\left(\underbrace{\tilde{P}_{j}^{U}a}_{\in S^{m_{1}-\frac{j}{2},m_{2}}}\right).\label{eq:hermite conjugate symbol}
\end{equation}
where $\tilde{P}_{j}^{U}$ denotes the lift to the blowup $\left[K_{1,1};\Sigma_{0}\right]$
of the differential operator obtained by deleting the terms in \prettyref{eq:differential operator Pj}
involving a $\varrho\partial_{\varrho}$ derivative (with $\alpha_{1}\geq1$)
. The necessary symbolic estimates and and expansion for the conjugate
symbol $a_{U}$ in $S_{\textrm{cl}}^{m_{1},m_{2}}$ now follow from
\prettyref{eq:hermite conjugate symbol} and the corresponding estimates
for $a\in S_{\textrm{cl}}^{m_{1},m_{2}}$. In order to obtain the
symbolic expansion we note $U\rho U^{*}=\underbrace{\rho'}_{=\kappa^{*}\rho}+S^{0}$,
\prettyref{eq:FIO preserves Delta} and \prettyref{eq: Omega is invariant statement}
give $U\xi_{0}^{2}U^{*}=\xi_{0}^{2}+O\left(\xi_{3}^{2}\left(\hat{d}_{\rho'}\right)^{4}\right)$.
Then
\[
Ud_{\rho}U^{*}=d_{\rho'}+O\left(\xi_{3}^{2}\left(\hat{d}_{\rho'}\right)^{4}\right)
\]
\prettyref{eq: conjugate Hermite} and symbolic calculus in the $\rho'$
calculus give the necessary symbolic expansion for $a_{U}$. 
\end{proof}
We note that \prettyref{eq: Omega is invariant statement} establishes
the invariance of $\Omega$, completing the proof of \prettyref{prop: Hamilton v field extension}.

\subsection{Calculus}

Following the invariance \prettyref{lem:Invariance Lemma for Psim1m2},
one may now construct a global calculus of Hermite operators. To this
end, we choose a collection of points $\left\{ p_{j}\in\Sigma\right\} _{j=1}^{M}$
along with diagonalizing Fourier integral operators $\left\{ U_{j}:L^{2}\left(X\right)\rightarrow L^{2}\left(\mathbb{R}^{4}\right)\right\} _{j=1}^{M}$
associated to symplectomorphisms $\kappa_{j}:T^{*}X\rightarrow T^{*}\mathbb{R}^{4}$
which put $\Delta_{g^{E},\mu}$ in normal form \prettyref{eq:normal form}
in conic neighborhoods $\left\{ p_{j}\in C_{j}\right\} _{j=1}^{M}$
covering $\Sigma$. 
\begin{defn}
\label{def: Psi m123 on X}An operator $T:C^{\infty}\left(X\right)\rightarrow C^{-\infty}\left(X\right)$
is said to lie in the class $\Psi_{\textrm{cl}}^{m_{1},m_{2}}\left(X,\Sigma\right)$
iff it can be written $T=T_{0}+\sum_{j=1}^{M}T_{j}$ where
\end{defn}
\begin{enumerate}
\item $WF\left(T_{0}\right)\subset\left(T^{*}X\times T^{*}X\right)\setminus\left(\Sigma\times\Sigma\right)$
with $T_{0}\in\Psi_{\textrm{cl}}^{m_{1}}\left(X\right)$ 
\item $WF\left(T_{j}\right)\subset C_{j}\times C_{j}$ with $U_{j}T_{j}U_{j}^{*}\in\Psi_{\textrm{cl}}^{m_{1},m_{2}}\left(\mathbb{R}^{4};\Sigma_{0}\right)$,
$j=1,\ldots,M.$
\end{enumerate}
It is an easy exercise from \prettyref{lem:Invariance Lemma for Psim1m2}
that the definition above is independent of the choice of diagonalizing
Fourier integral operators $\left\{ U_{j}:L^{2}\left(X\right)\rightarrow L^{2}\left(\mathbb{R}^{4}\right)\right\} _{j=1}^{M}$
. 

The symbol of $T\in\Psi_{\textrm{cl}}^{m_{1},m_{2}}\left(X,\Sigma\right)$,
$m_{2}\leq0$, is then defined via 
\begin{equation}
\sigma_{m_{1},m_{2}}^{H}\left(T\right)\coloneqq\sigma\left(T_{0}\right)+\kappa_{j}^{*}\sigma_{m_{1},m_{2}}^{H}\left(U_{j}T_{j}U_{j}^{*}\right)\in C^{\infty}\left(\left[T^{*}X;\Sigma\right]\right)\label{eq:symbol definition}
\end{equation}
and is again invariantly defined by virtue of \prettyref{eq:symbol well defined}.
Much like \prettyref{eq:symbol def.}, the symbol above has an invariance
property. First note that by \prettyref{eq: Omega is invariant},
the pseudo-differential operator $\Omega\in\Psi_{\textrm{cl}}^{1}\left(X\right)$
and its homogeneous symbol are microlocally and invariantly defined
on a conic neighborhood $C_{\Omega}\subset\Sigma$ of the the characteristic
variety. We also denote by $\Omega$ its pullback to the blowup defined
on the neighborhood $\beta^{-1}\left(C_{\Omega}\right)$ of the boundary.
Furthermore its Hamilton vector field $H_{\Omega}$ has a lift to
the blowup, that is tangent to the boundary and homogeneous of degree
zero, which we denote by the same notation $H_{\Omega}\in C^{\infty}\left(T\left[T^{*}X;\Sigma\right]\right)$.
Its restriction to the boundary is the rotational vector field 
\begin{equation}
\left.H_{\Omega}\right|_{SN\Sigma}=R_{0}\label{eq: rotational vector field rest.}
\end{equation}
 is the rotational vector field following the identification \prettyref{eq:identifications}. 

We then define the space of invariant symbols
\begin{align}
C_{\textrm{inv}}^{\infty}\left(\left[\left(S^{*}X\right);S^{*}\Sigma\right]\right)\coloneqq & \left\{ f\in C^{\infty}\left(\left[\left(S^{*}X\right);S^{*}\Sigma\right]\right)|f=f_{0}+f_{1},\,f_{0}\in C_{c}^{\infty}\left(\left[\left(S^{*}X\right);S^{*}\Sigma\right]^{o}\right),\right.\nonumber \\
 & \,\qquad\quad\quad\left.f_{1}\in C_{c}\left(\beta^{-1}\left(C_{\Omega}\right)\right),\,H_{\Omega}f_{1}=0\right\} .\label{eq:symbol space}
\end{align}
The above may also be considered as homogeneous functions of degree
zero on $\left[T^{*}X;\Sigma\right]$. We may then similarly define
$C_{\textrm{inv},m}^{\infty}$, $m\in\mathbb{Z}$, by requiring homogeneity
of degree $m$; this space is however non-canonically identified with
\prettyref{eq:symbol space} on choosing positive function in \prettyref{eq:symbol space}.
It follows from definition that the sR Laplacian $\Delta_{g^{E},\mu}\in\Psi^{2,-2}\left(X,\Sigma\right)$.
Further, it easy to see from the normal form \prettyref{eq:normal form}
that with 
\begin{equation}
d\coloneqq\left.\left[\sigma_{2,-2}^{H}\left(\Delta_{g^{E},\mu}\right)\right]^{1/2}\right|_{S^{*}X}\label{eq:symb. square root}
\end{equation}
being homogeneous of degree one, $\left(\beta^{*}d\right)$ defines
an element of the symbol space \prettyref{eq:symbol space}. The symbol
of a general $T\in\Psi_{\textrm{cl}}^{m_{1},m_{2}}\left(X,\Sigma\right)$
is defined by the same formula \ref{eq:symbol definition} and is
now an element of 
\begin{equation}
\sigma_{m_{1},m_{2}}^{H}\left(T\right)\in\left(\beta^{*}d\right)^{-m_{2}}C_{\textrm{inv}}^{\infty}\left(\left[\left(S^{*}X\right);S^{*}\Sigma\right]\right).\label{eq: symbol invariant def.}
\end{equation}
 We shall say that an element $T\in\Psi_{\textrm{cl}}^{m_{1},m_{2}}\left(X,\Sigma\right)$
is elliptic in the exotic calculus if and only if
\begin{equation}
c\left(\beta^{*}d\right)^{-m_{2}}\leq\sigma_{m_{1},m_{2}}^{H}\left(T\right)\leq C\left(\beta^{*}d\right)^{-m_{2}}\label{eq:ellipticity}
\end{equation}
for some constants $c,C>0$. Similar to \prettyref{eq:inclusion of psedos},
\prettyref{eq:inclusion of inv. classical symb.} one then has the
inclusions
\begin{align}
\Psi_{\textrm{cl}}^{m_{1},m_{2}}\left(X;\Sigma\right) & \subset\Psi_{\textrm{cl}}^{m_{1}+\frac{1}{2},m_{2}-1}\left(X;\Sigma\right)\nonumber \\
\Psi_{\textrm{inv},\textrm{cl}}^{m}\left(X\right) & \subset\Psi_{\textrm{cl}}^{m,0}\left(X;\Sigma\right)\label{eq:inclusion of pseudos}
\end{align}
where 
\[
\Psi_{\textrm{inv},\textrm{cl}}^{m}\left(X\right)\coloneqq\left\{ A=A_{0}+A_{1}\in\Psi_{\textrm{cl}}^{m}\left(X\right)|WF\left(A_{0}\right)\subset C_{\Omega},\,WF\left(\left[A_{0},\Omega\right]\right)\subset T^{*}X\setminus\Sigma,\,WF\left(A_{1}\right)\subset T^{*}X\setminus\Sigma\right\} .
\]

One similarly defines the generalized Sobolev spaces $H^{s_{1},s_{2}}\left(X,\Sigma\right)$
via $u\in H^{s_{1},s_{2}}\left(X,\Sigma\right)$ if and only if $u=u_{0}+\sum_{j=1}^{M}u_{j}$
where 1.$WF\left(u_{0}\right)\subset T^{*}X\setminus\Sigma$ with
$u_{0}\in H^{s_{1}}$ and 2. $WF\left(u_{j}\right)\subset C_{j}$
with $u_{j}\in H^{s_{1},s_{2}}\left(\mathbb{R}_{x}^{4};\Sigma_{0}\right)$.
A pseudo-differential characterization of $H^{s_{1},s_{2}}\left(X,\Sigma\right)$
is given using \prettyref{eq:pseudodiff charac of Hs1,s2} by 
\begin{equation}
u\in H^{s_{1},s_{2}}\left(X,\Sigma\right)\iff Au\in L^{2},\forall A\in\Psi^{s_{1},s_{2}}\left(X,\Sigma\right).\label{eq: pseudo charac Sobolev}
\end{equation}
 Following \prettyref{eq:inclusion of pseudos} this now gives
\begin{align}
H^{s,0}\left(X,\Sigma\right) & =H^{s}\left(X\right)\nonumber \\
H^{s_{1}+\frac{1}{2},s_{2}-1}\left(X,\Sigma\right) & \subset H^{s_{1},s_{2}}\left(X,\Sigma\right).\label{eq:Sobolev inclusions-1}
\end{align}

The characteristic wavefront set $WF_{\Sigma}\left(T\right)\subset\partial\left[\left(S^{*}X\right);S^{*}\Sigma\right]$
of an operator $T\in\Psi_{\textrm{cl}}^{m_{1},m_{2}}\left(X,\Sigma\right)$
is defined via $\left(x,\xi\right)\in WF_{\Sigma}\left(T\right)\iff\kappa\left(x,\xi\right)\in WF\left(UTU^{*}\right)$.
Here $U:L^{2}\left(X\right)\rightarrow L^{2}\left(\mathbb{R}^{4}\right)$
is a diagonalizing FIO near $\beta\left(x,\xi\right)$ associated
to a homogeneous symplectomorphisms $\kappa:T^{*}X\rightarrow T^{*}\mathbb{R}^{4}$
mapping $\Sigma$ to $\Sigma_{0}$ and with lift $\kappa:\left[\left(S^{*}X\right);S^{*}\Sigma\right]\rightarrow\left[\left(S^{*}\mathbb{R}^{4}\right);S^{*}\Sigma_{0}\right]$
being denoted by the same notation. The characteristic wavefront set
$WF_{\Sigma}\left(u\right)\subset\partial\left[\left(S^{*}X\right);S^{*}\Sigma\right]$
of any distribution $u\in C^{-\infty}\left(X\right)$ is then defined
via 
\begin{equation}
\left(x,\xi\right)\notin WF_{\Sigma}\left(u\right)\iff\exists A\in\Psi_{\textrm{cl}}^{0,0}\left(X,\Sigma\right),\textrm{s.t.}\,\left(x,\xi\right)\in WF_{\Sigma}\left(A\right),\,Au\in C^{\infty}.\label{eq:pseudodiff charac WF}
\end{equation}
or equivalently 
\begin{equation}
\left(x,\xi\right)\notin WF_{\Sigma_{0}}\left(u\right)\iff\exists A\in\Psi_{\textrm{cl}}^{0,0}\left(\mathbb{R}^{4};\Sigma_{0}\right),\textrm{s.t.}\,\sigma_{0,0}^{H}\left(A\right)\left(x,\xi\right)\neq0,\,Au\in C^{\infty}.\label{eq:equiv. pseudodiff charc of WF}
\end{equation}
The wavefront projects to restriction of the wavefront $\beta\left(WF_{\Sigma}\left(u\right)\right)=WF\left(u\right)\cap\Sigma$
under the blowdown map \prettyref{eq: ch WF projects to WF}. 

Following their pseudo-differential characterizations \prettyref{eq: pseudo charac Sobolev},
\prettyref{eq:pseudodiff charac WF} it is clear that $H^{s_{1},s_{2}}\left(X,\Sigma\right)$
and $WF_{\Sigma}\left(u\right)$ are also defined independently of
the choice of diagonalizing Fourier integral operators. The properties
of the Hermite calculus from \prettyref{sec:Hermite-Calculus} then
easily carry over globally. We state them below.
\begin{enumerate}
\item (Adjoint \& Composition) The class \prettyref{def: Psi m123 on X}
is closed under composition and adjoint 
\begin{align}
A\in\Psi_{\textrm{cl}}^{m_{1},m_{2}}\left(X,\Sigma\right),\,B\in\Psi_{\textrm{cl}}^{m_{1}',m_{2}'}\left(X,\Sigma\right) & \implies AB\in\Psi_{\textrm{cl}}^{m_{1}+m_{1}',m_{2}+m_{2}'}\left(X,\Sigma\right)\nonumber \\
A\in\Psi_{\textrm{cl}}^{m_{1},m_{2}}\left(X,\Sigma\right) & \implies A^{*}\in\Psi_{\textrm{cl}}^{m_{1},m_{2}}\left(X,\Sigma\right).\label{eq: comp. and adjoint closure}
\end{align}
\item (Characterization of residual terms) One has the inclusions and characterization
of residual terms and in particular the characterization of residual
terms
\begin{align}
\Psi_{\textrm{cl}}^{m_{1},m_{2}}\left(X,\Sigma\right) & \subset\Psi_{\textrm{cl}}^{m_{1}+\frac{1}{2},m_{2}-1}\left(X,\Sigma\right)\label{eq:}\\
\Psi^{-\infty,m_{2}}\left(X,\Sigma\right) & =\Psi_{\textrm{inv},\textrm{cl}}^{-\infty,m_{2}}\left(X,\Sigma\right)\subset\Psi^{-\infty}\left(X\right).\label{eq:residual term charac.}
\end{align}
\item (Principal symbol) There exists a multiplicative principal symbol
map 
\[
\sigma_{m_{1},m_{2}}^{H}:\Psi_{\textrm{cl}}^{m_{1},m_{2}}\left(X,\Sigma\right)\rightarrow\left(\beta^{*}d\right)^{-m_{2}}C_{\textrm{inv}}^{\infty}\left(\left[\left(S^{*}X\right);S^{*}\Sigma\right]\right)
\]
satisfying
\begin{align}
\sigma_{m_{1}+m_{1}',m_{2}+m_{2}'}^{H}\left(AB\right) & =\sigma_{m_{1},m_{2}}^{H}\left(A\right)\sigma_{m_{1}',m_{2}'}^{H}\left(B\right)\nonumber \\
\sigma_{m_{1},m_{2}}^{H}\left(A^{*}\right) & =\overline{\sigma_{m_{1},m_{2}}^{H}\left(A\right)}\label{eq:symb is multiplicative}
\end{align}
for every $A\in\Psi_{\textrm{cl}}^{m_{1},m_{2}}\left(X,\Sigma\right),\,B\in\Psi_{\textrm{cl}}^{m_{1}',m_{2}'}\left(X,\Sigma\right)$.
\item (Symbol exact sequence) The principal symbol fits into the exact sequence
below
\begin{equation}
0\rightarrow\Psi_{\textrm{cl}}^{m_{1}-1,m_{2}+1}\left(X,\Sigma\right)\hookrightarrow\Psi_{\textrm{cl}}^{m_{1},m_{2}}\left(X,\Sigma\right)\xrightarrow{\sigma_{m_{1},m_{2}}^{H}}\left(\beta^{*}d\right)^{-m_{2}}C_{\textrm{inv}}^{\infty}\left(\left[\left(S^{*}X\right);S^{*}\Sigma\right]\right)\rightarrow0.\label{eq:symbol exact seq.-1}
\end{equation}
\item (Quantization) There exists a surjective quantization map 
\[
\textrm{Op}^{H}:\left(\beta^{*}d\right)^{-m_{2}}C_{\textrm{inv}}^{\infty}\left[\left(S^{*}X\right);S^{*}\Sigma\right]\rightarrow\Psi_{\textrm{cl}}^{m_{1},m_{2}}\left(X,\Sigma\right)
\]
 which is a left-inverse to the principal symbol
\begin{align}
\sigma_{m_{1},m_{2}}^{H}\left(\textrm{Op}^{H}a\right) & =a\nonumber \\
\textrm{Op}^{H}\left[\sigma_{m_{1},m_{2}}^{H}\left(A\right)\right] & =A\quad\left(\textrm{mod }\Psi_{\textrm{cl}}^{m_{1}-1,m_{2}+1}\left(X,\Sigma\right)\right).\label{eq: sym =000026 Q are inverse}
\end{align}
\item (Symbol of commutator) For $A\in\Psi_{\textrm{cl}}^{m_{1},m_{2}}\left(X,\Sigma\right)$,
$B\in\Psi_{\textrm{cl}}^{m_{1}',m_{2}'}\left(X,\Sigma\right)$ the
commutator $\left[A,B\right]\in\Psi_{\textrm{cl}}^{m_{1}+m_{1}'-1,m_{2}+m_{2}'+1}\left(X,\Sigma\right)$
with symbol 
\begin{equation}
\sigma_{m_{1}+m_{1}'-1,m_{2}+m_{2}'+1}^{H}\left(\left[A,B\right]\right)=i\left\{ \sigma_{m_{1},m_{2}}^{H}\left(A\right),\sigma_{m_{1}',m_{2}'}^{H}\left(B\right)\right\} .\label{eq:symbol of commutator}
\end{equation}
\item (Asymptotic summation) For any set of operators $A_{j}\in\Psi^{m_{1}-j,m_{2}+j}\left(X,\Sigma\right)$,
$B_{j}\in\Psi^{m_{1},m_{2}-j}\left(X,\Sigma\right)$, (resp. $\Psi_{\textrm{cl}}\left(X,\Sigma\right)$),
$j\in\mathbb{N}_{0}$, there exists $A,B\in\Psi^{m_{1},m_{2}}$ such
that
\begin{align}
A-\sum_{j=0}^{N}A_{j}\in\Psi^{m_{1}-\frac{N}{2},m_{2}}\left(X,\Sigma\right),\nonumber \\
B-\sum_{j=0}^{N}B_{j}\in\Psi^{m_{1},m_{2}-N}\left(X,\Sigma\right), & \forall N\in\mathbb{N}_{0}\label{eq:asymptotic summation-1}
\end{align}
\item (Sobolev boundedness) For any $A\in\Psi_{\textrm{cl}}^{m_{1},m_{2}}\left(X,\Sigma\right)$
and $u\in H^{s_{1},s_{2}}\left(X,\Sigma\right)$ one has $Au\in H^{s_{1}-m_{1},s_{2}-m_{2}}\left(X,\Sigma\right)$. 
\item (Microlocality) For any $A\in\Psi_{\textrm{cl}}^{m_{1},m_{2}}\left(X,\Sigma\right)$,
$B\in\Psi_{\textrm{cl}}^{m_{1}',m_{2}'}\left(X,\Sigma\right)$ and
$u\in C^{-\infty}\left(X\right)$ one has
\begin{align}
WF_{\Sigma}\left(A+B\right) & \subset WF_{\Sigma}\left(A\right)\cup WF_{\Sigma}\left(B\right)\nonumber \\
WF_{\Sigma}\left(AB\right) & \subset WF_{\Sigma}\left(A\right)\cap WF_{\Sigma}\left(B\right)\nonumber \\
WF_{\Sigma}\left(Au\right) & \subset WF_{\Sigma}\left(A\right)\cap WF_{\Sigma}\left(u\right).\label{eq: properties of WF-1}
\end{align}
\end{enumerate}
As a first application of the calculus we construct parametrices for
elliptic elements of $\Psi_{\textrm{cl}}^{m_{1},m_{2}}\left(X,\Sigma\right)$. 
\begin{prop}
\label{prop:parametrix}Let $P\in\Psi_{\textrm{cl}}^{m_{1},m_{2}}\left(X,\Sigma\right)$
be elliptic. Then there exists $Q\in\Psi_{\textrm{cl}}^{-m_{1},-m_{2}}\left(X,\Sigma\right)$
satisfying $PQ-I\in\Psi_{\textrm{cl}}^{-\infty}\left(X\right)$, $QP-I\in\Psi_{\textrm{cl}}^{-\infty}\left(X\right)$.
\end{prop}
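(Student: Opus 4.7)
The plan is to carry out a standard parametrix construction adapted to the exotic second-microlocal calculus $\Psi_{\textrm{cl}}^{m_1,m_2}(X,\Sigma)$, leveraging all of the package of properties (1)--(7) established for it, most crucially the symbol exact sequence, the quantization map, the multiplicativity of the symbol, the inclusion $\Psi_{\textrm{cl}}^{m_1,m_2} \subset \Psi_{\textrm{cl}}^{m_1+1/2,m_2-1}$, and asymptotic summation. The residual term characterization $\Psi^{-\infty,m_2}(X,\Sigma) \subset \Psi^{-\infty}(X)$ is what converts control of the first index into honest smoothing.

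First I would use ellipticity of $P$, which gives $c(\beta^*d)^{-m_2} \le \sigma_{m_1,m_2}^H(P) \le C(\beta^*d)^{-m_2}$, to see that the reciprocal $1/\sigma_{m_1,m_2}^H(P)$ lies in $(\beta^*d)^{m_2} C_{\textrm{inv}}^\infty\bigl([S^*X;S^*\Sigma]\bigr)$. Setting $Q_0 \coloneqq \textrm{Op}^H\!\bigl(1/\sigma_{m_1,m_2}^H(P)\bigr) \in \Psi_{\textrm{cl}}^{-m_1,-m_2}(X,\Sigma)$ then produces a first approximation, with $\sigma_{0,0}^H(PQ_0) = 1$ by multiplicativity. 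The symbol exact sequence yields $R \coloneqq I - PQ_0 \in \Psi_{\textrm{cl}}^{-1,1}(X,\Sigma)$.

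Next I would iterate in Neumann-series fashion: set $Q_j \coloneqq Q_0 R^j \in \Psi_{\textrm{cl}}^{-m_1-j,-m_2+j}(X,\Sigma)$ by composition closure. Asymptotic summation \eqref{eq:asymptotic summation-1} (with the roles of $m_1,m_2$ replaced by $-m_1,-m_2$) produces $Q \in \Psi_{\textrm{cl}}^{-m_1,-m_2}(X,\Sigma)$ with $Q - \sum_{j=0}^{N} Q_j \in \Psi^{-m_1-N/2,-m_2}(X,\Sigma)$ for every $N$. Composing from the left with $P$, the algebraic identity $P Q_j = (I-R) R^j = R^j - R^{j+1}$ telescopes, giving
\[
P Q - I = -R^{N+1} + \Psi^{-N/2,0}(X,\Sigma).
\]
Using $R^{N+1} \in \Psi^{-(N+1),N+1} \subset \Psi^{-(N+1)/2,0}$ through repeated application of the inclusion in (8.8), this places $PQ - I \in \Psi^{-N/2,0}(X,\Sigma)$ for every $N$, hence in $\Psi^{-\infty,0}(X,\Sigma) = \Psi^{-\infty}(X)$ by the residual characterization \eqref{eq:residual term charac.}. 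Thus $Q$ is a right parametrix.

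The construction of a left parametrix $Q' \in \Psi_{\textrm{cl}}^{-m_1,-m_2}(X,\Sigma)$ is entirely symmetric, starting again from $Q_0$ and iterating with $R' = I - Q_0 P$; the standard trick $Q' = Q'(PQ) + Q'(I-PQ) = (Q'P)Q + \Psi^{-\infty} = Q + \Psi^{-\infty}$ identifies the two modulo smoothing, giving both $PQ - I, QP - I \in \Psi^{-\infty}(X)$. I expect no serious obstacle here: the only point requiring vigilance is bookkeeping the two-index filtration and making sure that the \emph{first} index (rather than the second) is what drives the remainder to smoothing, which is guaranteed by the inclusion trading $1/2$ of first-index decay against each unit of second-index growth together with the residual characterization \eqref{eq:residual term charac.}.
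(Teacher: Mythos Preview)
Your argument is correct and follows essentially the same approach as the paper's proof: both invert the principal symbol, quantize, and iterate to improve the remainder one step at a time in the bi-filtration, then asymptotically sum and use the residual characterization $\Psi^{-\infty,0}(X,\Sigma)\subset\Psi^{-\infty}(X)$. The only cosmetic difference is that you package the iteration as a Neumann series $Q_j=Q_0R^j$ whereas the paper writes each $Q_j$ as the quantization of an explicit symbol correction; these are equivalent standard formulations of the elliptic parametrix construction.
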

\begin{proof}
This is a usual application of the pseudo-differential calculus albeit
in the exotic class \prettyref{def: Psi m123 on X}. Since $\sigma_{m_{1},m_{2}}^{H}\left(P\right)\in\left(\beta^{*}d\right)^{-m_{2}}C_{\textrm{inv}}^{\infty}\left(\left[\left(S^{*}X\right);S^{*}\Sigma\right]\right)$
satisfies \prettyref{eq:ellipticity}, its inverse $\left[\sigma_{m_{1},m_{2}}^{H}\left(P\right)\right]^{-1}\in\left(\beta^{*}d\right)^{m_{2}}C_{\textrm{inv}}^{\infty}\left(\left[\left(S^{*}X\right);S^{*}\Sigma\right]\right)$
can be seen to lie in the given space and maybe quantized $Q_{0}\coloneqq\textrm{Op}^{H}\left(\left[\sigma_{m_{1},m_{2}}^{H}\left(P\right)\right]^{-1}\right)\in\Psi_{\textrm{cl}}^{-m_{1},-m_{2}}\left(X,\Sigma\right)$.
We now compute $\sigma_{0,0}\left(PQ_{0}-I\right)=0$ using \prettyref{eq:symb is multiplicative},
\prettyref{eq: sym =000026 Q are inverse} and thus $PQ_{0}-I\in\Psi_{\textrm{cl}}^{-1,1}\left(X,\Sigma\right)$.
We then set 
\[
Q_{1}\coloneqq-\textrm{Op}^{H}\left(\left[\sigma_{m_{1},m_{2}}^{H}\left(P\right)\right]^{-1}\sigma_{-1,1}\left(PQ_{0}-I\right)\right)\in\Psi_{\textrm{cl}}^{-m_{1}-1,-m_{2}+1}\left(X,\Sigma\right)
\]
 and again compute $P\left(Q_{0}+Q_{1}\right)-I\in\in\Psi_{\textrm{cl}}^{-2,2}\left(X,\Sigma\right)$.
Continuing in this fashion gives a sequence $Q_{j}\in\Psi_{\textrm{cl}}^{-m_{1}-j,-m_{2}+j}\left(X,\Sigma\right)$,
$j=0,1,\ldots$ such that 
\[
P\left(\sum_{j=0}^{N}Q_{j}\right)-I\in\Psi_{\textrm{cl}}^{-N-1,N+1}\left(X,\Sigma\right)\subset\Psi_{\textrm{cl}}^{-\frac{1}{2}\left(N+1\right),0}\left(X,\Sigma\right).
\]
The asymptotic summation $A\sim\sum_{j=0}^{\infty}Q_{j}$ \prettyref{eq:asymptotic summation-1}
then satisfies $PQ-I\in\Psi_{\textrm{cl}}^{-\infty,0}\left(X,\Sigma\right)\subset\Psi_{\textrm{cl}}^{-\infty}\left(X\right)$
as required. The construction of the left parametrix $Q'$ satisfying
$Q'P-I\in\Psi_{\textrm{cl}}^{-\infty}\left(X\right)$ is similar.
Seeing these to agree $Q-Q'=Q'\left(I-PQ\right)+\left(Q'P-I\right)Q\in\Psi_{\textrm{cl}}^{-\infty}\left(X\right)$
modulo residual terms gives the result.
\end{proof}
As an application we improve a subelliptic estimate.
\begin{prop}
Let $P\in\Psi_{\textrm{cl}}^{m_{1},m_{2}}\left(X,\Sigma\right)$ be
elliptic. Then there exists $C>0$ such that 
\begin{equation}
\left\Vert f\right\Vert _{H^{s_{1}+m_{1},s_{2}+m_{2}}}\leq C\left[\left\Vert Pf\right\Vert _{H^{s_{1},s_{2}}}+\left\Vert f\right\Vert _{H^{s_{1},s_{2}}}\right]\label{eq:exotic subelliptic estimate}
\end{equation}
$\forall f\in C^{\infty}\left(X\right),\,s_{1},s_{2}\in\mathbb{R}.$
\end{prop}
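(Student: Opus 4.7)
The plan is to derive the estimate directly from the parametrix constructed in Proposition \ref{prop:parametrix}, combined with the Sobolev boundedness property (item 8) of the exotic calculus. Since $P \in \Psi_{\textrm{cl}}^{m_1,m_2}(X,\Sigma)$ is elliptic, I would first invoke Proposition \ref{prop:parametrix} to obtain a left parametrix $Q \in \Psi_{\textrm{cl}}^{-m_1,-m_2}(X,\Sigma)$ with
\begin{equation*}
QP = I + R, \qquad R \in \Psi_{\textrm{cl}}^{-\infty}(X).
\end{equation*}

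Then for any $f \in C^{\infty}(X)$ I would write $f = QPf - Rf$ and apply the triangle inequality in the generalized Sobolev norm $\|\cdot\|_{H^{s_1+m_1,s_2+m_2}}$. The Sobolev boundedness property applied to $Q$ gives
\begin{equation*}
\|QPf\|_{H^{s_1+m_1,s_2+m_2}} \leq C_1 \|Pf\|_{H^{s_1,s_2}},
\end{equation*}
since $Q$ shifts indices by $(-m_1,-m_2)$ in the sense of \prettyref{eq:Sobolev inclusions-1} and \prettyref{eq: pseudo charac Sobolev}. For the remainder term, since $R \in \Psi_{\textrm{cl}}^{-\infty}(X)$ is a smoothing operator on $X$, and the generalized Sobolev spaces are trapped between classical Sobolev spaces via the inclusions \prettyref{eq:Sobolev inclusions-1} (so that $H^{s_1+m_1+\lceil |m_2|/2 \rceil}(X) \subset H^{s_1+m_1,s_2+m_2}(X,\Sigma) \subset H^{s_1+m_1-\lceil |m_2|/2 \rceil}(X)$ for suitable integer shifts), $R$ maps $H^{s_1,s_2}(X,\Sigma) \hookrightarrow H^{s_1}(X)$ boundedly into $C^{\infty}(X)$ and hence into $H^{s_1+m_1,s_2+m_2}(X,\Sigma)$ with norm controlled by $\|f\|_{H^{s_1,s_2}}$. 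Summing the two estimates yields the claim with $C = C_1 + C_2$.

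There is essentially no substantive obstacle remaining once the parametrix is in hand; the only mild subtlety is verifying that the residual operator $R$ in $\Psi^{-\infty}(X)$ genuinely maps between the exotic Sobolev spaces $H^{s_1,s_2}(X,\Sigma)$ for arbitrary real indices. This follows immediately from the characterization \prettyref{eq: pseudo charac Sobolev}: composing $R$ with any $A \in \Psi^{s_1+m_1,s_2+m_2}(X,\Sigma)$ still yields an operator in $\Psi^{-\infty}(X) \subset \Psi^{0}(X)$, hence $L^2$-bounded, and the operator norm bound on $AR$ provides the required constant. Thus the estimate \prettyref{eq:exotic subelliptic estimate} follows by the standard one-line parametrix argument, now executed in the exotic Hermite calculus.
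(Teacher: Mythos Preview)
Your proof is correct and follows essentially the same approach as the paper: write $f = QPf + (I-QP)f$ using the parametrix $Q$ from Proposition~\ref{prop:parametrix}, then apply Sobolev boundedness of $Q$ and of the smoothing remainder $I-QP \in \Psi_{\textrm{cl}}^{-\infty}(X)$. Your additional justification that $R \in \Psi^{-\infty}(X)$ maps between the exotic Sobolev spaces via the characterization \prettyref{eq: pseudo charac Sobolev} is a welcome elaboration of a step the paper leaves implicit.
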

\begin{proof}
With $Q$ being the parametrix \prettyref{prop:parametrix} for $P$,
write $f=QPf+\left(I-QP\right)f$ and use the Sobelev boundedness
$\left\Vert Q\right\Vert _{H^{s_{1},s_{2}}\left(X,\Sigma\right)\rightarrow H^{s_{1}+m_{1},s_{2}+m_{2}}\left(X,\Sigma\right)}<\infty$,
$\left\Vert I-QP\right\Vert _{H^{s_{1},s_{2}}\left(X,\Sigma\right)\rightarrow H^{s_{1}+m_{1},s_{2}+m_{2}}\left(X,\Sigma\right)}<\infty$.
\end{proof}
Since $\Delta_{g^{E},\mu}$ is clearly elliptic in $\Psi_{\textrm{cl}}^{2,-2}\left(X,\Sigma\right)$
by definition, the above proposition gives 
\begin{equation}
\left\Vert f\right\Vert _{H^{s_{1}+2,s_{2}-2}}\leq C\left[\left\Vert \Delta_{g^{E},\mu}f\right\Vert _{H^{s_{1},s_{2}}}+\left\Vert f\right\Vert _{H^{s_{1},s_{2}}}\right]\label{eq:refined subelliptic estimate}
\end{equation}
$\forall f\in C^{\infty}\left(X\right),\,s_{1},s_{2}\in\mathbb{R}.$
In light of the inclusions \prettyref{eq:Sobolev inclusions-1} the
above refines the subelliptic estimate for the sR Laplacian \prettyref{eq:hypoelliptic with loss of one derivative}
in our particular 4D quasi-contact case.
\begin{rem}
Although the notation suppresses it, the class of pseudo-differential
operators $\Psi_{\textrm{cl}}^{m_{1},m_{2}}\left(X,\Sigma\right)$
is depends on the Laplacian $\Delta_{g^{E},\mu}$ and not just the
characteristic variety. This class differs from the more well-known
class of operators defined in \cite{Boutet-de-Monvel-hypoelliptic-paramatrices,Boutet-Treves-74}
wherein the corresponding classes depend only on the characteristic
variety and their symbols do not necessarily satisfy any invariance
condition.
\end{rem}

\subsection{\label{subsec:Egorov-and-propagation}Egorov and propagation}

In this section we explore some immediate consequences of the global
calculus of the previous subsection. We first begin by showing that
the square root of the Laplacian lies in the given class.
\begin{prop}
\label{prop: square root in class}The square root lies in the given
class $\sqrt{\Delta_{g^{E},\mu}}\in\Psi_{\textrm{cl}}^{1,-1}\left(X,\Sigma\right)$
with symbol $\sigma_{1,-1}^{H}\left(\sqrt{\Delta_{g^{E},\mu}}\right)=d$
\prettyref{eq:symb. square root}. 
\end{prop}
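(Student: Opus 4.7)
First I would observe that $\Delta_{g^{E},\mu}$ is elliptic in $\Psi^{2,-2}_{\textrm{cl}}(X,\Sigma)$: the normal form in Section~\ref{subsec:Normal-form-near-Sigma} shows that $\sigma^{H}_{2,-2}(\Delta_{g^{E},\mu})$ is pinched between positive multiples of $(\beta^{*}d)^{2}$, so its positive square root $d = [\sigma^{H}_{2,-2}(\Delta_{g^{E},\mu})]^{1/2}$ belongs to the invariant symbol space $(\beta^{*}d)\,C^{\infty}_{\textrm{inv}}([(S^{*}X);S^{*}\Sigma])$. Quantize it as $A_{0} := \textrm{Op}^{H}(d) \in \Psi^{1,-1}_{\textrm{cl}}(X,\Sigma)$ via the quantization map of the global calculus. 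By multiplicativity of the principal symbol, $\sigma^{H}_{2,-2}(A_{0}^{2}) = d^{2} = \sigma^{H}_{2,-2}(\Delta_{g^{E},\mu})$, so the remainder $E_{0} := A_{0}^{2} - \Delta_{g^{E},\mu}$ lies in $\Psi^{1,-1}_{\textrm{cl}}(X,\Sigma)$ by the symbol short exact sequence.

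Next I would iteratively correct: given $E_{j-1} \in \Psi^{2-j,-2+j}_{\textrm{cl}}(X,\Sigma)$, set $A_{j} := -\textrm{Op}^{H}\!\bigl(\tfrac{1}{2d}\,\sigma^{H}(E_{j-1})\bigr)$, which lies in $\Psi^{1-j,-1+j}_{\textrm{cl}}(X,\Sigma)$ since $d$ is bounded below by a positive multiple of $\beta^{*}d$ and hence division by $d$ is a well-defined operation preserving the invariant symbol class. At this order the cross terms $A_{i}A_{j}+A_{j}A_{i}$ with $i\geq 1$ and the commutator $[A_{0},A_{j}]$ are of strictly lower order, so that the new remainder $E_{j} := (A_{0}+\cdots+A_{j})^{2} - \Delta_{g^{E},\mu}$ drops one more order in the $\Psi^{2-j,-2+j}$ scale. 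Asymptotic summation produces $A' \in \Psi^{1,-1}_{\textrm{cl}}(X,\Sigma)$ with $\sigma^{H}_{1,-1}(A') = d$ and $A'^{2} - \Delta_{g^{E},\mu} \in \Psi^{-\infty}(X)$. Replacing $A'$ by $A := \tfrac{1}{2}(A'+A'^{*})$ preserves the real-valued principal symbol $d$ by the adjoint rule and makes $A = A^{*}$, while keeping $A^{2} = \Delta_{g^{E},\mu} + K$ for some $K \in \Psi^{-\infty}(X)$.

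To conclude, I would show $A - \sqrt{\Delta_{g^{E},\mu}} \in \Psi^{-\infty}(X)$ through the Balakrishnan representation of the positive square root combined with the second resolvent identity:
\[
\sqrt{\Delta_{g^{E},\mu}} - A \;=\; -\frac{1}{\pi}\int_{0}^{\infty}\lambda^{1/2}\,(\Delta_{g^{E},\mu}+\lambda)^{-1}\,K\,(A^{2}+\lambda)^{-1}\,d\lambda,
\]
valid after a harmless finite-rank spectral cut-off that enforces $A^{2}\geq 0$. Since $K$ is smoothing and both resolvents are $L^{2}$-bounded by $\lambda^{-1}$, and since the refined subelliptic estimate for $\Delta_{g^{E},\mu}$ gives a two-order subelliptic gain per resolvent, the integrand is smoothing for each $\lambda$; iterating the resolvent identity to convert arbitrarily many factors of $K$ into two-sided regularity gain makes the integral absolutely convergent in every operator norm $H^{-s_{1},-s_{2}}\to H^{s_{1},s_{2}}$. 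Hence the difference is smoothing, which places $\sqrt{\Delta_{g^{E},\mu}}$ in $\Psi^{1,-1}_{\textrm{cl}}(X,\Sigma)$ with the asserted principal symbol $d$.

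The main obstacle will be the uniform-in-$\lambda$ Sobolev control of the resolvent integral across the full exotic Sobolev scale: a single resolvent gives only $O(\lambda^{-1})$ on $L^{2}$, which is not integrable against $\lambda^{1/2}$ at infinity, so one must trade smoothing powers of $K \in \Psi^{-\infty}$ for arbitrarily many subelliptic resolvent gains before taking norms. A secondary point is to verify at each iteration step that $\tfrac{1}{2d}\sigma^{H}(E_{j-1})$ stays in $C^{\infty}_{\textrm{inv}}$, i.e.\ Poisson-commutes with $\Omega$; this follows by induction from the normal form of Theorem~\ref{prop: normal form near pt}, since $d^{2}$ is itself invariant and the algebra of invariant symbols is closed under division by elliptic invariant functions.
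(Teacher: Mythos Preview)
Your proof is correct and follows essentially the same route as the paper: build an approximate square root $A$ in $\Psi_{\textrm{cl}}^{1,-1}(X,\Sigma)$ by iterated symbolic correction starting from $\textrm{Op}^{H}(d)$, then show $\sqrt{\Delta_{g^{E},\mu}}-A\in\Psi^{-\infty}(X)$ via a resolvent identity---the paper uses a contour integral $\frac{1}{2\pi i}\int_{\Gamma}z^{-1/2}(\Delta_{g^{E},\mu}-z)^{-1}(\Delta_{g^{E},\mu}-A^{2})(A^{2}-z)^{-1}\,dz$ where you use the equivalent Balakrishnan representation. One small slip: the finite-rank correction should enforce $A\geq 0$ (so that $\sqrt{A^{2}}=A$), not merely $A^{2}\geq 0$, which is automatic for self-adjoint $A$; the paper does this by projecting off the finitely many non-positive eigenvalues of $A$.
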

\begin{proof}
This is another application of the pseudo-differential calculus \prettyref{def: Psi m123 on X}.
As noted before $\beta^{*}d$ lies in the symbol space and can be
quantized $A_{0}\coloneqq\textrm{Op}^{H}\left(\beta^{*}d\right)\in\Psi_{\textrm{cl}}^{1,-1}\left(X,\Sigma\right)$.
It squares principally $\sigma_{2,-2}^{H}\left(\Delta_{g^{E},\mu}-\textrm{Op}^{H}\left(\beta^{*}d\right)^{2}\right)=0$
by \prettyref{eq:symb. square root},\prettyref{eq:symb is multiplicative},
\prettyref{eq: sym =000026 Q are inverse} and thus $\Delta_{g^{E},\mu}-A_{0}^{2}\in\Psi_{\textrm{cl}}^{1,-1}\left(X,\Sigma\right)$
by \prettyref{eq:symbol exact seq.-1}. Now define 
\[
A_{1}\coloneqq\frac{1}{2}\textrm{Op}^{H}\left[\left(\beta^{*}d\right)^{-1}\sigma_{1,-1}\left(\Delta_{g^{E},\mu}-\textrm{Op}^{H}\left(\beta^{*}d\right)^{2}\right)\right]\in\Psi_{\textrm{cl}}^{0,0}\left(X,\Sigma\right)
\]
 and again calculate $\Delta_{g^{E},\mu}-\left(A_{0}+A_{1}\right)^{2}\in\Psi_{\textrm{cl}}^{0,0}\left(X,\Sigma\right)$.
Continuing in this fashion we inductively construct a sequence $A_{j}\in\Psi_{\textrm{cl}}^{1-j,-1+j}\left(X,\Sigma\right)$,
$j=0,1,2,\ldots$ such that
\[
\Delta_{g^{E},\mu}-\left(\sum_{j=0}^{N}A_{j}\right)^{2}\in\Psi_{\textrm{cl}}^{1-N,-1+N}\left(X,\Sigma\right).
\]
The asymptotic summation $A\sim\sum_{j=0}^{\infty}A_{j}\in\Psi_{\textrm{cl}}^{1,-1}\left(X,\Sigma\right)$
\prettyref{eq:asymptotic summation-1} then satisfies $\Delta_{g^{E},\mu}-A^{2}\in\Psi_{\textrm{cl}}^{-\infty,0}\left(X,\Sigma\right)\subset\Psi_{\textrm{cl}}^{-\infty}\left(X\right)$.
The symbol $\sigma_{1,-1}^{H}\left(A\right)=\beta^{*}d$ shows that
$A$ is elliptic, satisfying the subelliptic estimate\prettyref{eq:exotic subelliptic estimate},
and hence has a compact resolvent by \prettyref{eq:Sobolev inclusions-1}.
It thus has only finitely many non-positive eigenvalues and can be
altered, by projecting off the negative eigenvalues, to a positive
operator. We now write the difference 
\[
\sqrt{\Delta_{g^{E},\mu}}-A=\frac{1}{2\pi i}\int_{\Gamma}dz\,z^{-1/2}\left(\Delta_{g^{E},\mu}-z\right)^{-1}\underbrace{\left(\Delta_{g^{E},\mu}-A^{2}\right)}_{\in\Psi_{\textrm{cl}}^{-\infty,0}\left(X\right)}\left(A^{2}-z\right)^{-1}
\]
 with $\Gamma$ representing a contour around the positive real axis,
to see that the difference above is also in $\Psi_{\textrm{cl}}^{-\infty}\left(X\right)$
and complete the proof.
\end{proof}
We next prove an Egorov theorem for conjugation by the half wave operator
$e^{it\sqrt{\Delta_{g^{E},\mu}}}$. Below and it what follows we note
that the evolution $\left(e^{tH_{d}}\right)^{*}\sigma_{m_{1},m_{2}}^{H}\left(P\right)\in\in\left(\beta^{*}d\right)^{-m_{2}}C_{\textrm{inv}}^{\infty}\left(\left[\left(S^{*}X\right);S^{*}\Sigma\right]\right)$
\prettyref{eq: symbol invariant def.} lies in the same class on account
of \prettyref{eq:singular expansion Hamiltonian} and circular invariance
of symbol \prettyref{eq:symbol space}.
\begin{thm}
\label{thm:Egororovb thm}For any $P\in\Psi_{\textrm{cl}}^{m_{1},m_{2}}\left(X,\Sigma\right)$
the conjugate $P\left(t\right)\coloneqq e^{-it\sqrt{\Delta_{g^{E},\mu}}}Pe^{it\sqrt{\Delta_{g^{E},\mu}}}\in\Psi_{\textrm{cl}}^{m_{1},m_{2}}\left(X,\Sigma\right)$
lies in the same pseudo-differential class with $\sigma_{m_{1},m_{2}}^{H}\left(P\left(t\right)\right)=\left(e^{-tH_{d}}\right)^{*}\sigma_{m_{1},m_{2}}^{H}\left(P\right)$.
\end{thm}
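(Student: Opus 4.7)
The plan is to establish the theorem via the classical Egorov strategy adapted to the exotic calculus of \prettyref{sec:Global-calculus}. Differentiating the definition gives the Heisenberg equation $\partial_t P(t) = -i[\sqrt{\Delta_{g^{E},\mu}}, P(t)]$ with $P(0)=P$. Since $\sqrt{\Delta_{g^{E},\mu}} \in \Psi_{\textrm{cl}}^{1,-1}(X,\Sigma)$ by \prettyref{prop: square root in class}, the commutator formula \prettyref{eq:symbol of commutator} shows that $[\sqrt{\Delta_{g^{E},\mu}}, \,\cdot\,]$ preserves the bi-index $(m_1,m_2)$ of the calculus and acts on principal symbols by $i\{d,\,\cdot\,\}$. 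I will construct a candidate $Q(t)\in \Psi_{\textrm{cl}}^{m_1,m_2}(X,\Sigma)$ whose symbol is transported along $H_d$, correct it by successive lower-order terms so that it solves the Heisenberg equation modulo $\Psi^{-\infty}(X)$, and then deduce $P(t)-Q(t)\in \Psi^{-\infty}(X)$ by Duhamel.

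At the symbolic level the equation reads $\dot\sigma(t) = \{d, \sigma(t)\}$ with solution $\sigma(t) = (e^{-tH_d})^* \sigma_{m_1,m_2}^H(P)$, so the essential geometric input is that the pushforward of $H_d$ to invariant symbols on $[(S^*X);S^*\Sigma]$ is well-behaved. By \prettyref{prop: Hamilton v field extension} one has the singular expansion $H_{\sigma^{1/2}} = \frac{\sigma(R)}{\sigma^{1/2}} H_\Omega + \hat Z + o(1)$ near $\partial[T^*X;\Sigma]$, in which the singular term is proportional to $H_\Omega$, equal to the rotation $R_0$ on the boundary by \prettyref{eq: rotational vector field rest.}. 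Since every element of $C^\infty_{\textrm{inv}}([(S^*X);S^*\Sigma])$ is by \prettyref{eq:symbol space} annihilated by $H_\Omega$, the action of $H_d$ on invariant symbols reduces to a smooth tangent vector field on the blowup with leading boundary contribution $\hat Z$. Its flow $\Phi_t$ is then a smooth family of diffeomorphisms of $[(S^*X);S^*\Sigma]$ preserving circular invariance, so that $\Phi_t^*\sigma_{m_1,m_2}^H(P)\in (\beta^*d)^{-m_2}C^\infty_{\textrm{inv}}([(S^*X);S^*\Sigma])$ uniformly on compact $t$-intervals. Setting $Q_0(t):=\textrm{Op}^H(\Phi_t^*\sigma_{m_1,m_2}^H(P))$, the quantization identities \prettyref{eq: sym =000026 Q are inverse} and multiplicativity \prettyref{eq:symb is multiplicative} ensure that both $Q_0(0)-P$ and $\partial_t Q_0(t) + i[\sqrt{\Delta_{g^{E},\mu}}, Q_0(t)]$ lie in $\Psi_{\textrm{cl}}^{m_1-1,m_2+1}(X,\Sigma)$.

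Iterating this construction produces corrections $Q_j(t)\in\Psi_{\textrm{cl}}^{m_1-j,m_2+j}(X,\Sigma)$ killing the residual at each successive order. Asymptotic summation \prettyref{eq:asymptotic summation-1} then yields $Q(t)\in\Psi_{\textrm{cl}}^{m_1,m_2}(X,\Sigma)$ with $Q(t)\sim\sum_j Q_j(t)$, $Q(0)-P\in\Psi^{-\infty}(X)$, and $R(t):=\partial_t Q(t)+i[\sqrt{\Delta_{g^{E},\mu}}, Q(t)]\in\Psi^{-\infty}(X)$. Duhamel's formula
\[
P(t)-Q(t) = e^{-it\sqrt{\Delta_{g^{E},\mu}}}(P-Q(0))e^{it\sqrt{\Delta_{g^{E},\mu}}} - \int_0^t e^{-i(t-s)\sqrt{\Delta_{g^{E},\mu}}} R(s)\, e^{i(t-s)\sqrt{\Delta_{g^{E},\mu}}}\,ds
\]
then places the difference in $\Psi^{-\infty}(X)$, since $e^{it\sqrt{\Delta_{g^{E},\mu}}}$ and its adjoint preserve both $C^\infty(X)$ and $C^{-\infty}(X)$ by hypoellipticity of $\Delta_{g^{E},\mu}$, and hence conjugation by them preserves operators with smooth Schwartz kernel. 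The main obstacle will be the symbolic step: verifying that $H_d$ lifts on invariant symbols to a smooth tangent vector field on the full blowup $[(S^*X);S^*\Sigma]$, that its flow preserves all the symbolic seminorms \prettyref{eq:symbolic estimates} uniformly in $t$, and—for the induction—that the successive residuals remain in the appropriately graded class. This rests on the commutation $\{\hat x_3, \Omega\}=0$ from \prettyref{eq:symplectic commutation} together with the normal form of \prettyref{prop: normal form near pt}, which expresses both $H_d$ and the iterative corrections as differential operators in the invariant coordinates \prettyref{eq:arguments}.
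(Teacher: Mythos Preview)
Your proposal is correct and follows essentially the same Egorov strategy as the paper: construct a formal solution by transporting the principal symbol along $H_d$, iterate to kill lower-order residuals, asymptotically sum, and conclude via Duhamel that the difference with $P(t)$ is smoothing. Your discussion of why the transported symbol remains in $(\beta^*d)^{-m_2}C^\infty_{\textrm{inv}}$---namely that the singular $H_\Omega$-component of $H_{\sigma^{1/2}}$ annihilates invariant symbols---is exactly the mechanism the paper relies on (and makes more explicit what the paper passes over with ``it is easy to check'').
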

\begin{proof}
We again use symbolic calculus in the class \prettyref{def: Psi m123 on X}.
Since the conjugate satisfies the differential equation $\partial_{t}P+\left[\sqrt{\Delta_{g^{E},\mu}},P\right]=0$,
we first solve this equation symbolically modulo residual terms. First
define $A_{0}\left(t\right)=\textrm{Op}^{H}\left[\left(e^{-tH_{d}}\right)^{*}\sigma_{m_{1},m_{2}}^{H}\left(P\right)\right]\in\Psi_{\textrm{cl}}^{m_{1},m_{2}}\left(X,\Sigma\right)$;
it is easy to check that $\left(e^{-tH_{d}}\right)^{*}\sigma_{m_{1},m_{2}}^{H}\left(P\right)\in\left(\beta^{*}d\right)^{-m_{2}}C_{\textrm{inv}}^{\infty}\left[\left(S^{*}X\right);S^{*}\Sigma\right]$.
We then compute $\sigma_{m_{1},m_{2}}^{H}\left(\partial_{t}A_{0}+\left[\sqrt{\Delta_{g^{E},\mu}},A_{0}\right]\right)=-H_{d}P+H_{d}P=0$
using \prettyref{eq:symb. square root}, \prettyref{eq:symbol of commutator}
and thus $\partial_{t}A_{0}+\left[\sqrt{\Delta_{g^{E},\mu}},A_{0}\right]\in\Psi_{\textrm{cl}}^{m_{1}-1,m_{2}+1}\left(X,\Sigma\right)$
by \prettyref{eq:symbol exact seq.-1}. Now define 
\begin{align*}
A_{1}\coloneqq & \textrm{Op}^{H}\left[\sigma_{m_{1}-1,m_{2}+1}^{H}\left(P-\sigma_{m_{1},m_{2}}^{H}\left(P\right)\right)\right.\\
 & \left.\quad+\int_{0}^{t}ds\,\left(e^{-\left(t-s\right)H_{d}}\right)^{*}\sigma_{m_{1}-1,m_{2}+1}^{H}\left(\partial_{t}A_{0}+\left[\sqrt{\Delta_{g^{E},\mu}},A_{0}\right]\right)\left(s\right)\right]\in\Psi_{\textrm{cl}}^{m_{1}-1,m_{2}+1}\left(X,\Sigma\right)
\end{align*}
and again compute $\sigma_{m_{1}-1,m_{2}+1}^{H}\left(\partial_{t}\left(A_{0}+A_{1}\right)+\left[\sqrt{\Delta_{g^{E},\mu}},\left(A_{0}+A_{1}\right)\right]\right)=0$
using the Duhamel's principle \prettyref{eq:symb. square root}, \prettyref{eq:symbol of commutator}
and thus $\partial_{t}\left(A_{0}+A_{1}\right)+\left[\sqrt{\Delta_{g^{E},\mu}},\left(A_{0}+A_{1}\right)\right]\in\Psi_{\textrm{cl}}^{m_{1}-2,m_{2}+2}\left(X,\Sigma\right)$.
Continuing in this fashion we inductively construct a sequence $A_{j}\in\Psi_{\textrm{cl}}^{m_{1}-j,m_{2}+j}\left(X,\Sigma\right)$,
$j=0,1,2,\ldots$ such that
\begin{align*}
\partial_{t}\left(\sum_{j=0}^{N}A_{j}\right)+\left[\sqrt{\Delta_{g^{E},\mu}},\left(\sum_{j=0}^{N}A_{j}\right)\right] & \in\Psi_{\textrm{cl}}^{m_{1}-N-1,m_{2}+N+1}\left(X,\Sigma\right)\\
P-\left.\left(\sum_{j=0}^{N}A_{j}\right)\right|_{t=0} & \in\Psi_{\textrm{cl}}^{m_{1}-N-1,m_{2}+N+1}\left(X,\Sigma\right).
\end{align*}
Thus again the asymptotic summation $A\sim\sum_{j=0}^{\infty}A_{j}\in\Psi_{\textrm{cl}}^{m_{1},m_{2}}\left(X,\Sigma\right)$
\prettyref{eq:asymptotic summation-1} then satisfies $R\left(t\right)\coloneqq\partial_{t}A+\left[\sqrt{\Delta_{g^{E},\mu}},A\right]\in\Psi_{\textrm{cl}}^{-\infty,m_{2}}\left(X\right)$,
$R_{0}\coloneqq P-\left.A\right|_{t=0}\in\Psi_{\textrm{cl}}^{-\infty,m_{2}}\left(X\right)$.
Finally Duhamel's principle gives
\[
P\left(t\right)-A=\underbrace{R_{0}}_{\Psi_{\textrm{cl}}^{-\infty,m_{2}}\left(X\right)}+\int_{0}^{t}ds\,e^{-is\sqrt{\Delta_{g^{E},\mu}}}\underbrace{R\left(t-s\right)}_{\in\Psi_{\textrm{cl}}^{-\infty,m_{2}}\left(X\right)}e^{is\sqrt{\Delta_{g^{E},\mu}}},
\]
showing that the difference is in $\Psi_{\textrm{cl}}^{-\infty}\left(X\right)$
and completing the proof.
\end{proof}
As an immediate application of Egorov theorem we have propagation
of singularities \prettyref{eq:pseudodiff charac WF}.
\begin{proof}[Proof of \prettyref{thm:propagation on sing.}]
 By \prettyref{eq:equiv. pseudodiff charc of WF}, for $\left(x,\xi\right)\notin WF_{\Sigma}\left(u\right)$
there exists $A\in\Psi_{\textrm{cl}}^{0,0}\left(X,\Sigma\right)$
with $\sigma_{0,0}^{H}\left(A\right)\left(x,\xi\right)\neq0$ and
$Au\in C^{\infty}$. Thus with $A\left(t\right)\coloneqq e^{it\sqrt{\Delta_{g^{E},\mu}}}Ae^{-it\sqrt{\Delta_{g^{E},\mu}}}\in\Psi_{\textrm{cl}}^{0,0}\left(X,\Sigma\right)$
we have $\sigma_{0,0}^{H}\left(A\left(t\right)\right)\left(e^{t\hat{Z}}\left(x,\xi\right)\right)=\sigma_{0,0}^{H}\left(A\right)\left(x,\xi\right)\neq0$
by \prettyref{eq:singular part Ham. vfield}, \prettyref{eq:singular expansion Hamiltonian},
\prettyref{thm:Egororovb thm} and rotational invariance of the symbol.
Then $A\left(t\right)e^{it\sqrt{\Delta_{g^{E},\mu}}}u=e^{it\sqrt{\Delta_{g^{E},\mu}}}Au\in C^{\infty}$
shows $e^{t\hat{Z}}\left(x,\xi\right)\notin WF_{\Sigma}\left(e^{it\sqrt{\Delta_{g^{E},\mu}}}u\right)$
as required.
\end{proof}

\subsection{\label{subsec:Parametrices}Parametrix}

In this section we construct a small time parametrix for the half
wave operator; we work more generally to construct a parametrix for
$Ae^{it\sqrt{\Delta_{g^{E},\mu}}}$, $A\in\Psi_{\textrm{cl}}^{0,0}\left(X\right)$.
The operators $\sqrt{\Delta_{g^{E},\mu}}$ and $A$ being pseudo-differential,
and $\sqrt{\Delta_{g^{E},\mu}}$ elliptic outside the characteristic
variety, the parametrix construction is achieved by standard Hamilton-Jacobi
theory in the complement of $\Sigma$. It shall then suffice to construct
for each $p\in\Sigma$ a microlocal solution to
\begin{align}
i\partial_{t}P+\sqrt{\Delta_{g^{E},\mu}}P & \in\Psi_{\textrm{cl}}^{-\infty,0}\nonumber \\
\left.P\right|_{t=0}=A & \in\Psi_{\textrm{cl}}^{0,0},\label{eq:half wave IVP}
\end{align}
where we may further suppose $A\in\Psi_{\textrm{cl}}^{0,0}\left(X\right)$
to be micro-supported in a microlocal chart near $p$ where \prettyref{eq:normal form-1}
holds. We shall look for a solution of the form 
\begin{equation}
P\coloneqq\left[\sum_{k\in\mathbb{N}_{0}}H_{k}^{*}P_{k}H_{k}\right]A.\label{eq: Hermite FIO-1}
\end{equation}
Here each
\begin{align}
P_{k} & =\left[I_{\varphi}\left(a\right)\right]_{k}\coloneqq\int e^{i\left(\varphi_{k}-\underline{y}.\underline{\xi}\right)}a_{k}\left(t;\underline{x},\underline{\xi}\right)d\underline{\xi},\quad k\in\mathbb{N}_{0},\label{eq:form of parametrix}
\end{align}
with $a\in S_{\textrm{cl},t}^{0,0},\,\varphi\in\underline{x}.\underline{\xi}+tS_{\textrm{cl},t}^{1,-1},$
and each $\varphi_{k}$ solving the Hamilton-Jacobi equation 
\begin{align}
\partial_{t}\varphi_{k} & =d_{k}\left(x,\partial_{x}\varphi_{k}\right)\nonumber \\
\left.\varphi_{k}\right|_{t=0} & =\underline{x}.\underline{\xi}.\label{eq:Hamilton Jacobi eqn}
\end{align}
We first show that the above has a solution.
\begin{prop}
There exists a sufficiently small conic neighborhood of $p\in C\subset\left[\left(T^{*}X\right);\Sigma\right]$,
$T>0$ and 
\begin{equation}
\varphi\in\underline{x}.\underline{\xi}+tdC_{\textrm{inv}}^{\infty}\left(C\times\left(-T,T\right)\right),\label{eq:soln in the space}
\end{equation}
of homogeneous of degree one such that each corresponding $\varphi_{k}$
, $k\in\mathbb{N}_{0}$, solves the Hamilton Jacobi equation \prettyref{eq:Hamilton Jacobi eqn}.
\end{prop}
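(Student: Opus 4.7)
The plan is to exploit the $\Omega$-invariance to reduce the indexed family \prettyref{eq:Hamilton Jacobi eqn} to a single parameter-dependent Hamilton--Jacobi equation, and then to apply the classical method of characteristics.

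First I would observe that, by the normal form \prettyref{eq:normal form} together with the invariance \prettyref{eq: Omega is invariant statement}, the square-root symbol $d = [\sigma_{2,-2}^H(\Delta_{g^E,\mu})]^{1/2}$ depends on its arguments only through the combination $\varrho \coloneqq x_1^2 + \hat{\xi}_1^2$ together with $(x_0,x_2,\hat{x}_3;\underline{\xi})$. Writing $d=d(\underline{x},\underline{\xi};\varrho)$, the Hermite substitution rule \prettyref{eq: Hermite transform of symbol} identifies $d_k = d(\underline{x},\underline{\xi};(2k+1)\xi_3^{-1})$. Consequently it suffices to produce a single $\varphi$ in the invariant class $C^\infty_{\textrm{inv}}$ solving
\begin{equation*}
\partial_t\varphi = d(\underline{x},\partial_{\underline{x}}\varphi;\varrho),\qquad \varphi|_{t=0}=\underline{x}.\underline{\xi};
\end{equation*}
substituting $\varrho\mapsto(2k+1)\xi_3^{-1}$ then yields every $\varphi_k$ at once.

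Next I would solve this equation by the method of characteristics, treating $\varrho$ as a smooth parameter. For each fixed value the reduced Hamilton vector field $H_d$ on $T^*\mathbb{R}^3_{\underline{x}}$ is smooth where $d>0$ and homogeneous of degree zero in $\underline{\xi}$. The bicharacteristics issuing from the initial Lagrangian $\{\underline{\xi}=\partial_{\underline{x}}(\underline{x}.\underline{\xi})\}$ project diffeomorphically onto the $\underline{x}$-base for $|t|$ small, and the usual Hamilton--Jacobi construction yields a smooth local solution. Uniformity in $\varrho$, including as $\Sigma$ is approached, follows by lifting everything to the blowup: by \prettyref{prop: Hamilton v field extension} the rescaled Hamilton field extends smoothly and tangentially to the boundary of $[T^*X;\Sigma]$. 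One thereby obtains a single smooth $\varphi$ on $C\times(-T,T)$ for some $T>0$ and some conic neighborhood $C$ of $p$. The Duhamel-type identity $\varphi=\underline{x}.\underline{\xi}+\int_0^t d\bigl(\underline{x},\partial_{\underline{x}}\varphi;\varrho\bigr)\,ds$, together with the homogeneity of $d$ and its vanishing precisely on $\Sigma$, places the correction in $td\cdot C^\infty_{\textrm{inv}}(C\times(-T,T))$ as required.

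The main obstacle is the uniform behaviour near $\Sigma$, where the Hamilton field of $\sigma^{1/2}$ on $T^*X$ exhibits the singular expansion \prettyref{eq:singular expansion Hamiltonian}. The rotational term $\frac{\sigma(R)}{d}H_\Omega$ is precisely what prevents a naive solution on the unblown cotangent bundle; but working throughout with $\Omega$-invariant (equivalently $S^1$-rotation invariant on the blowup) data kills this contribution, since $H_\Omega f=0$ for $f\in C^\infty_{\textrm{inv}}$. What survives is the regular vector field $\hat{Z}$ plus subleading terms for which the classical Hamilton--Jacobi theory applies, which is why $C^\infty_{\textrm{inv}}$ is precisely the right class in which to seek the phase.
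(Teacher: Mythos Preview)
Your approach is essentially the same as the paper's: reduce the family \prettyref{eq:Hamilton Jacobi eqn} to a single Hamilton--Jacobi equation on $T^*\mathbb{R}^3_{\underline{x}}$ with $\varrho$ (equivalently $\Omega$) as a frozen parameter, solve by the method of characteristics, and use the smooth extension of the reduced Hamilton vector field to the boundary of the blowup for uniformity. The only minor difference is in the verification that $\varphi-\underline{x}.\underline{\xi}\in td\,C^\infty_{\textrm{inv}}$: the paper computes the pullback of $d\varphi$ to the boundary via the tautological one-form, whereas your Duhamel argument implicitly relies on conservation of $d$ along the characteristics (note that the integrand is $d$ evaluated at $(\underline{x},\partial_{\underline{x}}\varphi)$, not at $(\underline{x},\underline{\xi})$, so ``$d$ vanishes on $\Sigma$'' alone is not quite enough---one needs that the bicharacteristic issuing from a boundary point stays on $\{d=0\}$).
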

\begin{proof}
This is a modification of usual Hamilton-Jacobi theory. From the computation
\prettyref{eq:leading part Ham. vector field}, we may then choose
to work in a microlocal chart $C'$ at $p$ such that $e^{tH_{d}}\left(C\right)$,
$t\in\left(-T,T\right)$, stays in the chart for some sufficiently
small conic neighborhood $C\subset C'$ and $T>0$. With the notation
of \prettyref{eq:normal form}, $d\left(\underline{x},\underline{\xi},\Omega\right)$
being a function of the given variables with $\left\{ \Omega,d\right\} =0$,
the function $\Omega$ is preserved under the flow of $H_{d}$. One
thus has 
\begin{align}
e^{tH_{d}}\left(\underline{x},\underline{\xi};x_{1},\xi_{1}\right) & =\left(e^{tH_{d,\Omega}}\left(\underline{x},\underline{\xi}\right);\ast,\ast\right)\quad\textrm{ for }\nonumber \\
H_{d,\Omega} & \coloneqq\left(\partial_{\underline{\xi}}d\right)\left(\underline{x},\underline{\xi},\Omega\right)\partial_{\underline{x}}-\left(\partial_{\underline{x}}d\right)\left(\underline{x},\underline{\xi},\Omega\right)\partial_{\underline{\xi}}.\label{eq:Frozen Ham. v. field}
\end{align}
The vector field $H_{d,\Omega}$ above extends smoothly to the boundary
of the blowup $\left[\left(T^{*}X\right);\Sigma\right]$.

Given $\alpha\geq0$,$\underline{\xi}\in\mathbb{R}^{3}$, we now define
the flow-out 
\[
\Lambda_{\alpha,\underline{\xi}}\coloneqq\left\{ \left(e^{tH_{d,\alpha}}\left(\underline{x},\underline{\xi}\right),t,d\left(\underline{x},\underline{\xi},\alpha\right)\right)|\left(\underline{x},\xi_{3}^{-1/2}\alpha^{1/2},\underline{\xi},0\right)\in C,\,t\in\left(-T,T\right)\right\} \subset T^{*}\mathbb{R}_{\underline{x}}^{3}\times T_{\left(t,\tau\right)}^{*}\mathbb{R}.
\]
 By \prettyref{eq:identifications} and an application of Gronwall's
lemma, for $C$ and $T$ sufficiently small, the flow-out $\Lambda_{\alpha,\underline{\xi}}$
is horizontal above $\mathbb{R}_{\underline{x}}^{3}\times\mathbb{R}_{t}$
for $t\in\left(-T,T\right)$. Hence one may find a solution $\varphi_{\alpha,\underline{\xi}}\left(\underline{x},t\right)$
to
\begin{align}
\textrm{graph}\left(d\varphi_{\alpha,\underline{\xi}}\right) & \coloneqq\left\{ x,t,d_{\left(\underline{x},t\right)}\varphi_{\alpha,\underline{\xi}}\right\} =\Lambda_{\alpha,\underline{\xi}}\label{eq:graph}
\end{align}
The function $\varphi\left(x,\xi,t\right)\coloneqq\varphi_{\Omega,\underline{\xi}}\left(\underline{x},t\right)$
is then the required solution to \prettyref{eq:Hamilton Jacobi eqn};
its smoothness follows from the smooth extension of \prettyref{eq:Frozen Ham. v. field}
to the boundary. To see that the solution lies in the given space
\prettyref{eq:soln in the space} one needs to check $\varphi-\underline{x}.\underline{\xi}$,
or its pullback to the graph \prettyref{eq:graph}, vanishes on the
boundary $\partial\left[\left(T^{*}\mathbb{R}^{4}\right);\Sigma_{0}\right]$
at all time $t\in\left(-T,T\right)$. This follows from computing
\[
\pi^{*}d\left.\varphi\right|_{\partial\left[\left(T^{*}\mathbb{R}^{4}\right);\Sigma_{0}\right]}=\underbrace{\left.d\right|_{\partial\left[\left(T^{*}\mathbb{R}^{4}\right);\Sigma_{0}\right]}}_{=0}\left(dt\right)+\underbrace{\left.\left(\beta^{-1}\right)^{*}\alpha\left(H_{d,\Omega}\right)\right|_{\partial\left[\left(T^{*}\mathbb{R}^{4}\right);\Sigma_{0}\right]}}_{=0}
\]
from \prettyref{eq:identifications}, with $\alpha$ denoting the
tautological one form on $T^{*}\mathbb{R}^{4}$.
\end{proof}
Next we solve for the amplitude in \prettyref{eq:form of parametrix}.
Differentiation of \prettyref{eq:form of parametrix} using the symbolic
expansion of $\sqrt{\Delta_{g^{E},\mu}}=d+\Psi_{\textrm{cl}}^{0,-1/2}$
gives 
\begin{align*}
i\partial_{t}P+\sqrt{\Delta_{g^{E},\mu}}P & =I_{\varphi}\left(b\right)\quad\textrm{ with}\\
b & =\underbrace{\left(\partial_{t}+H_{d}\right)a+Ra}_{\eqqcolon La}
\end{align*}
and where $R$ maps $S_{\textrm{cl}}^{m_{1},m_{2}}$ to $S_{\textrm{cl}}^{m_{1}-1,m_{2}+1}$,
$\forall m_{1},m_{2}$. One may then write down a solution to the
transport equation $La=0$ (mod $S_{\textrm{cl}}^{-\infty}$) as 
\begin{equation}
a\left(t\right)\sim\sum_{j=0}^{\infty}a_{j}\left(t\right)\in S_{\textrm{cl},t}^{0,0},\label{eq:transport solution}
\end{equation}
$a_{j}\left(t\right)\in S_{\textrm{cl}}^{-j,j}$, starting from the
symbolic expansion $a\sim\sum_{j=0}^{\infty}a_{j}$, $a_{j}\in S_{\textrm{cl}}^{-j,j}$
for $A$ \prettyref{eq:half wave IVP} by inductively solving
\begin{align}
a_{0}\left(t\right) & =\left(e^{-tH_{d}}\right)^{*}a_{0};\nonumber \\
\left(\partial_{t}+H_{d}\right)a_{j}\left(t\right) & =-R\left[a_{0}+\ldots+a_{j-1}\right],\;a_{j}\left(0\right)=a_{j},\quad j\geq1.\label{eq: amplitude solution}
\end{align}

\section{\label{sec:Poisson-relations}Poisson relations}

In this section we prove the Poisson relation \prettyref{thm:Poisson relation}.
We more generally analyze the behavior of the microlocal wave trace
$\textrm{tr }Ae^{it\sqrt{\Delta_{g^{E},\mu}}}$, $A\in\Psi_{\textrm{cl}}^{0,0}\left(X\right)$,
for small time using the parametrix \prettyref{eq:form of parametrix}.
It again suffices to consider the wave trace near characteristic variety
and we may assume $A\in\Psi_{\textrm{cl}}^{0,0}\left(X\right)$ to
be micro-supported in a microlocal chart near $p$ where \prettyref{eq:normal form-1}
holds. By \prettyref{eq:Hamilton Jacobi eqn}, \prettyref{eq:soln in the space}
we have $\varphi-\underline{x}.\underline{\xi}=t\varphi^{0}$ with
$\varphi^{0}=\xi_{3}\hat{d}_{\rho}\left[1+t\underbrace{R\left(t;\underline{x},\underline{\xi},\hat{d}_{\rho}\right)}_{\in S_{\textrm{cl}}^{0,0}}\right]$.
On changing the $\xi_{3}$, $\xi_{0}$ variables to the new variables
$r=\xi_{3}d_{k},\,\Xi_{0}=\frac{\xi_{0}}{d_{k}}$ the wave trace 
\begin{align*}
\textrm{tr }Ae^{it\sqrt{\Delta_{g^{E},\mu}}}= & \sum_{k\in\mathbb{N}_{0}}\int e^{i\left(tr+t^{2}rR_{k}\right)}a_{k}\left(t\right)\frac{\left(1-\Xi_{0}^{2}\right)r^{4}}{\hat{\rho}^{2}\left(2k+1\right)^{2}}dr\,d\Xi_{0}d\hat{\xi}_{2}d\underline{x}\;\;\left(\textrm{mod }t^{\infty}\right),
\end{align*}
in the distributional sense. Since the amplitude was shown to be in
the class $a\in S_{\textrm{cl},t}^{0,0}$, the wave trace mod $t^{N-5}$
is then a finite sum of terms of the form 
\begin{align}
 & \int\sum_{k\in\mathbb{N}_{0}}e^{irt}t^{2\alpha}r^{\alpha-\beta}a_{\alpha,\beta}\left(\underline{x},\hat{\xi}_{2},\Xi_{0};d_{k}\right)\frac{\left(1-\Xi_{0}^{2}\right)r^{4}}{\hat{\rho}^{2}\left(2k+1\right)^{2}}dr\,d\Xi_{0}d\hat{\xi}_{2}d\underline{x};\nonumber \\
 & \qquad\qquad\qquad\qquad\alpha,\beta\in\mathbb{N}_{0},\,0\leq\alpha+\beta\leq N,\label{eq: parametrix term}
\end{align}
with
\begin{align}
a_{\alpha,\beta}\left(\underline{x},\hat{\xi}_{2},\Xi_{0};d_{k}\right) & =b_{\alpha,\beta}\left(\underline{x},\hat{\xi}_{2},\Xi_{0}\left(d_{k}+1\right),d_{k}+1\right)\label{eq: form of amplitude}
\end{align}
for $b_{\alpha,\beta}\left(\underline{x},\hat{\xi}_{2},p_{0},d\right)\in C_{c}^{\infty}\left(\mathbb{R}_{\underline{x},\hat{\xi}_{2},p_{0}}^{5}\times\left[1,\infty\right)_{d}\right)$.
Furthermore the leading part 
\begin{equation}
a_{0,0}=\sigma^{H}\left(A\right)\left(\underline{x},\hat{\xi}_{2},\Xi_{0};d_{k}\right).\label{eq:leading term parametrix}
\end{equation}
We now show how to sum the above in $k$ with the help of the proposition
below.
\begin{prop}
Given $b\in C_{c}^{\infty}\left(\mathbb{R}_{\underline{x},\hat{\xi}_{2},p_{0}}^{5}\times\left[1,\infty\right)_{d}\right)$
and $a$ defined as in \prettyref{eq: form of amplitude}, the expression
\begin{align}
I\left(r\right) & \coloneqq\int d\Xi_{0}d\hat{\xi}_{2}d\underline{x}\left[\sum_{k\in\mathbb{N}_{0}}\frac{\left(1-\Xi_{0}^{2}\right)}{\hat{\rho}^{2}\left(2k+1\right)^{2}}a\left(\underline{x},\hat{\xi}_{2},\Xi_{0};d_{k}\right)\right]\nonumber \\
 & \sim cr^{-1}\ln r+c_{0}+c_{1}r^{-1}+c_{2}r^{-2}+\ldots;\label{eq:sum symbol}\\
c_{0} & =\frac{\pi^{2}}{8}\int d\Xi_{0}d\hat{\xi}_{2}d\underline{x}\frac{\left(1-\Xi_{0}^{2}\right)}{\hat{\rho}^{2}}b\left(\underline{x},\hat{\xi}_{2},\Xi_{0},0\right)\nonumber \\
c & =\frac{1}{2}\int d\Xi_{0}d\hat{\xi}_{2}d\underline{x}\frac{\left(1-\Xi_{0}^{2}\right)}{\hat{\rho}}\left(\Xi_{0}\partial_{p_{0}}+\partial_{d}\right)b\left(\underline{x},\hat{\xi}_{2},\Xi_{0},0\right)\label{eq:computation of first terms}
\end{align}
is the sum of a classical symbol in $r$ of order $0$ and a log term
$r^{-1}\ln r$ .
\end{prop}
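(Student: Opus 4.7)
The plan is to carry out a term-by-term asymptotic analysis of the sum in $k$ by combining a Taylor expansion of $b(\Xi_{0}(d_{k}+1),d_{k}+1)$ along the one-parameter family $d\mapsto(\Xi_{0}(d+1),d+1)$ with an Euler--Maclaurin conversion of $\sum_{k\ge0}$ to an integral. The identity $\sum_{k\ge0}(2k+1)^{-2}=\pi^{2}/8$ is the organizing device for the constant coefficient $c_{0}$, and the logarithmic term will emerge from a subleading contribution of the $d_{k}$-linear Taylor coefficient after converting to an integral in the transformed $d$-variable.

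I would first write, using that $(\Xi_{0}\partial_{p_{0}}+\partial_{d})$ is precisely the tangential derivative to the parametrizing curve at $d=0$,
\[
b(\Xi_{0}(d_{k}+1),d_{k}+1)=b(\Xi_{0},0)+d_{k}\,(\Xi_{0}\partial_{p_{0}}+\partial_{d})b(\Xi_{0},0)+\tfrac{d_{k}^{2}}{2}(\Xi_{0}\partial_{p_{0}}+\partial_{d})^{2}b(\Xi_{0},0)+\cdots.
\]
Multiplying by $(1-\Xi_{0}^{2})/[\hat{\rho}^{2}(2k+1)^{2}]$ and summing in $k$: the zeroth-order term immediately produces, after integration in $(\Xi_{0},\hat{\xi}_{2},\underline{x})$, the claimed $c_{0}$ via $\sum_{k\ge0}(2k+1)^{-2}=\pi^{2}/8$. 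The formal invariance of this step forces the factor $\pi^{2}/8$ and the weight $(1-\Xi_{0}^{2})/\hat{\rho}^{2}$ exactly as stated.

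For the higher-order Taylor terms I would use the explicit $(r,k)$-dependence of $d_{k}$ coming from the change of variables that produced \eqref{eq: parametrix term}, and convert $\sum_{k}d_{k}^{j}/(2k+1)^{2}$ to an integral in $d$ via Euler--Maclaurin. For $j=1$, after integrating in $\Xi_{0}$, the resulting integral in $d$ acquires an integrand of the form $\mathrm{const}/d$ near a lower cutoff $d_{0}(r)$ determined by the $k=0$ endpoint; the $r$-dependent cutoff regularizes the would-be logarithmic divergence and produces the $cr^{-1}\ln r$ term, and a direct computation of the residue recovers the formula for $c$ in terms of $(\Xi_{0}\partial_{p_{0}}+\partial_{d})b(\Xi_{0},0)$, with $\hat{\rho}$ now appearing to the first power because one factor of $\hat{\rho}$ is absorbed into the Jacobian of the $k\to d$ substitution. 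The terms $j\ge2$, together with the derivative corrections in Euler--Maclaurin, give convergent integrals contributing the power series $c_{1}r^{-1}+c_{2}r^{-2}+\cdots$.

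The main obstacle is to justify the formal Taylor expansion at $d_{k}=0$ in the asymptotic regime $r\to\infty$, where $d_{k}$ itself is large. This is handled by exploiting the compact support of $b$ in its $(\underline{x},\hat{\xi}_{2},p_{0})$ arguments: on the effective integration region where $b(\Xi_{0}(d_{k}+1),d_{k}+1)$ is non-zero, the variable $p_{0}=\Xi_{0}(d_{k}+1)$ stays in a fixed compact set, so $\Xi_{0}$ is constrained to a shrinking neighborhood of $0$ and the Taylor expansion can be truncated at any order modulo terms of higher decay in $r$. A secondary technical point is the uniform control of remainders as $|\Xi_{0}|\to1$, which is handled by a standard cutoff argument in $\Xi_{0}$ combined with the rapid decay of $b$ outside its compact $p_{0}$-support.
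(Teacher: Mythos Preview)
Your identification of $c_{0}$ via $\sum_{k\ge 0}(2k+1)^{-2}=\pi^{2}/8$ is correct, as is the mechanism by which the linear Taylor term produces the logarithm. The gap is in the treatment of the terms $j\ge 1$. If you Taylor expand $b$ to finite order, the polynomial pieces $d_{k}^{j}$ carry no cutoff in $k$, so $\sum_{k}d_{k}^{j}/(2k+1)^{2}$ diverges for every $j\ge 1$; the smooth cutoff that made the original sum finite lives entirely in the Taylor remainder. If instead you retain the cutoff by restricting to $k\lesssim r$ (the region where $b$ is supported), then with $d_{k}=\hat\rho(2k+1)/[r(1-\Xi_{0}^{2})]$ one finds
\[
\sum_{2k+1\lesssim r}\frac{d_{k}^{j}}{(2k+1)^{2}}\;\asymp\;r^{-j}\sum_{2k+1\lesssim r}(2k+1)^{j-2}\;=\;O(r^{-1})\qquad\text{for every }j\ge 2,
\]
so the Taylor terms do not form a decreasing asymptotic scale and no truncation yields the classical tail $c_{1}r^{-1}+c_{2}r^{-2}+\cdots$. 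Your proposed remedy --- that compact $p_{0}$-support forces $\Xi_{0}$ into a shrinking neighbourhood of $0$ --- is also incorrect: the bound $|p_{0}|\le C$ gives only $|\Xi_{0}|\le C/(d_{k}+1)$, and on the support of $b$ the variable $d_{k}$ is merely bounded, not large, so no shrinking occurs. The underlying obstruction is that Taylor expanding $b$ destroys the very cutoff that makes the individual terms summable.

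The paper avoids separating $b$ from its cutoff. It first performs the $\Xi_{0}$-integral via the substitution $\alpha=(1-\Xi_{0}^{2})^{-1}$, packaging the summand as $\hat\rho^{-2}(2k+1)^{-2}I_{0}\bigl(\hat\rho(2k+1)/r\bigr)$ for a single $I_{0}\in C_{c}^{\infty}$. Setting $\varepsilon=r^{-1}$ and differentiating $I_{1}(\varepsilon)=\sum_{k}\hat\rho^{-2}(2k+1)^{-2}I_{0}\bigl(\varepsilon\hat\rho(2k+1)\bigr)$ twice in $\varepsilon$ cancels the weight $(2k+1)^{-2}$, leaving a genuine Riemann sum $\sum_{k}(\partial_{d}^{2}I_{0})\bigl(\varepsilon\hat\rho(2k+1)\bigr)$ to which Poisson summation applies directly and gives $\partial_{\varepsilon}^{2}I_{1}\sim c\varepsilon^{-1}+c_{0}+c_{1}\varepsilon+\cdots$; integrating back twice produces the $r^{-1}\ln r$ term. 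The differentiation trick is what creates an honest asymptotic scale, and it has no counterpart in your argument.
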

\begin{proof}
The symbolic estimates are easily seen on differentiation and noting
$d_{k}=\frac{\hat{\rho}\left(2k+1\right)}{r\left(1-\Xi_{0}^{2}\right)}$
to be a symbol of order $-1$ in $r$ in the region $d_{k}\apprle1$,
$\Xi_{0}\apprle1$. To show a classical expansion, we perform the
change of variables $\alpha=\left(1-\Xi_{0}^{2}\right)^{-1}$ in the
$\Xi_{0}$ integration to obtain 
\[
I\left(r\right)=\sum_{k\in\mathbb{N}_{0}}\int\frac{d\hat{\xi}_{2}d\underline{x}}{\hat{\rho}^{2}\left(2k+1\right)^{2}}\left[\underbrace{\int_{1}^{\infty}\frac{d\alpha}{\alpha^{3}\sqrt{1-\alpha^{-1}}}a\left(\underline{x},\hat{\xi}_{2},\sqrt{1-\alpha^{-1}};\frac{\hat{\rho}\left(2k+1\right)}{r}\alpha\right)}_{\eqqcolon I_{0}\left(\underline{x},\hat{\xi}_{2};\frac{\hat{\rho}\left(2k+1\right)}{r}\right)}\right]
\]
with the integral in parentheses above seen to be $I_{0}\left(\underline{x},\hat{\xi}_{2};\frac{\hat{\rho}\left(2k+1\right)}{r}\right)\in C_{c}^{\infty}\left(\mathbb{R}_{\underline{x},\hat{\xi}_{2},\frac{\hat{\rho}\left(2k+1\right)}{r}}^{5}\right)$. 

From here the proposition follows from \cite[Prop. 7.20]{Melrose-hypoelliptic}
but we give a shorter argument. Differentiating $I_{1}\left(\varepsilon\right)\coloneqq\sum_{k\in\mathbb{N}_{0}}\frac{1}{\hat{\rho}^{2}\left(2k+1\right)^{2}}I_{0}\left(\underline{x},\hat{\xi}_{2};\varepsilon\hat{\rho}\left(2k+1\right)\right)$,
$\varepsilon\coloneqq\frac{1}{r}$, gives 
\begin{align*}
\partial_{\varepsilon}^{2}I_{1} & =\sum_{k\in\mathbb{N}_{0}}\left(\partial_{d}^{2}I_{0}\right)\left(\underline{x},\hat{\xi}_{2};\varepsilon\hat{\rho}\left(2k+1\right)\right)\\
 & =\int_{0}^{\infty}dk\,\left(\partial_{d}^{2}I_{0}\right)\left(\underline{x},\hat{\xi}_{2};\varepsilon\hat{\rho}\left(2k+1\right)\right)\\
 & \;+\frac{1}{2}\sum_{l\in\mathbb{N}\setminus\left\{ 0\right\} }\int dk\,e^{i2\pi kl}\left(\partial_{d}^{2}I_{0}\right)\left(\underline{x},\hat{\xi}_{2};\varepsilon\hat{\rho}\left(2k+1\right)\right)
\end{align*}
by the Poisson summation formula. By repeated integration by parts
the second term in the last line above is seen to be $O\left(\varepsilon^{\infty}\right)$,
while the first term is evaluated to be 

\begin{align}
\int_{0}^{\infty}dk\,\left(\partial_{d}^{2}I_{0}\right)\left(\underline{x},\hat{\xi}_{2};\varepsilon\hat{\rho}\left(2k+1\right)\right) & \sim c\varepsilon^{-1}+c_{0}+c_{1}\varepsilon+\ldots\nonumber \\
c & =\frac{1}{2}\hat{\rho}^{-1}\left[\partial_{d}I_{0}\left(\underline{x},\hat{\xi}_{2};0\right)\right]\label{eq:computation first terms}
\end{align}
to complete the proof. 
\end{proof}
Following this proposition, we may further simplify \prettyref{eq: parametrix term}
as being mod $t^{N-5}$ a sum of terms of the form 
\begin{align*}
 & t^{2\alpha}\int_{0}^{\infty}dr\,e^{irt}r^{4-j+\alpha-\beta};\\
\textrm{or }\quad & t^{2\alpha}\int_{0}^{\infty}dr\,e^{irt}r^{3-j+\alpha-\beta}\ln r\,;
\end{align*}
$\alpha,\beta,j\in\mathbb{N}_{0},\,\alpha+\beta\leq N$. Using the
identifications \prettyref{eq:identifications}, the knowledge of
these elementary Fourier transforms and identifying the constants
we now have the following. 
\begin{thm}
\label{thm: Microlocal wave trace} For any $A\in\Psi_{\textrm{cl}}^{0,0}$,
the microlocal wave trace in the 4D quasi-contact case has the asymptotics
\begin{align}
\textrm{tr }Ae^{it\sqrt{\Delta_{g^{E},\mu}}} & =\sum_{j=0}^{N}c_{j,0}^{A}\left(t+i0\right)^{j-5}+\sum_{j=0}^{N}c_{j,1}^{A}\left(t+i0\right)^{j-4}\ln\left(t+i0\right)+\sum_{j=0}^{N}c_{j,2}^{A}t^{j}\ln^{2}\left(t+i0\right)+O\left(t^{N-4}\right)\label{eq:microlocal wave trace}
\end{align}
 $\forall N\in\mathbb{N}$, as $t\rightarrow0$, in the distributional
sense with leading term $c_{0,0}^{A}=\frac{1}{32\pi}\int\left.\sigma\left(A\right)\right|_{SNS^{*}\Sigma}\mu_{\textrm{Popp}}^{SNS^{*}\Sigma}$. 
\end{thm}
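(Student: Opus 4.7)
The plan is to feed the small-time parametrix of Section \prettyref{subsec:Parametrices} into the trace, evaluate it on the diagonal in the adapted coordinates $(r,\Xi_{0},\hat{\xi}_{2},\underline{x})$ introduced just before the statement, and then carry out the sum over Landau levels $k\in\mathbb{N}_{0}$ using the proposition on $I(r)$ already proved. First I would split the trace as $\textrm{tr }Ae^{it\sqrt{\Delta_{g^{E},\mu}}}=\textrm{tr }(A\Pi_{\Sigma})e^{it\sqrt{\Delta_{g^{E},\mu}}}+\textrm{tr }(A(1-\Pi_{\Sigma}))e^{it\sqrt{\Delta_{g^{E},\mu}}}$ with $\Pi_{\Sigma}\in\Psi^{0}_{\textrm{cl}}(X)$ a microlocal cutoff to a small conic neighborhood of $\Sigma$. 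On the complement of $\Sigma$ the operator $\sqrt{\Delta_{g^{E},\mu}}$ is a classical elliptic pseudodifferential operator (\prettyref{prop: square root in class}), so the Duistermaat--Guillemin parametrix applies and the singularity at $t=0$ is a classical conormal expansion in $(t+i0)^{j-4}$ (dimension $4$), contributing only to $c_{j,0}^{A}$ with no log terms.

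For the piece localized near $\Sigma$ I would use the parametrix $P=\left[\sum_{k}H_{k}^{*}P_{k}H_{k}\right]A$ with $P_{k}=I_{\varphi_{k}}(a_{k})$ from \prettyref{eq: Hermite FIO-1}--\prettyref{eq: amplitude solution}. Taking the trace on the diagonal and performing the change of variables $(\xi_{0},\xi_{3})\mapsto(\Xi_{0},r)$ one arrives, modulo $O(t^{\infty})$, at a sum of oscillatory integrals of the form \prettyref{eq: parametrix term}. Using the asymptotic expansion of the amplitude $a\in S^{0,0}_{\textrm{cl},t}$ in $t$ together with the classical expansion in $d_{k}$ of each term, the computation reduces modulo $O(t^{N-5})$ to a finite sum of integrals
\[
\int_{0}^{\infty}e^{irt}r^{m}I_{m}(r)\,dr,\qquad m\in\{4{-}j{+}\alpha{-}\beta\},
\]
where $I_{m}(r)$ is the $k$-sum computed in \prettyref{eq:sum symbol}. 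The just-established proposition says $I_{m}$ has an expansion of the form $c\,r^{-1}\ln r+\sum_{\ell}c_{\ell}r^{-\ell}$.

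Plugging this into the remaining $r$-integration, combining the elementary distributional identities
\[
\int_{0}^{\infty}e^{irt}r^{s}\,dr=\Gamma(s{+}1)(-i)^{s+1}(t+i0)^{-s-1},
\]
\[
\int_{0}^{\infty}e^{irt}r^{s}\ln r\,dr=\Gamma(s{+}1)(-i)^{s+1}(t+i0)^{-s-1}\bigl[\psi(s{+}1)-\ln(t+i0)\bigr],
\]
one obtains the three series in the statement: the $(t+i0)^{j-5}$ series from the $c_{\ell}r^{-\ell}$ part of $I_{m}$, the $(t+i0)^{j-4}\ln(t+i0)$ series from the $cr^{-1}\ln r$ part, and the $t^{j}\ln^{2}(t+i0)$ series from positive powers of $t$ multiplying a $\ln$-producing integral combined with an additional $\ln$ arising when $s$ is a non-negative integer (poles of $\Gamma$ cancelling against a log). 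The factors of $t^{2\alpha}$ in \prettyref{eq: parametrix term} organize the three families correctly and are needed to see that the leading singularity is $(t+i0)^{-5}$.

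Finally, for the leading coefficient, the only term contributing to $c^{A}_{0,0}$ is $j=0$, $\alpha=\beta=0$, so the amplitude is $a_{0,0}=\sigma^{H}(A)$ by \prettyref{eq:leading term parametrix}, and the relevant constant is $c_{0}$ from \prettyref{eq:computation of first terms}. Using the identifications \prettyref{eq:identifications}, i.e.\ $\mu_{\textrm{Popp}}^{SNS^{*}\Sigma}=\hat{\rho}^{-2}(1-\Xi_{0}^{2})d\Xi_{0}dx_{0}dx_{2}d\hat{x}_{3}d\hat{\xi}_{2}$ and $\left.H_{\Omega}\right|_{SN\Sigma}=R_{0}$, together with the Fourier transform of $(t+i0)^{-5}$ which provides the factor $\Gamma(5)(-i)^{5}=24$ into the conversion constant, one identifies $c^{A}_{0,0}=\frac{1}{32\pi}\int\sigma(A)|_{SNS^{*}\Sigma}\mu_{\textrm{Popp}}^{SNS^{*}\Sigma}$. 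The main obstacle will be bookkeeping of the combinatorial factors producing the $\ln^{2}$ terms and keeping track of the correct prefactor in the leading constant, since several integrations (in $\Xi_{0}$, over Landau levels via Poisson summation, and in $r$ via the elementary Fourier transforms above) each contribute explicit numerical factors that must combine to $\frac{1}{32\pi}$.
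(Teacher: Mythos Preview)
Your proposal is correct and follows essentially the same route as the paper: localize near $\Sigma$, insert the Hermite parametrix in the $(r,\Xi_{0})$ coordinates to reach \prettyref{eq: parametrix term}, sum over Landau levels via the proposition on $I(r)$, and read off the three series from the elementary Fourier transforms of $r^{s}$ and $r^{s}\ln r$. One minor slip in your bookkeeping: the extra logarithm producing the $\ln^{2}$ terms arises when the exponent $s$ is a \emph{negative} integer (pole of $\Gamma(s{+}1)$), not a non-negative one, and $\Gamma(5)(-i)^{5}=-24i$ rather than $24$.
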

In the case when $A=1$ one has $b_{0,0}=1$ in \prettyref{eq: form of amplitude}
which following \prettyref{eq:computation of first terms} gives that
the first logarithmic term above vanishes $c_{0,1}^{1}=0$ proving
\prettyref{thm:Poisson relation}. Pairing \prettyref{eq:microlocal wave trace}
with $\theta\in C_{c}^{\infty}\left(-C_{0},C_{0}\right)$, for $C_{0}$
sufficiently small, gives 
\begin{align}
\textrm{tr }A\check{\theta}\left(\sqrt{\Delta_{g^{E},\mu}}-\lambda\right) & =\sum_{j=0}^{N}\tilde{c}_{j,0}^{A}\lambda^{4-j}+\sum_{j=0}^{N}\tilde{c}_{j,1}^{A}\lambda^{3-j}\ln\lambda+O\left(\lambda^{4-N}\right)\label{eq:small time microlocal trace}\\
\textrm{tr }\check{\theta}\left(\sqrt{\Delta_{g^{E},\mu}}-\lambda\right) & =\sum_{j=0}^{N}\tilde{c}_{j,0}\lambda^{4-j}+\sum_{j=0}^{N}\tilde{c}_{j,1}\lambda^{2-j}\ln\lambda+O\left(\lambda^{4-N}\right)\label{eq:small time trace}
\end{align}
$\forall N\in\mathbb{N}$ as $\lambda\rightarrow\infty$ with leading
terms 
\begin{align*}
\tilde{c}_{j,0}^{A} & =\frac{\theta\left(0\right)}{32\pi}\int\left.\sigma\left(A\right)\right|_{SNS^{*}\Sigma}\mu_{\textrm{Popp}}^{SNS^{*}\Sigma},\\
\tilde{c}_{j,0}^{1} & =\frac{\theta\left(0\right)}{32\pi}\int\left[\int_{-1}^{1}d\Xi_{0}\left(1-\Xi_{0}^{2}\right)\right]\mu_{\textrm{Popp}}=\frac{\theta\left(0\right)}{24\pi}\mu_{\textrm{Popp}}.
\end{align*}
We now prove the Weyl laws \prettyref{thm:Sharp Weyl law}.
\begin{proof}[Proof of \prettyref{thm:Sharp Weyl law}]
 Following a standard Tauberian theorem for Fourier transforms (cf.
\cite[Sec. 2]{Duistermaat-Guillemin}) \prettyref{eq:small time trace}
gives \prettyref{eq:sharp Weyl law}. To prove \prettyref{eq:improved Weyl law}
one needs to prove \prettyref{eq:small time trace} at leading order
for $\theta\in C_{c}^{\infty}\left(\mathbb{R}\right)$ of arbitrary
support under the dynamical assumption. 

We first consider the trace norm of $A\check{\theta}\left(\sqrt{\Delta_{g^{E},\mu}}-\lambda\right)$,
$A=a^{H}\in\Psi_{\textrm{cl}}^{0,0}$, $a\in C_{c,\textrm{inv}}^{\infty}\left(\left[\left(S^{*}X\right);S^{*}\Sigma\right];\left[0,1\right]\right)$,
for $\theta\in C_{c}^{\infty}\left(-C_{0},C_{0}\right)$ assuming
$\check{\theta}\geq0$. To this end, let $\tilde{a}\in C_{c,\textrm{inv}}^{\infty}\left(\left[\left(S^{*}X\right);S^{*}\Sigma\right];\left[0,1\right]\right)$
such that $\tilde{a}=1$ on $\cup_{t\in\left(-C_{0},C_{0}\right)}e^{tH_{d}}\left(\textrm{spt}a\right)$.
Then an Egorov type argument \prettyref{thm:Egororovb thm} gives
\begin{align}
a^{H}e^{it\left(\sqrt{\Delta_{g^{E},\mu}}-\lambda\right)}\left(1-\tilde{a}^{H}\right) & \in\Psi_{\textrm{cl}}^{-\infty},\quad\forall t\in\left(-C_{0},C_{0}\right),\nonumber \\
\left\Vert a^{H}\check{\theta}\left(\sqrt{\Delta_{g^{E},\mu}}-\lambda\right)\left(1-\tilde{a}^{H}\right)\right\Vert _{\textrm{tr}} & =O\left(\lambda^{-\infty}\right)\label{eq:Egorov estimate}
\end{align}
and thus
\begin{align*}
\left\Vert a^{H}\check{\theta}\left(\sqrt{\Delta_{g^{E},\mu}}-\lambda\right)\right\Vert _{\textrm{tr}} & =\left\Vert a^{H}\check{\theta}\left(\sqrt{\Delta_{g^{E},\mu}}-\lambda\right)\tilde{a}^{H}\right\Vert _{\textrm{tr}}+O\left(\lambda^{-\infty}\right)\\
 & =\left\Vert a^{H}\tilde{a}^{H}\check{\theta}\left(\sqrt{\Delta_{g^{E},\mu}}-\lambda\right)\tilde{a}^{H}\right\Vert _{\textrm{tr}}+O\left(\lambda^{-\infty}\right)
\end{align*}
Now since $\left|a\right|<1+\varepsilon$, $\forall\varepsilon>0$,
we may use symbolic calculus to write $a^{H}=1+\varepsilon-\left(b^{H}\right)^{2}+\Psi_{\textrm{cl}}^{-\infty}$.
This gives 
\begin{align*}
\left\Vert a^{H}\check{\theta}\left(\sqrt{\Delta_{g^{E},\mu}}-\lambda\right)\right\Vert _{\textrm{tr}} & =\left\Vert \left(1+\varepsilon-\left(b^{H}\right)^{2}\right)\check{\theta}\left(\sqrt{\Delta_{g^{E},\mu}}-\lambda\right)\tilde{a}^{H}\right\Vert _{\textrm{tr}}+O\left(\lambda^{-\infty}\right)\\
 & \leq\left(1+\varepsilon\right)\left\Vert \tilde{a}^{H}\check{\theta}\left(\sqrt{\Delta_{g^{E},\mu}}-\lambda\right)\tilde{a}^{H}\right\Vert _{\textrm{tr}}+O\left(\lambda^{-\infty}\right)
\end{align*}
Next for $\check{\theta}\geq0$, the operator $\tilde{a}^{H}\check{\theta}\left(\sqrt{\Delta_{g^{E},\mu}}-\lambda\right)\tilde{a}^{H}$
being positive and self-adjoint, its trace norm coincides with its
trace which is in turn analyzed in a similar fashion to \prettyref{eq:small time microlocal trace}.
Hence 
\begin{equation}
\left\Vert a^{H}\check{\theta}\left(\sqrt{\Delta_{g^{E},\mu}}-\lambda\right)\right\Vert _{\textrm{tr}}\leq\left(1+\varepsilon\right)\lambda^{4}\left[C_{\theta}\int\left.\left|\tilde{a}\right|^{2}\right|_{SNS^{*}\Sigma}\nu_{\textrm{Popp}}^{SNS^{*}\Sigma}\right]+O_{a,\tilde{a},\theta,\varepsilon}\left(\lambda^{3}\right).\label{eq:important trace estimate}
\end{equation}
To remove the condition $\check{\theta}\geq0$ on the Fourier transform
of the cutoff, one may choose $\phi\in C_{c}^{\infty}\left(-C_{0},C_{0}\right)$
satisfying $\phi>0$ on $\textrm{spt}\left(\theta\right)$ and $\check{\phi}\geq0$.
Then writing $\theta=g\phi$, $g\in C_{c}^{\infty}\left(-C_{0},C_{0}\right)$
gives $\check{\theta}=\check{g}\ast\check{\phi}$ and 
\begin{align*}
\left\Vert a^{H}\check{\theta}\left(\sqrt{\Delta_{g^{E},\mu}}-\lambda\right)\right\Vert _{\textrm{tr}} & =\int_{\left|\lambda'\right|<\lambda^{1/2}}d\lambda'\left|\check{g}\left(\lambda'\right)\right|\left\Vert a^{H}\check{\phi}\left(\sqrt{\Delta_{g^{E},\mu}}-\lambda-\lambda'\right)\right\Vert _{\textrm{tr}}\\
 & \quad+\int_{\left|\lambda'\right|>\lambda^{1/2}}d\lambda'\left|\check{g}\left(\lambda'\right)\right|\left\Vert a^{H}\check{\phi}\left(\sqrt{\Delta_{g^{E},\mu}}-\lambda-\lambda'\right)\right\Vert _{\textrm{tr}}
\end{align*}
The second integral above is $O\left(\lambda^{-\infty}\right)$. The
first integral is then estimated following the corresponding estimate
\prettyref{eq:important trace estimate} for $\check{\phi}$. To remove
the condition on $\textrm{spt}\theta$, we may write a function of
arbitrary support as a sum of translates $\theta_{c}\left(s\right)=\theta\left(s-c\right)\in C_{c}\left(\mathbb{R}\right)$,
$c\in\mathbb{R}$, of functions supported near zero. Then 
\[
\left\Vert a^{H}\check{\theta_{c}}\left(\sqrt{\Delta_{g^{E},\mu}}-\lambda\right)\right\Vert _{\textrm{tr}}=\left\Vert a^{H}e^{-ic\left(\sqrt{\Delta_{g^{E},\mu}}-\lambda\right)}\check{\theta}\left(\sqrt{\Delta_{g^{E},\mu}}-\lambda\right)\right\Vert _{\textrm{tr}}=\left\Vert a^{H}\check{\theta}\left(\sqrt{\Delta_{g^{E},\mu}}-\lambda\right)\right\Vert _{\textrm{tr}}
\]
 gives \prettyref{eq:important trace estimate} for any arbitrary
$\theta\in C_{c}^{\infty}\left(\mathbb{R}\right)$. 

We now come to estimating the trace \prettyref{eq:small time trace}
for arbitrary $\theta\in C_{c}^{\infty}\left(\mathbb{R}\right)$.
Splitting $\theta=\underbrace{\vartheta}_{\in C_{c}\left(-C_{0},C_{0}\right)}+\underbrace{\left(\theta-\vartheta\right)}_{\in C_{c}\left(\mathbb{R}\setminus\left(-\frac{C_{0}}{2},\frac{C_{0}}{2}\right)\right)}$,
with the trace $\textrm{tr }\check{\vartheta}\left(\sqrt{\Delta_{g^{E},\mu}}-\lambda\right)$
expanded as \prettyref{eq:small time trace}, we next estimate $\textrm{tr }\left(\check{\theta}-\check{\vartheta}\right)\left(\sqrt{\Delta_{g^{E},\mu}}-\lambda\right)$.
Under the assumption on $L^{E}$, we may find $\forall\varepsilon>0$
a microlocal partition of unity $\left\{ A_{j}=a_{j}^{H}\right\} _{j=-1}^{N}\in\Psi_{\textrm{cl}}^{0,0}\left(X\right)$,
$\left\{ a_{j}\right\} _{j=-1}^{N}\in C_{\textrm{inv}}^{\infty}\left(\left[\left(S^{*}X\right);S^{*}\Sigma\right];\left[0,1\right]\right)$,
$\sum_{j=-1}^{N}a_{j}=1$, satisfying 
\begin{align}
\textrm{spt}a_{-1}\cap SNS^{*}\Sigma & =\emptyset,\nonumber \\
\mu_{\textrm{Popp}}^{SNS^{*}\Sigma}\left(\textrm{spt}a_{0}\cap SNS^{*}\Sigma\right) & \leq\varepsilon,\nonumber \\
\left[\cup_{t\in\textrm{spt}\left(\theta-\vartheta\right)}e^{tH_{d}}\left(\textrm{spt}a_{j}\right)\right]\cap\left(\textrm{spt}a_{j}\right) & =\emptyset,\quad1\leq j\leq N.\label{eq:suitable partition}
\end{align}
The estimate \prettyref{eq:important trace estimate} gives 
\begin{equation}
\left|\textrm{tr }\left(a_{-1}^{H}+a_{0}^{H}\right)\left(\check{\theta}-\check{\vartheta}\right)\left(\sqrt{\Delta_{g^{E},\mu}}-\lambda\right)\right|\leq\varepsilon\left(1+\varepsilon\right)\lambda^{4}+O_{\theta,\varepsilon}\left(\lambda^{3}\right),\:\forall\varepsilon>0.\label{eq: first cutoff contribution}
\end{equation}
Furthermore, choosing $\tilde{a}_{j}\in C_{\textrm{inv}}^{\infty}\left(\left[\left(S^{*}X\right);S^{*}\Sigma\right];\left[0,1\right]\right)$,
$1\leq j\leq N$, with 
\begin{align}
\textrm{spt}\tilde{a}_{j}\cap\textrm{spt}a_{j} & =\emptyset\nonumber \\
\tilde{a}_{j} & =1\textrm{ on }\left[\cup_{t\in\textrm{spt}\left(\theta-\vartheta\right)}e^{tH_{d}}\left(\textrm{spt}a_{j}\right)\right]\label{eq: enlarged cutoff}
\end{align}
gives 
\begin{align}
\textrm{tr }\left[a_{j}^{H}\left(\check{\theta}-\check{\vartheta}\right)\left(\sqrt{\Delta_{g^{E},\mu}}-\lambda\right)\right] & =\textrm{tr }\left[a_{j}^{H}\left(\check{\theta}-\check{\vartheta}\right)\left(\sqrt{\Delta_{g^{E},\mu}}-\lambda\right)\tilde{a}_{j}^{H}\right]\nonumber \\
 & \quad+\textrm{tr }\left[a_{j}^{H}\left(\check{\theta}-\check{\vartheta}\right)\left(\sqrt{\Delta_{g^{E},\mu}}-\lambda\right)\left(1-\tilde{a}_{j}^{H}\right)\right]\nonumber \\
 & =\textrm{tr }\left[\left(\check{\theta}-\check{\vartheta}\right)\left(\sqrt{\Delta_{g^{E},\mu}}-\lambda\right)\tilde{a}_{j}^{H}a_{j}^{H}\right]\nonumber \\
 & \quad+\textrm{tr }\left[a_{j}^{H}\left(\check{\theta}-\check{\vartheta}\right)\left(\sqrt{\Delta_{g^{E},\mu}}-\lambda\right)\left(1-\tilde{a}_{j}^{H}\right)\right]\nonumber \\
 & =O\left(\lambda^{-\infty}\right)\label{eq: other cutoffs small contribution}
\end{align}
following a similar Egorov argument as in \prettyref{eq:Egorov estimate}
and \prettyref{eq: enlarged cutoff}. Thus finally combining \prettyref{eq:small time microlocal trace},
\prettyref{eq: first cutoff contribution} and \prettyref{eq: other cutoffs small contribution}
we have 
\[
\textrm{tr }\check{\theta}\left(\sqrt{\Delta_{g^{E},\mu}}-\lambda\right)=\lambda^{4}\frac{\theta\left(0\right)}{24\pi}\mu_{\textrm{Popp}}+o\left(\lambda^{4}\right)
\]
for any $\theta\in C_{c}^{\infty}\left(\mathbb{R}\right)$, under
the assumption on the closed integral curves of $L^{E}$. Following
the above the usual Tauberian argument of continues to prove \prettyref{eq:improved Weyl law};
cf. \cite[Sec. 2]{Duistermaat-Guillemin} or \cite[Ch. 11]{Dimassi-Sjostrand}.
\end{proof}
Next we prove the large time Poisson relation \prettyref{eq: sing spt wave trace}. 
\begin{proof}[Proof of \prettyref{eq: sing spt wave trace}]
 We shall infact prove the stronger statement 
\begin{align}
\textrm{sing spt}\left(\textrm{tr }e^{it\sqrt{\Delta_{g^{E},\mu}}}\right) & \subset\left\{ 0\right\} \cup\mathscr{L}_{\hat{Z}}\cup\mathscr{L}_{\textrm{normal}}\nonumber \\
 & \subset\left\{ 0\right\} \cup\left(-\infty,-T_{\textrm{abnormal}}^{E}\right]\cup\left[T_{\textrm{abnormal}}^{E},\infty\right)\cup\mathscr{L}_{\textrm{normal}}\label{eq: sing spt wave trace refined statement}
\end{align}
with $\mathscr{L}_{\hat{Z}}$ as in the computation \prettyref{eq:period spectrum lift of charac.}.
Equivalently stated, the above \prettyref{eq: sing spt wave trace refined statement}
amounts to
\[
\textrm{tr }\check{\theta}\left(\sqrt{\Delta_{g^{E},\mu}}-\lambda\right)=O\left(\lambda^{-\infty}\right)
\]
for $\textrm{spt}\left(\theta\right)\Subset\mathbb{R}\setminus\left(\left\{ 0\right\} \cup\mathscr{L}_{\hat{Z}}\cup\mathscr{L}_{\textrm{normal}}\right)$.
We may then again choose a microlocal partition of unity $\left\{ A_{j}=a_{j}^{H}\right\} _{j=1}^{N}\in\Psi_{\textrm{cl}}^{0,0}\left(X\right)$,
$\left\{ a_{j}\right\} _{j=1}^{N}\in C_{\textrm{inv}}^{\infty}\left(\left[\left(S^{*}X\right);S^{*}\Sigma\right];\left[0,1\right]\right)$,
$\sum_{j=1}^{N}a_{j}=1$, satisfying 
\begin{align}
\left[\cup_{t\in\textrm{spt}\left(\theta\right)}e^{tH_{d}}\left(\textrm{spt}a_{j}\right)\right]\cap\left(\textrm{spt}a_{j}\right) & =\emptyset,\quad1\leq j\leq N.\label{eq:suitable partition-1}
\end{align}
Furthermore, again choosing $\tilde{a}_{j}\in C_{\textrm{inv}}^{\infty}\left(\left[\left(S^{*}X\right);S^{*}\Sigma\right];\left[0,1\right]\right)$,
$1\leq j\leq N$, with 
\begin{align}
\textrm{spt}\tilde{a}_{j}\cap\textrm{spt}a_{j} & =\emptyset\nonumber \\
\tilde{a}_{j} & =1\textrm{ on }\left[\cup_{t\in\textrm{spt}\left(\theta\right)}e^{tH_{d}}\left(\textrm{spt}a_{j}\right)\right]\label{eq: enlarged cutoff-1}
\end{align}
gives 
\begin{align}
\textrm{tr }\left[a_{j}^{H}\check{\theta}\left(\sqrt{\Delta_{g^{E},\mu}}-\lambda\right)\right] & =\textrm{tr }\left[a_{j}^{H}\check{\theta}\left(\sqrt{\Delta_{g^{E},\mu}}-\lambda\right)\tilde{a}_{j}^{H}\right]+\textrm{tr }\left[a_{j}^{H}\check{\theta}\left(\sqrt{\Delta_{g^{E},\mu}}-\lambda\right)\left(1-\tilde{a}_{j}^{H}\right)\right]\nonumber \\
 & =\textrm{tr }\left[\check{\theta}\left(\sqrt{\Delta_{g^{E},\mu}}-\lambda\right)\tilde{a}_{j}^{H}a_{j}^{H}\right]+\textrm{tr }\left[a_{j}^{H}\check{\theta}\left(\sqrt{\Delta_{g^{E},\mu}}-\lambda\right)\left(1-\tilde{a}_{j}^{H}\right)\right]\nonumber \\
 & =O\left(\lambda^{-\infty}\right)\label{eq: other cutoffs small contribution-1}
\end{align}
following a similar Egorov argument as in \prettyref{eq:Egorov estimate}
and \prettyref{eq: enlarged cutoff-1}. 
\end{proof}

\section{\label{sec:Quantum-ergodicity}Quantum ergodicity}

In this section we prove the quantum ergodicity theorem for the sR
Laplacian \prettyref{thm:QE theorem}. As usual (see for instance
\cite{Zelditch2009}), it is enough to establish a microlocal Weyl
law 
\begin{align}
E\left(B\right)\coloneqq & \lim_{\lambda\rightarrow\infty}\frac{1}{N\left(\lambda\right)}\sum_{\lambda_{j}\leq\lambda}\left\langle B\varphi_{j},\varphi_{j}\right\rangle \nonumber \\
= & \frac{1}{2}\int d\nu_{\textrm{Popp}}\left[b\left(x,a_{g}\left(x\right)\right)+b\left(x,-a_{g}\left(x\right)\right)\right],\label{eq: micro-weyl}
\end{align}
and variance estimate 
\begin{equation}
V\left(B\right)\coloneqq\lim_{\lambda\rightarrow\infty}\frac{1}{N\left(\lambda\right)}\sum_{\lambda_{j}\leq\lambda}\left|\left\langle \left[B-E\left(B\right)\right]\varphi_{j},\varphi_{j}\right\rangle \right|^{2}=0,\label{eq:variance}
\end{equation}
given $B\in\Psi_{\textrm{cl}}^{0}\left(X\right),$ with $b=\sigma\left(B\right)$. 

\subsection{\label{subsec:Microlocal-Weyl-laws}Microlocal Weyl laws}

We begin with the microlocal Weyl law \prettyref{eq: micro-weyl}.
The upcoming \prettyref{lem:Microlocal weyl law} in fact works more
generally on any equiregular sR manifold; a more detailed discussion
of it including some singular (non-equiregular) analysis will appear
in \cite{Colin-de-Verdiere-Hillairet-TrelatII}.

We first prove a localization result for the heat kernel of the sR
Laplacian on a general sR manifold $X$ of dimension $n$. To state
this, given point a $x\in X$ we choose a privileged coordinate chart
contained inside the open ball $B_{\varrho}^{g^{TX}}\left(x\right)\coloneqq\left\{ x'|d^{g^{TX}}\left(x,x'\right)<\varrho\right\} $;
where $g^{TX}$ denotes a fixed Riemannian metric on $X$ and $\varrho$
depends on $x$. Let $\chi\in C_{c}^{\infty}\left(\left[-1,1\right];\left[0,1\right]\right)$
with $\chi=1$ on $\left[-\frac{1}{2},\frac{1}{2}\right]$. Choose
a local orthonormal frame $U_{1},\ldots,U_{k}$ for $E$ and define
\begin{align*}
\tilde{U}_{j} & =\hat{U}_{j}^{\left(-1\right)}+\chi\left(\frac{d^{g^{TX}}\left(x,x'\right)}{\varrho_{x}}\right)\left(U_{j}-\hat{U}_{j}^{\left(-1\right)}\right),\quad\forall1\leq j\leq k,\\
\tilde{\mu} & =\hat{\mu}+\chi\left(\frac{d^{g^{TX}}\left(x,x'\right)}{\varrho_{x}}\right)\left(\mu-\hat{\mu}\right),
\end{align*}
to be the modified vector fields and volume on $\mathbb{R}^{n}$.
Here $\hat{U}_{j}^{\left(-1\right)}$, $\hat{\mu}$ are the first
terms in the homogeneous privileged coordinate expansions of $U_{j}$
\prettyref{eq:priv cord. exp v. field} and the volume $\mu$ \prettyref{eq: prov cord exp. measure}
respectively. For $\varrho$ sufficiently small, the $\tilde{U}_{j}$'s
are linearly independent and bracket generating with degree of nonholonomy
being $r\left(x\right)$. A formula similar to \prettyref{eq: sR laplacian in orthonormal frame}
now gives an sR Laplacian on $\mathbb{R}^{n}$ via $\tilde{\Delta}_{g,\mu}f=\sum_{j=1}^{k}\left[-\left(\tilde{U}_{j}\right)^{2}\left(f\right)+\tilde{U}_{j}\left(f\right)\left(\textrm{div}_{\tilde{\mu}}\tilde{U}_{j}\right)\right]$.
The operator $\tilde{\Delta}_{g,\mu}$ is again essentially self-adjoint
with a resolvent that maps $\left(\tilde{\Delta}_{g,\mu}-z\right)^{-1}:H_{\textrm{loc}}^{s}\rightarrow H_{\textrm{loc}}^{s+\frac{1}{r}}$
\prettyref{eq:subelliptic estimate} and has a well defined functional
calculus. We now have the following localization lemma.
\begin{lem}
\label{lem: Localization lemma} The heat kernel satisfies 
\begin{equation}
\left[e^{-t\Delta_{g^{E},\mu}}\left(x,x'\right)\right]_{\mu}=ct^{-2nr-1}e^{-\frac{d^{E}\left(x,x'\right)^{2}}{4t}}\label{eq: heat kernel estimate}
\end{equation}
uniformly for $t\leq1$. 

Moreover, there exists $\varrho_{1}\left(x\right)>0$ such that
\begin{equation}
\left\Vert \left[e^{-t\Delta_{g^{E},\mu}}\right]_{\mu}\left(.,x\right)-\left[e^{-t\tilde{\Delta}_{g,\mu}}\right]_{\tilde{\mu}}\left(.,0\right)\right\Vert _{C^{k}\left(X\right)}=C_{x,k}e^{-\frac{\varrho_{1}^{2}}{16t}}\label{eq: heat kernel localization}
\end{equation}
have the same asymptotics for $d^{E}\left(x,x'\right)\leq\varrho_{1}$
as $t\rightarrow0$. 
\end{lem}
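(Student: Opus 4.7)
\medskip

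\noindent\textbf{Proof proposal.} For part (i), the plan is to derive a Gaussian upper bound of Jerison--Sánchez-Calle / Nagel--Stein--Wainger type for the heat kernel of the hypoelliptic operator $\Delta_{g^{E},\mu}$. Two routes are available. The first is to quote directly such Gaussian bounds for sums of squares of H\"ormander type, which give $[e^{-t\Delta}](x,x') \leq C t^{-Q(x)/2}\exp\!\left(-d^{E}(x,x')^{2}/(Ct)\right)$ in terms of the local Hausdorff dimension $Q(x)$; since $Q(x) \leq nr$ for any $x$, this fits into the looser uniform form $Ct^{-2nr-1}$ claimed. The second, more self-contained route is a Davies--Gaffney argument combined with Nash--Moser iteration: the on-diagonal polynomial blow-up follows from the subelliptic estimate \eqref{eq:subelliptic estimate} applied iteratively to $e^{-t\Delta}$, while the off-diagonal Gaussian decay is obtained by conjugating $\Delta_{g^{E},\mu}$ by $\exp(\varphi_{x,x'}/\sqrt{t})$ with $\varphi_{x,x'}$ a smoothed version of $d^{E}(x,\cdot)^{2}/4$ and using that the sub-Riemannian distance function is horizontal and Lipschitz, so that $|\nabla^{g^{E}}\varphi_{x,x'}|$ is uniformly bounded.

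For part (ii), the essential tool is the finite propagation speed theorem \ref{thm:(Finite-propagation-speed)}, which applies both to $\Delta_{g^{E},\mu}$ and to the locally defined operator $\tilde\Delta_{g,\mu}$ on $\mathbb{R}^{n}$ (the latter is an sR Laplacian in its own right, obtained from a bracket-generating family of vector fields with the same first-order jet at $x$ as $U_{1},\dots,U_{k}$, so the theorem applies to it as well). The plan is to represent both heat operators via the half-wave operator through the Gaussian
\[
e^{-tA^{2}} = \frac{1}{\sqrt{4\pi t}}\int_{\mathbb{R}} e^{-s^{2}/(4t)}\cos(sA)\, ds,
\qquad A\in\{\sqrt{\Delta_{g^{E},\mu}},\sqrt{\tilde\Delta_{g,\mu}}\},
\]
and to split the integral at $|s|=\varrho_{1}/2$. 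On the range $|s|\leq \varrho_{1}/2$, finite propagation speed forces $\cos(s\sqrt{\Delta_{g^{E},\mu}})\delta_{x}$ and $\cos(s\sqrt{\tilde\Delta_{g,\mu}})\delta_{0}$ to be supported, after identification of the privileged chart, inside the sR ball of radius $\varrho_{1}/2$ around the basepoint; choosing $\varrho_{1}$ small enough that $d^{E}$ and the distance of the $\tilde U_{j}$-structure agree on this ball (which is possible because $\tilde U_{j}=U_{j}$ on a fixed neighborhood by the cutoff construction), the two integrands coincide pointwise and their contributions cancel. The tail $|s|\geq\varrho_{1}/2$ contributes at most $Ce^{-\varrho_{1}^{2}/(16t)}\|\cos(sA)\|_{\mathrm{op}}$ in a suitable norm, and since $\|\cos(sA)\|_{L^{2}\to L^{2}}=1$ this bounds the $L^{2}$-difference by the asserted exponential.

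The main obstacle is upgrading this $L^{2}$-level cancellation to a $C^{k}$-estimate for the heat kernels as smooth functions. The plan here is to apply arbitrary powers of $\Delta_{g^{E},\mu}$ (and correspondingly $\tilde\Delta_{g,\mu}$) to the identity above and use the subelliptic estimate \eqref{eq:subelliptic estimate} iteratively: each power of the Laplacian produces a factor polynomial in $1/t$ from differentiating the Gaussian weight $e^{-s^{2}/(4t)}$ under the integral (via $\Delta e^{-tA^{2}}=-\partial_{t}e^{-tA^{2}}$), and any such polynomial loss is absorbed by the exponential $e^{-\varrho_{1}^{2}/(16t)}$, perhaps at the cost of slightly shrinking $\varrho_{1}$. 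Sobolev embedding then converts the resulting $H^{s}$-bound into the claimed $C^{k}$-bound. The delicate point is verifying uniformity of constants in $x$ and matching the sR distances on the small common ball; for this one exploits that the privileged coordinate construction is locally continuous in $x$, so $\varrho_{1}(x)$ can be chosen locally uniform.
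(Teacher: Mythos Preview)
Your proposal is correct, and for part (ii) it matches the paper's argument essentially line for line: the paper also writes the heat operator via the wave group, splits the $\xi$-integral by a cutoff $\chi(\xi/\varrho_{1})$, uses finite propagation speed together with $\Delta_{g^{E},\mu}=\tilde{\Delta}_{g,\mu}$ on the small ball to identify the near part, bounds the far part by $O(e^{-\varrho_{1}^{2}/16t})$, and upgrades to $C^{k}$ by applying powers $\Delta^{q}$ (which on the wave side become $D_{\xi}^{2q}$ acting on the Gaussian) and invoking the subelliptic estimate plus Sobolev embedding.

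For part (i) the paper takes a different, and in fact more economical, route than either of your two options. Rather than quoting Jerison--S\'anchez-Calle type bounds or running a Davies--Gaffney/Nash--Moser argument, the paper reuses exactly the same wave-to-heat formula already set up for (ii). Finite propagation forces the wave kernel $[e^{i\xi\sqrt{\Delta}}](x,x')$ to vanish for $|\xi|<d^{E}(x,x')$, so the $\xi$-integral is automatically restricted to $|\xi|\ge d^{E}(x,x')$; the elementary estimate $\bigl|\int_{|\xi|\ge d}e^{i\xi s}D_{\xi}^{2q}e^{-\xi^{2}/4t}\,d\xi\bigr|\le ct^{-2q-1/2}e^{-d^{2}/4t}$ then gives $\|\Delta^{q}e^{-t\Delta}\|_{L^{2}\to L^{2}}\le ct^{-2q-1/2}e^{-d^{E}(x,x')^{2}/4t}$, and subellipticity plus Sobolev yields the pointwise Gaussian bound. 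This has the virtue of being entirely self-contained within the paper and of unifying (i) and (ii) into a single computation. Your machinery for (ii) already contains everything needed for this: the observation you are missing is simply that the same finite-propagation support constraint, used without any ``matching'' counterpart, already produces the off-diagonal Gaussian decay in (i).
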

\begin{proof}
Both claims follow from the finite propagation result \prettyref{thm:(Finite-propagation-speed)}
and the Fourier transformation formula 
\begin{align}
\left[\Delta_{g^{E},\mu}^{q}e^{-t\Delta_{g^{E},\mu}}\right]_{\mu}\left(x,x'\right) & =\frac{1}{2\pi}\int d\xi\,\left[e^{i\xi\sqrt{\Delta_{g^{E},F,\mu}}}\left(x,x'\right)\right]_{\mu}D_{\xi}^{2q}\frac{e^{-\frac{\xi^{2}}{4t}}}{\sqrt{4\pi t}}\nonumber \\
 & =\frac{1}{2\pi}\int d\xi\,\left[e^{i\xi\sqrt{\Delta_{g^{E},F,\mu}}}\left(x,x'\right)\right]_{\mu}\chi\left(\frac{\xi}{\varrho_{1}}\right)D_{\xi}^{2q}\frac{e^{-\frac{\xi^{2}}{4t}}}{\sqrt{4\pi t}}\nonumber \\
 & \;\;+\frac{1}{2\pi}\int d\xi\,\left[e^{i\xi\sqrt{\Delta_{g^{E},F,\mu}}}\left(x,x'\right)\right]_{\mu}\left[1-\chi\left(\frac{\xi}{\varrho_{1}}\right)\right]D_{\xi}^{2q}\frac{e^{-\frac{\xi^{2}}{4t}}}{\sqrt{4\pi t}},\label{eq:breakup integral}
\end{align}
$\forall q\in\mathbb{N}_{0}$, $\varrho_{1}>0$. By finite propagation,
the integral maybe restricted to $\left|\xi\right|\geq d^{E}\left(x,x'\right)$.
Now the integral estimate 
\[
\left|\int_{\left|\xi\right|\geq d^{E}\left(x,x'\right)}e^{i\xi s}D_{\xi}^{2q}e^{-\frac{\xi^{2}}{4t}}d\xi\right|\leq ct^{-2q-\frac{1}{2}}e^{-\frac{d^{E}\left(x,x'\right)^{2}}{4t}}
\]
gives the bound on $\left\Vert \Delta_{g,\mu}^{q}e^{-t\Delta_{g^{E},F,\mu}}\right\Vert _{L^{2}\rightarrow L^{2}}\leq ct^{-2q-\frac{1}{2}}e^{-\frac{d^{E}\left(x,x'\right)^{2}}{8t}}$.
This combined with the subelliptic estimate \prettyref{eq:subelliptic estimate}
gives \prettyref{eq: heat kernel estimate}. For \prettyref{eq: heat kernel localization},
note that the second summand of \prettyref{eq:breakup integral} is
exponentially decaying $O\left(\exp(-\frac{\varrho_{1}^{2}}{16t})\right)$.
Next for $\varrho_{1}$ sufficiently small, $B_{\varrho_{1}}^{g^{E}}\left(x\right)\subset B_{\varrho}^{g^{TX}}\left(x\right)$.
Thus finite propagation and $\Delta_{g^{E},\mu}=\tilde{\Delta}_{g^{E},\mu}$
on $B_{\varrho_{1}}^{g^{E}}\left(x\right)$ give that the corresponding
first summands for $\Delta_{g^{E},\mu},\,\tilde{\Delta}_{g^{E},\mu}$
agree for $d^{E}\left(x,x'\right)\leq\varrho_{1}$. 
\end{proof}
We now prove the microlocal Weyl law in the equiregular case; below
let $\left[e^{-\hat{\Delta}_{g,\mu}}\right]_{\hat{\mu},x}$ we denote
the heat kernel of the Laplacian on the nilpotentization \prettyref{eq:nilpotent tangent space}
at a point $x$. Denote by
\begin{equation}
\int_{\left(E_{r}/E_{r-1}\right)}dy\,e^{iy.\xi}\left[e^{-\hat{\Delta}_{g,\mu}}\right]_{\hat{\mu}}\left(0;y\right)\label{eq:partial fourier transform}
\end{equation}
its partial Fourier transform in $\left(E_{r}/E_{r-1}\right)$ variables
and evaluation at $0$ in the remaining $\left(E_{1}\right)\oplus\left(E_{2}/E_{1}\right)\oplus\ldots\oplus\left(E_{r-1}/E_{r-2}\right)$
variables. We now have the following.
\begin{thm}
\label{lem:Microlocal weyl law} Let $\left(X,E,g^{E}\right)$ be
an equiregular sR manifold. For $B\in\Psi_{\textrm{cl}}^{0}\left(X,\Sigma\right)$
with $\sigma\left(B\right)=b_{0}$ we have
\begin{align}
E\left(B\right) & =\frac{1}{\left(2\pi\right)^{k_{r}^{E}}\mathcal{P}}\int_{E_{r-1}^{\perp}}d\hat{\mu}d\xi\,\left.b\left(x,\xi\right)\right|_{E_{r-1}^{\perp}}\left\{ \int_{\left(E_{r}/E_{r-1}\right)}dy\,e^{iy.\xi}\left[e^{-\hat{\Delta}_{g,\mu}}\right]_{\hat{\mu},x}\left(0;y\right)\right\} ,\quad\textrm{with }\label{eq: expectation formula}\\
\mathcal{P} & \coloneqq\int_{X}\left[e^{-\hat{\Delta}_{g,\mu}}\right]_{\hat{\mu}}\left(0,0\right)d\hat{\mu}.\label{eq: normalization}
\end{align}
Here the fiber $d\xi$-integral on the annihilator $E_{r-1}^{\perp}=\left(E_{r}/E_{r-1}\right)^{*}$
is with respect to the canonical volume elements \prettyref{eq: flag volume elements}.
\end{thm}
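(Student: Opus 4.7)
The plan is a heat-kernel Karamata--Tauberian argument together with a rescaling on the nilpotentization. First, I decompose $B = B_1 + iB_2$ with $B_j$ self-adjoint and then each $B_j = P_j^+ - P_j^-$ as a difference of nonnegative zeroth-order classical pseudodifferential operators (via the standard trick of adding a large constant and extracting a symbolic square root up to a smoothing error). Karamata's Tauberian theorem applied to the monotone heat traces $t\mapsto \operatorname{tr}(P_j^{\pm}e^{-t\Delta_{g^E,\mu}})$ reduces the proof to computing the leading $t^{-Q/2}$ behaviour of $\operatorname{tr}(Be^{-t\Delta_{g^E,\mu}})$ as $t\to 0^+$, where $Q = \sum_j w_j^E(x)$ is the Hausdorff dimension; dividing by the analogous coefficient for $B=1$ gives the normalization by $\mathcal P$.

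Next, I write $\operatorname{tr}(Be^{-t\Delta_{g^E,\mu}}) = \int_X [Be^{-t\Delta_{g^E,\mu}}]_\mu(x,x)\,d\mu(x)$ and work near a fixed $x \in X$ in privileged coordinates. By \prettyref{lem: Localization lemma} the heat kernel $[e^{-t\Delta_{g^E,\mu}}]_\mu$ may be replaced by $[e^{-t\tilde\Delta_{g,\mu}}]_{\tilde\mu}$ modulo $O(e^{-c/t})$, and then by the nilpotent heat kernel $[e^{-t\hat\Delta_{g,\mu}}]_{\hat\mu}$ up to terms of lower order in $t$, using the privileged-coordinate expansions \prettyref{eq:priv cord. exp v. field} and \prettyref{eq: prov cord exp. measure}. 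Expressing $B$ in Kohn--Nirenberg form gives
\[
[Be^{-t\hat\Delta}]_{\hat\mu}(0,0) = \frac{1}{(2\pi)^n}\int e^{-iy\cdot\xi}\,b(0,\xi)\,[e^{-t\hat\Delta}]_{\hat\mu}(y,0)\,dy\,d\xi + (\text{lower order in }t).
\]

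Now perform the rescaling: set $y = \delta_{\sqrt t}y'$ and $\xi_j = t^{-w_j/2}\xi_j'$, so that the Jacobians cancel, the phase $y\cdot\xi$ is preserved, and the nilpotent heat kernel rescales as $[e^{-t\hat\Delta}]_{\hat\mu}(\delta_{\sqrt t}y',0) = t^{-Q/2}[e^{-\hat\Delta}]_{\hat\mu}(y',0)$, contributing the prefactor $t^{-Q/2}$. Since $b$ is homogeneous of degree $0$, the rescaled symbol equals $b(0,(t^{(r-w_1)/2}\xi_1',\ldots,t^{(r-w_n)/2}\xi_n'))$, which converges pointwise as $t\to 0^+$ to $b(0,\xi')|_{E_{r-1}^\perp}$: all components of $\xi'$ in directions with weight $w_j < r$ are multiplied by positive powers of $t^{1/2}$ and vanish in the argument of the homogeneous symbol, leaving only the $E_{r-1}^\perp$-component. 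Dominated convergence, justified by the Schwartz decay of the nilpotent heat kernel in $y'$ and a cutoff near $\xi'=0$, then yields
\[
t^{Q/2}\operatorname{tr}(Be^{-t\Delta_{g^E,\mu}})\;\longrightarrow\; \frac{1}{(2\pi)^{n}}\int_X d\hat\mu(x)\int_{E_{r-1}^\perp} d\xi\,b(x,\xi)\int_{E_r/E_{r-1}}e^{iy\cdot\xi}[e^{-\hat\Delta_{g,\mu}}]_{\hat\mu,x}(0,y)\,dy,
\]
with $d\hat\mu$ emerging as the leading term of the rescaled volume $t^{-Q/2}d\mu$ via \prettyref{eq: prov cord exp. measure}, and the surviving fiber measure on $E_{r-1}^\perp$ identified with the canonical one from \prettyref{eq: flag volume elements}; taking $B = 1$ gives the denominator constant $\mathcal P$ and proves the statement.

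The principal difficulty will be the dominated-convergence step: while the pointwise convergence of the rescaled symbol to its restriction to $E_{r-1}^\perp$ is clear from homogeneity, one must produce a uniform $L^1$ majorant. Away from $\xi' = 0$ this follows from the angular compactness of the cosphere and the Schwartz decay of $[e^{-\hat\Delta}]_{\hat\mu}(y',0)$ in $y'$; near $\xi' = 0$ one needs an integration-by-parts argument in the transverse directions (those with $w_j < r$) exploiting that the nilpotent heat semigroup is smoothing precisely along the subspaces transverse to $E_r/E_{r-1}$. The remaining bookkeeping---identification of the factor $(2\pi)^{k_r^E}$, the volume $d\hat\mu$, and the decomposition of $B$ into positive parts---follows standard lines once this concentration-on-$E_{r-1}^\perp$ is rigorously established.
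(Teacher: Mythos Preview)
Your approach is essentially the same as the paper's: both reduce via a Tauberian argument to the $t^{-Q/2}$ coefficient of $\operatorname{tr}(Be^{-t\Delta_{g^E,\mu}})$, localize via \prettyref{lem: Localization lemma}, and extract the on-diagonal leading term by the privileged-coordinate rescaling $\delta_{t^{1/2}}$, under which the degree-zero symbol of $B$ concentrates on $E_{r-1}^\perp$. The paper additionally proves a full asymptotic expansion $[Be^{-t\Delta}]_\mu(x,x)\sim t^{-Q/2}\sum_j b_j(x)t^j$ by a resolvent expansion of the rescaled operator $\tilde\Delta^{\varepsilon}_{g,\mu}$ about the nilpotent Laplacian together with the Helffer--Sj\"ostrand formula; this both supplies your ``$\tilde\Delta\to\hat\Delta$ up to lower order'' step with a quantitative remainder and replaces your dominated-convergence discussion by a symbol-class estimate $\delta_{t^{1/2}}^*b=\mathtt b_0+O_{S^0_{\mathrm{cl}}}(t)$, but your leading-order argument already suffices for the statement as formulated.
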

\begin{proof}
By a standard Tauberian argument, it suffices to prove that one has
an on diagonal asymptotic expansion for the heat kernel 
\begin{equation}
\left[Be^{-t\Delta_{g^{E},\mu}}\right]_{\mu}\left(x,x\right)=t^{-Q/2}\left[\sum_{j=0}^{N}b_{j}\left(x\right)t^{j}+O\left(t^{N+1}\right)\right]\label{eq:microlocal heat exp}
\end{equation}
that is uniform in $x\in X$ with leading term 
\begin{equation}
b_{0}=\frac{1}{\left(2\pi\right)^{k_{r}^{E}}}\int_{E_{r-1}^{\perp}}d\mu_{\textrm{Popp}}d\xi\,a\left(x,\xi\right)\left\{ \int_{\left(E_{r}/E_{r-1}\right)}dy\,e^{iy.\xi}\left[e^{-\hat{\Delta}_{g,\mu}}\right]_{\hat{\mu},x}\left(0;y\right)\right\} .\label{eq: leading term formula}
\end{equation}
First consider the case $B=1$. By \prettyref{lem: Localization lemma},
it suffices to demonstrate the expansion for the localized kernel
$\left[e^{-t\tilde{\Delta}_{g,\mu}}\right]_{\tilde{\mu}}\left(0,0\right)$
on $\mathbb{R}^{n}$. To this end, consider the rescaled sR-Laplacian
and measure
\begin{align*}
\tilde{\Delta}_{g^{E},\mu}^{\varepsilon} & \coloneqq\varepsilon^{2}\left(\delta_{\varepsilon}\right)_{*}\tilde{\Delta}_{g^{E},\mu}\\
\mu_{\varepsilon} & \coloneqq\varepsilon^{Q\left(x\right)}\left(\delta_{\varepsilon}\right)_{*}\tilde{\mu}
\end{align*}
using the privileged coordinate dilation from \prettyref{sec:sub-Riemannian-geometry}
.  It is now clear that the Schwartz kernels satisfy the relation
\begin{eqnarray}
\left[e^{-t\tilde{\Delta}_{g,\mu}^{\varepsilon}}\right]_{\mu_{\varepsilon}}\left(x',x\right) & = & \varepsilon^{Q\left(x\right)}\left[e^{-t\varepsilon^{2}\tilde{\Delta}_{g,\mu}}\right]_{\tilde{\mu}}\left(\delta_{\varepsilon}x',\delta_{\varepsilon}x\right).\label{eq: comparison with nilp heat kernels}
\end{eqnarray}
Rearranging and setting $x=x'=0,\,t=1$; gives 
\[
\varepsilon^{-Q\left(x\right)}\left[e^{-\tilde{\Delta}_{g,\mu}^{\varepsilon}}\right]_{\mu_{\varepsilon}}\left(0,0\right)=\left[e^{-\varepsilon^{2}\tilde{\Delta}_{g,\mu}}\right]_{\tilde{\mu}}\left(0,0\right)
\]
 and hence it suffices to compute the expansion of the left hand side
above as the dilation $\varepsilon\rightarrow0$. To this end, first
note that the rescaled Laplacian has an expansion 
\begin{equation}
\tilde{\Delta}_{g,\mu}^{\varepsilon}=\left(\sum_{j=0}^{N}\varepsilon^{j}\hat{\Delta}_{g,\mu}^{j}\right)+\varepsilon^{N+1}R_{N}^{\varepsilon},\quad\forall N\in\mathbb{N}\label{eq: privileged cord expansion Delta}
\end{equation}
Here each $\hat{\Delta}_{g^{E},\mu}^{j}$ is an $\varepsilon$-independent
second order differential operator of homogeneous $E-$order $j-2$.
While each $R_{\varepsilon}^{\left(N\right)}$ is an $\varepsilon$-dependent
second order differential operators on $\mathbb{R}^{n}$ of $E$-order
at least $N-1$. The coefficient functions of $\hat{\Delta}_{g^{E},\mu}^{\left(j\right)}$
are polynomials (of degree at most $j+2r$) while those of $R_{\varepsilon}^{\left(N\right)}$
are uniformly (in $\varepsilon$) $C^{\infty}$-bounded. The first
term is a scalar operator given in terms of the nilpotent approximation
\begin{equation}
\hat{\Delta}_{g^{E},\mu}^{0}=\Delta_{\hat{g}^{E},\hat{\mu};x}=\sum_{j=1}^{m}\left(\hat{U}_{j}^{\left(-1\right)}\right)^{2}.\label{eq: nilpotent Laplacian Euclidean}
\end{equation}
at the point $x$. This expansion \prettyref{eq: privileged cord expansion Delta}
along with the subelliptic estimates now gives 
\[
\left(\tilde{\Delta}_{g^{E},\mu}^{\varepsilon}-z\right)^{-1}-\left(\hat{\Delta}_{g^{E},\mu}^{0}-z\right)^{-1}=O_{H_{\textrm{loc}}^{s}\rightarrow H_{\textrm{loc}}^{s+1/r-2}}\left(\varepsilon\left|\textrm{Im}z\right|^{-2}\right),
\]
$\forall s\in\mathbb{R}$. More generally, we let $I_{j}\coloneqq\left\{ p=\left(p_{0},p_{1},\ldots\right)|p_{\alpha}\in\mathbb{N},\sum p_{\alpha}=j\right\} $
denote the set of partitions of the integer $j$ and define 
\begin{equation}
\mathtt{C}_{j}^{z}\coloneqq\sum_{p\in I_{j}}\left(\hat{\Delta}_{g^{E},\mu}^{0}-z\right)^{-1}\left[\prod_{\alpha}\hat{\Delta}_{g^{E},F,\mu}^{p_{\alpha}}\left(\hat{\Delta}_{g^{E},\mu}^{0}-z\right)^{-1}\right].\label{eq:resolvent exp. terms}
\end{equation}
Then by repeated applications of the subelliptic estimate we have
\[
\left(\tilde{\Delta}_{g^{E},\mu}^{\varepsilon}-z\right)^{-1}-\sum_{j=0}^{N}\varepsilon^{j}\mathtt{C}_{j}^{z}=O_{H_{\textrm{loc}}^{s}\rightarrow H_{\textrm{loc}}^{s+N\left(1/r-2\right)}}\left(\varepsilon^{N+1}\left|\textrm{Im}z\right|^{-2Nw_{n}^{E}-2}\right),
\]
$\forall s\in\mathbb{R}$. A similar expansion as \prettyref{eq: privileged cord expansion Delta}
for the operator $\left(\tilde{\Delta}_{g^{E},\mu}^{\varepsilon}+1\right)^{M}\left(\tilde{\Delta}_{g^{E},\mu}^{\varepsilon}-z\right)$,
$M\in\mathbb{N}$, also gives 
\begin{equation}
\left(\tilde{\Delta}_{g^{E},\mu}^{\varepsilon}+1\right)^{-M}\left(\tilde{\Delta}_{g^{E},\mu}^{\varepsilon}-z\right)^{-1}-\sum_{j=0}^{N}\varepsilon^{j}\mathtt{C}_{j,M}^{z}=O_{H_{\textrm{loc}}^{s}\rightarrow H_{\textrm{loc}}^{s+N\left(1/r-2\right)+\frac{M}{r}}}\left(\varepsilon^{N+1}\left|\textrm{Im}z\right|^{-2Nw_{n}^{E}-2}\right)\label{eq: regularized expansion}
\end{equation}
for operators $\mathtt{C}_{j,M}^{z}=O_{H_{\textrm{loc}}^{s}\rightarrow H_{\textrm{loc}}^{s+N\left(1/r-2\right)+\frac{M}{r}}}\left(\varepsilon^{N+1}\left|\textrm{Im}z\right|^{-2Nw_{n}^{E}-2}\right)$,
$j=0,\ldots,N$, with 
\[
\mathtt{C}_{0,M}^{z}=\left(\hat{\Delta}_{g^{E},\mu}^{0}+1\right)^{-M}\left(\hat{\Delta}_{g^{E},\mu}^{0}-z\right)^{-1}.
\]
For $M\gg0$ sufficiently large, Sobolev's inequality gives an expansion
for the corresponding Schwartz kernels of \prettyref{eq: regularized expansion}
in $C^{0}\left(\mathbb{R}^{n}\times\mathbb{R}^{n}\right)$. By plugging
the resolvent expansion into the Helffer-Sjöstrand formula and noting
$\mu_{\varepsilon}\sim\hat{\mu}+\sum_{j=1}^{\infty}\varepsilon^{j}\mu_{j}$
gives the diagonal heat kernel expansion
\begin{align*}
\left[e^{-t\Delta_{g^{E},\mu}}\right]_{\mu}\left(x,x\right) & =\sum_{j=0}^{\infty}e_{j}\left(x\right)t^{j/2}\quad\textrm{with }\\
e_{0}\left(x\right) & =\left[e^{-\Delta_{\hat{g}^{E},\hat{\mu}}}\right]_{\hat{\mu}}.
\end{align*}
Finally, to see that the expansion only involves only even powers
of $t^{1/2}$, note that the operators $\hat{\Delta}_{g^{E},\mu}^{j}$
in the expansion \prettyref{eq: privileged cord expansion Delta}
change sign by $\left(-1\right)^{j}$ under the rescaling $\delta_{-1}$.
The integral expression \prettyref{eq:resolvent exp. terms} corresponding
to $\mathtt{C}_{j}^{z}\left(0,0\right)$ then changes sign by $\left(-1\right)^{j}$
under this change of variables giving $\mathtt{C}_{j}^{z}\left(0,0\right)=0$
for $j$ odd. 

We now come to the expansion for general $B\in\Psi_{\textrm{cl}}^{0}$.
By a partition of unity and \prettyref{lem: Localization lemma},
we may assume $B$ to be supported in the privileged coordinate chart.
That is it has an integral representation $\left[B\right]_{\mu}\left(0,x\right)=\left[b^{W}\right]_{\mu}\left(0,x\right)=\frac{1}{\left(2\pi\right)^{n}}\int d\xi e^{-ix.\xi}b\left(\frac{1}{2}x,\xi\right)$
in the privileged coordinate chart with symbol $b$ being compactly
supported in $x$ . Next letting 
\begin{align}
\delta_{t^{1/2}}:T^{*}\mathbb{R}^{n} & \rightarrow T^{*}\mathbb{R}^{n},\nonumber \\
\delta_{t^{1/2}}\left(x,\xi\right) & \coloneqq\left(\delta_{t^{1/2}}x,\delta_{t^{-1/2}}\xi\right)\label{eq:privileged dilation phase space}
\end{align}
denote the induced symplectic dilation of phase space, we note
\begin{equation}
\left(\delta_{t^{-1/2}}\right)_{*}b^{W}\coloneqq\delta_{t^{1/2}}^{*}b^{W}\delta_{t^{-1/2}}^{*}=\left(\delta_{t^{1/2}}^{*}b\right)^{W}.\label{eq:dilation of Weyl symbol}
\end{equation}
Furthermore; the classical symbolic expansion for $b\in S_{\textrm{cl}}^{0}$
gives 
\begin{align}
\left(\delta_{t^{1/2}}^{*}b\right)\left(x;\xi\right)=b\left(\delta_{t^{1/2}}x;\delta_{t^{-1/2}}\xi\right) & =b\left(\delta_{t^{1/2}}x;t^{-w_{1}/2}\xi_{1},\ldots,t^{-w_{n}/2}\xi_{n}\right)\nonumber \\
 & =\underbrace{b_{0}\left(0;0,0,\ldots,0,\underbrace{\xi_{n-k_{r}+1},\ldots,\xi_{n}}_{\xi'=}\right)}_{\mathtt{b}_{0}\coloneqq}+O_{S_{\textrm{cl}}^{0}}\left(t\right)\label{eq: limit Weyl symbol}
\end{align}
We now finally compute
\begin{align}
 & \left[Be^{-t\tilde{\Delta}_{g,\mu}}\right]_{\mu}\left(0,0\right)\nonumber \\
= & t^{-Q/2}\left[\left(\delta_{t^{-1/2}}\right)_{*}\left(Be^{-t\tilde{\Delta}_{g,\mu}}\right)\right]_{\mu_{t^{1/2}}}\left(0,0\right)\nonumber \\
= & t^{-Q/2}\left[\left(\delta_{t^{-1/2}}\right)_{*}B\left(\delta_{t^{-1/2}}\right)_{*}e^{-t\tilde{\Delta}_{g,\mu}}\right]_{\mu_{t^{1/2}}}\left(0,0\right)\nonumber \\
= & t^{-Q/2}\left[1+o\left(1\right)\right]\left[\mathtt{b}_{0}^{W}e^{-\hat{\Delta}_{g^{E},\mu}^{0}}\right]_{\hat{\mu}}\left(0,0\right)\nonumber \\
= & t^{-Q/2}\left[1+o\left(1\right)\right]\int e^{-iy'\xi'}b_{0}\left(0;0,\xi'\right)e^{-\Delta_{\hat{g}^{E},\hat{\mu}}}\left(0,y';0\right)\label{eq: expression microlocal heat kernel}
\end{align}
following \prettyref{eq: comparison with nilp heat kernels}, \prettyref{eq:dilation of Weyl symbol}
and \prettyref{eq: limit Weyl symbol}. The theorem now follows on
noting leading term above to agree with \prettyref{eq: leading term formula}
in privileged coordinates.
\end{proof}
The rescaling arguments in the proof above are also analogous to those
in local index theory cf. \cite[Sec. 7]{Savale2017-Koszul} or \cite{Savale-Asmptotics,Savale-thesis2012}
and references therein. A local Weyl law for the semiclassical (magnetic)
analogue of sR Laplacian was also recently explored in \cite[Sec. 3]{Marinescu-Savale18}.

One still needs to identify the right hand side of \prettyref{eq: expectation formula}
with \prettyref{eq: micro-weyl} in the 4D quasi-contact case. First
note that \prettyref{eq: expectation formula}, \prettyref{eq: normalization}
are unchanged on replacing $\hat{\mu}$ by $\mu_{\textrm{Popp}}$.
A model for the nilpotentization is given in terms of the Darboux
coordinates of \prettyref{subsec:Normal-form-near-Sigma} this case
is $\hat{X}=\mathbb{R}^{4}$ with $\hat{E}=\mathbb{R}\left[\partial_{x_{0}},\partial_{x_{1}}+x_{2}\partial_{x_{3}},\partial_{x_{2}}-x_{1}\partial_{x_{3}}\right]$
being the span of the given (orthonormal) vector fields. The partial
Fourier transform in $x_{3}$ of the nilpotent Laplacian is computed
\begin{equation}
\mathcal{F}_{x_{3}}\hat{\Delta}_{g,\mu}^{0}\mathcal{F}_{x_{3}}^{-1}=-\partial_{x_{0}}^{2}\underbrace{-\left(\partial_{x_{1}}+ix_{2}\xi_{3}\right)^{2}-\left(\partial_{x_{2}}-ix_{1}\xi_{3}\right)^{2}}_{=\hat{\Delta}_{\xi_{3}}}\label{eq:partial tranform nilp.}
\end{equation}
while the Popp volume $\mu_{\textrm{Popp}}=\frac{1}{2}dx$ is Euclidean.
Mehler's formula (\cite{Berline-Getzler-Vergne} Sec. 4.2) now gives
the partial Fourier transform \prettyref{eq:partial fourier transform}
of the heat kernel to be
\begin{align}
\int dx_{3}\,e^{-ix_{3}.\xi_{3}}\left[e^{-\hat{\Delta}_{g,\mu_{\textrm{Popp}}}^{0}}\right]_{\mu_{\textrm{Popp}}}\left(0,0\right) & =\left[\exp-\left\{ \partial_{x_{0}}^{2}+\left(\partial_{x_{1}}+ix_{2}\xi_{3}\right)^{2}+\left(\partial_{x_{2}}-ix_{1}\xi_{3}\right)^{2}\right\} \right]_{\frac{1}{2}dx}\left(0,0\right)\nonumber \\
 & =\frac{1}{4\pi^{3/2}}\frac{\left|2\xi_{3}\right|}{\sinh\left|2\xi_{3}\right|},\quad\textrm{ while }\nonumber \\
f\left(\hat{\Delta}_{\xi_{3}}\right) & =\left\langle f\left(s\right),\frac{\left|\xi_{3}\right|}{\pi}\sum_{k=0}^{\infty}\delta\left(s-2\left|\xi_{3}\right|\left(2k+1\right)\right)\right\rangle \label{eq:Mehler formula}
\end{align}
cf. \cite[Sec. 7.]{Savale2017-Koszul}.

We then calculate 
\[
\mathcal{P}=\int d\mu_{\textrm{Popp}}\left[e^{-\hat{\Delta}_{g,\mu_{\textrm{Popp}}}^{0}}\right]_{\mu_{\textrm{Popp}}}\left(0,0\right)=\frac{1}{2\pi}\int d\mu_{\textrm{Popp}}\left\{ \int_{-\infty}^{\infty}d\xi_{3}\,\frac{1}{4\pi^{3/2}}\frac{\left|2\xi_{3}\right|}{\sinh\left|2\xi_{3}\right|}\right\} =\frac{1}{32\sqrt{\pi}}P\left(X\right)
\]
 while the leading term of \prettyref{eq: expectation formula} is
\begin{align*}
E\left(B\right)= & \frac{1}{2\pi\mathcal{P}}\int d\mu_{\textrm{Popp}}\left[b\left(x,a_{g}\left(x\right)\right)+b\left(x,-a_{g}\left(x\right)\right)\right]\left\{ \int_{0}^{\infty}d\xi_{3}\,\frac{1}{8\pi^{3/2}}\frac{\left|2\xi_{3}\right|}{\sinh\left|2\xi_{3}\right|}\right\} \\
= & \frac{1}{64\sqrt{\pi}\mathcal{P}}\int d\mu_{\textrm{Popp}}\left[b\left(x,a_{g}\left(x\right)\right)+b\left(x,-a_{g}\left(x\right)\right)\right]\\
= & \frac{1}{2}\int d\nu_{\textrm{Popp}}\left[b\left(x,a_{g}\left(x\right)\right)+b\left(x,-a_{g}\left(x\right)\right)\right]
\end{align*}
proving \prettyref{eq: micro-weyl}. 

The expression above may be rewritten 
\begin{align*}
E\left(B\right) & =\int\left.b\right|_{S^{*}\Sigma}\nu_{\textrm{Popp}}\\
 & =\int\pi_{S}^{*}\left.b\right|_{S^{*}\Sigma}\nu_{\textrm{Popp}}^{SNS^{*}\Sigma}
\end{align*}
in terms of the lifts of the normalized Popp volume to the unit sphere
of the characteristic variety and its blowup \prettyref{eq:lift Popp measure}.
We now generalize the above expression to prove a microlocal Weyl
law in $\Psi_{\textrm{cl}}^{0,0}\left(X,\Sigma\right)$, via the heat
kernel method here, agreeing with \prettyref{thm: Microlocal wave trace}.
\begin{thm}
For $X$ quasi-contact and $B\in\Psi_{\textrm{cl}}^{0,0}\left(X,\Sigma\right)$,
$\sigma^{H}\left(B\right)=b_{0}$ we have 
\begin{equation}
E\left(B\right)=\int\left.b\right|_{SNS^{*}\Sigma}\nu_{\textrm{Popp}}^{SNS^{*}\Sigma}.\label{eq:exotic microlocal weyl law}
\end{equation}
\end{thm}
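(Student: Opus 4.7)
The plan is to extract \eqref{eq:exotic microlocal weyl law} by applying a Tauberian argument to the microlocal wave trace expansion, since Theorem~\ref{thm: Microlocal wave trace} was already proved in the exotic class $\Psi_{\textrm{cl}}^{0,0}(X,\Sigma)$ via the parametrix of Section~\ref{subsec:Parametrices}. Concretely, for $B\in\Psi_{\textrm{cl}}^{0,0}(X,\Sigma)$ pairing the distributional expansion \eqref{eq:microlocal wave trace} with $\theta\in C_{c}^{\infty}(-C_{0},C_{0})$ having $\check\theta\geq 0$ and $\theta(0)=1$ yields the smoothed spectral asymptotic \eqref{eq:small time microlocal trace}, namely
\[
\textrm{tr}\bigl(B\,\check\theta(\sqrt{\Delta_{g^{E},\mu}}-\mu)\bigr) \;=\; \tfrac{\theta(0)}{32\pi}\mu^{4}\int\sigma^{H}(B)\big|_{SNS^{*}\Sigma}\,\mu_{\textrm{Popp}}^{SNS^{*}\Sigma} \;+\; o(\mu^{4}).
\]

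The second step is to convert this into a linear asymptotic for the partial spectral sum $\Phi_{B}(\mu)\coloneqq\sum_{\sqrt{\lambda_{j}}\leq\mu}\langle B\varphi_{j},\varphi_{j}\rangle$. Since Tauberian theorems require monotonicity, I would reduce to nonnegative self-adjoint operators by writing $B=B_{1}+iB_{2}$ with $B_{1}=\tfrac{1}{2}(B+B^{*})$ and $B_{2}=\tfrac{1}{2i}(B-B^{*})$, both in $\Psi_{\textrm{cl}}^{0,0}(X,\Sigma)$ by the adjoint closure \eqref{eq: comp. and adjoint closure}, and then decomposing each $B_{j}=CI-(CI-B_{j})$ for $C$ larger than the $L^{\infty}$-norm of the symbol; each piece is a positive self-adjoint element of $\Psi_{\textrm{cl}}^{0,0}(X,\Sigma)$. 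The standard Tauberian theorem of Duistermaat--Guillemin (\cite[Sec.~2]{Duistermaat-Guillemin}, \cite[Ch.~11]{Dimassi-Sjostrand}), applied piece by piece and then combined linearly, yields
\[
\Phi_{B}(\mu) \;=\; \tfrac{K}{32\pi}\,\mu^{5}\int\sigma^{H}(B)\big|_{SNS^{*}\Sigma}\,\mu_{\textrm{Popp}}^{SNS^{*}\Sigma} \;+\; o(\mu^{5})
\]
for a universal constant $K$ depending only on the Fourier conventions for $(t+i0)^{-5}$.

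The constant $K$ is pinned down by specializing to $B=I$, for which $\sigma^{H}(I)=1$ and $\Phi_{I}(\mu)=N(\mu^{2})$. Combined with the sharp Weyl law \eqref{eq:sharp Weyl law}, i.e.\ $N(\mu^{2})=\tfrac{1}{24\pi}\mu^{5}\int_{X}\mu_{\textrm{Popp}}+O(\mu^{4})$, this fixes $K$ through the identity $\tfrac{K}{32\pi}\int\mu_{\textrm{Popp}}^{SNS^{*}\Sigma}=\tfrac{1}{24\pi}\int_{X}\mu_{\textrm{Popp}}$, consistently with the vertical computation $\int_{-1}^{1}(1-\Xi_{0}^{2})\,d\Xi_{0}=\tfrac{4}{3}$ arising from \eqref{eq:identifications}. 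Finally, dividing the asymptotic for a general $B$ by that for $I$ and invoking the definition $\nu_{\textrm{Popp}}^{SNS^{*}\Sigma}=\mu_{\textrm{Popp}}^{SNS^{*}\Sigma}/\int\mu_{\textrm{Popp}}^{SNS^{*}\Sigma}$ gives
\[
E(B) \;=\; \lim_{\mu\to\infty}\frac{\Phi_{B}(\mu)}{\Phi_{I}(\mu)} \;=\; \frac{\int\sigma^{H}(B)\big|_{SNS^{*}\Sigma}\,\mu_{\textrm{Popp}}^{SNS^{*}\Sigma}}{\int\mu_{\textrm{Popp}}^{SNS^{*}\Sigma}} \;=\; \int b\big|_{SNS^{*}\Sigma}\,\nu_{\textrm{Popp}}^{SNS^{*}\Sigma}.
\]

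There is no serious obstacle at this stage: the real technical work was already done in constructing the parametrix \eqref{eq: Hermite FIO-1} and carrying out the summation over Landau levels yielding \eqref{eq:sum symbol}, which underlies Theorem~\ref{thm: Microlocal wave trace}. The only point requiring minor care is that the positivity reduction uses $I\in\Psi_{\textrm{cl}}^{0,0}(X,\Sigma)$ and the closure under $B\mapsto B^{*}$, both of which are immediate from the global calculus of Section~\ref{sec:Global-calculus}. Consistency with the previously established Weyl law for the standard class $\Psi_{\textrm{cl}}^{0}(X)\subset\Psi_{\textrm{cl}}^{0,0}(X,\Sigma)$ (via the inclusion $\Psi_{\textrm{inv,cl}}^{0}\subset\Psi_{\textrm{cl}}^{0,0}$) may be verified by noting that for a standard $B$ with $\sigma(B)=b$, the restriction $b|_{SNS^{*}\Sigma}$ is the pullback of $b|_{S^{*}\Sigma}$, independent of the fiber variable $\Xi_{0}$, so that the fiber integration against $(1-\Xi_{0}^{2})\,d\Xi_{0}$ collapses to recover the formula \eqref{eq: micro-weyl}.
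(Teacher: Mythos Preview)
Your argument is correct, but it takes a different route from the paper. The paper proves this theorem by the heat kernel method: it localizes near $\Sigma$, works in the Darboux/privileged chart of the normal form, rescales via the privileged dilation $\delta_{t^{1/2}}$ extended to the blowup, and computes the on-diagonal asymptotics of $[U^{*}BU\,e^{-t\Delta_{g^{E},\mu}}]_{\mu}(0,0)$ using Mehler's formula \eqref{eq:Mehler formula}, the identification \eqref{eq:hermite transf nilp.}, and the limit \eqref{eq:limit of Hermite symbol} of the dilated Hermite symbol. The summation over Landau levels is then carried out explicitly to recover the $(1-\Xi_{0}^{2})$ fiber density and hence $\mu_{\textrm{Popp}}^{SNS^{*}\Sigma}$.

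Your approach instead recycles Theorem~\ref{thm: Microlocal wave trace} (already established for the exotic class) together with a standard Tauberian argument, which is perfectly legitimate and arguably more economical given what the paper has already proved: indeed the paper itself remarks that its heat kernel computation is meant to be ``agreeing with \ref{thm: Microlocal wave trace}''. The trade-off is that the heat kernel proof is logically independent of the parametrix construction of Section~\ref{subsec:Parametrices} and ties in with the general equiregular framework of Theorem~\ref{lem:Microlocal weyl law}, whereas your argument leans entirely on the wave-trace machinery. One small point to make explicit in your write-up: the positivity step $CI-B_{j}\geq 0$ needs operator positivity, not just symbol positivity, but this is immediate since $B_{j}\in\Psi_{\textrm{cl}}^{0,0}(X,\Sigma)$ is $L^{2}$-bounded (Sobolev boundedness with $s_{1}=s_{2}=0$), so one may simply take $C=\|B_{j}\|_{L^{2}\to L^{2}}$.
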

\begin{proof}
Since $B$ is microlocally a classical pseudo-differential operator
\prettyref{def: Psi m123 on X} away from the characteristic variety,
where the microlocal Weyl measure \prettyref{eq: micro-weyl} vanishes,
it suffices to prove \prettyref{eq:exotic microlocal weyl law} for
$B$ micro-supported near $\Sigma$. In particular we may work in
the microlocal chart $C$ where \prettyref{eq:normal form} holds.
Note that in the quasi-contact case, the Darboux coordinates \prettyref{eq: quasi-contact normal form},
\prettyref{eq:characterictic normal form} used in the normal form
for $\Delta_{g^{E},\mu}$ and thereafter used in the definition \prettyref{def: Psi m123 on X}
of $\Psi_{\textrm{cl}}^{0,0}\left(X,\Sigma\right)$ are in particular
privileged. Furthermore, the privileged dilation of phase space \prettyref{eq:privileged dilation phase space}
extends to the blow up
\begin{align*}
\delta_{t^{1/2}}: & \left[T^{*}\mathbb{R}^{4},\Sigma_{0}\right]\rightarrow\left[T^{*}\mathbb{R}^{4},\Sigma_{0}\right]\\
\delta_{t^{1/2}}\coloneqq & \beta^{-1}\delta_{t^{1/2}}\beta
\end{align*}
and one has the relation 
\begin{align}
\left(\delta_{t^{-1/2}}\right)_{*}b^{H}=\delta_{t^{1/2}}^{*}b^{H}\delta_{t^{-1/2}}^{*} & =\left(\delta_{t^{1/2}}^{*}b\right)^{H},\label{eq: dilation of Hermite symbol}
\end{align}
 $\forall b\in S_{\textrm{cl}}^{0,0}$ similar to \prettyref{eq:dilation of Weyl symbol}.
Furthermore; the classical symbolic expansion for $b\in S_{\textrm{cl}}^{0,0}$
gives 
\begin{equation}
\delta_{t^{1/2}}^{*}b=\underbrace{b_{0}\left(0,d^{-2}\left(x_{1}^{2}+\hat{\xi}_{1}^{2}\right),0,0;d^{-1}\hat{\xi}_{0},0\right)}_{\mathtt{b}_{0}\coloneqq}+O_{S_{\textrm{cl}}^{0,0}}\left(t\right).\label{eq:limit of Hermite symbol}
\end{equation}
The equation \prettyref{eq:first Hamiltonian} and Duhamel's principle
give 
\begin{align}
U_{t}\coloneqq\left(\delta_{t^{-1/2}}\right)_{*}U=\delta_{t^{1/2}}^{*}U\delta_{t^{-1/2}}^{*} & =e^{i\frac{\pi}{4}f_{0}^{W}}+O_{L_{\textrm{loc}}^{2}\rightarrow H_{\textrm{loc}}^{-1}}\left(t\right)\label{eq:limit conjugating operator}
\end{align}
for the diagonalizing FIO in \prettyref{prop: normal form near pt},
while \prettyref{eq:first Hamiltonian flow}, \prettyref{eq:partial tranform nilp.}
gives
\begin{equation}
e^{i\frac{\pi}{4}f_{0}^{W}}e^{-\hat{\Delta}_{g^{E},\mu}^{0}}e^{-i\frac{\pi}{4}f_{0}^{W}}=\left[e^{-\xi_{0}^{2}+2\rho\xi_{3}\left(x_{1}^{2}+\hat{\xi}_{1}^{2}\right)}\right]^{H}.\label{eq:hermite transf nilp.}
\end{equation}
We may then compute
\begin{align*}
 & \left[U^{*}BUe^{-t\Delta_{g^{E},\mu}}\right]_{\mu}\left(0,0\right)\\
 & =t^{-5/2}\left[\left(\delta_{t^{-1/2}}\right)_{*}U^{*}BUe^{-t\Delta_{g^{E},\mu}}\right]_{\mu_{t^{1/2}}}\left(0,0\right)\\
 & =t^{-5/2}\left[U_{t}^{*}B_{t}U_{t}\left(\delta_{t^{-1/2}}\right)_{*}e^{-t\Delta_{g^{E},\mu}}\right]_{\mu_{t^{1/2}}}\left(0,0\right)\\
 & =t^{-5/2}\left[1+o\left(1\right)\right]\left[e^{-i\frac{\pi}{4}f_{0}^{W}}\mathtt{b}_{0}^{H}e^{i\frac{\pi}{4}f_{0}^{W}}e^{-\hat{\Delta}_{g^{E},\mu}^{0}}\right]_{dx}\left(0,0\right)\\
 & =\frac{t^{-5/2}}{2\pi}\left[1+o\left(1\right)\right]\left[e^{-i\frac{\pi}{4}f_{0}^{W}}\sum_{k=0}^{\infty}H_{k}^{*}.\right.\\
 & \qquad\left.\left..\left\{ \int d\xi_{0}e^{-\xi_{0}^{2}+2\rho\left(2k+1\right)}b_{0}\left(d_{k}^{-1}\hat{\xi}_{0}\right)\right\} H_{k}e^{i\frac{\pi}{4}f_{0}^{W}}\right]_{dx}\right|_{x_{1}=x_{2}=x_{3}=0}\\
 & =\frac{t^{-5/2}}{2\pi}\left[1+o\left(1\right)\right]\left[e^{-i\frac{\pi}{4}f_{0}^{W}}\sum_{k=0}^{\infty}H_{k}^{*}\right.\\
 & \left.\left..\left\{ \int_{-1}^{1}d\Xi_{0}\frac{\rho^{1/2}\left(2k+1\right)^{1/2}}{\left(1-\Xi_{0}^{2}\right)^{3/2}}e^{-\rho\left(2k+1\right)/\left(1-\Xi_{0}^{2}\right)}b_{0}\left(\Xi_{0}\right)\right\} H_{k}e^{i\frac{\pi}{4}f_{0}^{W}}\right]_{dx}\right|_{x_{1}=x_{2}=x_{3}=0}\\
 & =\frac{t^{-5/2}}{8\pi^{2}}\left[1+o\left(1\right)\right]\int d\Xi_{0}\int_{0}^{\infty}d\xi_{3}b_{0}\left(\Xi_{0}\right)\left(1-\Xi_{0}^{2}\right)^{-1}\left[\hat{\Delta}_{\hat{\rho}\xi_{3}/\left(1-\Xi_{0}^{2}\right)}^{1/2}e^{-\hat{\Delta}_{\hat{\rho}\xi_{3}/\left(1-\Xi_{0}^{2}\right)}}\right]_{dx}\left(0,0\right)\\
 & =\frac{t^{-5/2}}{8\pi^{2}}\left[1+o\left(1\right)\right]\left[\int_{-1}^{1}d\Xi_{0}b_{0}\left(\Xi_{0}\right)\int_{0}^{\infty}d\xi_{3}\sum_{k=0}^{\infty}\hat{\rho}\xi_{3}\frac{\rho^{1/2}\left(2k+1\right)^{1/2}}{\left(1-\Xi_{0}^{2}\right)^{3/2}}e^{-\rho\left(2k+1\right)/\left(1-\Xi_{0}^{2}\right)}\right]\\
 & =\frac{t^{-5/2}}{8\pi^{2}}\left[1+o\left(1\right)\right]\left[\left(\int_{0}^{\infty}r^{3/2}e^{-r}dr\right)\left(\sum_{k=0}^{\infty}\frac{1}{\left(2k+1\right)^{2}}\right)\int_{-1}^{1}d\Xi_{0}\left(1-\Xi_{0}^{2}\right)b_{0}\left(\Xi_{0}\right)\right]\\
 & =\frac{3t^{-5/2}}{256\sqrt{\pi}}\left[1+o\left(1\right)\right]\int_{\pi^{-1}\left(x\right)}\left.b\right|_{SNS^{*}\Sigma}\mu_{\textrm{Popp}}^{SNS^{*}\Sigma}
\end{align*}
following \prettyref{eq:Mehler formula}, \prettyref{eq: dilation of Hermite symbol},
\prettyref{eq:limit of Hermite symbol} and \prettyref{eq:limit conjugating operator}
and on identifying the right hand side of \prettyref{eq:exotic microlocal weyl law}
here in terms of privileged coordinates. 
\end{proof}

\subsection{\label{subsec:Variance-estimate}Variance estimate}

We now prove the variance estimate \prettyref{eq:variance}, specializing
again to the 4D quasi-contact case. By replacing $B\in\Psi_{\textrm{cl}}^{0}\left(X\right)$
by the operator $B-E\left(B\right)\in\Psi_{\textrm{cl}}^{0}\left(X\right)$
with the same variance, we may assume that $E\left(B\right)=0$. Furthermore
we have
\begin{align}
V\left(B_{1}\right) & \leq E\left(B_{1}^{*}B_{1}\right)\label{eq:variance vs exp.}\\
V\left(B_{1}+B_{2}\right) & \leq2\left[V\left(B_{1}\right)+V\left(B_{2}\right)\right]\label{eq:youngs ineq.}\\
V\left(B_{1}\right)=0 & \implies V\left(B_{1}+B_{2}\right)=V\left(B_{2}\right),\label{eq:variance pres by var zero}
\end{align}
$\forall B_{1},B_{2}\in\Psi_{\textrm{cl}}^{0}\left(X\right)$, (see
\cite[Lemma 4.1]{Colin-de-Verdiere-Hillairet-TrelatI}) and 
\begin{equation}
\left.\sigma\left(B_{1}\right)\right|_{\Sigma}=0\implies E\left(B_{1}^{*}B_{1}\right)=0\label{eq:vanishing symbol implies vanishing exp.}
\end{equation}
 by \prettyref{lem:Microlocal weyl law} giving
\begin{equation}
\left.\sigma\left(B_{1}\right)\right|_{\Sigma}=\left.\sigma\left(B_{2}\right)\right|_{\Sigma}\implies V\left(B_{1}\right)=V\left(B_{2}\right).\label{eq:variance depends on Sigma}
\end{equation}
From \prettyref{eq:youngs ineq.}, \prettyref{eq:variance depends on Sigma}
and a partition of unity it now suffices to prove \prettyref{eq:variance}
for $B\in\Psi_{\textrm{cl}}^{0}\left(X\right)$ micro-supported in
a conic neighborhood of $p\in\Sigma$ with $E\left(B\right)=0$. By
a Taylor expansion in the coordinates $\left(x_{0},x_{1},x_{2},\hat{x}_{3};\xi_{0},\xi_{1},\xi_{2},\xi_{3}\right)$
of the normal form \prettyref{eq:normal form}, we may write $B=B_{0}+B_{1}$
where $B_{0},B_{1}\in\Psi_{\textrm{cl}}^{0}\left(X\right)$ with $\left.\sigma\left(B_{1}\right)\right|_{\Sigma}=0$
and $B_{0}=\left[b_{0}\left(x_{0},x_{2},\hat{x}_{3},\xi_{2},\xi_{3}\right)\right]^{W}\in\Psi_{\textrm{cl}}^{0}\left(X\right)$.
Furthermore, the homogeneous symbol $\sigma\left(B_{0}\right)=\pi^{*}b_{0}$,
$b_{0}\in C^{\infty}\left(X\right)$, is the pull back of a function
from the base. From \prettyref{eq:variance vs exp.} and \prettyref{eq:vanishing symbol implies vanishing exp.}
we have $V\left(B_{1}\right)=0$ and it suffices to show $V\left(B_{0}\right)=0$
by \prettyref{eq:youngs ineq.}. Clearly $E\left(B_{0}\right)=E\left(B\right)=0$
by \prettyref{eq:variance depends on Sigma} and $\left[B_{0},\Omega\right]=0$
\prettyref{eq:inclusion of inv. classical symb.} showing 
\[
B_{0}\in\Psi_{\textrm{inv},\textrm{cl}}^{0}\left(X\right)\subset\Psi_{\textrm{cl}}^{0,0}\left(X;\Sigma\right).
\]
Next $V\left(B_{0}\right)=V\left(e^{-it\sqrt{\Delta_{g^{E},\mu}}}B_{0}e^{it\sqrt{\Delta_{g^{E},\mu}}}\right)$
by definition, while Egorov's theorem \prettyref{thm:Egororovb thm}
gives 
\[
D\coloneqq e^{-it\sqrt{\Delta_{g^{E},\mu}}}B_{0}e^{it\sqrt{\Delta_{g^{E},\mu}}}-\textrm{Op}^{H}\left[\underbrace{\left(e^{-tH_{d}}\right)^{*}\sigma_{0,0}\left(B_{0}\right)}_{\eqqcolon b_{t}}\right]\in\Psi_{\textrm{cl}}^{-1,1}\left(X,\Sigma\right).
\]
In particular the difference above $D:L^{2}\left(X\right)\rightarrow H^{1,-1}\left(X,\Sigma\right)\hookrightarrow H^{\frac{1}{2},0}\left(X,\Sigma\right)\hookrightarrow L^{2}\left(X\right)$
\prettyref{eq:Sobolev inclusions-1} being a compact operator, its
variance vanishes $V\left(D\right)=0$ \cite[Lemma 4.2]{Colin-de-Verdiere-Hillairet-TrelatI}.
Hence $V\left(B_{0}\right)=V\left(\textrm{Op}^{H}\left[b_{t}\right]\right)$,
$\forall t$, and also
\begin{align}
V\left(B_{0}\right) & =V\left(\underbrace{\textrm{Op}^{H}\left[\underbrace{\frac{1}{T}\int_{0}^{T}dt\,b_{t}}_{\eqqcolon b_{T}}\right]}_{\eqqcolon\bar{B}_{T}}\right)\nonumber \\
 & \leq E\left(\bar{B}_{T}^{*}\bar{B}_{T}\right)=\int\left|\left.b_{T}\right|_{SNS^{*}\Sigma}\right|^{2}\nu_{\textrm{Popp}}^{SNS^{*}\Sigma},\label{eq:last step variance estimate}
\end{align}
$\forall T>0$, by \prettyref{eq:exotic microlocal weyl law} and
\prettyref{eq:variance vs exp.}. Finally $\left.b_{T}\right|_{SNS^{*}\Sigma}\rightarrow0$
in $L^{2}\left(SNS^{*}\Sigma;\nu_{\textrm{Popp}}^{SNS^{*}\Sigma}\right)$
as $T\rightarrow\infty$ under the ergodicity assumption on $\hat{Z}$
by the von Neumann mean ergodic theorem to prove the first part of
\prettyref{thm:QE theorem}. 

Next to prove the second part of \prettyref{thm:QE theorem}, suppose
that $L^{E}$ is ergodic and $L_{Z}\mu_{\textrm{Popp}}=0$. From the
equivalent conditions \prettyref{eq:equivalent conditions} and the
computation \prettyref{eq:Hvfield restriction to boundary} it follows
that the function $\Xi_{0}$ is now preserved under the $\hat{Z}$-flow.
Furthermore, the level sets $\left(SNS^{*}\Sigma/S^{1}\right)_{c}\coloneqq\left\{ \Xi_{0}=c\right\} $,
$c\in\left(-1,1\right)$, are copies of $X$ with the $\hat{Z}$-flow
$\left.\left(\hat{Z}b_{t}\right)\right|_{\left(SNS^{*}\Sigma/S^{1}\right)_{c}}=\left(\pi_{S}\circ\pi\right)^{*}cZb_{t,c}$
being simply lifted from the base
\[
\left.b_{t}\right|_{\left(SNS^{*}\Sigma\right)_{c}}=\left(\pi_{S}\circ\pi\right)^{*}\left(e^{-tcZ}\right)^{*}b_{0}.
\]
Setting, $\left(b_{0}\right)_{T}\coloneqq\frac{1}{T}\int_{0}^{T}dt\,\left(e^{-tZ}\right)^{*}b_{0}$,
we may then compute 
\begin{align}
\int\left|\left.b_{T}\right|_{SNS^{*}\Sigma}\right|^{2}\nu_{\textrm{Popp}}^{SNS^{*}\Sigma} & =\int_{-1}^{1}\left(1-c^{2}\right)dc\int\nu_{\textrm{Popp}}\left|\left(b_{0}\right)_{cT}\right|^{2}\nonumber \\
 & =\frac{1}{T}\int_{-T}^{T}\left(1-\frac{c'^{2}}{T^{2}}\right)dc'\int\nu_{\textrm{Popp}}\left|\left(b_{0}\right)_{c'}\right|^{2}.\label{eq: L2 average symbol}
\end{align}
As noted before the ergodicity assumption on $L^{E}$ is equivalent
to the ergodicity of the vector field $Z\in C^{\infty}\left(L^{E}\right)$.
Since $E\left(B_{0}\right)=\int b_{0}\nu_{\textrm{Popp}}=0$, the
von Neumann mean ergodic theorem applied to $e^{tZ}$ gives $\int\nu_{\textrm{Popp}}\left|\left(b_{0}\right)_{T}\right|^{2}$
and hence \prettyref{eq: L2 average symbol} converges to zero as
$T\rightarrow\infty$. 

We finally remark that the ergodicity of $L^{E}$ alone , which is
a topological condition, does not suffice to prove the variance estimate
and hence quantum ergodicity in the general volume preserving case.
In this case, following the computation \prettyref{eq:Hvfield restriction to boundary}
and \prettyref{eq:log derivative}, functions of the form $f\left(\left(1-\Xi_{0}^{2}\right)/\hat{\rho}_{Z}\right)$
are seen to be invariant under the $\hat{Z}$ flow. The last line
of \prettyref{eq:last step variance estimate} now converging to the
projection of $b_{0}$ onto the $\hat{Z}$ invariant functions, such
a projection $\int\nu_{\textrm{Popp}}^{SNS^{*}\Sigma}f\left(\left(1-\Xi_{0}^{2}\right)/\hat{\rho}_{Z}\right)\left(\beta^{*}\pi^{*}b_{0}\right)$
of the symbol $b_{0}$ , $\int b_{0}\mu_{\textrm{Popp}}=0$, might
be non-zero unless $\hat{\rho}_{Z}=1$.

\bibliographystyle{siam}
\bibliography{biblio}

\begin{thebibliography}{10}

\bibitem{AgrachevBarilariBoscain2019}
{\sc A.~Agrachev, D.~Barilari, and U.~Boscain}, {\em {A Comprehensive
  Introduction to Sub-Riemannian Geometry}}, Cambridge University Press, 2019.

\bibitem{Agrachev2014}
{\sc A.~A. Agrachev}, {\em {Some open problems}}, in {Geometric control theory
  and sub-{R}iemannian geometry}, vol.~5 of {Springer INdAM Ser.}, Springer,
  Cham, 2014, pp.~1--13.

\bibitem{Avakumovic56}
{\sc V.~G. Avakumovi{\'c}}, {\em {{\"U}ber die {E}igenfunktionen auf
  geschlossenen {R}iemannschen {M}annigfaltigkeiten}}, Math. Z., 65 (1956),
  pp.~327--344.

\bibitem{Bellaiche-book-1996}
{\sc A.~{Bella\"{\i} che} and J.-J. Risler}, eds., {\em {Sub-{R}iemannian
  geometry}}, vol.~144 of {Progress in Mathematics}, Birkh{\"a}user Verlag,
  Basel, 1996.

\bibitem{Berline-Getzler-Vergne}
{\sc N.~Berline, E.~Getzler, and M.~Vergne}, {\em {Heat kernels and {D}irac
  operators}}, {Grundlehren Text Editions}, Springer-Verlag, Berlin, 2004.
\newblock Corrected reprint of the 1992 original.

\bibitem{Bismut84}
{\sc J.-M. Bismut}, {\em {Large deviations and the {M}alliavin calculus}},
  vol.~45 of {Progress in Mathematics}, Birkh{\"a}user Boston, Inc., Boston,
  MA, 1984.

\bibitem{Boscain-Neel-Rizzi2017}
{\sc U.~Boscain, R.~Neel, and L.~Rizzi}, {\em {Intrinsic random walks and
  sub-{L}aplacians in sub-{R}iemannian geometry}}, Adv. Math., 314 (2017),
  pp.~124--184.

\bibitem{Boutet-de-Monvel-hypoelliptic-paramatrices}
{\sc L.~{Boutet de Monvel}}, {\em {Hypoelliptic operators with double
  characteristics and related pseudo-differential operators}}, Comm. Pure Appl.
  Math., 27 (1974), pp.~585--639.

\bibitem{Boutet-Treves-74}
{\sc L.~{Boutet de Monvel} and F.~Tr{\`e}ves}, {\em {On a class of systems of
  pseudodifferential equations with double characteristics}}, Comm. Pure Appl.
  Math., 27 (1974), pp.~59--89.

\bibitem{Charlot2002}
{\sc G.~Charlot}, {\em {Quasi-contact {S}-{R} metrics: normal form in {$\bold
  R^{2n}$}, wave front and caustic in {$\bold R^4$}}}, Acta Appl. Math., 74
  (2002), pp.~217--263.

\bibitem{Chazarain74}
{\sc J.~Chazarain}, {\em {Formule de {P}oisson pour les vari{\'e}t{\'e}s
  riemanniennes}}, Invent. Math., 24 (1974), pp.~65--82.

\bibitem{Colin-Presas-Vogel-2018}
{\sc V.~Colin, F.~Presas, and T.~Vogel}, {\em {Notes on open book
  decompositions for {E}ngel structures}}, Algebr. Geom. Topol., 18 (2018),
  pp.~4275--4303.

\bibitem{CdV73}
{\sc Y.~{Colin de Verdi{\`e}re}}, {\em {Spectre du laplacien et longueurs des
  g{\'e}od{\'e}siques p{\'e}riodiques. {I}, {II}}}, Compositio Math., 27
  (1973), pp.~83--106; ibid. 27 (1973), 159--184.

\bibitem{CdV-QE}
{\sc Y.~{Colin de Verdi{\`e}re}}, {\em {Ergodicit{\'e} et fonctions propres du
  laplacien}}, in {Bony-{S}j{\"o}strand-{M}eyer seminar, 1984--1985}, {\'E}cole
  Polytech., Palaiseau, 1985, pp.~Exp.\ No. 13, 8.

\bibitem{Colin-de-Verdiere-Hillairet-TrelatII}
{\sc Y.~{Colin de Verdi{\`e}re}, L.~Hillairet, and E.~Tr{\'e}lat}, {\em
  {Spectral asymptotics for sub-Riemannian Laplacians II: micro-local Weyl
  measures for the Martinet and the Grushin cases}}.
\newblock in preparation.

\bibitem{Colin-de-Verdiere-Hillairet-TrelatIg}
\leavevmode\vrule height 2pt depth -1.6pt width 23pt, {\em {Spiraling of
  sub-Riemannian geodesics around the Reeb flow in the 3D contact case}}.
\newblock in preparation.

\bibitem{Colin-de-Verdiere-Hillairet-TrelatI}
\leavevmode\vrule height 2pt depth -1.6pt width 23pt, {\em {Spectral
  asymptotics for sub-{R}iemannian {L}aplacians, {I}: {Q}uantum ergodicity and
  quantum limits in the 3-dimensional contact case}}, Duke Math. J., 167
  (2018), pp.~109--174.

\bibitem{Dimassi-Sjostrand}
{\sc M.~Dimassi and J.~Sj{\"o}strand}, {\em {Spectral asymptotics in the
  semi-classical limit}}, vol.~268 of {London Mathematical Society Lecture Note
  Series}, Cambridge University Press, Cambridge, 1999.

\bibitem{Duistermaat-Guillemin}
{\sc J.~J. Duistermaat and V.~W. Guillemin}, {\em {The spectrum of positive
  elliptic operators and periodic bicharacteristics}}, Invent. Math., 29
  (1975), pp.~39--79.

\bibitem{Duistermaat-Hormander}
{\sc J.~J. Duistermaat and L.~H{\"o}rmander}, {\em {Fourier integral operators.
  {II}}}, Acta Math., 128 (1972), pp.~183--269.

\bibitem{Grigis-Sjostrand94}
{\sc A.~Grigis and J.~Sj{\"o}strand}, {\em {Microlocal analysis for
  differential operators}}, vol.~196 of {London Mathematical Society Lecture
  Note Series}, Cambridge University Press, Cambridge, 1994.
\newblock An introduction.

\bibitem{Hakavuori-LeDonne2016}
{\sc E.~Hakavuori and E.~{Le Donne}}, {\em {Non-minimality of corners in
  subriemannian geometry}}, Invent. Math., 206 (2016), pp.~693--704.

\bibitem{Hasselmann-thesis}
{\sc S.~Hasselmann}, {\em {Spectral Triples on Carnot Manifolds}}, PhD thesis,
  Universit{\"a}t Hannover, 2014.

\bibitem{Hormander1967}
{\sc L.~H{\"o}rmander}, {\em {Hypoelliptic second order differential
  equations}}, Acta Math., 119 (1967), pp.~147--171.

\bibitem{Hormander68}
\leavevmode\vrule height 2pt depth -1.6pt width 23pt, {\em {The spectral
  function of an elliptic operator}}, Acta Math., 121 (1968), pp.~193--218.

\bibitem{HormanderIII}
\leavevmode\vrule height 2pt depth -1.6pt width 23pt, {\em {The analysis of
  linear partial differential operators. {III}}}, {Classics in Mathematics},
  Springer, Berlin, 2007.
\newblock Pseudo-differential operators, Reprint of the 1994 edition.

\bibitem{Hsu92}
{\sc L.~Hsu}, {\em {Calculus of variations via the {G}riffiths formalism}}, J.
  Differential Geom., 36 (1992), pp.~551--589.

\bibitem{Kath-Ungermann2015}
{\sc I.~Kath and O.~Ungermann}, {\em {Spectra of sub-{D}irac operators on
  certain nilmanifolds}}, Math. Scand., 117 (2015), pp.~64--104.

\bibitem{Kotschick&Vogel2018}
{\sc D.~Kotschick and T.~Vogel}, {\em {Engel structures and weakly hyperbolic
  flows on four-manifolds}}, Comment. Math. Helv., 93 (2018), pp.~475--491.

\bibitem{Lascars-propagation-DUKE}
{\sc B.~Lascar and R.~Lascar}, {\em {Propagation des singularit{\'e}s pour des
  op{\'e}rateurs pseudo-diff{\'e}rentiels {\`a} symboles r{\'e}els}}, Duke
  Math. J., 53 (1986), pp.~945--981.

\bibitem{LeDonne-Leonardi-Monti-Vittone2013}
{\sc E.~{Le Donne}, G.~P. Leonardi, R.~Monti, and D.~Vittone}, {\em {Extremal
  curves in nilpotent {L}ie groups}}, Geom. Funct. Anal., 23 (2013),
  pp.~1371--1401.

\bibitem{Levitan52}
{\sc B.~M. Levitan}, {\em {On the asymptotic behavior of the spectral function
  of a self-adjoint differential equation of the second order}}, Izvestiya
  Akad. Nauk SSSR. Ser. Mat., 16 (1952), pp.~325--352.

\bibitem{Marinescu-Savale18}
{\sc G.~{Marinescu} and N.~{Savale}}, {\em {Bochner Laplacian and Bergman
  kernel expansion of semi-positive line bundles on a Riemann surface}}, arXiv
  1811.00992,  (2018).

\bibitem{Melrose-hypoelliptic}
{\sc R.~B. Melrose}, {\em {The wave equation for a hypoelliptic operator with
  symplectic characteristics of codimension two}}, J. Analyse Math., 44
  (1984/85), pp.~134--182.

\bibitem{Menikoff-SjostrandI}
{\sc A.~Menikoff and J.~Sj{\"o}strand}, {\em {On the eigenvalues of a class of
  hypoelliptic operators}}, Math. Ann., 235 (1978), pp.~55--85.

\bibitem{Menikoff-SjostrandII}
\leavevmode\vrule height 2pt depth -1.6pt width 23pt, {\em {On the eigenvalues
  of a class of hypoelliptic operators. {II}}}, in {Global analysis ({P}roc.
  {B}iennial {S}em. {C}anad. {M}ath. {C}ongr., {U}niv. {C}algary, {C}algary,
  {A}lta., 1978)}, vol.~755 of {Lecture Notes in Math.}, Springer, Berlin,
  1979, pp.~201--247.

\bibitem{Metivier-hypspectralfunction}
{\sc G.~M{\'e}tivier}, {\em {Fonction spectrale et valeurs propres d'une classe
  d'op{\'e}rateurs non elliptiques}}, Comm. Partial Differential Equations, 1
  (1976), pp.~467--519.

\bibitem{Minakshisundaram-Pleijel49}
{\sc S.~Minakshisundaram and {\AA}.~Pleijel}, {\em {Some properties of the
  eigenfunctions of the {L}aplace-operator on {R}iemannian manifolds}},
  Canadian J. Math., 1 (1949), pp.~242--256.

\bibitem{Montgomery94}
{\sc R.~Montgomery}, {\em {Abnormal minimizers}}, SIAM J. Control Optim., 32
  (1994), pp.~1605--1620.

\bibitem{Montgomery-book}
\leavevmode\vrule height 2pt depth -1.6pt width 23pt, {\em {A tour of
  subriemannian geometries, their geodesics and applications}}, vol.~91 of
  {Mathematical Surveys and Monographs}, American Mathematical Society,
  Providence, RI, 2002.

\bibitem{Nelson-book69}
{\sc E.~Nelson}, {\em {Topics in dynamics. {I}: {F}lows}}, {Mathematical
  Notes}, Princeton University Press, Princeton, N.J.; University of Tokyo
  Press, Tokyo, 1969.

\bibitem{Pia2019}
{\sc N.~{Pia}}, {\em {Riemannian properties of Engel structures}}, arXiv
  1905.09006,  (2019).

\bibitem{Rothschild-Stein76}
{\sc L.~P. Rothschild and E.~M. Stein}, {\em {Hypoelliptic differential
  operators and nilpotent groups}}, Acta Math., 137 (1976), pp.~247--320.

\bibitem{Savale-thesis2012}
{\sc N.~Savale}, {\em {Spectral {A}symptotics for {C}oupled {D}irac
  {O}perators}}, ProQuest LLC, Ann Arbor, MI, 2012.
\newblock Thesis (Ph.D.)--Massachusetts Institute of Technology.

\bibitem{Savale-Asmptotics}
\leavevmode\vrule height 2pt depth -1.6pt width 23pt, {\em {Asymptotics of the
  {E}ta {I}nvariant}}, Comm. Math. Phys., 332 (2014), pp.~847--884.

\bibitem{Savale2017-Koszul}
\leavevmode\vrule height 2pt depth -1.6pt width 23pt, {\em {Koszul complexes,
  {B}irkhoff normal form and the magnetic {D}irac operator}}, Anal. PDE, 10
  (2017), pp.~1793--1844.

\bibitem{Savale-Gutzwiller}
\leavevmode\vrule height 2pt depth -1.6pt width 23pt, {\em {A {G}utzwiller type
  trace formula for the magnetic {D}irac operator}}, Geom. Funct. Anal., 28
  (2018), pp.~1420--1486.

\bibitem{Schnirelman-QE}
{\sc A.~I. {Shnirel'man}}, {\em {Ergodic properties of eigenfunctions.}}, {Usp.
  Mat. Nauk}, 29 (1974), pp.~181--182.

\bibitem{Zelditch-QE}
{\sc S.~Zelditch}, {\em {Uniform distribution of eigenfunctions on compact
  hyperbolic surfaces}}, Duke Math. J., 55 (1987), pp.~919--941.

\bibitem{Zelditch2009}
{\sc S.~Zelditch}, {\em {Recent developments in mathematical quantum chaos}},
  in {Current developments in mathematics, 2009}, Int. Press, Somerville, MA,
  2010, pp.~115--204.

\end{thebibliography}

\end{document}